\newcommand{\Aut}{\mathop{\mathrm{Aut}}\nolimits}
\newcommand{\End}{\mathop{\mathrm{End}}\nolimits}
\newcommand{\GL}{\mathop{\mathrm{GL}}\nolimits}
\newcommand{\Hom}{\mathop{\mathrm{Hom}}\nolimits}
\newcommand{\indlim}{\mathop{\underrightarrow{\lim}}\limits}
\newcommand{\Ka}{K_{\mathrm{a}}}
\newcommand{\Kb}{K_{\mathrm{b}}}
\newcommand{\Kc}{K_{\mathrm{c}}}
\newcommand{\Kd}{K_{\mathrm{d}}}
\newcommand{\mapsfrom}{\leftarrow\!\shortmid}
\newcommand{\Sy}{\mathop{\mathfrak{S}}\nolimits}
\newcommand{\Sm}{\mathop{\mathrm{Sm}}\nolimits}
\newcommand{\St}{\mathop{\mathrm{Stab}}\nolimits}
\newcommand{\wK}{\widetilde{K}}
\newtheorem{theorem}{Theorem}[section] 
\newtheorem{conjecture}[theorem]{Conjecture}
\newtheorem{corollary}[theorem]{Corollary}
\newtheorem{lemma}[theorem]{Lemma}
\newtheorem{proposition}[theorem]{Proposition}
\theoremstyle{remark} 
\newtheorem{example}[theorem]{Example}
\theoremstyle{definition} 
\newtheorem{definition}[theorem]{Definition}
\newtheorem{notation}[theorem]{Notation}
\theoremstyle{remark} 
\newtheorem{remark}[theorem]{Remark}
\newtheorem{remarks}[theorem]{Remarks}
\begin{document} 
\title{Partial fraction decompositions, and semilinear representations of infinite symmetric groups} 
\author{M.Rovinsky} 
\dedicatory{To Fedor Bogomolov with admiration} 
\address{National Research University Higher School of Economics, 
AG Laboratory, HSE, 6 Usacheva str., Moscow, Russia, 119048}
\email{marat@mccme.ru} 
\date{}

\begin{abstract} 
Let $F|k$ be a non-trivial regular field extension, $\Psi$ be an infinite (discrete) set, $\Sy_{\Psi}$ 
be the group of all permutations of $\Psi$ endowed with the compact-open (a.k.a. finite or Krull) 
topology, $L$ be the fraction field of the tensor product over $k$ of the copies 
of $F$ labeled by $\Psi$. The field $L$ is endowed with the natural $\Sy_{\Psi}$-action. For each 
$\Sy_{\Psi}$-invariant subfield $K$ of $L$, let $\Sm_K(\Sy_{\Psi})$ denote the category of {\sl smooth} 
(i.e. with open stabilizers) $K$-{\sl semilinear} representations of $\Sy_{\Psi}$, cf. \S\ref{Goals}. 

The categories $\Sm_K(\Sy_{\Psi})$ (especially, their simple and injective objects) 
are the principal object of the present study, though only in some particular cases. 

It is known (\cite[Theorem 6.1]{Lueroth}) that the indecomposable injective objects 
of the category $\Sm_L(\Sy_{\Psi})$ are the $L$-exterior powers $L\langle\binom{\Psi}{s}\rangle$ 
($s\ge 0$) of the $L$-vector space with the basis $\Psi$, while $L$ is the only simple object. 
It turns out that the objects $K\langle\binom{\Psi}{s}\rangle$ are injective quite generally. 

Let $K=L^H\subset L$ be the fixed field of an algebraic automorphism $k$-group $H$ of $F|k$ acting on 
$L$ diagonally. The question is: what could be a relation (a kind of the Schur--Weyl duality) between 
representations of $H$ and the indecomposable injectives or simple objects of $\Sm_K(\Sy_{\Psi})$? 

In this paper we consider several examples, where $H$ is either 
a subgroup of $\mathrm{PGL}_{2,k}=\Aut(k(t)|k)$ or a torus. 
In these examples: a) a natural bijection between the {\sl finite-dimensional} simple objects 
of $\Sm_K(\Sy_{\Psi})$ and the irreducible rational representations of $H$ is constructed; b) 
for $H\neq\mathrm{PGL}_{2,k}$, the indecomposable injectives and the simple objects 
of $\Sm_K(\Sy_{\Psi})$ are described completely. 

For $H=\mathrm{PGL}_{2,k}$, an infinite list of infinite-dimensional simple objects is produced, which 
is shown to be complete if $F\neq k$; a system of indecomposable injective {\sl generators} is described. 
\end{abstract}

\maketitle 
\tableofcontents 
\section{Introduction} 
\subsection{Motivation and goals} \label{Goals} Let $G$ be a group of permutations of 
a set $\Psi$. We consider $G$ as a totally disconnected group with a prebase of 
topology formed by the left (or right) translates of the stabilizers of the elements of $\Psi$. 

One is interested in describing continuous representations of $G$ in discrete vector spaces 
(i.e., with open stabilizers), called {\sl smooth} in what follows. This problem arises naturally in many 
situations. E.g., some birational `motivic' algebro-geometric questions over a characteristic 0 algebraically 
closed field $k$ are related to describing certain irreducible smooth representations of the automorphism group 
$G$ of an algebraically closed field extension $C|k$ of infinite transcendence degree. There are some 
reasons to expect that the numerous `interesting' representations can be embedded into the single 
$C$-vector space of differential forms $\Omega^{\bullet}_{C|k}=\bigoplus_{q\ge 0}\Omega^q_{C|k}$. 
Note that the $G$-action on $\Omega^q_{C|k}$ is $C$-{\sl semilinear}. This leads to the study of 
the category $\Sm_C(G)$ of smooth $C$-semilinear representations of $G$, that can be considered 
as `quasicoherent sheaves' in the dominant topology (in the sense of \cite{glatt}). 

Classical examples, where smooth semilinear representations show up, are (i) `Hilbert's Theorem 90' 
(\cite[Satz 1]{Speiser}) stating that the smooth semilinear {\sl Galois} representations are `trivial', 
and (ii) the theory of ($q$-)difference equations. 
The first case where $G$ is not locally (pre)compact (namely, $G$ is the symmetric group $\Sy_{\Psi}$ 
of all permutations of an infinite set $\Psi$) is considered in \cite{H90,NagpalSnowden}. 

In analogy with Hilbert's Theorem 90, one may ask whether for a given permutation group $G$ and 
a field $K$ endowed with a smooth $G$-action there exists a smooth $G$-field extension $B|K$ such 
that all smooth $B$-semilinear representations of $G$ are `as trivial as they can'.\footnote{Though 
a more direct analogue of the Tate--Fontaine's condition would be weaker: after coefficient 
extension to $B$ the smooth $K$-semilinear representations of $G$ become `as trivial as they can'.} 

The purpose of this paper is (i) to describe the category $\Sm_K(\Sy_{\Psi})$ of all smooth $K$-semilinear 
representations of the symmetric group $\Sy_{\Psi}$ over some $\Sy_{\Psi}$-fields $K$ and 
(ii) to construct, for an arbitrary field $K$ endowed with a smooth $\Sy_{\Psi}$-action, 
a smooth $\Sy_{\Psi}$-field extension $B|K$, called a `weak period' field extension, such that $B$ is 
a cogenerator of the category $\Sm_B(\Sy_{\Psi})$. 

Compared to the linear representations over fields of various characteristics, the series of examples of 
categories $\Sm_{K_?}(\Sy_{\Psi})$ considered here (where $K_?$ is an $\Sy_{\Psi}$-field, depending on a 
label $?\in\{\mathrm{a,b,c}\}$ and a field extension $F|k$, see below) have the feature that the {\sl structure} 
of $\Sm_K(\Sy_{\Psi})$ (e.g. its Gabriel spectrum) depends rather on the label $?$ of $K$ than on $F|k$ (with 
a minor modification in a few particular cases). 

Though the case of the group $\Sy_{\Psi}$ is a toy model of this kind of problems, a faithful and exact 
functor relates the above `algebro-geometric' category $\Sm_C(G)$ of `quasicoherent sheaves' to the 
category $\Sm_{k(\Psi)}(\Sy_{\Psi})$, where $k(\Psi)=k(x~|~x\in\Psi)$ is the field of rational functions 
over $k$ in variables labeled by a transcendence basis $\Psi$ of $C$ over $k$ (and endowed with the 
natural $\Sy_{\Psi}$-action). In particular, at least some of `interesting' objects of $\Sm_C(G)$ are 
transformed to injective objects of $\Sm_{k(\Psi)}(\Sy_{\Psi})$, see \cite[Propositions 4.6 and 4.7]{H90}.

\subsection{Basic notation} \label{BasicNota} 
For an abelian group $A$ and a set $S$, we denote by $A\langle S\rangle$ the abelian 
group that is the direct sum of copies of $A$ indexed by $S$, i.e., the elements of 
$A\langle S\rangle$ are the finite formal sums $\sum_{i=1}^Na_i[s_i]$ for all integer 
$N\ge 0$, $a_i\in A$, $s_i\in S$, with addition defined in the obvious way. 

For a group $G$ and a $G$-set $S$, denote by $S^G$ the subset of $S$ fixed by $G$. For 
a unital associative ring $A$ endowed with a $G$-action (by ring automorphisms), we consider 
$A\langle G\rangle$ as a unital associative ring, so $A\langle S\rangle$ becomes a left 
$A\langle G\rangle$-module with the (diagonal) left $G$-action both on $A$ and $S$: $g(a[s])=a^g[gs]$ 
for all $g\in G$, $a\in A$, $s\in S$, where we write $a^g$ for the result of applying of $g$ to $a$. 

If $S$ is a pointed $G$-orbit, i.e. $S=G/H$ for a subgroup $H\subseteq G$, we may also consider 
$A\langle S\rangle$ as an $A\langle G\rangle$-$A^H\langle N_G(H)/H\rangle$-bimodule: 
$(a[g])(b[g'\pmod H])(c[h']):=ab^gc^{gg'}[gg'h']$ for all $g,g'\in G$, $h'\in N_G(H)/H$, 
$a,b\in A$ and $c\in A^H$. 

The left $A\langle G\rangle$-modules are also called $A$-{\sl semilinear representations} of $G$. 

For each set $S$ and an integer $s\ge 0$, denote by $\binom{S}{s}$ the set of all subsets 
of $S$ of cardinality $s$. Denote by $\Sy_S$ the group of all permutations of the set $S$. 

\subsubsection{Notation and terminology for fields} \label{NotaF} 
For a field $L$ and a set $S$, $L(S)$ denotes the purely transcendental field extension 
of $L$ with $S$ as a transcendence base (the fraction field of the symmetric $L$-algebra 
of $L\langle S\rangle$). If a group acts on $L$ and on $S$ then it acts on $L(S)$ as well. 

For a subfield $K$ of $L$, $\Aut(L|K)$ and $\mathrm{tr.deg}(L|K)$ denote the group of 
field automorphisms of $L$ identical on $K$ and the transcendence degree, respectively. 

For a group $G$, a ({\sl non-trivial}) $G$-{\sl field} is a field endowed with a (non-trivial) $G$-action. 

The labels $\mathrm{a,b,c,d}$ and the $\Sy_{\Psi}$-fields $K_?$ for a label $?\in\{\mathrm{a,b,c,d}\}$ 
are defined in Notation~\ref{the_fields_K?}. 

For each field $k$ and each collection $\{A_i\}_{i\in S}$ of objects in the category of unital associative 
commutative $k$-algebras, indexed by a set $S$, denote by $\bigotimes_{k,~i\in S}A_i$ the coproduct of this 
collection. 

For each regular field extension $F|k$ and each set $S$, denote by $F_S=F_{k,S}$ the fraction 
field of the (integral) $k$-algebra $\bigotimes_{k,~i\in S}F$. In particular, $F_S=k$ if 
either $F=k$ or $S=\varnothing$; $F(x)_S$ (here $F(x)$ is the field of rational functions 
over $F$ in a variable $x$) is nothing but the field $F_S(S)$ of rational 
functions over $F_S$ in the variables enumerated by the set $S$. 

Any permutation group of $S$ acts smoothly on $F_S$ by permuting the tensor factors of $\bigotimes_{k,~i\in S}F$. 

For each field extension $L|K$ and integer $s\ge 0$, $\Omega^s_{L|K}:=\bigwedge^s_L\Omega^1_{L|K}$ denotes 
the $L$-vector space of differential $s$-forms on $L$ over $K$; $L|K$ is {\sl non-trivial} if $L\neq K$. 

\subsubsection{Notation for categories of smooth semilinear representations and their Picard groups} 
For an associative ring $A$ and a permutation group $G$ (cf. Definition~\ref{permutation_group}) acting 
on $A$, denote by $\Sm_A(G)$ the {\sl category of smooth left $A$-semilinear representations of} $G$. 
E.g., $\Sm_A(1)$ is the category of left $A$-modules. 

For an object $X$ of a Grothendieck category, $E(X)$ denotes an injective hull of $X$. 
When dealing with common objects of several embedded categories, one has to modify the notation $E(-)$. 
In particular, an injective hull in the category $\Sm_{\Kc}(\Sy_{\Psi})$ is denoted $E_{\mathrm{c}}(-)$. 

\begin{remark} If $A$ is a division ring and there exists a non-zero object of $\Sm_A(G)$ then the $G$-action 
on $A$ is smooth. [Indeed, for any $A$-semilinear representation $V$ of $G$, any non-torsion $v\in V$, any 
non-zero $a\in A$, one has $\St_{av}\cap\St_v=\St_a\cap\St_v$. If $V$ is smooth then the group on the left 
hand side is open, so the stabilizer $\St_a$ is open, i.e. the $G$-action on $A$ is smooth.] \end{remark} 

If $K$ is a smooth $G$-field then $\Sm_K(G)$ is a tensor $k$-linear Grothendieck category, where 
$k:=K^G$. The smooth finite-dimensional $K$-semilinear representations of $G$ form a rigid tannakian 
category ($K$ is the unit, $\End_{K\langle G\rangle}(K)=k$; the duals are $V^{\vee}:=\Hom_K(V,K)$). 

Denote by $\mathrm{Pic}_K(G)$ the group of isomorphism classes of invertible (i.e. one-dimensional 
over $K$) objects of $\Sm_K(G)$, called the {\sl Picard group} of $\Sm_K(G)$, and set 
$\mathrm{Pic}_K:=\mathrm{Pic}_K(\Sy_{\Psi})$. 

\subsection{Results} We are looking for an analogue of Hilbert's Theorem 90 
(cf. Proposition \ref{Satz1}) for the symmetric group $\Sy_{\Psi}$ of an infinite set $\Psi$, i.e. 
we study the category $\Sm_K(\Sy_{\Psi})$ of smooth $K$-semilinear representations of $\Sy_{\Psi}$, 
where $K$ is a non-trivial smooth $\Sy_{\Psi}$-field. 

By \cite[Theorem 3.18]{H90}, the category $\Sm_K(\Sy_{\Psi})$ is locally noetherian. 

\subsubsection{General results} \label{General_results} \begin{itemize} 
\item The finitely generated objects in $\Sm_K(\Sy_{\Psi})$ {\sl split `locally'} (i.e. under restriction 
to sufficiently small open subgroups of $\Sy_{\Psi}$) into finite direct sums of $K$-semilinear 
representations induced by trivial representations of open subgroups (Proposition~\ref{local-structure}), 
which is somewhat similar to the description of the `local' structure of smooth representations of 
$\mathrm{GL}_n$ over local division rings in \cite{Howe} and \cite{HenniartVigneras}. 

This phenomenon allows to describe the Grothendieck $K_0$-ring of the full subcategory of 
compact objects in $\Sm_K(\Sy_{\Psi})$ for some $K$ (Theorem \ref{lambda-homomorphism}) 
generalizing \cite[Theorem 4.18]{NagpalSnowden}. 

It turns out that the morphisms of the latter category {\sl are `locally' split} (Theorem~\ref{Local-injectivity}). 
This is related to a canonical `{\sl level}' filtration on the objects, introduced in \S\ref{level_filtration}. 
\item It follows from the noetherian property of $\Sm_K(\Sy_{\Psi})$ that any $\Sy_{\Psi}$-field $K$ admits 
a {\sl `weak period' extension}, i.e. of a smooth $\Sy_{\Psi}$-field extension $\wK|K$ such that $\wK$ 
is a cogenerator of $\Sm_{\wK}(\Sy_{\Psi})$. Moreover, Proposition~\ref{weak_period} provides a functorial 
construction of $\wK$ for any {\sl non-trivial} $\Sy_{\Psi}$-field $K$. However, $\wK$ constructed there is 
by no means optimal, in particular, $\wK^{\Sy_{\Psi}}\neq K^{\Sy_{\Psi}}$. 

If $K$ is trivial then any `weak period' extension of $K$ admits an $\Sy_{\Psi}$-subfield isomorphic to $K(\Psi)$ 
(Example~\ref{minimal_period}) that is a `period' extension of $K$, i.e. $K(\Psi)^{\Sy_{\Psi}}=K$. \end{itemize} 

\begin{definition} The {\sl Gabriel spectrum} of a Grothendieck category $\mathcal C$ is the topological 
space whose points are isomorphism classes of indecomposable injectives. For each object $X$ of 
$\mathcal C$ denote by $[X]$ the set of points $E$ with $\Hom(X,E)=0$. Then base of opens consists of 
sets of the form $[X]$ as $X$ ranges over the compact (finitely presentable) objects. \end{definition}

\begin{itemize} \item Let $K$ be a field endowed with a non-trivial smooth $\Sy_{\Psi}$-action, 
$\mathrm{Spec}_K$ be the Gabriel spectrum of the category $\Sm_K(\Sy_{\Psi})$. 
It is explained in Remark~\ref{points_P} that, for all integer $s\ge 0$, the injective hulls of the objects 
$K\langle\binom{\Psi}{s}\rangle$ of $\Sm_K(\Sy_{\Psi})$ represent pairwise 
distinct points $P_s=P_s^{(K)}$ of $\mathrm{Spec}_K$. Any set containing $P_s$ for infinitely many $s\ge 0$ 
is dense in $\mathrm{Spec}_K$ (Lemma~\ref{closed-pts_bd} (\ref{density_of_P_n})). 
The closure of $P_s$ in the set $\{P_t~|~t\ge 0\}$ is $\{P_t~|~0\le t\le s\}$ if $s\ge s_0(K)$, where 
$[K^{\Sy_{\Psi|J}}:K^{\Sy_{\Psi}}]=\infty$ for any $J$ of order $\ge s_0(K)$. 
\end{itemize} 

\subsubsection{The fields $F_{\Psi}$ and their $\Sy_{\Psi}$-invariant subfields; the fields $K_?$} 
\label{subfields} Let $\Psi$ be an infinite set, and $F|k$ be a regular field extension. The most 
straightforward example of a smooth $\Sy_{\Psi}$-field is the fraction field $F_{\Psi}$ 
of the `$\Psi$-th' tensor power of $F$ over $k$ (defined in \S\ref{NotaF}).

One can get more examples of smooth 
$\Sy_{\Psi}$-fields as $\Sy_{\Psi}$-invariant subfields of $F_{\Psi}$ or as its extensions. 

For any group $H$ of automorphisms of $F|k$, its diagonal action on $F_{\Psi}$ commutes with the 
$\Sy_{\Psi}$-action, so for any field extension $L|k$ in $F$, the subfield 
$L_{\Psi}^H$ of $F_{\Psi}$ (fixed by $H$ in $L_{\Psi}$) is $\Sy_{\Psi}$-invariant. 

This construction can be generalized by considering (pro-){\sl algebraic} $k$-groups $H$ and understanding 
the fixed subfields $L_{\Psi}^H$ accordingly.

In the case of characteristic zero extensions $F|k$ of transcendence degree 1, all possible 
$\Sy_{\Psi}$-invariant field extensions of $k$ in $F_{\Psi}$ are precisely of type $L_{\Psi}^H$ (this is 
\cite[Theorem 3.4, Propositions 3.6 and 3.8]{Lueroth}). 
Here is an {\sl explicit part} of their complete list: 
\begin{theorem} \label{invar-subf} Let $F|k$ be 
a transcendence degree $1$ regular field extension of characteristic $0$, 
$K\neq k$ be an $\Sy_{\Psi}$-invariant field extension of $k$ in $F_{\Psi}$. 

Then the transcendence degree $d$ of $F_{\Psi}$ over $K$ is $\le 3$. 

For each $x\in\Psi$ and $f\in F$, denote by $f(x)$ the image of $f$ under the field embedding 
$F\hookrightarrow F_{\Psi}$ identifying $F$ with the $x$-th tensor factor in $\bigotimes_{k,\Psi}F$. 

If $d=3$ then there exists a unique $R\in\mathrm{PGL}_2(k)\backslash(F\smallsetminus k)$ 
such that \[K=k\left(\frac{(R(w)-R(x))(R(y)-R(z))}{(R(w)-R(y))(R(x)-R(z))}~|~w,x,y,z\in\Psi\right)
\cong k(\Psi)^{\mathrm{PGL}_{2,k}}.\] 

If $d=2$ then there exists a unique $R\in\mathbb P_k(F/k)$ 
such that \[K=k\left(\frac{R(u)-R(w)}{R(u)-R(v)}~|~u,v,w\in\Psi\right)\cong 
k(\Psi)^{\mathbb G_{\mathrm{m},k}\ltimes\mathbb G_{\mathrm{a},k}}.\] 

If $d=1$ and $K$ is {\bf algebraically closed} in $F_{\Psi}$ then there is a system 
$(\pi_{ij}\colon W_i\to W_j)_{ij}$ of isogenies between torsors $W_i$ over geometrically irreducible 
one-dimensional algebraic $k$-groups $E_i$ endowed with a compatible system of $k$-field 
embeddings $\sigma_i:k(W_i)\hookrightarrow F\ ($i.e. $\sigma_i\pi_{ij}^*=\sigma_j$ for all $i,j)$ 
such that $K=\bigcup_i(\overline{k}(W_i)_{\overline{k},\Psi}^{E_i(\overline{k})})^{\mathrm{Gal}(\overline{k}|k)}
\subset\bigcup_ik(W_i)_{\Psi}\subseteq F_{\Psi}$, where $E_i(\overline{k})$ acts on 
$\overline{k}(W_i)_{\Psi}$ diagonally.\qed \end{theorem}

\begin{remark} \S\ref{reps-over-fixed-subfields} provides examples of proper invariant subfields $K$ of 
$F_{\Psi}$ which are not algebraically closed in $F_{\Psi}$, and a description of $\Sm_K(\Sy_{\Psi})$ 
for such $K$'s. \end{remark} 
\begin{proposition}[\cite{Lueroth}, Proposition 3.6] \label{sub-algebraic} Let $K$ be an 
$\Sy_{\Psi}$-invariant field extension of $k$ in $F_{\Psi}$ over which $F_{\Psi}$ is algebraic. Then there 
is an intermediate subfield $F'$ in $F|k$ over which $F$ is algebraic such that $F'_{\Psi}\subseteq K$. 

If the characteristic of $k$ is $0$ then $K=L_{\Psi}^{\Gamma}$, 
where $L$ is an intermediate subfield in $F|k$ and $\Gamma$ is a profinite algebraic 
$k$-group of automorphisms of $L|k$ acting diagonally on $L_{\Psi}$. More explicitly, 
$K=\left((L\otimes_k\overline{k})_{\Psi}^{\Gamma(\overline{k})}\right)^{\mathrm{Gal}(\overline{k}|k)}$, 
where $\overline{k}$ is an algebraic closure of $k$. \qed \end{proposition} 

In arbitrary characteristic, if $F|k$ is a transcendence degree 1 regular field extension and 
$K\neq k$ is an $\Sy_{\Psi}$-invariant field extension of $k$ in $F_{\Psi}$ then 
$d:=\mathrm{tr.deg}(F_{\Psi}|K)$ is finite. 
Some of such $K$ can be constructed as $L_{\Psi}^H$ for (closed) subgroups $H$ of automorphisms of 
intermediate subfields $L$ in $F|k$ (e.g. $H$ is a subgroup of $\mathrm{PGL}_{2,k}$ if $L\cong k(X)$). 
Moreover, Theorem~\ref{invar-subf} shows that (i) $d\le 3$, and 
(ii) in characteristic 0, `essentially', each $K$ algebraically closed in $F_{\Psi}$ is obtained by 
the above construction (and its isomorphism class depends only on $d$ if $d\neq 1$, while the case 
$d=1$ admits more options).

If $F=L(X)$ for a field extension $L|k$ and $H$ is a $k$-subgroup of $\mathrm{PGL}_{2,k}$, 
the $\mathrm{PGL}_{2,k}$-action on $F_{\Psi}$ is understood as an $L_{\Psi}$-algebra homomorphism 
from $F_{\Psi}$ to a localization of $F_{\Psi}\otimes_k\mathcal O(H)$: \begin{gather} 
\label{Borel-action} \mbox{in the case of }H=\mathbb G_{\mathrm{a},k}\rtimes\mathbb G_{\mathrm{m},k}:\ 
u\mapsto u\otimes A+1\otimes B\in k(\Psi)\otimes_kk[A,A^{-1},B]\mbox{ for all }u\in\Psi. \end{gather} 

One may wonder, whether there are other $\Sy_{\Psi}$-invariant subfields $K$ of $F_{\Psi}$. 

In the case of the $\Sy_{\Psi}$-field $F_{\Psi}$, for any regular field extension $F|k$, 
one may expect that 
\begin{conjecture} \label{level-l-fields} 
Any $\Sy_{\Psi}$-invariant intermediate field $K$ in $F_{\Psi}|k$ is contained, in fact, in $L_{\Psi}$ 
for a field extension $L|k$ in $F$ such that for any $L'|k$ in $L$ with $\mathrm{tr.deg}(L'|k)<\infty$ 
there is a field extension $L''|L'$ in $L$ with $\mathrm{tr.deg}(L''_{\Psi}|K\cap L''_{\Psi})<\infty;$ 
moreover, the algebraic closure of $L$ in $F$ is determined uniquely. \end{conjecture}

This conjecture is proved in \cite[Theorem 2.4]{Lueroth} when $k$ is of characteristic $0$.

In arbitrary characteristic, if $F|k$ is a transcendence degree 1 regular field extension and 
$K\neq k$ is an $\Sy_{\Psi}$-invariant field extension of $k$ in $F_{\Psi}$ then 
$d:=\mathrm{tr.deg}(F_{\Psi}|K)$ is finite. 
Some of such $K$ can be constructed as $L_{\Psi}^H$ for (closed) subgroups $H$ of automorphisms of 
intermediate subfields $L$ in $F|k$ (e.g. $H$ is a subgroup of $\mathrm{PGL}_{2,k}$ if $L\cong k(X)$). 
Moreover, Theorem~\ref{invar-subf} shows that in characteristic 0, (i) $d\le 3$, 
and (ii) `essentially', each $K$ algebraically closed in $F_{\Psi}$ is obtained by the above construction 
(and its isomorphism class depends only on $d$ if $d\neq 1$, while the case $d=1$ admits more options). 
In the case of $K$ algebraically non-closed in $F_{\Psi}$ with $d=0$, there exist smooth irreducible 
semilinear representations of $\Sy_{\Psi}$ of finite dimensions $>1$ (namely, $2,\ 3,\ 4,\ 5$, and $q$ if 
$k$ contains $\mathbb F_q$), cf. Examples~\ref{irred_2-dim_ex}, \ref{irred_A_4_ex}, \ref{irred_A_5_ex}. 
In particular, though in Theorem~\ref{spectrum-triple} all points of a given level form a 
$\mathrm{Pic}_{K_?}$-orbit, this is not the case if $K_?$ is replaced by an arbitrary $K$, even in level 0. 

It is shown in Proposition~\ref{sub-algebraic} that if $F|k$ is a regular field extension, and $K|k$ 
is an $\Sy_{\Psi}$-invariant field extension in $F_{\Psi}$ over which $F_{\Psi}$ is algebraic, then 
$F'_{\Psi}\subseteq K$ for an intermediate subfield $F'$ in $F|k$ over which $F$ is algebraic.

\begin{notation}[The fields $K_?$] \label{the_fields_K?} In what follows, we deal with fields denoted 
$K_?$ for the label $?\in\{\mathrm{a,b,c,d}\}$ depending on $\Psi$ and $F|k$ as above, while $\Ka$ depends 
moreover on a finite set $S$ and on a subgroup $\Gamma$ of the free abelian group $\Xi$ with basis $S$. 
We assume that $S$ is non-empty if $F=k$. 

There are inclusions $F_{\Psi}\subset\Kd\subset\Kc\subset\Kb\subset F(X)_{\Psi}=F_{\Psi}(\Psi)$ 
and $\Ka\subseteq F_{\Psi}(\Psi\times S)=F(S)_{\Psi}$. 

Here $F_{\Psi}(\Psi\times S)$ is the purely transcendental $\Sy_{\Psi}$-field extension of 
$F_{\Psi}$ with a transcendence basis consisting of the variables labeled by the $\Sy_{\Psi}$-set 
$\Psi\times S$, where $u_s$ denotes the variable corresponding to $(u,s)\in\Psi\times S$. 
For all $u\in\Psi$ and $\gamma=\sum_sm_s[j_s]\in\Xi$, set $u^{\gamma}:=\prod_su_{j_s}^{m_s}$. 

For each label $?\in\{\mathrm{a,b,c,d}\}$, we define an $\Sy_{\Psi}$-field extension $K_?$ of $F_{\Psi}$: 
$\Ka\subseteq F_{\Psi}(\Psi\times S)$ and $K_?\subseteq F_{\Psi}(\Psi)$ for $?\in\{\mathrm{b,c,d}\}$. 
Thus, they all depend on $\Psi$ and $F|k$, and moreover, $\Ka$ depends also on $S$ and a subgroup 
$\Gamma\subseteq\Xi:=\mathbb Z\langle S\rangle$. 
The field $\Ka$ is the fixed subfield of the algebraic group $\Hom(\Xi/\Gamma,\mathbb G_{\mathrm{m},k})$ 
acting on the field extension $F_{\Psi}(\Psi\times S)|F_{\Psi}$, while the fields $K_?$ for 
$?\in\{\mathrm{b,c,d}\}$ are the fixed subfields of, respectively, the algebraic subgroups 
$\mathbb G_{\mathrm{a},k}$, $\mathbb G_{\mathrm{a},k}\rtimes\mathbb G_{\mathrm{m},k}$, $\mathrm{PGL}_{2,k}$ 
of $\mathrm{PGL}_{2,k}$ acting on the field extension $F_{\Psi}(\Psi)|F_{\Psi}$. 

More precisely and explicitly, \[\Ka=K_{\Psi,S,\Gamma}^{(F|k)}:=
F_{\Psi}\left(u^{\gamma},\frac{u_s}{v_s}~|~\gamma\in\Gamma,~s\in S,~u,v\in\Psi\right) 
=F_{\Psi}\left(x^{\gamma},\frac{u_s}{x_s}~|~\gamma\in\Gamma,~s\in S,~u\in\Psi\right),\] 
(so $K_{\Psi,S,\Xi}^{(F|k)}=F(S)_{\Psi};$ $K_{\Psi,\{\ast\},0}^{(F|k)}=F_{\Psi}(u/v~|~u,v\in\Psi);$ 
$K_{\Psi,S\sqcup S',\Gamma\oplus\mathbb Z\langle S'\rangle}^{(F|k)}=K_{\Psi,S,\Gamma}^{(F(S')|k)});$ 

\[\Kb:=F_{\Psi}(u-v~|~u,v\in\Psi)\subset F_{\Psi}(\Psi);\quad
\Kc:=F_{\Psi}\left(\frac{u-v}{u-w}~|~u,v,w\in\Psi,~\#\{u,v,w\}=3\right)\subset\Kb;\] 

\[\Kd:=F_{\Psi}\left(\frac{(t-u)(v-w)}{(u-v)(w-t)}~|~t,u,v,w\in\Psi,~\#\{t,u,v,w\}=4\right)\subset\Kc
\quad\mbox{is the ``cross-ratio'' field}.\] \end{notation} 

\subsubsection{Smooth $\Sy_{\Psi}$-field extensions of $F_{\Psi}$} 
Given a smooth $G$-field, it is natural to study not just its $G$-invariant subfields but its smooth $G$-field 
extensions as well, and to compare semilinear representations of $G$ over such $G$-fields. 

In the setting of Notation~\ref{the_fields_K?}, any period $\Sy_{\Psi}$-extension of $\Ka$ 
contains a unique $\Sy_{\Psi}$-extension of $\Ka$ isomorphic to $F_{\Psi}(\Psi\times S)$, i.e. 
$F_{\Psi}(\Psi\times S)$ is the smallest period $\Sy_{\Psi}$-extension of $\Ka$. 
Similarly, $F_{\Psi}(\Psi)$ is the smallest period $\Sy_{\Psi}$-extension of $\Kd$ (as well as of $\Kb$ 
and of $\Kc$).

Concerning the smooth $\Sy_{\Psi}$-field extensions $L|K$ with $L^{\Sy_{\Psi}}=K^{\Sy_{\Psi}}$, 
Example~\ref{finite_smooth_G-ext} lists some of $K$ with no non-trivial {\sl finite} $L|K$, while 
\cite[Proposition~5.8]{Lueroth} shows that there are no 
non-trivial {\sl `isotrivial' finitely generated} $L|K$ with $K=F_{\Psi}$. 
In \cite[\S5.1]{Lueroth} some conditions on $L|K$ forcing the (iso)triviality are listed.

\subsubsection{$\Sm_K(\Sy_{\Psi})$ for certain subfields $K\subseteq F_{\Psi}$} \label{actions} 

Keeping notation and having a supply of $\Sy_{\Psi}$-fields $K=F_{\Psi}^H$ from \S\ref{subfields} 
(Notation~\ref{the_fields_K?}), one may ask for a description of the categories $\Sm_K(\Sy_{\Psi})$.

\begin{theorem}[Picard groups] \label{Pic_K?} 
Fix distinct elements $x,y\in\Psi$. Then 
\begin{enumerate} \item the invertible objects $x^{\lambda}\Ka\subseteq F_{\Psi}(\Psi\times S)$ 
of $\Sm_{\Ka}(\Sy_{\Psi})$ are injective for all $\lambda\in\Xi;$ 
\item $\mathrm{Pic}_{\Ka}\cong\Xi/\Gamma$ and its elements $P_{0,\overline{\lambda}}$ for all 
$\overline{\lambda}\in\Xi/\Gamma$ are presented by $x^{\lambda}\Ka$ for all $\lambda\in\Xi;$ 
\item $\mathrm{Pic}_{\Kb}=\mathrm{Pic}_{\Kd}=0;$ and 
\item $\mathrm{Pic}_{\Kc}\cong\mathbb Z$ is generated by the class of $\Omega^1_{\Kc|\Kd}\cong
(x-y)\Kc\subset\Kb$. \end{enumerate} \end{theorem}

To certain extent, this can be interpreted as follows: {\it for an algebraic subgroup $H$ of 
$\mathrm{PGL}_{2,k}$, the group $\mathrm{Pic}_{F_{\Psi}^H}$ coincides with the group of characters of} $H$.

\begin{theorem}[Spectra I: essentially, only in the $\mathrm{a,b,c}$ cases] \label{spectrum-triple-rough} 
Let $?\in\{\mathrm{a,b,c}\}$. Then 
\begin{itemize} \item the points of $\mathrm{Spec}_{K_?}$ are of finite level$;$ any point of 
$\mathrm{Spec}_{K_?}$ of level $n\ge 0$ is contained in the $\mathrm{Pic}_{K_?}$-orbit of the 
class $P_n^{(K_?)}$ of an injective hull of the object $K_?\langle\binom{\Psi}{n}\rangle$ of 
$\Sm_{K_?}(\Sy_{\Psi});$ 
\item the $\mathrm{Pic}_{K_?}$-orbit of $P_n^{(K_?)}$ consists of 
the $($single$)$ class of $K_?\langle\binom{\Psi}{n}\rangle$ if $n>2$; 
\item the objects $\Kd\langle\{\{1,2,3\}\hookrightarrow\Psi\}\rangle$ and $\Kd\langle\binom{\Psi}{s}\rangle$ 
of $\Sm_{\Kd}(\Sy_{\Psi})$ for $s\ge 4$ are injective. \end{itemize} \end{theorem} 
In particular, for each $?\in\{\mathrm{a,b,c,d}\}$, any infinite collection of objects 
of type $K_?\langle\binom{\Psi}{N}\rangle$ for $N\ge 4$ forms a system of {\bf injective} generators 
of the category $\Sm_{K_?}(\Sy_{\Psi})$. 

It turns out that, for many $F|k$ and $H$, the functor from the category of algebraic representation 
of $H$ to $\Sm_{F_{\Psi}^H}(\Sy_{\Psi})$, given by $V\mapsto\Hom_{k\langle H\rangle}(V,F_{\Psi})$ 
(again, understood properly), induces a bijection between isomorphism classes of 
simple finite-dimensional objects of the source and of the target. 
\begin{theorem}[Simple objects, possibly with omissions when $F=k$ in the $\mathrm{d}$ case] 
\label{simple_objects_are_invertible_Pic} 
\begin{enumerate} \item The\\ simple objects of $\Sm_{K_?}(\Sy_{\Psi})$ are invertible for each 
$?\in\{\mathrm{a,b,c}\}$, but not for $?=\mathrm{d}$. 
\item \label{BW} {\rm (Theorem~\ref{non-triv-Simple-Kd} (\ref{finite-Simple-Kd}))} 
There is a natural bijection between the {\sl finite-dimensional} simple objects of 
$\Sm_{\Kd}(\Sy_{\Psi})$ and the irreducible rational representations of $\mathrm{PGL}_{2,k}$. 
\item {\rm (Theorem~\ref{non-triv-Simple-Kd} (\ref{infinite-Simple-Kd}))} The rational non-negative 
characters of $\mathbb G_m$ give rise to an infinite list of {\sl infinite-dimensional} simple objects of 
$\Sm_{\Kd}(\Sy_{\Psi})$, which is complete, at least if $F\neq k$. \end{enumerate} \end{theorem}

Theorem~\ref{simple_objects_are_invertible_Pic} (\ref{BW}) is an analogue of Borel--Weil theorem for 
$\mathbb P^1$. Namely, the valuations of $\Kc$ trivial on $\Kd$ can be considered as points of the 
projective line $\mathbb Y$ over $\Kd$ endowed with a natural $\Sy_{\Psi}$-action, while any 
finite-dimensional simple object is isomorphic to the socle of the module of global sections of 
some plurianticanonical sheaf on $\mathbb Y$. These objects are self-dual. Their dimensions are 
arbitrary odd positive integers if the characteristic $p$ of $k$ is 0, and are arbitrary products of 
positive integers $\le p$ if $p>0$ (which is a version of the Steinberg's tensor product theorem for 
$\mathrm{PGL}_2$).

\begin{theorem}[Spectra II] \label{spectrum-triple} 
Let $p\ge 0$ be the characteristic of $k$. Fix an element $x\in\Psi$. 
\begin{enumerate} \item The objects of $\Sm_{\Ka}(\Sy_{\Psi})$ of finite length are semisimple. 
For any integer $N\ge 1$, the indecomposable objects of $\Sm_{\Kb}(\Sy_{\Psi})$ of 
length $N$ are isomorphic, if $p=0$. 
\item The injection $E\colon\mathrm{Pic}_K\to\mathrm{Cl}_K$, $\mathcal L\mapsto E(\mathcal L)$, to 
the set of closed points of $\mathrm{Spec}_K$, commutes with the $\mathrm{Pic}_K$-action, 
it is bijective for $K=\Kb$ and for $K=\Ka$ if $F\neq k$ or $\Gamma\neq 0;$ 
\begin{itemize} \item $\mathrm{Cl}_{\Ka}\smallsetminus\mathrm{Pic}_{\Ka}=\{P_1^{(\Ka)}\}$ if 
$\Xi\neq\Gamma=0$ and $F=k$ $($in this case any infinite subset of $\mathrm{Cl}_{\Ka}$ is dense$)$, 
\item \label{unip} $\mathrm{Cl}_{\Kb}$ consists of the class $P_0$ of the $($non-noetherian, artinian 
if and only if $p=0)$ object $E(\Kb)=\Kb[x]\subset F_{\Psi}(\Psi)$, 
\item \label{Kc_P01} $\mathrm{Cl}_{\Kc}=\{P_{i,n}~|~n\in\mathbb Z,\ i\in\{0,1\}\}$ is the union of the free 
$\mathrm{Pic}_{\Kc}$-orbits 
of $P_0$ and of $P_1$, where $P_0$ is the class of $E(\Kc)=\Kc\left[\frac{x}{x-y}\right]\subset F_{\Psi}(\Psi)$ 
and $P_1$ is the class of $\Kc\langle\Psi\rangle$. \end{itemize} 

\item For each $?\in\{\mathrm{a,b,c}\}$, the non-closed points of $\mathrm{Spec}_{K_?}$ 
are represented by noetherian objects. The non-closed points are $P_2^{(K_?)},P_3^{(K_?)},\dots$ and 
\begin{itemize} \item $P_1^{(K_?)}$ if either $?=\mathrm{c}$ or 
$?=\mathrm{b}$ and $F\neq k$ or $?=\mathrm{a}$ and $\Gamma\neq 0$, 
\item the class $P_2'$ of $(x-y)\cdot\Kc\langle\binom{\Psi}{2}\rangle\subset\Kb\langle\binom{\Psi}{2}\rangle$ 
if $?=\mathrm{c}$, $F=k$ and $p\neq 2$. \end{itemize} 

If, for an integer $s\ge 1$, $K_?\langle\binom{\Psi}{s}\rangle$ is not injective then 
$?\in\{\mathrm{b,c}\}$, $p=2$, $s=2$, $F=k$ and 
$E(K_?\langle\binom{\Psi}{2}\rangle)\cong K_?\langle\Psi^2\smallsetminus\Delta_{\Psi}\rangle$, 
where $\Delta_{\Psi}$ is the diagonal in $\Psi^2=\Psi\times\Psi$. 
\item For $?\in\{\mathrm{a,b}\}$ and any $s>0$, the closure of $P_s^{(K_?)}$ is the set 
$\mathrm{Cl}_{K_?}\cup\{P_1^{(K_?)},P_2^{(K_?)},\dots,P_s^{(K_?)}\}$. 

For $?=\mathrm{c}$, the closure of $P_s^{(\Kc)}$ $($for $s\ge 2)$ is \begin{itemize} \item 
$\mathrm{Cl}_{\Kc}\cup\{P_{1,n}~|~n\in\mathbb Z\}\cup\{P_2^{(\Kc)},\dots,P_s^{(\Kc)}\}$ if $s>2$ and $p=2$, 
\item $\mathrm{Cl}_{\Kc}\cup\{P_{1,n}~|~n\in\mathbb Z\}\cup\{P_2',P_2^{(\Kc)},\dots,P_s^{(\Kc)}\}$ 
if $s>2$ and $p\neq 2$, 
\item $\mathrm{Cl}_{\Kc}\cup\{P_{1,n}~|~(-1)^n=1\mbox{ {\rm in} }k\}\cup\{P_2^{(\Kc)}\}$ if $s=2;$ 
\end{itemize} 
the closure of $P_{1,n}$ is $\mathrm{Cl}_{\Kc}\cup\{P_{1,n}\}$. \end{enumerate} \end{theorem}

In terminology of \cite[\S4.20, p.295]{Popescu}, the categories $\Sm_{\Ka}(\Sy_{\Psi})$ for $\Gamma=\Xi$ 
and $\Sm_{\Kb}(\Sy_{\Psi})$ are {\sl local}, while $\Sm_{\Ka}(\Sy_{\Psi})$ for $\Gamma\neq\Xi$ and 
$\Sm_{K_?}(\Sy_{\Psi})$ for $?\in\{\mathrm{c},\mathrm{d}\}$ are not. 

\begin{example} \label{forms-on-F_Psi} It is shown in \cite[Theorem 6.1]{Lueroth} that, for an $F|k$ 
with $F\neq k$ as above, the points of the Gabriel spectrum of $\Sm_{F_{\Psi}}(\Sy_{\Psi})$ are 
$F_{\Psi}\langle\binom{\Psi}{s}\rangle$, while $F_{\Psi}$ is a cogenerator of $\Sm_{F_{\Psi}}(\Sy_{\Psi})$. 

For any intermediate field $L$ in $F|k$, the natural map $F_{\Psi}\langle\Psi\rangle\otimes_F\Omega^1_{F|L}
\xrightarrow{f[u]\otimes\omega\mapsto f\omega(u)}\Omega^1_{F_{\Psi}|L_{\Psi}}$, is bijective. 
In particular, $\Omega^1_{F_{\Psi}|L_{\Psi}}\cong F_{\Psi}\langle\Psi\rangle$ if $\mathrm{tr.deg}(F|L)=1$ and 
$F|L$ is separable. Therefore, $F_{\Psi}\langle\binom{\Psi}{s}\rangle\cong\Omega^s_{F_{\Psi}|L_{\Psi}}$ (the 
space of differential $s$-forms on $F_{\Psi}(\Psi)$ over $F_{\Psi}$) for all integer $s\ge 0$. \end{example}

Theorems~\ref{Pic_K?}--\ref{spectrum-triple} 
provide several examples of fields $K$ and a non-precompact group $G$ of their automorphisms such that 
the Gabriel spectrum (and in particular the simple objects, cf. Lemma~\ref{rmks_on_Gabriel_topology}) 
of $\Sm_K(G)$ admits an explicit description. In all examples $G=\Sy_{\Psi}$, though the description depends 
crucially on the ``type'' (but not that essentially on the characteristic!) of the field $K$. 

\begin{remarks} \label{various_remarks} \begin{enumerate} \item Let $k$ be a trivial $\Sy_{\Psi}$-field. 
Then it is well-known that $k$ is an injective object of $\Sm_k(\Sy_{\Psi})$ if and only if 
the characteristic of $k$ is $0$, cf. \cite[Theorem~A.17]{H90} and Proposition~\ref{no-incr-morphisms}.
\item Proposition~\ref{Satz1} below implies that the pair $(G,K)$ cannot be reconstructed from the category 
$\Sm_K(G)$. However, for a given set $\Psi$, one can reconstruct the field $\Ka$ with torsion free 
$\mathrm{Pic}_{\Ka}$ from the category $\Sm_{\Ka}(\Sy_{\Psi})$ as follows. The field $k$ is the endomorphism 
ring of any closed point of $\mathrm{Spec}_{\Ka}$. Let $S$ be a basis of $\mathrm{Pic}_K$. If there is 
a (unique) point $I_1$ of $\mathrm{Spec}_{\Ka}$ whose closure is $\mathrm{Pic}_K\sqcup\{I_1\}$ (i.e. either 
$F\neq k$ or $\Gamma\neq 0$) then $L:=\End_{\Sm_{\Ka}(\Sy_{\Psi})}(I_1)$ is the fraction field of the group 
$F$-algebra of $\Gamma$, and $\Ka\cong K_{\Psi,S,0}^{(L|k)}$. Otherwise, $\Ka\cong K_{\Psi,S,0}^{(k|k)}$. 
\item Let $G$ be a permutation group and $L|K$ be a smooth $G$-field extension. Then \begin{itemize} 
\item any injective object of the category $\Sm_L(G)$ is injective as an object of $\Sm_K(G)$; 
\item \label{irred-contained-irred-semilin} any simple object $W$ of $\Sm_K(G)$ can be embedded into 
a simple object of $\Sm_L(G)$ (namely, into any simple quotient of $L\otimes_KW$). \end{itemize} 

In particular, it follows from Theorem~\ref{simple_objects_are_invertible_Pic} that, for some fixed 
$x\neq y$ in $\Psi$, {\it any smooth irreducible representation of $\Sy_{\Psi}$ over a field $k$ 
can be embedded into the representation $(x-y)^{-n}k\left(\frac{u-v}{u-w}~|~u,v,w\in\Psi\right)
\subset x^{-n}k\left(\frac{u}{v}~|~u,v\in\Psi\right)\subset k(\Psi)$ for some $n\in\mathbb Z$}. 
\item Lemma~\ref{iso-restr} identifies the smooth $\Sy_{\Psi}$-sets with sheaves on a site 
$\mathrm{FI}^{\mathrm{op}}$. Thus when dealing with $\Sm_K(G)$, we will switch sometimes 
the terminology from representations to sheaves. \end{enumerate} \end{remarks}

\section{Permutation groups and smooth representations as sheaves} 
\begin{definition} \label{permutation_group} A {\bf permutation group} is a Hausdorff topological group $G$ 
admitting a base of open subsets consisting of the left and right shifts of subgroups. \end{definition} 
If $B$ is a collection of open subgroups such that the finite intersections of conjugates of elements of $B$ form 
a base of open neighbourhoods of $1$ in $G$ then $G$ acts faithfully on the set $\Psi:=\coprod_{U\in B}G/U$, so 
(i) $G$ becomes a {\sl permutation group of the set} $\Psi$, (ii) the shifts of the pointwise stabilizers $G_T$ 
of the finite subsets $T\subset\Psi$ form a base of the topology of $G$. Clearly, $G$ is totally disconnected. 

As we mainly deal not with a permutation group $G$ itself, but rather with smooth $G$-sets (or representations, 
etc.), we may replace $G$ by the {\sl prodiscrete left cancellative semigroup} $\varprojlim_{U\in B}G/U$. 

It is easy to see that, for any collection of smooth $G$-sets and any collection of finitary relations 
on them, their common stabilizer in $G$ is closed. In particular, if $\Psi$ is such a structure as 
a group, ring, field, module (over a `constant' ring), affine or projective space, etc. then 
its automorphism group is a closed subgroup of the symmetric group $\Sy_{\Psi}$. 

Recall (see, e.g., \cite[Expos\'e IV, \S2.4--2.5]{SGA4 I} or \cite[Section 8.1, Example 8.15 (iii)]{Johnstone}) 
that, for any permutation group $G$, the smooth $G$-sets and their $G$-equivariant maps form a topos. Namely, 
let $\mathfrak{T}$ be the category whose objects are the elements of some base $B$ of open subgroups of $G$ 
and \[\Hom_{\mathfrak{T}}(U,V):=\Hom_{G\text{-sets}}(G/V,G/U)=(G/U)^V=\{h\in G~|~hUh^{-1}\supseteq V\}/U.\] 

Unlike the category of smooth $G$-sets, the category $\mathfrak{T}$ does not admit finite products if $G\neq 1$. 
Indeed, fix any $G\neq U\in B$. The product of two copies of $G/U$ should coincide with each 
$G$-orbit in the cartesian square of $G/U$, which is impossible as there are at least two distinct $G$-orbits. 

As any morphism in $\mathfrak{T}$ is onto, it is natural to consider $\mathfrak{T}$ as a site 
with the maximal topology, i.e., to declare {\sl covering} any non-empty sieve. Then the sheaves 
of sets, groups, etc. on $\mathfrak{T}$ are identified with the smooth $G$-sets, groups, etc.: 
$\mathcal F\mapsto\indlim_{U\in B}\mathcal F(U)$ (this is a smooth $G$-set, since any element 
in it belongs to the image of some $\mathcal F(U)$, 
while the $U$-action on it is trivial) and $W\mapsto(U\mapsto W^U)$. 

A presheaf $\mathcal F$ on $\mathfrak{T}$ is a sheaf if and only if, for any diagram 
$U_1\subseteq U_0\supseteq U_2$ in $B$, the diagram $\mathcal F(U_0)\to\mathcal F(U_1)\times\mathcal F(U_2)
\rightrightarrows\prod_{[g]\in U_1\backslash U_0/U_2}\mathcal F(U_1\cap gU_2g^{-1})$ is an equalizer.

\subsection{Substructures} \label{substructures} 
Let $G\subseteq\Sy_{\Psi}$ be a permutation group of a set $\Psi$. 

For a subset $S\subset\Psi$, (i) we denote by $G_S$ the pointwise stabilizer of 
the set $S$; (ii) we call the fixed set $\Psi^{G_S}$ the $G$-{\sl closure} of $S$. 
We say that a subset $S\subset\Psi$ is $G$-{\sl closed} if $S=\Psi^{G_S}$. 

Any intersection $\bigcap_iS_i$ of $G$-closed sets $S_i$ is $G$-closed: as $G_{S_i}\subseteq G_{\bigcap_jS_j}$, 
one has $G_{S_i}s=s$ for any $s\in\Psi^{G_{\bigcap_jS_j}}$, so $s\in\Psi^{G_{S_i}}=S_i$ for any $i$, and thus, 
$s\in\bigcap_iS_i$. This implies that the subgroup generated by $G_{S_i}$'s is dense in $G_{\bigcap_iS_i}$ 
(and coincides with $G_{\bigcap_iS_i}$ if at least one of $G_{S_i}$'s is open). 

The $G$-closed subsets of $\Psi$ form a small category with the morphisms being all those embeddings that are 
induced by elements of $G$: $\Hom(X,Y):=\{g\in G~|~g(X)\subseteq Y\}/G_X$. 

For a $G$-closed subset $T\subset\Psi$, \label{Aut-notation} 
(hiding $G$ and $\Psi$ from notation) set $\Aut(T):=N_G(G_T)/G_T$. 

\begin{lemma} \label{strong-generation} Let $\Psi$ be either {\rm (i)} an infinite set, or {\rm (ii)} an 
infinite-dimensional vector space over a field, or {\rm (iii)} an algebraically closed field extension of 
a field $k$ of infinite transcendence degree. Let $G$ be the automorphism group of $\Psi$, i.e. either 
$\Sy_{\Psi}$, or $\mathrm{GL}_{}(\Psi)$, or the field automorphism group of $\Psi$ over $k$. For an integer 
$n$, let $T\subset\Psi$ be a subset of order $n$, resp. a subspace of dimension $n$, resp. a subfield of 
transcendence degree $n$ over $k$. Then there exist elements $g_0,\dots,g_n\in G$ such that 
$G=\bigcup_{i=0}^nG_Tg_iG_Tg_i^{-1}$. 

Let, in the cases {\rm (i)} and {\rm (ii)}, $T'\subset\Psi$ be a finite $G$-closed subset. 
In the case {\rm (iii)}, let $T,T'$ be algebraically closed subfields in $\Psi|k$ of finite 
transcendence degree. Then the subgroups $G_T$ and $G_{T'}$ generate the subgroup $G_{T\cap T'}$. 
If $T\subsetneqq T'$ and $V^{G_T}=V^{G_{T'}}$ for a $G$-set $V$ then $V^{G_T}=V^G$. \end{lemma}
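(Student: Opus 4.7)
I will exhibit $n+1$ ``general position'' translates $T_0,T_1,\ldots,T_n$ of $T$: each of the form $g_i(T)$, with $T,T_0,\ldots,T_n$ jointly independent in the sense appropriate to each case (pairwise disjoint in (i), in direct sum in (ii), algebraically independent over $k$ in (iii)). Since $g_iG_Tg_i^{-1}=G_{T_i}$, the set $G_Tg_iG_Tg_i^{-1}$ equals $G_T\cdot G_{T_i}$, and unpacking shows $g\in G_T\cdot G_{T_i}$ iff $g(T_i)\cap T$ is trivial (empty set, zero subspace, or equal to $k$, respectively). If $g$ failed every $T_i$, the images $g(T_0),\ldots,g(T_n)$ would inherit joint independence, forcing the nontrivial sub-objects $g(T_i)\cap T\subseteq T$ to be themselves independent inside $T$; their sizes would then sum to at least $n+1>n=\text{size}(T)$, a contradiction.

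\textbf{Generation.} For $\langle G_T,G_{T'}\rangle=G_{T\cap T'}$, the inclusion $\subseteq$ is immediate. For the reverse I will first prove density by producing explicit ``elementary moves'' in $\langle G_T,G_{T'}\rangle$: in case (i), any transposition $(a\,b)$ with $a,b\in\Psi\smallsetminus(T\cap T')$ lies in the generated subgroup, either directly when $\{a,b\}$ avoids $T$ or $T'$, or else via the identity $(a\,b)=(a\,c)(b\,c)(a\,c)$ with a helper $c\in\Psi\smallsetminus(T\cup T')$, a short case analysis placing each of $(a\,c),(b\,c)$ in $G_T\cup G_{T'}$. Analogous elementary linear maps in case (ii), and field swaps of algebraically independent pairs in case (iii), play the same role. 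Finite products of such moves match any $g\in G_{T\cap T'}$ on an arbitrary finite slice, yielding density. In cases (i) and (ii), $G_T$ is open (being the pointwise stabilizer of a finite set or finite-dimensional subspace), so the generated subgroup is open, hence closed, hence equals $G_{T\cap T'}$. In case (iii) $G_T$ is not open, and one instead argues by a direct factorization leveraging the fact that two algebraically closed subfields of an algebraically closed field are algebraically independent over their intersection.

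\textbf{Bootstrap.} Assuming $T\subsetneq T'$ and $V^{G_T}=V^{G_{T'}}$, I first note that applying any $h\in G$ to both sides yields $V^{G_{h(T)}}=V^{G_{h(T')}}$. By the transitivity of $G$ on containment-pairs of sub-objects of the same sizes as $(T,T')$ (standard in all three cases), the equality $V^{G_U}=V^{G_{U'}}$ holds for every such pair. Fix one $U'$ of size $|T'|$ and let $U$ range over sub-objects of $U'$ of size $|T|$; all the $V^{G_U}$ equal $V^{G_{U'}}$, so $V^{G_{U'}}=\bigcap_U V^{G_U}=V^{\langle G_U : U\rangle}$. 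Iterating the generation assertion gives $\langle G_U : U\subsetneq U',\ |U|=|T|\rangle=G_{\bigcap_U U}$; since $|T|<|T'|$, the intersection $\bigcap_U U$ is the trivial sub-object and $G_{\bigcap_U U}=G$. Hence $V^{G_{U'}}=V^G$, and combining with the hypothesis (applied at $U'=T'$) yields $V^{G_T}=V^G$. The principal obstacle is the generation step in case (iii): openness of $G_T$ fails there, so the short ``open dense subgroup equals the whole'' route available in (i) and (ii) is not at hand, and the set-theoretic rather than merely topological equality $\langle G_T,G_{T'}\rangle=G_{T\cap T'}$ --- which is what the iteration in the bootstrap requires --- must be extracted from the explicit factorization sketched above.
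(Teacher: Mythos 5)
Your arguments for cases (i) and (ii) are sound, and track the paper's reasonably closely for the covering (the paper also takes $n{+}1$ general-position translates $T_i$ and reduces to a counting contradiction). Your route to generation is different: you prove density of $\langle G_T,G_{T'}\rangle$ in $G_{T\cap T'}$ by elementary moves and then invoke openness, whereas the paper computes the double coset $G_TG_{T'}=\{g\in G_{T\cap T'}\mid g(T')\cap T=T\cap T'\}$ directly. Your bootstrap is a genuinely different proof from the paper's: you use transitivity to propagate the hypothesis to all pairs of the right sizes and iterate generation over all $U\subset U'$ until $\bigcap_U U$ is trivial; the paper instead takes a minimal $T_0\subseteq T$ with $V^{G_T}=V^{G_{T_0}}$ and conjugates by an element of $N_G(G_{T'})$ moving $T_0$ to shrink it further. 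Both are correct (modulo case (iii) below), and yours is arguably more transparent.

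The gap is case (iii), and it runs through both the covering and the generation. You claim that $g\in G_T\cdot G_{T_i}$ iff $g(T_i)\cap T$ is trivial, and separately that two algebraically closed subfields of an algebraically closed field are algebraically independent over their intersection. Both rest on the same false premise: the pregeometry of algebraic closure in an algebraically closed field is \emph{not} modular. For $a,b,c$ algebraically independent over $k$ and $d:=ac+b$, the fields $\mathrm{acl}(a,b)$ and $\mathrm{acl}(c,d)$ have intersection $\mathrm{acl}(\varnothing)$, yet they are not algebraically independent over it, since $\mathrm{tr.deg}(a,b,c,d|k)=3<2+2$. Consequently ``$g(T_i)\cap T$ trivial'' does \emph{not} guarantee that the partial map (identity on $T$, $g$ on $T_i$) extends to an automorphism, so it does not give $g\in G_T\cdot G_{T_i}$; the correct characterization is that $T$ and $g(T_i)$ are algebraically independent, which is precisely what the paper encodes by $\mathrm{tr.deg}(g(T)T_i|k)=2n$. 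Your counting of intersection sizes therefore only excludes nontrivial intersections, not non-freeness, and so does not yield the covering. The repair is to count transcendence degrees throughout: if $g(T)$ is not free from any $T_i$, each deficiency $\delta_i:=n-\mathrm{tr.deg}(T_i|g(T))\ge 1$, while $\sum_i\mathrm{tr.deg}(T_i|g(T)T_0\cdots T_{i-1})=\mathrm{tr.deg}(T_0\cdots T_n|g(T))\ge(n+1)n-n$ forces $\sum_i\delta_i\le n$, a contradiction. The same non-modularity removes the footing of your ``direct factorization'' sketch for generation in case (iii), so that part remains open in your proposal; and since your bootstrap iterates the generation statement, it too is only established in cases (i) and (ii).
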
 
\begin{proof} Choose $n+1$ subsets (resp. subspaces, resp. subfields) in general position $T_0,\dots,T_n$ 
isomorphic to $T$, i.e. $\#(T\cup\bigcup_{i=0}^nT_i)=(n+2)n$ (resp. $\dim(T+\sum_{i=0}^nT_i)=(n+2)n$, 
resp. $\mathrm{tr.deg}(TT_0\cdots T_n|k)=(n+2)n$). Looking at the $G_T$-orbit of the identical embedding 
$T_i\hookrightarrow\Psi$, we see that $G_TG_{T_i}=\{g\in G~|~g(T)\cap T_i=\varnothing\}$ (resp. 
$G_TG_{T_i}=\{g\in G~|~\dim(g(T)+T_i)=2n\}$, resp. 
$G_TG_{T_i}=\{g\in G~|~\mathrm{tr.deg}(g(T)T_i|k)=2n\}$), so $G=\bigcup_{i=0}^nG_Tg_iG_Tg_i^{-1}$ 
for elements $g_0,\dots,g_n\in G$ such that $g_i(T)=T_i$ for all $i$. 

More generally, let us show that $G_TG_{T'}=\{g\in G_{T\cap T'}~|~g(T')\cap T=T\cap T'\}=:\Xi$. 
The inclusion $\subseteq$ is trivial. On the other hand, the set $\Xi/G_{T'}=G_T/(G_T\cap G_{T'})$
consists of all embeddings of $T\cup T'$ (or rather of its $G$-closure) into $\Psi$ identical on $T$ 
that are induced by elements of $G$, while such embeddings form a $G_T$-orbit. 

Let $T_0\subseteq T$ be a minimal $G$-closed subset such that $V^{G_T}=V^{G_{T_0}}$. Assuming 
$T_0$ is not initial (i.e. $\neq\varnothing$ or 0), let $g\in N_G(G_{T'})$ be an element such that 
$g(T_0)\neq T_0$. Then \[V^{G_{T'}}=V^{G_{T'}}\cap g(V^{G_{T'}})=V^{G_{T_0}}\cap g(V^{G_{T_0}})=
V^{G_{T_0}}\cap V^{G_{g(T_0)}}=V^{\langle G_{T_0},G_{g(T_0)}\rangle}=V^{G_{T_0\cap g(T_0)}},\] 
contradicting the minimality of $T_0$. \end{proof} 

\begin{lemma} \label{open-subgrps-descr} Let $G$ be either $\Sy_{\Psi}$ for an infinite set $\Psi$, or 
$\mathrm{GL}_{\mathbb F_q}(\Psi)$ for an infinite-dimensional vector space $\Psi$ over a finite field 
$\mathbb F_q$. For any open subgroup $U$ of $G$ there exists a unique $G$-closed subset $T\subset\Psi$ 
such that $G_T\subseteq U$ and the following equivalent conditions hold: {\rm (a)} 
$T$ is minimal; {\rm (b)} $G_T$ is normal in $U$; {\rm (c)} $G_T$ is of finite index in $U$. In particular, 
{\rm (i)} such $T$ is finite, {\rm (ii)} the open subgroups of $G$ correspond bijectively to the pairs 
$(T,H)$ consisting of a finite $G$-closed subset $T\subset\Psi$ and a subgroup $H\subseteq\Aut(T)$ under  
$(T,H)\mapsto\{g\in N_G(G_T)~|~\text{{\rm restriction of $g$ to $T$ belongs to $H$}}\}$. \end{lemma}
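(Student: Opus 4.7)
The plan is to first construct a canonical minimal $T_U$ attached to $U$, verify it is finite and satisfies each of the three listed properties, show (c) characterises $T_U$ uniquely, and then set up the bijection in (ii).

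For existence of $T_U$: openness of $U$ yields a finite subset (respectively, a finite-dimensional subspace) $T_0\subseteq\Psi$ with $G_{T_0}\subseteq U$. Lemma~\ref{strong-generation} tells us that for $G$-closed $S_1,S_2$ with $G_{S_1},G_{S_2}\subseteq U$ one has $G_{S_1\cap S_2}=\langle G_{S_1},G_{S_2}\rangle\subseteq U$. Since $T_0$ has only finitely many $G$-closed subsets (or finite-dimensional subspaces), the intersection of all $G$-closed $T'\subseteq T_0$ with $G_{T'}\subseteq U$ is attained by a finite sub-intersection, producing a unique smallest such $T_U\subseteq T_0$; any $G$-closed $T''\subseteq\Psi$ with $G_{T''}\subseteq U$ then satisfies $G_{T''\cap T_U}=\langle G_{T''},G_{T_U}\rangle\subseteq U$, forcing $T_U\subseteq T''$ by minimality. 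Thus $T_U$ is the unique minimal $G$-closed subset of $\Psi$ with $G_{T_U}\subseteq U$, and it is automatically finite.

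For property (b): given $u\in U$, $uG_{T_U}u^{-1}=G_{u(T_U)}\subseteq uUu^{-1}=U$, so by minimality of $T_U$ combined with Lemma~\ref{strong-generation}, $T_U\subseteq u(T_U)$; since $u$ is bijective and $T_U$ finite, $u(T_U)=T_U$, proving $U\subseteq N_G(G_{T_U})$. For (c): the restriction map $U/G_{T_U}\hookrightarrow\Aut(T_U)$ embeds into the finite group $\Aut(T_U)$. For the characterisation (c)$\Rightarrow$(a), given a $G$-closed $T$ with $G_T\subseteq U$ and $[U:G_T]<\infty$, minimality of $T_U$ gives $T_U\subseteq T$, hence $G_T\subseteq G_{T_U}$ and $[G_{T_U}:G_T]\le[U:G_T]<\infty$; but for any $t\in T\smallsetminus T_U$ the $G_{T_U}$-orbit of $t$ is infinite (since $G_{T_U}$ surjects onto $\Sy_{\Psi\smallsetminus T_U}$, respectively onto $\mathrm{GL}_{\mathbb F_q}$ of a chosen complement of $T_U$, both acting transitively and with infinite orbit on non-zero elements), giving a contradiction. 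Thus $T=T_U$, and the uniqueness claim follows.

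For the bijection in (ii), associate to an open $U$ the pair $(T_U,H_U)$ with $H_U\subseteq\Aut(T_U)$ the image of $U/G_{T_U}$ under restriction; conversely, to a pair $(T,H)$ with $T$ finite $G$-closed and $H\subseteq\Aut(T)$, assign $U_{(T,H)}:=\{g\in N_G(G_T)~|~g|_T\in H\}$, which is open as a union of $|H|$ cosets of the open subgroup $G_T$. The identity $T_{U_{(T,H)}}=T$ follows because for any $G$-closed $S\subsetneq T$ one can produce $g\in G_S$ with $g(T)\neq T$ by swapping an element of $T\smallsetminus S$ with an element of $\Psi\smallsetminus T$ in the set case (or by an analogous invertible linear map in the vector space case), so $g\notin N_G(G_T)\supseteq U_{(T,H)}$, confirming $G_S\not\subseteq U_{(T,H)}$ and the minimality of $T$. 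That $H$ matches is tautological, and checking the two assignments are mutually inverse is then routine. The main technical obstacle is the infinite-orbit assertion underlying (c)$\Rightarrow$(a), which must be handled differently in the set and vector space cases but in each amounts to a transitivity statement for the relevant full automorphism group of an infinite residual structure.
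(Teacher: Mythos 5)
Your proof is correct and follows essentially the same approach as the paper: take a finite $G$-closed $T$ with $G_T\subseteq U$, use Lemma~\ref{strong-generation} to intersect and produce the unique minimal one, deduce normality from invariance under $U$-conjugation, get finite index via the embedding $U/G_T\hookrightarrow\Aut(T)$, and characterize uniqueness in case (c) via non-commensurability of the stabilizers $G_T$. Your exposition is a bit more explicit in spelling out the non-commensurability (the infinite-orbit argument) and in verifying that the two maps in part (ii) are inverse to each other, but the underlying argument is the same.
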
 
\begin{proof} Any open subgroup $U$ in $G$ contains the subgroup $G_T$ for a finite $G$-closed 
subset $T\subset\Psi$. Assume that $T$ is chosen to be minimal. If $\sigma\in U$ then 
$U\supseteq\sigma G_T\sigma^{-1}=G_{\sigma(T)}$, and therefore, (i) $\sigma(T)$ is also minimal, (ii) 
$U$ contains the subgroup generated by $G_{\sigma(T)}$ and $G_T$. By Lemma \ref{strong-generation}, the 
subgroup generated by $G_{\sigma(T)}$ and $G_T$ is $G_{T\cap\sigma(T)}$ and thus, $U$ contains the subgroup 
$G_{T\cap\sigma(T)}$. The minimality of $T$ means that $T=\sigma(T)$, i.e., $U\subseteq N_G(G_T)$. If 
$T'\subset\Psi$ is another minimal subset such that $G_{T'}\subseteq U$ then, by Lemma~\ref{strong-generation}, 
$G_{T\cap T'}\subseteq U$, so $T=T'$. This proves (b) and (the uniqueness in the case) (a). It follows 
from (b) that $G_T\subseteq U\subseteq N_G(G_T)$, so $G_T$ is of finite index in $U$. As the subgroups 
$G_T$ and $G_{T'}$ are not commensurable for $T'\neq T$, we get the uniqueness in the case (c). \end{proof}

\begin{notation} For a set $\Psi$ and a subset $T\subseteq\Psi$, denote (i) by $\Sy_{\Psi|T}$ the pointwise 
stabilizer of $T$ in the group $\Sy_{\Psi}$; (ii) by $\Sy_{\Psi,T}:=\Sy_{\Psi\smallsetminus T}\times\Sy_T$ 
the group of all permutations of $\Psi$ preserving $T$ (in other words, the setwise stabilizer of $T$ in 
$\Sy_{\Psi}$, or equivalently, the normalizer of $\Sy_{\Psi|T}$ in $\Sy_{\Psi}$). \end{notation} 

\begin{example} If $G=\Sy_{\Psi}$ and $B$ consists of $\Sy_{\Psi|J}$ for all finite $J\subset\Psi$ 
then $\Hom_{\mathfrak{T}}(\Sy_{\Psi|I},\Sy_{\Psi|J})$ is naturally identified with the set of embeddings 
$J\hookrightarrow I$. This means that $\mathfrak{T}$ is anti-equivalent to the category $\mathrm{FI}$. 
Here $\mathrm{FI}$ is the category of all finite sets, where the morphisms are the injections.

A presheaf $\mathcal F$ on $\mathrm{FI}^{\mathrm{op}}$ is a sheaf if and only if, for any 
diagram $J_1\hookleftarrow J_0\hookrightarrow J_2$ in $\mathrm{FI}$, the diagram 
$\mathcal F(J_0)\to\mathcal F(J_1)\times\mathcal F(J_2)\rightrightarrows\mathcal F(J_1\sqcup_{J_0}J_2)$ 
is an equalizer, where $\sqcup_{J_0}$ is the colimit of $J_1\hookleftarrow J_0\hookrightarrow J_2$ 
in the category of sets. 
\end{example}

\begin{notation} 
For each `reasonable'\ category $\mathcal C$, each $J\in\mathrm{FI}$ and each $\mathcal C$-valued 
sheaf $\mathcal F$ on $\mathrm{FI}^{\mathrm{op}}$, denote by $\mathcal F_{+J}$ the sheaf 
$I\mapsto\mathcal F(I\sqcup J)$. Thus, \begin{itemize} \item $(-)_{+J}\colon\mathcal F\mapsto\mathcal F_{+J}$ 
is an endofunctor on the category of $\mathcal C$-valued sheaves on $\mathrm{FI}^{\mathrm{op}}$; 
\item $(-)_{+\varnothing}$ is the identity functor; 
\item any embedding of finite sets $J_1\hookrightarrow J_2$ induces a morphism of functors 
$(-)_{+J_1}\to(-)_{+J_2}$; \item for each pair of finite $J,J'$, there is a natural 
isomorphism $(-)_{+J}\circ(-)_{+J'}\cong(-)_{+(J\sqcup J')}$. \end{itemize} \end{notation} 

For a sheaf of rings $\mathcal A$, we say 
`{\it an $\mathcal A$-module}'\ instead of `{\it a sheaf of $\mathcal A$-modules}'. 

If $\mathcal A$ is a sheaf of rings and $\mathcal F$ is an $\mathcal A$-module then 
$\mathcal F_{+J}$ is an $\mathcal A_{+J}$-module, so there is a natural morphism of functors 
$\mathcal A_{+J}\otimes_{{\mathcal A}}(-)\to(-)_{+J}$ on the category of $\mathcal A$-modules.

\begin{lemma} \label{iso-restr} Let $\Psi$ be an infinite set. The functor 
\[\nu_{\Psi}\colon\{\text{{\rm sheaves of sets on $\mathrm{FI}^{\mathrm{op}}$}}\}\xrightarrow{\sim}
\{\text{{\rm smooth $\Sy_{\Psi}$-sets}}\},\quad\mathcal F\mapsto\mathcal F(\Psi):=
\indlim_{J\subset\Psi}\mathcal F(J),\] where $J$ runs over the finite subsets of $\Psi$, 
is an equivalence of categories. 

If $\mathcal O$ is a sheaf of fields then $\nu_{\Psi}$ induces an equivalence 
$\{\text{{\rm $\mathcal O$-modules}}\}\xrightarrow{\sim}\Sm_{\mathcal O(\Psi)}(\Sy_{\Psi})$. 

The functor $\nu_{\Psi}$ admits a quasi-inverse $\nu_{\Psi}^{-1}$ such that for any infinite 
subset $\Psi'\subseteq\Psi$ the equivalences $\nu_{\Psi'}\circ\nu_{\Psi}^{-1}\colon
\{\text{{\rm smooth $\Sy_{\Psi}$-sets}}\}\xrightarrow{\sim}\{\text{{\rm smooth $\Sy_{\Psi'}$-sets}}\}$ 
and $\Sm_{\mathcal O(\Psi)}(\Sy_{\Psi})\xrightarrow{\sim}\Sm_{\mathcal O(\Psi')}(\Sy_{\Psi'})$ 
are given by $M\mapsto\indlim_{J\subset\Psi'}M^{\Sy_{\Psi|J}}\subseteq M^{\Sy_{\Psi|\Psi'}}$, 
where $J$ runs over the finite subsets of $\Psi'$. 

For infinite sets $\Psi''\subseteq\Psi'\subseteq\Psi$ and a sheaf $\mathcal F$ on $\mathrm{FI}^{\mathrm{op}}$, 
one has $\nu_{\Psi''}\circ\nu_{\Psi'}^{-1}(\mathcal F(\Psi))=\mathcal F(\Psi''\cup(\Psi\smallsetminus\Psi'))$. 

Let $J$ be a finite subset of $\Psi$. Then the functor 
$\nu_{\Psi\smallsetminus J}\circ(-)_{+J}\circ\nu_{\Psi}^{-1}$ is isomorphic to the functor 
\[\{\text{{\rm smooth $\Sy_{\Psi}$-sets}}\}\xrightarrow{\mathrm{Res}_{\Sy_{\Psi|J}}}
\{\text{{\rm smooth $\Sy_{\Psi|J}$-sets}}\}=\{\text{{\rm smooth $\Sy_{\Psi\smallsetminus J}$-sets}}\}.\] 
\end{lemma} 
\begin{proof} This is essentially \cite[Lemma 3.4]{H90}. \end{proof}

\begin{lemma} \label{inject} Let $G$ be a group, $A$ be a division 
ring endowed with a $G$-action $G\to\Aut_{\mathrm{ring}}(A)$, and $V$ be an $A\langle G\rangle$-module. 
Then $V^G$ is an $A^G$-module and the natural map $A\otimes_{A^G}V^G\to V$ is injective. \end{lemma} 
\begin{proof} This is well-known (see, e.g. \cite[Lemma 3.1]{H90}). \end{proof}

\begin{remark} Let $\Psi$ be an infinite set, $A$ be an integral smooth $\Sy_{\Psi}$-ring, $L|K$ be 
a smooth $\Sy_{\Psi}$-field extension, and $W\in\Sm_K(\Sy_{\Psi})$. Then, for any subset $\Psi'\subset\Psi$, 
\begin{enumerate} \item \label{int-cl} the subring $A^{(\Psi')}_{(\Psi)}$ is integrally closed in $A$, 
\item \label{inj-te} the multiplication map $K\otimes_{K^{(\Psi')}_{(\Psi)}}W^{(\Psi')}_{(\Psi)}\to W$ 
is injective $($in particular, with $W=L)$. \end{enumerate} 
\begin{proof} (\ref{int-cl}) If an element $u\in A$ is integral over $A^{(\Psi')}_{(\Psi)}$, i.e. 
$u^n+a_{n-1}u^{n-1}+\cdots+a_0=0$ for a finite subset $I\subset\Psi'$ and some $a_i\in A^{\Sy_{\Psi|I}}$, 
then the $\Sy_{\Psi|I}$-orbit of $u$ is finite, and therefore, $u\in A^{\Sy_{\Psi|I}}$. 

(\ref{inj-te}) follows from the injectivity of $K\otimes_{K^{\Sy_{\Psi|I}}}W^{\Sy_{\Psi|I}}\to W$ 
(Lemma~\ref{inject}). \end{proof} \end{remark} 

\begin{example} Given a sheaf of abelian groups $\mathcal F$, an integer $s\ge 0$ 
and $J\in\mathrm{FI}$, define the presheaves of abelian groups 
$h^J_{\mathcal F}\colon I\mapsto\mathcal F(I)\langle\{J\hookrightarrow I\}\rangle$ and 
$\bar{h}^s_{\mathcal F}\colon I\mapsto\mathcal F(I)\langle\binom{I}{s}\rangle$. 
E.g., $h^{\varnothing}_{\mathcal F}=\bar{h}^0_{\mathcal F}=\mathcal F$ and 
$h^{\{\ast\}}_{\mathcal F}=\bar{h}^1_{\mathcal F}\colon I\mapsto\mathcal F(I)\langle I\rangle$. 
Then $h^J_{\mathcal F}$ and $\bar{h}^s_{\mathcal F}$ are sheaves of abelian groups, 
faithfully flat for $\mathcal F=\underline{\mathbb Z}$. \end{example} 

\section{Hilbert's theorem 90 and period fields} 
\label{Satz90_period_fields} The following is a version of Speiser's generalization of Hilbert's theorem 90, 
cf. \cite[Satz 1]{Speiser}, or \cite[Prop. 3, p.159]{CL}, or \cite[Proposition 5]{dSG}. 
\begin{proposition} \label{Satz1} Let $K$ be a field of characteristic $p\ge 0$ endowed with a smooth 
action of a permutation group $G$, and $H\subseteq G$ be a closed normal subgroup. Then \begin{itemize} 
\item the functor $\Gamma_H\colon\Sm_K(G)\to\Sm_{K^H}(G/H)$, $V\mapsto V^H$, is an equivalence if and only 
if $H$ acts on $K$ faithfully and $H$ is precompact, i.e., any open subgroup of $H$ is of finite index$;$ 
\item the category $\Sm_K(G)$ is semisimple if and only if $G$ is precompact and $p$ divides index of 
no open subgroup of the action kernel $N:=\ker[G\to\Aut(K)]$. \end{itemize} \end{proposition} 
\begin{proof} As it is mentioned after Definition~\ref{permutation_group}, $G$ is a 
permutation group of a set $\Psi$, so $K\langle\Psi\rangle$ is a smooth faithful representation of $G$. 
If $\Gamma_H$ is an equivalence then its left adjoint $K\otimes_{K^H}(-)$ should be quasi-inverse. 
Obviously, the adjunction map $K\otimes_{K^H}K\langle\Psi\rangle^H\to K\langle\Psi\rangle$ is 
surjective only if the $H$-action on $K$ is faithful. On the other hand, $K\langle G/U\rangle^H\neq 0$ 
for each open subgroup $U\subseteq G$, so $U\cap H$ is of finite index in $H$, i.e. $H$ is precompact. 

If $G$ is finite then \cite[Satz 1]{Speiser}, appropriately reformulated, implies that any $K$-semilinear 
representation of $G$ is a sum of copies of $K$. Namely, with $k:=K^G$, the field extension $K|k$ is 
finite, so the multiplication and the natural $G$-action on $K$ give rise to a $k$-algebra homomorphism 
from $K\langle G\rangle$ to the algebra $\End_k(K)$ of endomorphisms of $K$ considered as $k$-vector space, 
which is (a) surjective by Jacobson's density theorem and (b) injective by the independence of characters. 
Then any $K\langle G\rangle$-module is isomorphic to a direct sum of copies of $K$. 

If $H$ is precompact and faithful on $K$, we have to check that the adjunction map 
$K\otimes_{K^H}V^H\xrightarrow{\xi}V$ is surjective for any object $V$ of $\Sm_K(G)$. 
For each $v\in V$, consider the intersection $S$ of all conjugates of the stabilizer of $v$ in $H$. 
Thus, $v$ is contained in the $K^S$-semilinear representation $V^S$ of the group $H/S$. 
As $H/S$ is finite and the action of $H/S$ on $K^S$ is faithful (cf. \cite[p.151]{Jacobson}), 
$V^S=K^S\otimes_{(K^S)^{H/S}}(V^S)^{H/S}=K^S\otimes_{K^H}V^H$, i.e., $v$ is contained in the image of $\xi$. 

If $\Sm_K(G)$ is semisimple then the projection $\pi_U\colon K\langle G/U\rangle\xrightarrow{[g]\mapsto 1}K$, 
splits for any open subgroup $U\subseteq G$, so $K\langle G/U\rangle^G$ contains some $\alpha\neq 0$, 
and thus, $[G:U]<\infty$, i.e. $G$ is precompact. Then $\alpha=\sum_ia_i\sum_{[g]\in O_i}[g]$, where $O_i$ 
are $N$-orbits on $G/U$, so $O_i\cong N/(U\cap N)$, and therefore, $\pi_U(\alpha)=\#(N/(U\cap N))\sum_ia_i$. 
Then $\pi_U(\alpha)\neq 0$ only if $p\nmid[N:U\cap N]$. 

Conversely, under our assumption on $N$, the categories $\Sm_K(N)$ and $\Sm_K(G/N)$ are semisimple, 
so for any $V\in\Sm_K(G)$ the functors $\Sm_K(G)\xrightarrow{\Hom_{\Sm_K(N)}(V,-)}\Sm_K(G/N)$ 
and $\Sm_K(G/N)\xrightarrow{\Gamma_{G/N}}\Sm_{K^G}(1)$ are exact. Then their composition 
$\Hom_{\Sm_K(G)}(V,-)\colon\Sm_K(G)\to\Sm_{K^G}(1)$ is exact as well, so 
$\mathrm{Ext}^{>0}_{\Sm_K(G)}(-,-)=0$, i.e., $\Sm_K(G)$ is semisimple. \end{proof} 

\begin{example} Let $\Psi$ be an infinite-dimensional vector space over a finite field, 
$U\subset\mathrm{GL}(\Psi)$ be an open subgroup. There is a unique maximal compact normal subgroup $H$ of $U$. 
Namely, let $J$ be the minimal subspace of $\Psi$ such that $U$ contains be the pointwise stabilizer of $J$ in 
$\mathrm{GL}(\Psi)$ (equivalently, $J$ is the maximal finite-dimensional subspace of $\Psi$ stabilized by $U$). 
Let $K$ be a field endowed with a smooth $U$-action faithful on $H$. Then $U/H=\mathrm{PGL}(\Psi/J)$ and 
$\Gamma_H\colon\Sm_K(U)\to\Sm_{K^H}(\mathrm{PGL}(\Psi/J))$, $V\mapsto V^H$, is an equivalence of categories. 

Similarly (and simpler), if $\Psi$ is an infinite set and $U\subseteq\Sy_{\Psi}$ be an open subgroup, 
$K$ be a field endowed with a smooth and faithful $U$-action 
then $\Gamma_H\colon\Sm_K(U)\to\Sm_{K^H}(\Sy_{\Psi\smallsetminus J})$, $V\mapsto V^H$, 
is an equivalence of categories, where $J$ is the maximal finite subset of $\Psi$ 
stabilized by $U$, and $H$ is the maximal finite normal subgroup of $U$. \end{example} 

Recall that a family $\mathcal{S}$ of objects in a category is {\sl generating} (resp., {\sl cogenerating}) 
if any pair of distinct morphisms $g_1,g_2\colon X\rightrightarrows Y$ admits a morphism $\theta\colon S\to X$ 
(resp., $\theta\colon Y\to S$) with $S\in\mathcal{S}$ such that $g_1\circ\theta\neq g_2\circ\theta$ (resp., 
$\theta\circ g_1\neq\theta\circ g_2$). 
For a Grothendieck category, a family $\mathcal{S}$ is generating (resp., cogenerating) 
if and only if any object is {\it a quotient of a direct sum} (resp., 
{\it a subobject of a direct product}) of objects in $\mathcal{S}$. An object $U$ is called 
a {\sl generator} (resp., {\sl cogenerator}) if the family $\{U\}$ is generating (resp., cogenerating). 

\begin{definition} \label{def_period_field} For a permutation group $G$ and a smooth $G$-field $F$, 
a smooth $G$-field extension $K|F$ is called a {\sl weak $G$-period extension} of $F$ 
if $K$ is a cogenerator of $\Sm_K(G)$. 

If, moreover, $K^G=F^G$ then $K$ is called a (strong) {\sl $G$-period extension} of $F$. \end{definition} 

The first natural questions are: Given a permutation group $G$ and a smooth $G$-field $F$, does 
there exist a $G$-period extension of $F$? Can it be chosen to be `minimal' in some sense? 

\begin{example} \label{minimal_period} Let $k$ be a field and $K|k$ be a weak $\Sy_{\Psi}$-period 
extension. Then $K$ contains a $\Sy_{\Psi}$-subfield isomorphic to $k(\Psi)$, so $k(\Psi)$ 
can be considered as a minimal $\Sy_{\Psi}$-period extension of $k$. \end{example} 
\begin{proof} Fix some $x\in\Psi$. There should exist an embedding of $K\langle\Psi\rangle$ into 
a product of copies of $K$, so $K^{\Sy_{\Psi}}\neq K^{\Sy_{\Psi|x}}$. 
As $\Sy_{\Psi|x}$ is a maximal proper subgroup of $\Sy_{\Psi}$, fixing an element of 
$K^{\Sy_{\Psi|x}}\smallsetminus K^{\Sy_{\Psi}}$, we can identify the $\Sy_{\Psi}$-set $\Psi$ with an 
$\Sy_{\Psi}$-orbit in $K$. Suppose that some pairwise distinct elements $x_1,\dots,x_n\in\Psi\subset K$ 
are algebraically dependent over $k$, i.e. $P(x_1,\dots,x_n)=0$ for a non-zero polynomial $P$ over $k$, 
and $n\ge 2$ is minimal. Then $P(x_1^g,\dots,x_n^g)=0$ for any $g\in\Sy_{\Psi}$. The 
$\Sy_{\Psi|\{x_2,\dots,x_n\}}$-orbit of $x_1$, i.e. $\Psi\smallsetminus\{x_2,\dots,x_n\}$, 
gives infinitely many solutions of the equation $P(X,x_2,\dots,x_n)=0$, 
while it has at most $\deg_XP$ solutions. This is contradiction. \end{proof} 

\section{Open subgroups, permutation modules, generators and projectives} 
A permutation group $G$ is called {\sl unibased} if, for any open proper subgroup, 
the finite intersections of its conjugates form a base of open subgroups of $G$. 

A reason to introduce this class is a `simplicity' property (Proposition~\ref{simplicity_of_Sm}) of $\Sm_K(G)$, 
if $K$ is a non-trivial $G$-field: any non-zero subcategory of $\Sm_K(G)$ closed under direct products and 
subquotients in $\Sm_K(G)$ is equivalent to $\Sm_K(G)$ itself. (There exist, however, non-zero Grothendieck 
subcategories of $\Sm_K(G)$ not equivalent to $\Sm_K(G)$, e.g. the full subcategory of all semisimple objects.) 

\begin{example} The following groups are unibased. 

(i) Simple finite groups. 

(ii) The automorphism groups of each of the following structures: an infinite set, an 
infinite-dimensional projective space, an algebraically closed field extension $F|k$ 
of infinite transcendence degree of an algebraically closed field $k$. \end{example} 

\begin{lemma} Any unibased group $G$ is topologically simple;\footnote{equivalently, any smooth 
non-trivial representation of $G$ is faithful} its topology is induced by any non-trivial continuous 
homomorphism $\varphi$ from $G$ to a permutation group. In particular, a permutation group is unibased if 
and only if it is topologically simple and {\bf minimal} in the sense of \cite{Stephenson}. \end{lemma} 
\begin{proof} If $S$ is a non-trivial closed normal proper subgroup of $G$, fix an element 
$g\in G\smallsetminus S$ and a neighbourhood of $g$ in $G\smallsetminus S$. We may choose this neighbourhood 
of the form $gU$ for an open subgroup $U$ of $G$. As $g\notin SU$, $SU$ is a proper open subgroup of 
$G$, while all conjugates of $SU$ contain $S$, and thus, cannot generate a Hausdorff topology on $G$. 

The target of $\varphi$ admits an open subgroup $U$ that does not contain the image 
of $\varphi$, so $\varphi^{-1}(U)$ is an open proper subgroup of $G$. \end{proof}

\begin{remark} A unibased group need not be simple as an abstract group: the finitary 
permutations of an infinite set $\Psi$ form a proper normal subgroup of $\Sy_{\Psi}$, while 
the finite-rank perturbations of the identity operator on an infinite-dimensional vector space 
$W$ form a proper normal subgroup of $\mathrm{PGL}(W)$. On the other hand, the unibased group 
$\mathrm{Aut}(\mathbb C|\overline{\mathbb Q})$ is simple, \cite{D.Lascar}. \end{remark} 

\begin{proposition} \label{simplicity_of_Sm} Let $G$ be a unibased group, and $K$ be a non-trivial 
smooth $G$-field. 

Then, for any non-zero object $V$ of $\Sm_K(G)$, $\Sm_K(G)$ admits a system of generators embeddable 
into direct product of copies of $V$. In particular, $\Sm_K(G)$ is equivalent to its arbitrary 
non-zero subcategory closed under direct products and subquotients in $\Sm_K(G)$. \end{proposition} 
\begin{proof} 
Consider $G$ as a subgroup of $\Aut(K)$. The objects $K\langle G/G_T\rangle$ for all finite subsets 
$T\subseteq K$ such that $V^{G_T}\neq 0$ form a system of generators of the category $\Sm_K(G)$, so 
it suffices to show that there is an embedding of $K\langle G/G_T\rangle$ into a direct product of 
copies of $V$, or equivalently, that there is a family of morphisms $K\langle G/G_T\rangle\to V$ 
with vanishing common kernel. 

Fix a non-zero $\alpha\in V^{G_T}$ and consider, for all $t\in K^{G_T}$, the morphisms 
$t\alpha:K\langle G/G_T\rangle\to V$, $a[\sigma]\mapsto a(t\alpha)^{\sigma}$, for all $a\in K$ and 
$\sigma\in G$. Let $\xi=\sum_{i=1}^Na_i\sigma_i\in K\langle G/G_T\rangle$ be an element in the common 
kernel of these morphisms. The elements of the set $G/G_T$ can be considered as (pairwise distinct) 
$K^{\times}$-valued characters of the group $(K^{G_T})^{\times}$, since $G_{K^{G_T}}=G_T$, so 
the element $\xi$ can be considered as a $K$-linear relation between characters. Due to the 
linear independence of such characters, one has $a_1=\dots=a_N=0$, i.e., 
$\bigcap_{t\in(K^{G_T})^{\times}}\ker(K\langle G/G_T\rangle\xrightarrow{t\alpha}V)=0$. \end{proof}

\begin{lemma} \label{no-proj} Let $G$ be a permutation group, and $K$ be a $G$-field. Assume that for any 
open subgroup $U\subset G$ there is an open subgroup $U'\subset U$ such that each conjugate of $U'$ meets $U$ 
in a subgroup of infinite index in $U$. Then there are no non-zero projectives in $\Sm_K(G)$. \end{lemma} 
\begin{proof} Let $P\neq 0$ be an object of $\Sm_K(G)$. Fix a non-zero $e\in P$. For 
each $u\in P$, fix an open subgroup $U_u\subset\St_e\cap\St_u$ such that 
each conjugate of $U_u$ meets $\St_e$ in a subgroup  of infinite index. Consider the 
surjection $\bigoplus_{u\in P}K\langle G/U_u\rangle\xrightarrow{\pi}P$, $[g]_u\mapsto gu$. If 
the support of an element in $K\langle G/U_u\rangle^{\St_e}$ contains $[g]\in G/U_u$ 
then it contains $\St_egU_u/U_u$. This set is of the same cardinality as 
$g^{-1}\St_egU_u/U_u=g^{-1}\St_eg/(g^{-1}\St_eg\cap U_u)\cong\St_e/(\St_e\cap gU_ug^{-1})$. 
As the latter set is infinite, 
$K\langle G/U_u\rangle^{\St_e}=0$, so $\pi$ does not split, and thus, $P$ is not projective. \end{proof} 

\begin{example} The automorphism groups of infinite sets, infinite-dimensional vector spaces, 
algebraically closed field extensions $F|k$ of infinite transcendence degree 
satisfy assumptions of Lemma~\ref{no-proj}. \end{example}

\begin{lemma} \label{no-finite-dim-submod} Let $G$ be a group acting on a field $K$. Let $U\subset G$ 
be a subgroup of infinite index, and $V$ be a $K\langle U\rangle$-module. Let $K'\subseteq K$ be 
a $G$-invariant subfield. Then there are no non-zero $K'\langle G\rangle$-submodules in 
$K\langle G\rangle\otimes_{K\langle U\rangle}V$ finite-dimensional over $K'$. \end{lemma}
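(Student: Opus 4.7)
The plan is to derive a contradiction from the finite-dimensionality assumption by showing that any $G$-stable finite-dimensional $K'$-subspace has finite total support in $G/U$, whereas the support of any single nonzero element generates (under the $G$-action) all of $G/U$, which is infinite.

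More concretely, suppose for contradiction that $M\subseteq K\langle G/U\rangle$ is a nonzero $K'\langle G\rangle$-submodule with $\dim_{K'}M<\infty$. Pick a $K'$-basis $w_1,\dots,w_m$ of $M$. Each $w_j$ is by definition a finite $K$-linear combination of cosets, so its support $\mathrm{supp}(w_j)\subset G/U$ is finite, and the finite set
\[
T:=\bigcup_{j=1}^m\mathrm{supp}(w_j)\subset G/U
\]
contains $\mathrm{supp}(w)$ for every $w\in M$ (any such $w$ is a $K'$-linear combination of the $w_j$).

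Now choose any $0\neq v\in M$ and any coset $c\in\mathrm{supp}(v)$. Since $M$ is $G$-stable, $gv\in M$ for every $g\in G$, and the $G$-action on $K\langle G/U\rangle$ is diagonal, so $\mathrm{supp}(gv)=g\cdot\mathrm{supp}(v)\ni gc$. Consequently $gc\in T$ for all $g\in G$, i.e.\ the orbit $G\cdot c$ is contained in the finite set $T$. But $G$ acts transitively on $G/U$, so $G\cdot c=G/U$, which is infinite by the hypothesis $[G:U]=\infty$. This contradicts the finiteness of $T$, and therefore no such $M$ exists.

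The argument uses essentially only that the $G$-action on $K\langle G/U\rangle$ permutes supports (which is built into the definition of the diagonal action) and that a finite-dimensional $K'$-subspace has finite total support; the role of $K'$ is merely to guarantee the latter statement, so no obstacle arises from $K'$ being possibly smaller than $K$. There is no genuinely hard step here.
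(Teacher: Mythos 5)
Your proof is correct and follows the same approach as the paper: both observe that the union of supports of elements of a finite-dimensional $K'$-submodule is a finite $G$-stable subset of $G/U$, which (since $G$ is transitive on $G/U$ and $[G:U]=\infty$) forces the submodule to be zero. You spell out the transitivity step slightly more explicitly, but the argument is the same.
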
 
Example. If $V=K$ then $K\langle G\rangle\otimes_{K\langle U\rangle}V\xrightarrow{\sim}K\langle G/U\rangle$, 
$a[g]\otimes f\mapsto af^g[g]$. 
\begin{proof} One has a canonical decomposition 
$K\langle G\rangle\otimes_{K\langle U\rangle}V=\bigoplus_{[g]\in G/U}V_{[g]}$ into a direct sum of $K$-vector 
spaces, where $V_{[g]}=K\langle gU\rangle\otimes_{K\langle U\rangle}V$. By the {\sl support} of an element 
$\alpha=(\alpha_{[g]})_{[g]\in G/U}\in K\langle G\rangle\otimes_{K\langle U\rangle}V$ we mean the subset of 
$G/U$ consisting of those $[g]\in G/U$ for which $\alpha_{[g]}\neq 0$. Let 
$V'\subseteq K\langle G\rangle\otimes_{K\langle U\rangle}V$ be a $K'\langle G\rangle$-submodule finite-dimensional 
over $K'$, and $S\subset G/U$ be the union of the supports in $G/U$ of all elements of $V'$. Then $S$ is finite 
and $G$-stable. However, the only finite and $G$-stable subset of $G/U$ is empty, so $V'=0$. \end{proof}

\begin{lemma} \label{embedding_into_prod} Let $G$ be a group acting on a field $K$. Then the following conditions 
on a subgroup $U\subseteq G$ are equivalent: \begin{itemize} \item an element $g\in G$ acts identically on $K^U$ 
if and only if $g\in U$; \item $K\langle G/U\rangle$ embeds into a product of copies of $K$. \end{itemize} 
Assume that $U$ satisfies these conditions, and $V\xrightarrow{\lambda}K$ 
is a morphism of $K\langle G\rangle$-modules inducing a non-zero map $V^U\to K^U$. 
Then $K\langle G/U\rangle$ embeds into a product of copies of $V$. \end{lemma}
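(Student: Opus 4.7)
The plan is to identify $K\langle G\rangle$-morphisms $K\langle G/U\rangle\to K$ with elements of $K^U$ and then invoke Dedekind's linear independence of characters. Since $[U]$ is $U$-fixed and $\phi$ is $G$-equivariant and $K$-linear, any such morphism $\phi$ is determined by $\phi([U])$, which must lie in $K^U$; conversely, any $a\in K^U$ extends to a morphism $\phi_a$ via $\phi_a([gU]):=a^g$.

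For the implication $(\Leftarrow)$ of the first equivalence, I would fix an embedding $K\langle G/U\rangle\hookrightarrow\prod K$, view each coordinate as some $\phi_a$, and observe that if $g\in G$ acts identically on $K^U$ then $\phi_a([gU])=a^g=a=\phi_a([U])$ for every $a\in K^U$, forcing $[gU]=[U]$ in the product, hence $g\in U$. For $(\Rightarrow)$, given a nonzero $\xi=\sum_{i=1}^Nc_i[g_iU]$ with pairwise distinct cosets $g_iU$, applying the various $\phi_a$ yields $\sum_ic_ia^{g_i}=0$ for all $a\in K^U$. The crux is then to verify that the restrictions $g_i|_{K^U}\colon K^U\hookrightarrow K$ are pairwise distinct field embeddings: indeed, $g_i|_{K^U}=g_j|_{K^U}$ would mean that $g_j^{-1}g_i$ acts identically on $K^U$, which by hypothesis forces $g_j^{-1}g_i\in U$, contradicting $g_iU\neq g_jU$. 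Dedekind's independence of distinct multiplicative characters $(K^U)^\times\to K^\times$ then forces all $c_i=0$.

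For the second statement I would reduce to the first by composing with $\lambda$. Since $\lambda|_{V^U}\colon V^U\to K^U$ is a nonzero $K^U$-linear map into the field $K^U$, I would fix $v_0\in V^U$ with $\lambda(v_0)=a_0\neq 0$ and, for each $b\in K^U$, set $v_b:=(b/a_0)v_0\in V^U$, so that $\lambda(v_b)=b$. The morphism $\phi_b^{(V)}\colon K\langle G/U\rangle\to V$, $[gU]\mapsto g\cdot v_b$, then satisfies $\lambda\circ\phi_b^{(V)}=\phi_b$, hence $\bigcap_b\ker\phi_b^{(V)}\subseteq\bigcap_b\ker\phi_b=0$ by the first part, producing the desired embedding into $\prod_{b\in K^U}V$. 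The main obstacle is the translation ``distinct $U$-cosets $\Leftrightarrow$ distinct field embeddings $K^U\hookrightarrow K$'', which is precisely the content of the hypothesis and what allows Dedekind's lemma to close the argument.
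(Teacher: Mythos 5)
Your proposal is correct and takes essentially the same approach as the paper: both identify $K\langle G\rangle$-morphisms $K\langle G/U\rangle\to K$ with the data of an element of $K^U$ (or equivalently, with the cosets $[g]$ acting as distinct multiplicative characters $(K^U)^{\times}\to K^{\times}$), and both invoke linear independence of characters to conclude injectivity; the paper merely packages the family $\{\phi_a\}_{a\in K^U}$ into a single map $\varphi_V:K\langle G/U\rangle\to\Hom_{K^G}(V^U,V)$. Your reduction of the second claim to the first via $v_b:=(b/a_0)v_0$ and $\lambda\circ\phi_b^{(V)}=\phi_b$ is an unpacked version of the paper's composition $\varphi_K=|_{K^U\cdot v}\circ\lambda\circ\varphi_V$ with $\lambda(v)=1$.
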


\begin{proof} For any $K\langle G\rangle$-module $V$, define a morphism of $K\langle G\rangle$-modules 
$K\langle G/U\rangle\xrightarrow{\varphi_V}\Hom_{K^G}(V^U,V)$ by $\sum_gb_g[g]\mapsto[v\mapsto\sum_gb_gv^g]$. 
The first condition means that the elements $[g]\in G/U$ can be considered as certain pairwise distinct 
one-dimensional characters $\chi_{[g]}\colon(K^U)^{\times}\to K^{\times}$. By Artin's independence of 
characters theorem, the characters $\chi_{[g]}$ are linearly independent in the $K$-vector space of all 
functions $(K^U)^{\times}\to K$, so $\varphi_K$ is injective. Conversely, 
$\Hom_{K\langle G\rangle}(K\langle G/U\rangle,K)=K^U$, while if $\varphi_K$ is injective then 
$\chi_{[g]}\neq\chi_{[1]}$ for any $[g]\in G/U\smallsetminus\{[1]\}$, i.e. $g$ acts non-trivially on $K^U$. 

Fix some $v\in V^U$ with $\lambda(v)=1$. Then the composition 
\[K\langle G/U\rangle\xrightarrow{\varphi_V}\Hom_{K^G}(V^U,V)\xrightarrow{\lambda}
\Hom_{K^G}(V^U,K)\xrightarrow{|_{K^U\cdot v}}\Hom_{K^G}(K^U\cdot v,K)\cong\Hom_{K^G}(K^U,K)\] 
coincides with $\varphi_K$, which is injective. \end{proof}

\begin{lemma} \label{simple-in-K} Let $G$ be a group, $K$ be a $G$-field, $k\subseteq K^G$ be a subfield, $F|k$ 
be a field extension. Assume that $K\otimes_kF$ is integral. Let $L$ be the fraction field of $K\otimes_kF$. 

Then any simple $K\langle G\rangle$-submodule $M$ of $L$ coincides with 
$K\cdot a$ for some non-zero $a\in L^G$. \end{lemma}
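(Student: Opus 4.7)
The plan is to reduce the statement to the existence of a nonzero $G$-invariant in $M$, and then to produce such an invariant by analyzing the $K\langle G\rangle$-module structures of the subring $R:=K\otimes_kF\subseteq L$ and of the $F$-span $FM:=\sum_{f\in F}fM\subseteq L$.

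\emph{Reduction.} I would first show it suffices to find a nonzero $a\in M\cap L^G$. Given such an $a$, the map $K\to M$, $k\mapsto ka$, is a nonzero $K\langle G\rangle$-homomorphism (indeed $g(ka)=g(k)g(a)=g(k)a$), whose image $Ka$ is a nonzero $K\langle G\rangle$-submodule of the simple module $M$; simplicity forces $Ka=M$.

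\emph{Two semisimplicity inputs.} Fix a $k$-basis $\{f_\alpha\}_\alpha$ of $F$. Since $k$ is algebraically closed in $K$, the $f_\alpha$ remain $K$-linearly independent in $L$ and one has the $K\langle G\rangle$-module decomposition $R=\bigoplus_\alpha Kf_\alpha$ with each summand isomorphic to the simple $K\langle G\rangle$-module $K$; thus $R$ is $K\langle G\rangle$-semisimple, and by the standard argument its simple submodules are exactly the lines $Kr_0$ with $0\neq r_0\in R^G=K^G\otimes_kF$. Likewise, $FM$ is a $K\langle G\rangle$-submodule (since $F\subseteq L^G$ commutes with $K$ and is $G$-fixed), and multiplication by any $f\in F^\times$ gives a $K\langle G\rangle$-isomorphism $M\xrightarrow\sim fM$; hence $FM$ is $K\langle G\rangle$-isotypic of type $M$, and the key consequence is that $(FM)^G\neq 0$ if and only if $M^G\neq 0$.

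\emph{Producing the invariant.} Write any nonzero $m\in M$ as $m=r/s$ with $r,s\in R$; then $sm=r$ is a nonzero element of $RM\cap R$, so the latter is a nonzero $K\langle G\rangle$-submodule of the semisimple module $R$ and therefore contains a simple submodule $Kr_0$ with $0\neq r_0\in R^G\subseteq L^G$. Since $r_0\in RM$, write $r_0=\sum_ir_im_i$ and expand each $r_i=\sum_\alpha c_{i,\alpha}f_\alpha$ in the chosen $k$-basis; rearranging gives $r_0=\sum_\alpha f_\alpha\widetilde m_\alpha$ with $\widetilde m_\alpha:=\sum_ic_{i,\alpha}m_i\in M$, exhibiting $r_0$ as a nonzero element of $FM\cap L^G=(FM)^G$. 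Isotypicity then yields $M^G\neq 0$, and the reduction finishes the proof. The only real task is the identification of the correct auxiliary modules $R\cdot M$ and $F\cdot M$; once these are in place the argument is essentially formal, and I do not foresee a serious obstacle.
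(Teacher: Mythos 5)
Your proof is correct and takes a genuinely different route from the paper's. Both arguments begin by clearing denominators of a nonzero element of $M$ against the subring $R=K\otimes_kF$. The paper then \emph{specializes}: it writes the numerator and denominator in $K\otimes_kA$ for a finitely generated $k$-subalgebra $A\subseteq F$, chooses a $k$-algebra homomorphism $\varphi\colon A\to k'$ into a finite extension $k'|k$ at which the relevant product does not vanish, and so produces a nonzero $K\langle G\rangle$-morphism $M\to K\otimes_kk'\cong K^{[k':k]}$; simplicity of $M$ then forces $M\cong K$. You avoid specialization entirely and argue purely in terms of semisimple module structure: $R\cong\bigoplus_\alpha Kf_\alpha$ is $K$-isotypic, $FM$ is $M$-isotypic, clearing denominators shows $R\cap RM\neq 0$, the $K$-isotypic side yields a $G$-invariant $r_0\in R^G$, and then $r_0\in FM$ forces $M^G\neq 0$. (Your rearrangement step is exactly the identity $RM=FM$, which holds because $R=K\cdot F$ and $M$ is already a $K$-module; stating this up front would let you drop the coefficient bookkeeping.) The trade-off is that the paper's argument relies on a Nullstellensatz-flavored existence of a closed point of $\mathrm{Spec}(A)$ with residue field finite over $k$, while yours replaces that with isotypicity. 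One minor wording caveat: the $K$-linear independence of the $f_\alpha$ inside $L$ is not a direct consequence of $k$ being algebraically closed in $K$ but of the injectivity of $K\otimes_kF\hookrightarrow L$, which is built into the hypothesis that $L$ is the fraction field of $K\otimes_kF$; the algebraic closure assumption is what guarantees that $K\otimes_kF$ is a domain, hence that this hypothesis is sensible.
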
 
\begin{proof} Let $Q\in L^{\times}$ be a non-zero element of $M$, so $Q=\alpha/\beta$ is a ratio of 
a pair of elements $\alpha,\beta\in K\otimes_kA$ for a finitely generated $k$-subalgebra $A$ of $F$. 
There is a finite field extension $k'|k$ and a $k$-algebra homomorphism $\varphi\colon A\to k'$ with 
an invertible image of $\alpha\beta$ under the $G$-equivariant $K$-algebra homomorphism 
$\Phi\colon K\otimes_kA\to K\otimes_kk'$, $\sum_ib_i\otimes a_i\mapsto\sum_ib_i\varphi(a_i)$. 
The homomorphism $\Phi$ extends to $(K\otimes_kA)[(\beta^g)^{-1}~|~g\in G]$ with a non-zero restriction to 
a morphism of left $K\langle G\rangle$-modules $M\to K\otimes_kk'\cong\bigoplus_{\text{a $k$-basis of $k'$}}K$, 
so $M\cong K$, and thus, $M=K\cdot a$ for some non-zero $a\in M^G\subseteq L^G$. \end{proof} 

\begin{remark} \label{fixed_field_alg_closed} If $G$ has no open proper subgroups of finite index (e.g. if 
$G=\Sy_{\Psi}$ for an infinite set $\Psi$) and $K$ is smooth then $K^G$ is algebraically closed in $K$: 
any finite extension of $K^G$ in $K$ is fixed by an open subgroup of $G$ of finite index, i.e. by $G$. 

If $K^G$ is algebraically closed in $K$ then all $G$-fields $L$ in Lemma~\ref{simple-in-K} come from 
the particular case $k=K^G$, since $L$ is the fraction field of $K\otimes_{K^G}\tilde{F}$, 
where $\tilde{F}$ is the fraction field of $K^G\otimes_kF$. \end{remark} 

\begin{corollary} \label{simple-in-K_S} Let $\Psi$ be an infinite set, $J\subset\Psi$ be a subset 
with infinite complement. Let $F|k$ be a non-trivial regular field extension. 
Then any simple $F_{\Psi\smallsetminus J}\langle\Sy_{\Psi|J}\rangle$-submodule 
$M$ of $F_{\Psi}$ coincides with $aF_{\Psi\smallsetminus J}$ for some $a\in F_J^{\times}$. 
In particular, $M$ is isomorphic to $F_{\Psi\smallsetminus J}$. \end{corollary}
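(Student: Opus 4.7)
The plan is to apply Lemma~\ref{simple-in-K} with the lemma's ``$K$'' set to $F_{\Psi\smallsetminus J}$ (with its smooth $\Sy_{\Psi|J}$-action permuting the tensor factors indexed by $\Psi\smallsetminus J$), its ``$k$'' kept as $k$, and its ``$F$'' taken to be $F_J$, regarded as an extension of $k$ with trivial $\Sy_{\Psi|J}$-action. The canonical decomposition $\bigotimes_{k,\,i\in\Psi}F\cong\bigotimes_{k,\,i\in\Psi\smallsetminus J}F\otimes_k\bigotimes_{k,\,i\in J}F$ exhibits this ring as a subring of $F_{\Psi\smallsetminus J}\otimes_k F_J$ with matching fraction field, identifying the lemma's ``$L$'' with $F_\Psi$; and the standing hypothesis that $k$ is algebraically closed in $F$ ensures that $k$ is algebraically closed in $F_{\Psi\smallsetminus J}$, as the lemma requires.

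Invoking the lemma produces a non-zero $a\in F_\Psi^{\Sy_{\Psi|J}}$ with $M=F_{\Psi\smallsetminus J}\cdot a$. The task then reduces to proving the equality $F_\Psi^{\Sy_{\Psi|J}}=F_J$: granted it, one has $a\in F_J^{\times}$, and multiplication by $a$ furnishes the isomorphism $F_{\Psi\smallsetminus J}\xrightarrow{\sim} M$ of $F_{\Psi\smallsetminus J}\langle\Sy_{\Psi|J}\rangle$-modules claimed in the final sentence.

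To establish $F_\Psi^{\Sy_{\Psi|J}}\subseteq F_J$, I would use that every element of $F_\Psi$ lies in $F_I$ for some finite $I\subset\Psi$, together with the intersection identity
\[F_I\cap F_{I'}=F_{I\cap I'}\qquad\text{inside $F_\Psi$, for all finite $I,I'\subset\Psi$.}\]
This follows from the linear-disjointness decomposition $\bigotimes_{k,\,i\in I\cup I'}F=\bigotimes_{k,\,i\in I\smallsetminus I'}F\otimes_k\bigotimes_{k,\,i\in I\cap I'}F\otimes_k\bigotimes_{k,\,i\in I'\smallsetminus I}F$ by passage to fraction fields, and it equips each $q\in F_\Psi^{\times}$ with a well-defined minimal support $\mathrm{supp}(q)\subset\Psi$ that transforms equivariantly under $\Sy_\Psi$. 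For invariant $q\in F_\Psi^{\Sy_{\Psi|J}}$ the set $\mathrm{supp}(q)$ is then a finite $\Sy_{\Psi|J}$-invariant subset of $\Psi$; since $\Sy_{\Psi|J}$ acts as the full symmetric group of the infinite set $\Psi\smallsetminus J$, any such subset must be contained in $J$, whence $q\in F_J$.

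The principal obstacle is the intersection identity $F_I\cap F_{I'}=F_{I\cap I'}$: it is the natural manifestation of the regularity of $F|k$ inside the tensor construction of $F_\Psi$, but its justification deserves care when $F|k$ is not purely transcendental, since one must combine the ring-theoretic linear disjointness with the behavior of fraction fields of subrings.
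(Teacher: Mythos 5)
Your proof is correct and is essentially the paper's: the paper's entire argument is the one-line citation of Lemma~\ref{simple-in-K} with the same substitutions you make ($G=\Sy_{\Psi|J}$, $K=F_{\Psi\smallsetminus J}$, the lemma's ``$F$'' taken to be $F_J$, $L=F_{\Psi}$). Your second and third paragraphs supply the identification $F_{\Psi}^{\Sy_{\Psi|J}}=F_J$, which the paper leaves tacit but which is genuinely needed to place $a$ in $F_J^{\times}$ rather than merely in $L^{\Sy_{\Psi|J}}$ (and, in the same breath, to see that the lemma's base field $K^G$ is the ambient $k$); your support argument via the finite-level intersection identity $F_I\cap F_{I'}=F_{I\cap I'}$, and your caveat that this identity rests on linear disjointness inside the domain $\bigotimes_{k}F$ and its fraction fields, are exactly the right points to attend to.
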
 
\begin{proof} This is Lemma~\ref{simple-in-K} for $G=\Sy_{\Psi|J}$, 
$K=F_{\Psi\smallsetminus J}$, $k=K^G$, with $F$ replaced by $F_J$ (so $L=F_{\Psi}$). \end{proof}

\begin{lemma} \label{no-simple-submod} Let $G$ be a group acting on 
a field $K$. Let $U\subset G$ be a subgroup of infinite index such that an element 
$g\in G$ acts identically on $K^U$ if and only if $g\in U$. Set $k:=K^G$. Let $F|k$ be a field extension 
such that $k$ is algebraically closed either in $F$ or in $K$. Let $L$ be the fraction field of $K\otimes_kF$. 
Then \begin{itemize} \item $L\langle G/U\rangle$ contains no simple $K\langle G\rangle$-submodules; 
\item for each integer $s\ge 0$, there is a natural bijection 
between $s$-dimensional $k$-vector subspaces $\Xi$ in $K^U$ and $K\langle G\rangle$-submodules $V_{\Xi}$ 
in $K\langle G/U\rangle$ such that $K\langle G/U\rangle/V_{\Xi}\cong K^s$ in $\Sm_K(G)$. 
\end{itemize} \end{lemma}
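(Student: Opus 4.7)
Suppose for contradiction that $M\subseteq L\langle G/U\rangle$ is a simple $K\langle G\rangle$-submodule. I would first use a specialization argument to show that $M\cong K$, and then obtain a contradiction from the vanishing $L\langle G/U\rangle^G=0$: any invariant element would have $G$-stable support in $G/U$, but $G$ acts transitively on the infinite set $G/U$ while finite formal sums have finite support. The specialization is an adaptation of the proof of Lemma~\ref{simple-in-K}. Pick a nonzero $m=\sum_{g\in T}(\alpha_g/\beta_g)[g]\in M$ with $\alpha_g,\beta_g\in K\otimes_k A$ for a finitely generated $k$-subalgebra $A\subseteq F$ (the ring $K\otimes_k A$ is a domain as a subring of $L$, so such denominators exist). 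Writing an arbitrary nonzero element of $K\otimes_k A$ as $\sum_i c_i\otimes f_i$ with $c_i\in K$ being $k$-linearly independent, and using that $K\otimes_k k'$ is a free $k'$-module on a $k$-basis of $K$ for any finite extension $k'|k$ (a freeness that uses neither of the two alternative hypotheses on $k$), Nullstellensatz produces a $k$-algebra map $\varphi\colon A\to k'$ such that the induced $G$-equivariant $K$-algebra map $\Phi\colon K\otimes_k A\to K\otimes_k k'$ does not kill $\prod_{g\in T}\alpha_g\beta_g$. Extending $\Phi$ over the $G$-stable localization obtained by inverting the $G$-translates of $\prod_g\beta_g$ and applying it coefficient-wise yields a nonzero $K\langle G\rangle$-morphism from the submodule of $L\langle G/U\rangle$ generated by $m$ into $(K\otimes_k k')\langle G/U\rangle$; by simplicity its restriction to $M$ is injective. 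Since $K\otimes_k k'$ is a free $K$-module of rank $[k':k]$ (with $G$ acting through $K$), the target splits as a direct sum of $[k':k]$ copies of $K\langle G/U\rangle$, and a nontrivial projection gives $M\hookrightarrow K\langle G/U\rangle$. The hypothesis on $U$ then lets Lemma~\ref{embedding_into_prod} embed $K\langle G/U\rangle$ into a product of copies of $K$, and simplicity forces $M\hookrightarrow K$, hence $M\cong K$.

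\textbf{Second bullet.} Send a $k$-basis $\lambda_1,\dots,\lambda_s$ of $\Xi\subseteq K^U$ to the kernel $V_\Xi$ of the $K\langle G\rangle$-morphism $\Phi_\Xi\colon K\langle G/U\rangle\to K^s$, $a[g]\mapsto(a\lambda_1^g,\dots,a\lambda_s^g)$; a change of $k$-basis of $\Xi$ acts through $\mathrm{GL}_s(k)=\Aut_{\Sm_K(G)}(K^s)$ on the target, so $V_\Xi$ depends only on $\Xi$. The core step is showing $\Phi_\Xi$ is surjective precisely when the $\lambda_i$ are $k$-linearly independent. The ``only if'' is immediate: a relation $\sum c_i\lambda_i=0$ with $c_i\in k$ pushes the image into the proper $G$-stable hyperplane $\sum c_i x_i=0$. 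For the ``if'', induct on $s$, the cases $s\in\{0,1\}$ being clear. Assuming $\Phi_{\Xi'}$ is surjective for $\Xi'=\langle\lambda_1,\dots,\lambda_{s-1}\rangle_k$, if $\Phi_\Xi$ failed to surject then (since projection to the first $s-1$ coordinates recovers $\Phi_{\Xi'}$) its image would be the graph of a $K$-linear map, forcing $\lambda_s^g=\sum_i c_i\lambda_i^g$ for all $g\in G$ with fixed $c_i\in K$. Applying $h\in G$ to this identity and subtracting the same identity with $g$ replaced by $hg$ yields $\sum_i(c_i^h-c_i)\lambda_i^{g'}=0$ for all $g'\in G$, and the inductive hypothesis (that the vectors $(\lambda_1^{g'},\dots,\lambda_{s-1}^{g'})$ $K$-linearly span $K^{s-1}$) forces $c_i^h=c_i$ for every $h$, so $c_i\in K^G=k$, contradicting the $k$-linear independence of the $\lambda_i$. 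Conversely, by the adjunction $\Hom_{K\langle G\rangle}(K\langle G/U\rangle,K^s)=(K^U)^s$, a surjection $K\langle G/U\rangle\twoheadrightarrow K^s$ corresponds to a generating tuple $(\mu_1,\dots,\mu_s)\in(K^U)^s$, and the $k$-span of such a tuple recovers $\Xi$; the two assignments are visibly inverse.

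\textbf{Main obstacle.} I expect the inductive step in the second bullet to be the subtlest: the Wronskian-style derivation is natural, but tracking how the $G$-action interacts with the putative coefficients $c_i$ — so that the inductive hypothesis indeed pins $c_i\in k$ — requires careful bookkeeping. The specialization in the first bullet is a routine adaptation of the argument in Lemma~\ref{simple-in-K}, and once it yields $M\cong K$ the contradiction via $L\langle G/U\rangle^G=0$ is automatic.
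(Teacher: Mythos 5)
Your proof is correct and reaches the same conclusions, but it is organized quite differently from the paper's in both bullets, and the comparison is worth spelling out. For the first bullet, the paper applies Lemma~\ref{embedding_into_prod} to the field $L$ (so $\varphi_L$ is injective) to get a nonzero morphism from the putative simple $M$ into $L$, then cites Lemma~\ref{simple-in-K} to conclude $M\cong K$, and contradicts Lemma~\ref{no-finite-dim-submod}. You reverse the two steps: you inline the Nullstellensatz specialization from the proof of Lemma~\ref{simple-in-K}, carried out directly inside $L\langle G/U\rangle$ rather than inside $L$, to land $M$ in $K\langle G/U\rangle$, and only then invoke Lemma~\ref{embedding_into_prod} over $K$; your closing contradiction, the support argument giving $L\langle G/U\rangle^G=0$, substitutes for Lemma~\ref{no-finite-dim-submod}. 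Same ingredients, opposite order; the paper's route is shorter because it reuses Lemma~\ref{simple-in-K} as a black box, yours is more self-contained. One caveat, shared with the source you are adapting: extending $\Phi$ over the $G$-stable localization tacitly uses that $\Phi(\sigma\beta_g)$ is invertible in $K\otimes_kk'$, which holds because $K\otimes_kk'$ is a field when $k$ is algebraically closed in $K$; your parenthetical that the freeness of $K\otimes_kk'$ over $k'$ uses neither alternative hypothesis is true of that step, but the invertibility one line later is exactly where the hypothesis enters, just as in Lemma~\ref{simple-in-K}. For the second bullet, the paper gets surjectivity of $\Phi_\Xi$ from density of $\varphi_K$, attributed to linear independence of characters, whereas your inductive graph argument is a genuinely different and more explicit proof of the same density. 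It can be shortened: once the image of $\Phi_\Xi$ is recognized as the graph of a $K$-linear $\psi:K^{s-1}\to K$, $G$-stability of the graph together with $G$-equivariance of the coordinate projections makes $\psi$ a morphism in $\Sm_K(G)$, so $\psi\in\Hom_{K\langle G\rangle}(K^{s-1},K)=k^{s-1}$ and the $c_i$ lie in $k$ at once, without the $h$-twist-and-subtract. The bijectivity bookkeeping via the adjunction $\Hom_{K\langle G\rangle}(K\langle G/U\rangle,K^s)=(K^U)^s$ and the $\mathrm{GL}_s(k)$-action is fine.
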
 
\begin{proof} By Lemma~\ref{embedding_into_prod}, $L\langle G/U\rangle$ embeds into a product of 
copies of $L$, so any simple $K\langle G\rangle$-submodule $V$ of $L\langle G/U\rangle$ embeds into $L$. 
By Lemma~\ref{simple-in-K}, $V$ is isomorphic to $K$, contradicting Lemma~\ref{no-finite-dim-submod}. 

The image of $K\langle G/U\rangle\xrightarrow{\varphi_K}\Hom_k(K^U,K)$ is dense, which means that 
for any finite-dimensional $k$-vector subspace $\Xi\subset K^U$ the composition 
$K\langle G/U\rangle\xrightarrow{\varphi_K}\Hom_k(K^U,K)\xrightarrow{\text{restriction to $\Xi$}}\Hom_k(\Xi,K)$ 
is surjective: otherwise there was a non-zero element 
$\sum_ia_i\otimes f_i\in K\otimes_k\Xi$ vanishing on the image, i.e., $\sum_ia_i\otimes f_i^g=0$ for all 
$g\in G/U$, contradicting linear independence of characters $(K^U)^{\times}\to K^{\times}$. 

To each $k$-vector subspace $\Xi\subset K^U$ we associate the $K\langle G\rangle$-submodule 
$V_{\Xi}:=\bigcap_{\xi\in\Xi}\ker\xi$, where all $\xi$'s are considered as morphisms $K\langle G/U\rangle\to K$. 
Conversely, to each $K\langle G\rangle$-submodule $V\subset K\langle G/U\rangle$ we associate 
the $k$-vector subspace $\Xi_V:=\bigcap_{v\in V}\ker v\subset K^U$, where all $v$'s are considered as 
$k$-linear maps $K^U\to K$. \end{proof} 

{\sc Example.} Let $G=\Sy_{\Psi}$ and $U\subset\Sy_{\Psi}$ be a maximal proper subgroup, i.e., $U=\Sy_{\Psi,I}$ 
for a finite non-empty subset $I\subset\Psi$ (so $\Sy_{\Psi}/U$ can be identified with the set $\binom{\Psi}{\#I}$). 
Suppose that $K^{\Sy_{\Psi,I}}\neq k$. Then we are under assumptions of Lemma~\ref{no-simple-submod}, so there 
are no irreducible $K$-semilinear subrepresentations in $K\langle\binom{\Psi}{\#I}\rangle$. 

\begin{remarks} \label{more_remarks} \begin{enumerate} \item \label{condition_on_symm-group-field} Let 
$G$ be as in Lemma~\ref{open-subgrps-descr}. It follow from the explicit description of open subgroups 
in Lemma~\ref{open-subgrps-descr} that index of any proper open subgroup $U$ in $G$ is infinite. 

One cannot claim that, for an arbitrary smooth $K$, there are no 
elements in $\Sy_{\Psi}\smallsetminus U$ acting identically on $K^U$, even if $K^U\neq K^{\Sy_{\Psi}}$. 
For instance, if $K=F_{\binom{\Psi}{s}}$ for $s>1$, $U=\Sy_{\Psi|S}$ for a finite 
$S$, $\#S=s$, then $K^U\cong F$, but $H=\Sy_{\Psi,S}$ acts trivially on $K^U$. 

However, this is the case if $K^{\Sy_{\Psi|x}}\neq K^{\Sy_{\Psi}}$, or equivalently, 
$K^{\Sy_{\Psi}}(\Psi)$ embeds into $K$. 
\item One cannot claim that $K\langle G/U\rangle$ is trivial, 
when $U$ is of finite index in $G$, even if $G$ acts faithfully on $K$. E.g., let $U=\Sy_{\Psi|\{x,y\}}$, 
$G=\Sy_{\Psi,\{x,y\}}=\Sy_2\times U$, $K=K'=k(\frac{z-y}{x-y}~|~z\in\Psi\smallsetminus\{x,y\})$. Then $K^U=k$ 
and $K\langle G/U\rangle^G=(K\langle G/U\rangle^U)^G=(k\langle G/U\rangle)^G=k([1]+[(12)])$. 

\end{enumerate} \end{remarks} 

\begin{lemma} \label{finite-index-split} Let $K$ be a field, $G$ be a group of automorphisms of 
the field $K$. Let $U\subseteq H\subseteq G$ be open subgroups of $G$, where the index of $U$ 
in $H$ is finite. Suppose that $[K^U:K^H]=\#(H/U)$. Then there is a natural isomorphism 
$\rho_G\colon K\langle G/H\rangle\otimes_{K^H}K^U\xrightarrow{\sim}K\langle G/U\rangle$, given by 
$\sum_ia_i[g_i]\otimes f_i\mapsto\sum_i\sum_{[\xi]\in G/U,~[\xi]\bmod H=[g_i]}a_if_i^{\xi}[\xi]$. \end{lemma}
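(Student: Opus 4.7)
The plan is to verify $\rho_G$ is a well-defined $K\langle G\rangle$-linear map, reduce bijectivity to the case $G=H$ via a coset decomposition of $G/H$, and then settle that base case by Dedekind's linear independence of characters.

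First I would check that the formula for $\rho_G$ is well-defined on the tensor product over $K^H$ and is $G$-equivariant. The right $K^H$-action on $K\langle G/H\rangle$ dictated by the bimodule convention in the preamble is $[g]\cdot c=c^g[g]$. For any $\xi=gh$ with $h\in H$ and $c\in K^H$, one has $c^\xi=(c^h)^g=c^g$, so both $(a[g]c)\otimes f$ and $a[g]\otimes(cf)$ map to $\sum_{[\xi]\in gH/U}ac^g f^\xi[\xi]$. Independence of the coset representative $g$ inside $gH$ is clear because the sum is taken over the entire coset. The $K\langle G\rangle$-linearity follows by a direct reindexing $\eta=g\xi$.

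Next I would reduce to the case $G=H$. Fix a set of representatives $\{g_i\}_{i\in I}$ for $G/H$ with $g_0=1$. As a right $K^H$-module, $K\langle G/H\rangle=\bigoplus_i K\cdot[g_i]$, since each summand is stable under $[g_i]\cdot c=c^{g_i}[g_i]$. Hence $K\langle G/H\rangle\otimes_{K^H}K^U=\bigoplus_i (K\cdot[g_i])\otimes_{K^H}K^U$, and similarly $K\langle G/U\rangle=\bigoplus_i K\langle g_iH/U\rangle$. The map $\rho_G$ is block-diagonal with respect to these decompositions, and left translation by $g_i$ yields a commuting square identifying each block with the $0$th one, namely $\rho_H\colon K\otimes_{K^H}K^U\to K\langle H/U\rangle$. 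It therefore suffices to prove that $\rho_H$ is bijective.

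Finally, both sides of $\rho_H$ are free $K$-modules of rank $n:=\#(H/U)=[K^U:K^H]$. Choosing a $K^H$-basis $\{e_j\}_{j=1}^n$ of $K^U$, the map $\rho_H$ is encoded by the $n\times n$ matrix $M=[\xi(e_j)]_{[\xi]\in H/U,\,j}$ over $K$; its rows record the values of the $K^H$-algebra embeddings $\xi|_{K^U}\colon K^U\to K$ on the chosen basis. By Dedekind's linear independence of distinct characters, provided these $n$ restrictions are pairwise distinct, the rows are $K$-linearly independent and $\det M\ne 0$, making $\rho_H$ an isomorphism. The principal obstacle is this distinctness claim: one must extract from the numerical equality $[K^U:K^H]=\#(H/U)$ the fact that the $n$ restrictions $\xi|_{K^U}$ are pairwise distinct (equivalently, that $K^U/K^H$ is separable of degree $n$ and that $H/U$ realizes all $K^H$-embeddings of $K^U$ into $K$); any coincidence would make the rank of $M$ drop below $n$ and spoil the free $K$-rank match on the two sides.
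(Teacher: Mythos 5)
Your overall structure — well-definedness, block decomposition along $G/H$ (which is the same device as the paper's application of the exact functor $K\langle G\rangle\otimes_{K\langle H\rangle}(-)$), and a Dedekind determinant for the base case $\rho_H$ — is sound, but the ``principal obstacle'' you flag at the end is a genuine unclosed gap, and it is precisely the step that the hypothesis $[K^U:K^H]=\#(H/U)$ exists to handle. It is not optional: set $U':=\{\eta\in H:\eta|_{K^U}=\mathrm{id}\}\supseteq U$, and note $K^{U'}=K^U$. Since $f^{\xi}=f^{\xi'}$ for every $f\in K^U$ whenever $\xi U'=\xi'U'$, the image of $\rho_H$ lies in the $K$-subspace of $K\langle H/U\rangle$ of vectors constant along the fibres of $H/U\to H/U'$, a subspace of dimension $\#(H/U')$. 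If $U'\supsetneq U$ this is strictly smaller than $n=\dim_K(K\otimes_{K^H}K^U)$ and $\rho_H$ cannot be injective. So you must actually prove $U'=U$, which your write-up only names as the thing to be proved.

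Here is how to extract it from $[K^U:K^H]=n$. You cannot apply Artin's theorem to $K^U$ directly, since $H$ need not stabilize $K^U$ when $U$ is not normal in $H$. Instead, take $K'\subseteq K$ to be the compositum of the finitely many $H$-conjugates $\xi(K^U)=K^{\xi U\xi^{-1}}$ of $K^U$ (there are at most $[H:N_H(U)]\le n$ of them, each of degree $n$ over $K^H$, so $[K':K^H]<\infty$). The group $H$ stabilizes $K'$ and acts on it through a finite quotient $\Gamma=H/N$, and $(K')^{\Gamma}=K'\cap K^H=K^H$; by Artin's theorem $K'/K^H$ is Galois with group $\Gamma$. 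Since $N\subseteq U'$ and $\mathrm{Gal}(K'/K^U)=U'/N$, the Galois correspondence gives $[K^U:K^H]=[\Gamma:U'/N]=\#(H/U')$. Combined with the hypothesis and $U\subseteq U'$, this forces $U'=U$, after which your determinant argument closes. (For what it is worth, the paper's own injectivity claim — presenting $\alpha$ as $\sum_i a_i[g_i]\otimes f_i$ with pairwise distinct $[g_i]$ and concluding $\rho_G(\alpha)=0\Rightarrow\alpha=0$ — tacitly treats each block of $K\langle G/H\rangle\otimes_{K^H}K^U$ as consisting of simple tensors, so it leans on the same distinctness; you have correctly isolated the missing step, you just have not supplied the proof of it.)
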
 
\begin{proof} Any element $\alpha$ of $K\langle G/H\rangle\otimes_{K^H}K^U$ can be presented as 
$\sum_ia_i[g_i]\otimes f_i$, where $[g_i]$ are determined uniquely if they are pairwise distinct. Then 
$\rho_G(\alpha)=0$ only if $\sum_{i:~[\xi]\bmod H=[g_i]}a_if_i^{\xi}=0$ for all $\xi$, i.e. only if $\alpha=0$. 
As $[K^U:K^H]=\#(H/U)$, $K$-dimensions of the source and the target of the embedding 
$\rho_H\colon K\langle H/H\rangle\otimes_{K^H}K^U=K\otimes_{K^H}K^U\to K\langle H/U\rangle$ coincide, 
so $\rho_H$ is bijective. Then the induction functor $K\langle G\rangle\otimes_{K\langle H\rangle}(-)$ 
transforms the bijection $\rho_H$ to the isomorphism $\rho_G$. \end{proof} 

\begin{example} \label{fin-ind-ex} Let $G=\Sy_{\Psi}$ for an infinite set $\Psi$, and $U\subseteq H\subseteq G$ 
be open subgroups of $G$, where the index of $U$ in $H$ is finite. By Lemma~\ref{open-subgrps-descr}, there 
exists a unique finite subset $J\subseteq\Psi$ such that $\Sy_{\Psi|J}$ is a subgroup of finite index in $U$. 
Then the assumption of Lemma~\ref{finite-index-split} is equivalent to the faithfulness of the $\Sy_J$-action 
on $K^{\Sy_{\Psi|J}}$. This is the case if, for a field $k$, $K$ contains 
one of the following fields: \begin{itemize} \item $k(x/y~|~x,y\in\Psi)$; \item $k(x-y~|~x,y\in\Psi)$ and 
either $k$ is not of characteristic 2, or $U$ coincides with $\Sy_{\Psi|\{x,y\}}$ for no $x\neq y$ in $\Psi$; 
\item $k\left(\frac{x-y}{x-z}~|~x,y,z\in\Psi\right)$, where $U$ coincides 
with $\Sy_{\Psi|\{x,y\}}$ for no $x\neq y$ in $\Psi$, 
\item $k\left(\frac{(w-x)(y-z)}{(x-y)(z-w)}~|~w,x,y,z\in\Psi\right)$, where $U$ coincides with 
$\Sy_{\Psi|\{x,y\}}$ for no $x\neq y$ in $\Psi$ and $U$ contains $\Sy_{\Psi|\{x,y,z\}}$ as 
a subgroup of index $\le 3$ for no pairwise distinct $x,y,z\in\Psi$. \end{itemize} 
Then Lemma~\ref{finite-index-split} asserts that, for any such open subgroup $U\subseteq G$, one has 
$K\langle G/U\rangle\cong K\langle\binom{\Psi}{s}\rangle^r$ for some integer $s\ge 0$ and $r\ge 1$. \end{example}

\begin{definition} \label{Gal-pair} A pair $(K,G)$ consisting of a permutation group $G$ and a $G$-field $K$ is 
called a {\sl Galois pair} if there is an open subgroup $U_0$ such that the following equivalent conditions hold: 
\begin{enumerate} \item \label{separ} for any open subgroup $U\subseteq U_0$ and any subgroup 
$\widetilde{U}\subseteq G$ containing $U$ as a proper subgroup of finite index one has $K^U\neq K^{\widetilde{U}}$; 
\item \label{faithf} for any open subgroup $U\subseteq U_0$ and any subgroup $\widetilde{U}\subseteq G$ 
containing $U$ as a proper normal subgroup of 
finite index the $\widetilde{U}/U$-action on $K^U$ is faithful. \end{enumerate} \end{definition} 

\begin{proof} (\ref{separ})$\Rightarrow$(\ref{faithf}). If $U\subseteq U_0$ is normal in $\widetilde{U}$ and 
$h\in\widetilde{U}/U$ acts trivially on $K^U$ then $K^U=K^{\langle g,U\rangle}$, where $g$ is a preimage of $h$ in 
$\widetilde{U}$, so $\langle g,U\rangle=U$, i.e. $g\in U$, so the $\widetilde{U}/U$-action on $K^U$ is faithful. 

(\ref{faithf})$\Rightarrow$(\ref{separ}). If $U\subseteq U_0$ and $\widetilde{U}\subseteq G$ contains $U$ as a 
subgroup of finite index, set $U':=\bigcap_{g\in\widetilde{U}}gUg^{-1}\subseteq U$. Then $\widetilde{U}/U'$ acts 
faithfully on $K^{U'}$, i.e. $\widetilde{U}/U'$ is the Galois group of $K^{U'}|K^{\widetilde{U}}$, and therefore, 
$K^U:=(K^{U'})^{U/U'}=(K^{U'})^{\widetilde{U}/U'}=:K^{\widetilde{U}}$ if and only if $U=\widetilde{U}$. \end{proof}

\begin{lemma} \label{Gal-pair-examples} Let $K$ be a field endowed with a faithful smooth $G$-action, 
where $G$ is either $\Sy_{\Psi}$ for an infinite set $\Psi$, or a quotient by a central subgroup of 
the automorphism group of an infinite-dimensional vector space $\Psi$ 
over a field. Then $(K,G)$ is a Galois pair. \end{lemma}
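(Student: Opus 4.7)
The plan is to verify condition~(\ref{faithf}) of Definition~\ref{Gal-pair}. I focus on $G=\Sy_\Psi$; the vector-space case is strictly analogous. First, by Lemma~\ref{open-subgrps-descr}, an open subgroup $U\subseteq G$ has the form $U=\{g\in N_G(G_T):g|_T\in H\}$ for a unique pair $(T,H)$ with $T\subset\Psi$ finite and $H\subseteq\Sy_T$. Since $N_G(G_T)=\Sy_T\times\Sy_{\Psi\smallsetminus T}$, one computes $N_G(U)$ as the preimage of $N_{\Sy_T}(H)$, whence $N_G(U)/U\cong N_{\Sy_T}(H)/H$ is finite. Because any $\widetilde U$ containing $U$ as a proper normal finite-index subgroup necessarily lies in $N_G(U)$, condition~(\ref{faithf}) for $U$ is equivalent to: the finite group $N_G(U)/U$ acts faithfully on $K^U$.

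Next, I reduce this to a single Galois-theoretic assertion. If $\Sy_T$ acts faithfully on $L:=K^{G_T}$, then $L|L^{\Sy_T}$ is a finite Galois extension with group $\Sy_T$, and classical Galois theory identifies the subgroup of $\Sy_T$ fixing $L^H=K^U$ pointwise with $H$; restricting to the setwise stabilizer $N_{\Sy_T}(H)$ shows that $N_{\Sy_T}(H)/H$ acts faithfully on $K^U$. Hence condition~(\ref{faithf}) will follow as soon as I exhibit a finite $T_0\subset\Psi$ such that $\Sy_T$ acts faithfully on $K^{G_T}$ for every finite $T\supseteq T_0$ (one then takes $U_0:=G_{T_0}$).

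To produce $T_0$, let $N_T\subseteq\Sy_T$ denote the kernel of $\Sy_T\to\Aut(K^{G_T})$. The crucial monotonicity is: for $T_*\subseteq T$ and $\sigma\in\Sy_{T_*}$, the inclusion $K^{G_{T_*}}\subseteq K^{G_T}$ yields the implication $\sigma\notin N_{T_*}\Rightarrow\sigma\notin N_T$. Now fix an arbitrary $3$-cycle $\sigma_0=(xyz)$ in $\Sy_\Psi$. By faithfulness of the $G$-action on $K$ together with smoothness $K=\bigcup_TK^{G_T}$, $\sigma_0$ must act non-trivially on $K^{G_{T_*}}$ for some finite $T_*\supseteq\{x,y,z\}$; enlarging if necessary, I may assume $|T_*|\ge 5$ and set $T_0:=T_*$. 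Then by monotonicity $\sigma_0\notin N_T$ for every $T\supseteq T_0$. Since $|T|\ge 5$, the proper normal subgroups of $\Sy_T$ are exactly $\{1\}$ and $A_T$; as $\sigma_0\in A_T$ but $\sigma_0\notin N_T$, one must have $N_T=\{1\}$, i.e., $\Sy_T$ acts faithfully on $K^{G_T}$. This completes the argument with $U_0=G_{T_0}$.

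The main obstacle is the third step: extracting the triviality of $N_T$ from the mere faithfulness of $G$ on $K$. The trick is to combine the smoothness of $K$ (which expresses $K$ as the union of the subfields $K^{G_T}$) with the classification of normal subgroups of $\Sy_T$ for $|T|\ge 5$, so that a single ``witness'' $3$-cycle $\sigma_0$ simultaneously rules out all potential nontrivial kernels. For $G$ a quotient by a central subgroup of the automorphism group of an infinite-dimensional vector space over a field $\mathbb F$, the same template applies with $\Sy_T$ replaced by $\mathrm{GL}(T)$ modulo the relevant centre: one fixes a transvection in place of the $3$-cycle and invokes the analogous classification of normal subgroups of $\mathrm{GL}_n(\mathbb F)/Z$ for $n$ sufficiently large.
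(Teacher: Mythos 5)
Your argument is correct for $G=\Sy_\Psi$, the case you spell out, and it takes a genuinely different (and arguably cleaner) route to the same destination as the paper's own proof. The common ingredient is the classification of normal subgroups of $\Sy_T$ for $|T|\ge 5$. Where you differ is in how the \emph{witness} is produced. The paper's proof begins by choosing a maximal $G$-closed $J$ with $K^{G_J}=K^G$, then picks a sequence $x_1,x_2,\dots$ in general position over $J$ and, invoking Lemma~\ref{strong-generation}, shows that the subgroup $\Sy_{\{x_1,\dots,x_5\}}\subset\Aut(J^{(5)}|J)$ already acts faithfully on $\sum_i K^{G_{J\cup\{x_i\}}}$; it then concludes faithfulness of $\Aut(J^{(N)})$ on $K^{G_{J^{(N)}}}$ for $N\ge 5$ because every non-central normal subgroup of $\Sy_{J^{(N)}}$ meets $\Sy_{\{x_1,\dots,x_5\}}$. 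Your proof avoids both the maximal-$J$ device and Lemma~\ref{strong-generation}: you exhibit a single $3$-cycle $\sigma_0$, use faithfulness plus smoothness to find one finite $T_*\ni x,y,z$ on which it acts nontrivially, and then the trivial monotonicity $K^{G_{T_*}}\subseteq K^{G_T}$ propagates this to every $T\supseteq T_*$; the classification of normal subgroups of $\Sy_T$ finishes the job. This is more economical. You are also more explicit than the paper about the reduction step, correctly using Lemma~\ref{open-subgrps-descr} to identify $N_G(U)/U\cong N_{\Sy_T}(H)/H$, and Artin's lemma plus the Galois correspondence to pass from faithfulness of $\Sy_T$ on $K^{G_T}$ to faithfulness of $N_{\Sy_T}(H)/H$ on $K^U$ — a step the paper's proof leaves implicit.

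Two small points worth flagging. First, you should note (as you implicitly did) that after choosing $T_*$ with $\sigma_0(a)\ne a$ for some $a\in K^{G_{T_*}}$, one may enlarge $T_*$ so that it is $\sigma_0$-stable; this is automatic in the set case since $\sigma_0$ fixes everything outside $\{x,y,z\}$. Second, your sketch for the vector-space case is only an outline: ruling out a nontrivial \emph{central} kernel requires a separate witness beyond the transvection, and the description of open subgroups you cite (Lemma~\ref{open-subgrps-descr}) is stated over a finite field. The paper's own proof is equally terse at this point (it also only rules out non-central kernels), so your level of detail matches the source, but if you wanted a complete treatment of the $\mathrm{GL}$ case you would need to address the centre explicitly.
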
 
\begin{proof} Let $J\subset\Psi$ be a maximal $G$-closed subset such that $K^{G_J}=K^G$, and 
$x_1,x_2,\dots$ be a sequence of elements of $\Psi$ in general position with respect to $J$, i.e. each 
$x_{i+1}$ is not in the $G$-closure $J^{(i)}$ of $J\cup\{x_1,\dots,x_i\}$. As $G_{J\cup\{x_i\}}$ and 
$G_{J\cup\{x_j\}}$ generate $G_J$ for all $i\neq j$, $K^{G_{J\cup\{x_i\}}}\cap K^{G_{J\cup\{x_j\}}}=K^{G_J}$, 
so the group $\Sy_{\{x_1,\dots,x_N\}}\subset\Aut(J^{(N)}|J)$ acts faithfully on 
$\sum_{i=1}^NK^{G_{J\cup\{x_i\}}}$. Then, for any $N\ge 5$, the group $\Aut(J^{(N)})$ acts faithfully on 
$K^{G_{J^{(N)}}}\supseteq\sum_{i=1}^NK^{G_{J\cup\{x_i\}}}$, 
since all non-central normal subgroups of $\Aut(J^{(N)})$ (i.e. $\Sy_{J^{(N)}}$ and $\mathfrak{A}_{J^{(N)}}$ 
if $G=\Sy_{\Psi}$; $\mathrm{SL}(J^{(N)})\cdot\langle\mbox{central subgroup}\rangle$ if $G=\mathrm{GL}(\Psi)$) 
meet the subgroup $\Sy_{\{x_1,\dots,x_N\}}\subset\Aut(J^{(N)}|J)$ non-trivially. \end{proof}

\begin{lemma} \label{semilin-gener} Let $K$ be a field and $G$ be a group of automorphisms 
of the field $K$. Let $B$ be such a system of open subgroups of $G$ that any open subgroup contains a 
subgroup conjugated, for some $H\in B$, to an open subgroup of finite index in $H$. Suppose that $(K,G)$ 
is a Galois pair. Then the objects $K\langle G/H\rangle$ for all $H\in B$ form a system of generators 
of the category $\Sm_K(G)$ of smooth $K$-semilinear representations of $G$. \end{lemma}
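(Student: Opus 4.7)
The plan is to prove generation by showing that for any $V\in\Sm_K(G)$ and any non-zero $v\in V$ there exists a $K\langle G\rangle$-morphism from a finite direct sum of objects in $\{K\langle G/H\rangle\}_{H\in B}$ to $V$ whose image contains $v$; taking such a morphism for each $v$ and summing then produces a surjection onto $V$ from a direct sum of generators.

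First I would exploit smoothness: the stabilizer $U_v:=\mathrm{Stab}_v$ is open, and the canonical semilinear morphism $K\langle G/U_v\rangle\to V$, $[\sigma]\mapsto v^{\sigma}$, has $v$ in its image, so it suffices to realize $K\langle G/U_v\rangle$ as a quotient of a finite direct sum of $K\langle G/H\rangle$'s. Next, let $U_0$ be the open subgroup provided by the Galois pair hypothesis. Applying the hypothesis on $B$ to the open subgroup $U_v\cap U_0$ yields $H\in B$, $g\in G$, and an open subgroup $H'\subseteq H$ of finite index with $gH'g^{-1}\subseteq U_v\cap U_0$. The inclusion $gH'g^{-1}\subseteq U_v$ gives a canonical surjection $K\langle G/gH'g^{-1}\rangle\twoheadrightarrow K\langle G/U_v\rangle$, and the coset bijection $\sigma H'\mapsto \sigma g^{-1}\cdot gH'g^{-1}$ induces a $K\langle G\rangle$-isomorphism $K\langle G/H'\rangle\xrightarrow{\sim}K\langle G/gH'g^{-1}\rangle$; hence the task reduces to presenting $K\langle G/H'\rangle$ as a finite direct sum of copies of $K\langle G/H\rangle$.

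For this last step I would invoke Lemma~\ref{finite-index-split}, which supplies a $K\langle G\rangle$-module isomorphism $K\langle G/H\rangle\otimes_{K^H}K^{H'}\cong K\langle G/H'\rangle$ as soon as the degree equality $[K^{H'}:K^H]=\#(H/H')$ is established; a $K^H$-basis of $K^{H'}$ then splits the left side as $K\langle G/H\rangle^{\oplus\#(H/H')}$. To verify the degree equality, I would pass to the normal core $N:=\bigcap_{h\in H}hH'h^{-1}$, an open normal subgroup of $H$ of finite index. Because the Galois pair property is manifestly invariant under $G$-conjugation of $U_0$ (its conditions involve only invariant subfields, which are permuted by $G$), $(K,G)$ is also a Galois pair with respect to $g^{-1}U_0 g$, which contains $N$. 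Applying the faithfulness form of the Galois pair condition to the pair $N\trianglelefteq H$ yields a faithful $H/N$-action on $K^N$, so Artin's theorem gives $[K^N:K^H]=\#(H/N)$; the analogous identity $[K^N:K^{H'}]=\#(H'/N)$ follows from the same faithful action restricted to $H'/N$, and multiplicativity of degrees delivers $[K^{H'}:K^H]=\#(H/H')$, as required.

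The main obstacle is this third stage: reconciling the Galois pair hypothesis, which controls only subgroups of a specific $U_0$, with the hypothesis on $B$, which only places $H'$ inside an arbitrary $G$-conjugate of some $H\in B$. The conjugation invariance of the Galois pair property combined with the passage to the normal core $N$ resolves this cleanly, and the remainder of the argument is formal.
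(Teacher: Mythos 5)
Your proof is correct, and it follows a genuinely different path from the paper's. The paper replaces $v$ by a $G$-translate $v'$ whose stabilizer contains an open subgroup $U$ normal of finite index in some $H\in B$ with $U\subseteq U_0$, and then works inside the finite-dimensional $K\langle H\rangle$-module $W$ spanned by the (finite) $H$-orbit of $v'$: Speiser's theorem (via Proposition~\ref{Satz1}), applied to the faithful $H/U$-action on $K^U$, gives $W=K\otimes_{K^H}W^H$, so $v'$ is hit by a morphism from a finite power of $K\langle G/H\rangle$. You instead stay entirely at the level of permutation modules: the surjection $K\langle G/gH'g^{-1}\rangle\twoheadrightarrow K\langle G/U_v\rangle$ together with the $G$-set isomorphism $G/H'\cong G/gH'g^{-1}$ reduces everything to the splitting $K\langle G/H'\rangle\cong K\langle G/H\rangle^{\oplus\#(H/H')}$, which Lemma~\ref{finite-index-split} supplies once $[K^{H'}:K^H]=\#(H/H')$ is known; your passage to the normal core $N$ and appeal to Artin's theorem correctly delivers that degree equality. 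Where the paper invokes Speiser's theorem through Proposition~\ref{Satz1}, you use Lemma~\ref{finite-index-split} plus Artin --- different technical tools from the same family. Your route also makes the conjugation bookkeeping explicit (the $B$-hypothesis only places $gH'g^{-1}$, not $H'$, inside $U_v\cap U_0$) via the observation that the Galois pair property is preserved under conjugating the witness $U_0$, and it proves the marginally stronger statement that $K\langle G/U_v\rangle$ is itself a quotient of a finite direct sum of the $K\langle G/H\rangle$'s, not merely that $v$ lies in the image of such a map.
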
 
\begin{proof} Let $V$ be a smooth semilinear representation of $G$. Then the stabilizer of any vector 
$v\in V$ is open, i.e., the stabilizer of some vector $v'$ in the $G$-orbit of $v$ admits an open subgroup 
$U\subseteq G$ that is of finite index in some $H\in B$. The $K$-linear envelope $W$ of the (finite) 
$H$-orbit of $v'$ is a smooth $K$-semilinear representation of $H$. We may assume that $U$ is normal 
in $H$. Then the elements of the $H$-orbit of $v'$ are fixed by $U$, so $W=K\otimes_{K^U}W^U$. 

If $(K,G)$ is a Galois pair then we may assume that $U\subseteq U_0$ as in Definition~\ref{Gal-pair}, 
so the $H/U$-action on $K^U$ is faithful. By Proposition~\ref{Satz1}, this implies that 
$W^U=K^U\otimes_{(K^U)^{H/U}}(W^U)^{H/U}=K^U\otimes_{K^H}W^H$. Then $W=K\otimes_{K^H}W^H$, 
i.e., $v'$ belongs to the $K$-linear envelope of the $K^H$-vector subspace fixed by $H$. 
As a consequence, there is a morphism from a finite cartesian power of $K\langle G/H\rangle$ 
to $V$, containing $v'$ (and therefore, containing $v$ as well) in the image. \end{proof}

\begin{example} \label{Omega-binom} Let $K$ be a field endowed with a smooth faithful $\Sy_{\Psi}$-action. 
Then (i) the assumptions of Lemma \ref{semilin-gener} hold if $B$ is the set of subgroups $\Sy_{\Psi,T}$ for 
a collection of finite subsets $T\subset\Psi$ of unbounded cardinality, (ii) $K\langle\binom{\Psi}{N}\rangle$ 
is isomorphic to $K\langle\Sy_{\Psi}/\Sy_{\Psi,T}\rangle$ for any $T$ of order $N$. 

Thus, any collection of the objects $K\langle\binom{\Psi}{N}\rangle$ for infinitely many $N$ form a system 
of generators of the category $\Sm_K(\Sy_{\Psi})$ of smooth $K$-semilinear representations of $\Sy_{\Psi}$. 

Let $K$ admit an $\Sy_{\Psi}$-equivariant field embedding 
$\iota\colon k\left(\frac{x-y}{x-z}~|~x,y,z\in\Psi\right)\to K$ for a prime field $k$. For 
each $n\in\mathbb Z$ and some $x\neq y$ in $\Psi$, denote by $(x-y)^nK$ the one-dimensional 
$K$-vector space spanned by $(x-y)^n$ endowed with the semilinear $\Sy_{\Psi}$-action 
$\sigma\colon(x-y)^n\mapsto(x-y)^n\cdot\left(\frac{x^{\sigma}-y^{\sigma}}{x-y}\right)^n$. 
Obviously, the isomorphism class of $(x-y)^nK$ is independent of $x,y\in\Psi$. Then 
$n\mapsto(x-y)^nK$ gives rise to homomorphism from $\mathbb Z$ to the Picard group of the 
isomorphism classes of invertible objects of $\Sm_K(\Sy_{\Psi})$, which is trivial if and only 
if $\iota$ extends to an $\Sy_{\Psi}$-equivariant field embedding of $k(u-v~|~u,v\in\Psi)$. 

For any object $V$ of $\Sm_K(\Sy_{\Psi})$, set $(x-y)^nV:=(x-y)^nK\otimes_KV$. Then 
\[\bigwedge\nolimits_K^NK\langle\Psi\rangle\xrightarrow[\sim]{[s_1]\wedge\dots\wedge[s_N]\mapsto 
\prod_{1\le i<j\le N}(s_i-s_j)\{s_1,\dots,s_N\}}(x-y)^{\binom{N}{2}}K\langle\binom{\Psi}{N}\rangle\] 
is an isomorphism. If $K\cong k(\Psi)$ for a $\Sy_{\Psi}$-subfield $k$ then 
$\bigwedge_K^NK\langle\Psi\rangle\cong\Omega^N_{K|k}$ (differential $N$-forms on $K$ over $k$), 
$[s_1]\wedge\dots\wedge[s_N]\leftrightarrow ds_1\wedge\dots\wedge ds_N$. \qed \end{example} 

\begin{lemma} \label{fixed_basis} Let $G$ be a group, $U\subseteq G$ be a subgroup, $A$ be 
a $G$-ring, $V$ be an $A\langle G\rangle$-module admitting an $A$-basis $B$ fixed by $U$, i.e. 
the natural map $A\otimes_{A^U}V^U\xrightarrow{\times}V$ is an isomorphism. 

Then there is a natural isomorphism of $A\langle G\rangle$-modules 
$\eta\colon A\langle G/U\rangle\otimes_{A^U}V^U\xrightarrow{\sim}V\langle G/U\rangle$. \end{lemma} 
\begin{proof} Define $\eta$ by $\sum_ia_i[g_i]\otimes v_i\mapsto\sum_ia_iv_i^{g_i}\otimes[g_i]$. 
Then $\eta$ is $G$-equivariant and transforms the $A$-basis $B\times(G/U)$ of the source to the $A$-basis 
$\bigsqcup_{[g]\in G/U}g(B)\otimes[g]$ of the target, so it extends by $A$-linearity to an isomorphism. \end{proof} 

\begin{lemma} \label{indecomp} Let $G$ be a group, $U\subseteq G$ be a subgroup such that 
$[U:U\cap(gUg^{-1})]=\infty$ unless $g\in U$, $K$ be a $G$-field, and $V$ be a $K\langle G\rangle$-module. 
Then {\rm(i)} $V\langle G/U\rangle\cong\bigoplus_BK\langle G/U\rangle$ if and only if $V^U$ contains 
a $K$-basis of $V;$ {\rm(ii)} $V\langle G/U\rangle$ is indecomposable if $V$ is invertible $($i.e. 
$\dim_KV=1)$. \end{lemma} 
\begin{proof} As $UgU=g(g^{-1}Ug)U$ consists of $[g^{-1}Ug:(g^{-1}Ug)\cap U]=[U:U\cap(gUg^{-1})]$ classes 
in $G/U$, the only finite $U$-orbit on $G/U$ is $\{[U]\}$, so $V\langle G/U\rangle^U\subseteq V\cdot[U]$, 
and (i) follows from Lemma~\ref{fixed_basis}. 

If $V$ is invertible then 
$\End_{K\langle G\rangle}(V\langle G/U\rangle)=\End_{K\langle G\rangle}(K\langle G/U\rangle)\cong 
K\langle G/U\rangle^U=K^U\cdot[U]$ is a field, so $V\langle G/U\rangle$ is indecomposable. \end{proof} 

For a wide class of categories, one can define the {\sl length} of an object. In particular, the length 
of a $G$-closed subset $X$ is the minimal cardinality of the subsets $S\subset\Psi$ such that $X$ is the 
$G$-closure of $S$. 

{\sc Examples.} 1. Let $\Psi$ be an infinite set, possibly endowed with a structure of 
a projective space. Let $G$ be the group of automorphisms of $\Psi$, respecting the structure, 
if any. Let $J$ be the $G$-closure of a finite subset in $\Psi$, i.e., a finite subset or 
a finite-dimensional subspace. Let $U$ be the stabilizer of $J$ in $G$. Then $G/U$ is identified 
with the set of all $G$-closed subsets in $\Psi$ of the same length as $J$. 

2. By Lemma \ref{indecomp}, the object $K\langle G/U\rangle$ of the category 
$\Sm_K(G)$ is indecomposable in the following examples: \begin{enumerate} \item 
$G$ is the group of projective automorphisms of an infinite projective space $\Psi$ (i.e., 
either $\Psi$ is infinite-dimensional, or $\Psi$ is defined over an infinite field), $U$ is 
the setwise stabilizer in $G$ of a finite-dimensional subspace $J\subseteq\Psi$. Then $G/U$ 
is identified with the Grassmannian of all subspaces in $\Psi$ of the same dimension as $J$. 
\item $G$ is the group of permutations of an infinite set $\Psi$, $U$ is 
the stabilizer in $G$ of a finite subset $J\subset\Psi$. Then $G/U$ is identified 
with the set $\binom{\Psi}{\#J}$ of all subsets in $\Psi$ of order $\#J$. 
\item $G$ is the automorphism group of an algebraically closed extension $F$ of 
a field $k$, $U$ is the stabilizer in $G$ of an algebraically closed subextension 
$L|k$ of finite transcendence degree. Then $G/U$ is identified with the set 
of all subextensions in $F|k$ isomorphic to $L|k$. \end{enumerate} 

\section{Structure of smooth semilinear representations of \texorpdfstring{$\Sy_{\Psi}$}{}} 

\subsection{The level filtration} \label{level_filtration} 
Let $G$ be a permutation group of a set $\Psi$, and $\mathcal A$ be a concrete category such 
that each object $H$ admits a unique minimal subobject containing a given subset of $H$.  
Let $\mathcal A(G)$ be the category whose objects are the objects of $\mathcal A$ with their underlying 
sets endowed with a smooth $G$-action; the morphisms are $G$-equivariant morphisms in $\mathcal A$. 

\begin{definition} Let $H$ be an object of $\mathcal A(G)$. For each integer $s\ge 0$, define 
\begin{itemize} \item ${}''N_s^GH={}''N_s^{\Psi,G}H$ as the minimal subobject of $H$ containing $H^{G_J}$ 
for all $J\in\binom{\Psi}{s};$ 
\item ${}'N_sH={}'N_s^GH={}'N_s^{\Psi,G}H$ as the minimal subobject of $H$ containing 
all subobjects $H'\subseteq H$ of $\mathcal A(G)$ such that there exists 
an open subgroup $U\subseteq G$ with the property that ${}''N_s^UH'=H';$

\item $N_sH=N_s^GH=N_s^{\Psi,G}H\in\mathcal A(G)$ as the minimal subobject in $H$ containing all 
subobjects $H'\in\mathcal A(G)$ in $H$ such that there exists an open subgroup $U\subseteq G$ (depending 
on $H'$) such that there are no proper subobjects of $\mathcal A$ in $H'$ containing ${H'}^{U'}$ 
for all $J\in\binom{\Psi}{s}$ and all open subgroups $U'\subseteq U_J$ of finite index. \end{itemize}

We say that $H$ is {\sl of level} $\le s$ {\sl with respect to} $\Psi$ if $H=N_s^{\Psi,G}H$. \end{definition}

\begin{example} The only examples of (full subcategories of) the category $\mathcal A(G)$ considered 
here are the category of smooth $G$-fields and $\Sm_A(G)$ for an associative ring $A$ endowed with 
a smooth $G$-action. 

If, for an integer $s\ge 0$, an object $V$ of 
$\Sm_A(G)$ is a quotient of a direct sum of objects $A\langle G/G_J\rangle$ for a collection of 
$J\in\binom{\Psi}{s}$ then $N_s^GV=V$. 

For any pair of subgroups $U,H\subseteq G$, restriction to $U$ splits $A\langle G/H\rangle$ as 
$\bigoplus_{\alpha\in U\backslash G/H}A\langle U\widetilde{\alpha}\rangle$, where 
$\widetilde{\alpha}\in G/H$ is any representative of $\alpha$, so $G/H\supseteq U\widetilde{\alpha}
\xrightarrow[\sim]{u\widetilde{\alpha}\mapsto[u]}U/(U\cap\widetilde{\alpha}H\widetilde{\alpha}^{-1})$. 
In particular, when $H=G_J$ we see that restriction to open subgroups $U$ preserves the level.

If $G$ is either $\Sy_{\Psi}$ or $\mathrm{GL}(\Psi)$, then 
$A\langle\binom{\Psi}{t}_q\rangle=\bigoplus_{\Lambda\subseteq I}M_{\Lambda}$, where $M_{\Lambda}$ is 
the free $A$-module on the set of all subobjects of $\Psi$ of length $t$ and meeting $I$ along $\Lambda$. 

\end{example}

\begin{lemma} \label{restriction_to_open} Let $G$ be a permutation group of a set $\Psi$, $A$ 
be an associative ring endowed with a smooth $G$-action, $V$ be an $A\langle G\rangle$-module. Then 
\begin{itemize} \item ${}''N_0^G$, ${}'N_0^G$ and $N_0^G$ are independent of $\Psi;$ 
\item ${}''N_s^G\subseteq{}'N_s^G\subseteq N_s^G$ are functors to $\Sm_A(G)$, 
${}''N_s^G\ {}''N_t^G={}''N_t^G\ {}''N_s^G={}''N_s^G$ and $N_s^GN_t^G=N_t^GN_s^G=N_s^G$ for $s\le t;$ 
\item ${}''N_0^GV\subseteq{}''N_1^GV\subseteq{}''N_2^GV\subseteq\dots$ 
and $N_0^GV\subseteq N_1^GV\subseteq N_2^GV\subseteq\dots$ are functorial filtrations, exhausting 
if $V$ is smooth. Both filtrations stabilize for finitely generated smooth $V$. \end{itemize} 

Let $U\subseteq G$ be an open subgroup. Then any smooth $A\langle G\rangle$-module $V$ is also smooth when 
considered as an $A\langle U\rangle$-module, and $N_s^GV\subseteq N_s^UV$. If $V\neq N_0^GV$ then 
$N_0^GV\neq N_0^UV$. 

Suppose that the set $U\backslash G/U'$ is finite for any open subgroup $U'\subseteq G$. 
Then any smooth finitely generated $A\langle G\rangle$-module 
is also finitely generated as an $A\langle U\rangle$-module. \end{lemma} 
\begin{proof} The equality ${}''N_s^G\ {}''N_t^G={}''N_t^G\ {}''N_s^G={}''N_s^G$ for $s\le t$ is evident. 
Then $V'={}''N_s^G$ satisfies ${}''N_s^GV'=V'$, so $V'={}''N_s^GV\subseteq N_s^GV$. 

Let us show that $N_s^GV\subseteq N_s^UV$, or equivalently, that $V'\subseteq N_s^UV$ for any subobject 
$V'\subseteq V$ in $\Sm_A(G)$ such that there exists an open subgroup $U'\subseteq G$ with the property 
that ${}''N_s^{U'}V'=V'$. By definition, the natural morphism $\bigoplus_{J\in\binom{\Psi}{s}}
A\langle U'/U'_J\rangle\otimes_{A^{U'_J}}(V')^{U'_J}\to V'$ in $\Sm_A(U')$ is surjective. Set $U'':=U\cap U'$. 
The left $U''$-action on $U'/U'_J$ gives a splitting 
$A\langle U'/U'_J\rangle=\bigoplus_{[g]\in U''\backslash U'/U'_J}A\langle U''gU'_J/U'_J\rangle.$ 
Using the equality $U''\cap gU'_Jg^{-1}=U''_{g(J)}$ and the $U''$-equivariant bijection 
$U''gU'_J/U'_J\xrightarrow{\sim}U''/(U''\cap gU'_Jg^{-1})$ ($[g]\mapsto[1]$), we see that 
the image of $A\langle U''gU'_J/U'_J\rangle\otimes_{A^{U'_J}}(V')^{U'_J}$ in $V$ 
($[ug]\otimes v\mapsto ugv$) coincides with the image of 
$A\langle U''/U''_{g(J)}\rangle\otimes_{A^{U'_{g(J)}}}(V')^{U'_{g(J)}}$ ($[u]\otimes gv\mapsto ugv$), 
i.e. the natural morphism $\bigoplus_{J\in\binom{\Psi}{s}}
A\langle U''/U''_J\rangle\otimes_{A^{U'_J}}(V')^{U'_J}\to V'$ is surjective, and therefore, 
the natural morphism $\bigoplus_{J\in\binom{\Psi}{s}}
A\langle U''/U''_J\rangle\otimes_{A^{U'_J}}(V')^{U''_J}\to V'$ is surjective as well, 
i.e. ${}''N_s^{U''}V'=V'$.

The $A\langle G\rangle$-modules $A\langle G/U'\rangle$ for all open subgroups $U'$ of $G$ form a generating 
family of the category of smooth $A\langle G\rangle$-modules. It suffices, thus, to check that 
$A\langle G/U'\rangle$ is a finitely generated $A\langle U\rangle$-module for all open subgroups $U'$ of $G$. 
Choose representatives $\alpha_i\in G/U'$ of the elements of $U\backslash G/U'$. Then $G/U'=\coprod_iU\alpha_i$, 
so $A\langle G/U'\rangle\cong\bigoplus_iA\langle U/(U\cap\alpha_iU'\alpha_i^{-1})\rangle$ 
is a finitely generated $A\langle U\rangle$-module. \end{proof}

\begin{example} 1. The finiteness assumption of Lemma~\ref{restriction_to_open} is valid for any 
{\sl Roelcke precompact} group $G$, i.e., such that the set $U'\backslash G/U'$ is finite for any open subgroup 
$U'\subseteq G$. Examples of such groups include the profinite groups, the symmetric groups $\Sy_{\Psi}$, 
the automorphism group of a vector space over a finite field [this is shown in Lemma~\ref{strong-generation}]; 
the open subgroups of any Roelcke precompact group, the products of Roelcke precompact groups. 

2. If an object $V$ of $\Sm_A(G)$ is a sum of $G$-invariant finitely generated $A$-submodules then 
$V=N_0^{\Psi}V$. [Indeed, any finitely generated $A$-module is fixed by an open subgroup of $G$.] 

3. If $G$ is locally precompact and an open subgroup of $G$ acts faithfully on a field $K$ then any object 
of $\Sm_K(G)$ is of level 0. [Indeed, let $U$ be an open precompact subgroup of $G$ acting faithfully on 
$K$. Then $V=K\otimes_{K^U}V^U$, so $V={}''N_0^UV$, and therefore, $V=N_0^GV$.] \end{example} 

For each $j\ge 0$, let $\Sm_K^{\le j}(G)$ be the full subcategory of $\Sm_K(G)$ of the objects of 
level $\le j$. Then the inclusion functor $\Sm_K^{\le j}(G)\hookrightarrow\Sm_K(G)$ admits a right adjoint 
$N_j\colon\Sm_K(G)\to\Sm_K^{\le j}(G)$, $V\mapsto N_jV$, the maximal subobject of $V$ of level $\le j$.

\begin{definition} Let $G$ be a group and $U\subseteq G$ be a subgroup. We say that $U$ is an 
{\sl f-subgroup} if there exist integers $N,m\ge 1$ and elements $g_{ij}\in G$ for $1\le i\le m$ 
and $1\le j\le N$ such that $\bigcup_{j=1}^Ng_{1j}Ug_{2j}U\cdots Ug_{mj}=G$. \end{definition} 

\begin{example} \label{examples_f-subgrp} The following subgroups $U$ of a group $G$ are f-subgroups: 
\begin{itemize} \item a subgroup of $G$ of finite index; \item an f-subgroup of an f-subgroup of $G$; 
\item $U$ contains an f-subgroup of $G$; \item an open subgroup of a Roelcke precompact group $G$; 
\item a parabolic subgroup of a linear algebraic group $G$ over an algebraically closed field; 
\item an open subgroup of $G$ that is either $\mathrm{GL}(V)$ 
for an infinite-dimensional vector space $V$ or $\Aut(F|k)$ for an algebraically closed field 
extension $F|k$ of infinite transcendence degree. \end{itemize} \end{example} 
\begin{proof} The first 4 cases are evident. The last one follows from Lemma~\ref{strong-generation}. 
The 5-th case follows from the Bruhat decomposition. \end{proof} 

\begin{lemma} \label{triv-on-open-subgrp} Let $G$ be a group acting on a left noetherian ring $A$, 
$U\subseteq G$ be an f-subgroup, $V$ be an $A\langle G\rangle$-module generated by $V^U$ as an 
$A$-module. Then $V$ is a sum of $A\langle G\rangle$-submodules noetherian as $A$-modules. \end{lemma} 
\begin{proof} For each finitely generated $A$-submodule $B\subseteq V$, denote by $B'$ an 
$A$-submodule of $V$ containing $B$ and generated by finitely many elements of $V^U$. Then $B'$ is 
an $A\langle U\rangle$-module. For any $v\in V$ and any finite collection $g_1,g_2,\dots,g_m\in G$, 
the $A$-module $A\langle g_1Ug_2U\cdots Ug_m\rangle v$ is contained in the finitely generated 
$A$-module $(g_1((g_2(\cdots(g_{m-2}((g_{m-1}((Ag_mv)'))'))'\cdots))'))'$. As $U$ is an f-subgroup 
of $G$, $A\langle G\rangle v=\sum_{j=1}^NA\langle g_{1j}Ug_{2j}U\cdots Ug_{mj}\rangle v$, 
so this shows that any cyclic $A\langle G\rangle$-submodule of $V$ is contained in a finitely 
generated $A$-module. As $A$ is noetherian, any cyclic $A\langle G\rangle$-submodule of $V$ is 
a noetherian $A$-module. \end{proof}

\begin{corollary} Let $G$ be a permutation group such that any open subgroup is an f-subgroup, $A$ be a 
left noetherian associative ring endowed with a smooth $G$-action, $V$ be an object of $\Sm_A(G)$. 
Then $N_0^GV$ is the sum of all subobjects of $V$ noetherian as $A$-modules. \end{corollary}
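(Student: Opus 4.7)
The plan is to prove both inclusions between $N_0^GV$ and $\Sigma := \sum_{W \text{ noetherian over } A} W$, where the sum is over $A\langle G\rangle$-submodules $W \subseteq V$ that are noetherian as $A$-modules.

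For the inclusion $\Sigma \subseteq N_0^GV$: given a subobject $W \subseteq V$ noetherian as an $A$-module, fix finitely many $A$-generators $w_1,\dots,w_n$. Smoothness of the $G$-action on $V$ guarantees that each stabilizer $\mathrm{Stab}_G(w_i)$ is open, hence so is their finite intersection $U$. Then $w_1,\dots,w_n \in W^U$, which forces $W = A\cdot W^U$, i.e.\ $\widetilde{N}_0^U W = W$; thus $W \subseteq N_0^GV$ by definition. No hypothesis on f-subgroups is needed here, nor does the noetherianness of $A$ play a role in this direction.

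For the reverse inclusion $N_0^GV \subseteq \Sigma$: by definition $N_0^GV$ is the sum of subobjects $V' \subseteq V$ for which there exists an open $U \subseteq G$ with $A\cdot (V')^U = V'$. Fix any such $V'$ with witnessing open subgroup $U$. Since the hypothesis of the corollary is exactly that every open subgroup of $G$ is an f-subgroup, Lemma~\ref{triv-on-open-subgrp} applies verbatim (with $U$ as the f-subgroup and $V'$ in place of $V$) and concludes that $V'$ is a sum of $A\langle G\rangle$-submodules noetherian as $A$-modules. Hence $V' \subseteq \Sigma$, and summing over all such $V'$ yields $N_0^GV \subseteq \Sigma$.

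The two containments together give the equality. There is no real obstacle here: the first inclusion is a direct application of smoothness of the action, and the second is essentially a translation of the preceding lemma. The only thing to keep track of is that the hypotheses of Lemma~\ref{triv-on-open-subgrp} (noetherian $A$, $V'$ generated over $A$ by $(V')^U$, and $U$ an f-subgroup) are all in force, which they are by assumption.
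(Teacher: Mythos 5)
Your argument is correct and follows essentially the same route as the paper: the forward inclusion uses smoothness to fix the finitely many $A$-generators of a noetherian submodule by an open subgroup, and the reverse inclusion unpacks the definition of $N_0^G$ and invokes Lemma~\ref{triv-on-open-subgrp}. One minor aside: the noetherianness of $A$ is not entirely idle in the first direction — you need $W$ noetherian to have finitely many $A$-generators, which is exactly the point — but that is implicit in your phrasing and does not affect the proof.
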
 
\begin{proof} If an object $V$ of $\Sm_A(G)$ is finitely generated as $A$-module then its generators are fixed 
by an open subgroup of $G$, and thus, $V$ is of level 0. Conversely, a level 0 object $V$ of $\Sm_A(G)$ is 
a sum of subobjects $V_i$ generated as $A$-modules by some elements fixed by an open subgroup $U_i$ of $G$. 
By Lemma~\ref{triv-on-open-subgrp}, $V_i$ is a sum of $A\langle G\rangle$-submodules noetherian as $A$-modules. 
\end{proof}

\subsection{Local structure of finitely generated objects in \texorpdfstring{$\Sm_K(\Sy_{\Psi})$}{}} 

\begin{lemma} \label{growth} Let $n\ge 0$ be an integer, and $\binom{\Psi}{n}$ be the $\Sy_{\Psi}$-orbit 
consisting of all subsets in $\Psi$ of order $n$. Let $A$ be a division ring endowed with a $\Sy_{\Psi}$-action. 
If $M$ is a non-zero $A\langle\Sy_{\Psi}\rangle$-submodule of $A\langle\binom{\Psi}{n}\rangle$ then 
the function $d_M\colon\mathbb Z_{\ge 0}\xrightarrow{\#J\mapsto\dim_{A^{\Sy_{\Psi|J}}}(M^{\Sy_{\Psi|J}})}
\mathbb Z_{\ge 0}\sqcup\{\infty\}\ ($for any finite subset $J\subset\Psi)$ grows 
as a polynomial of degree $n:$ $\binom{N-m}{n}\le d_M(N)\le\binom{N}{n}$ for some $m\ge 0$. \end{lemma} 
\begin{proof} This is a particular case of \cite[Lemma 3.2]{H90}. \end{proof}

For a sheaf of rings $\mathcal A$, we say `{\it an $\mathcal A$-sheaf of finite type}'\ instead of 
`{\it a finitely generated object of the category of $\mathcal A$-modules}'.
\begin{proposition} \label{local-structure} Let $\mathcal O$ be a non-constant 
sheaf of fields on $\mathrm{FI}^{\mathrm{op}}$, and $V$ be an $\mathcal O$-sheaf of finite 
type. Then there is an integer $n\ge 0$ such that for any finite set $J$ of order $\ge n$ 
\begin{itemize} \item the $\mathcal O_{+J}$-module $V_{+J}$ is isomorphic to 
$\bigoplus_{s=0}^\ell(\bar{h}^s_{\mathcal O_{+J}})^{\kappa_s}$ for some uniquely determined integers 
$\ell,\kappa_0,\dots,\kappa_{\ell}\ge 0;$ the integers $\ell$ and 
$\kappa_{\ell}=\dim_{\End(\bar{h}^{\ell}_{\mathcal O_{+J}})}\Hom(V,\bar{h}^{\ell}_{\mathcal O_{+J}})$ 
are independent of $J;$ 
\item $V$ can be embedded into a product of copies of $\mathcal O_{+J}$. \end{itemize} \end{proposition} 
\begin{proof} By Lemma~\ref{restriction_to_open}, for each finite set $J'$, $V_{+J'}$ is an 
$\mathcal O_{+J'}$-sheaf of finite type, so by Lemma~\ref{semilin-gener}, there is a surjection of 
$\mathcal O_{+J'}$-modules $\bigoplus_{s=0}^{\ell(J')}\bar{h}^s_{\mathcal O_{+J'}}{}^{m_{s,J'}}\to V_{+J'}$ 
for some $\ell(J')\ge 0$ and $m_{s,J'}\ge 0$. Choose such a surjection so that $\ell(J')$ is minimal and 
$m_{\ell(J'),J'}\ge 1$ is minimal as well. It is clear, that for each finite set $J''$ with $\#J''\ge\#J'$, 
one has $\ell(J'')\le\ell(J')$ and $m_{\ell(J''),J''}\le m_{\ell(J'),J'}$ if $\ell(J'')=\ell(J')$. 
Fix some $J'$ with minimal possible $\ell(J')$ and $m_{\ell,J'}\ge 1$. The pair $(\ell,m)$ depends 
only on $V$, so it makes sense to denote it $(\ell_V,m_V)$. We replace $\mathcal O$ by $\mathcal O_{+J'}$ 
and $V$ by $V_{+J'}$. Obviously, $(\ell_V,m_V)=(\ell_{V_{+J}},m_{V_{+J}})$ for any finite $J$. 

By induction on $\ell$, we prove the statement for all $V$ with $\ell_V=\ell$, the case $\ell=0$ being trivial. 

Let $\alpha\colon(\bar{h}^\ell_{\mathcal O})^m\to V$ and 
$\beta:\bigoplus_{s=0}^{\ell-1}(\bar{h}^s_{\mathcal O})^{m_s}\to V$ be two morphisms 
such that the morphism $\alpha+\beta\colon(\bar{h}^\ell_{\mathcal O})^m\oplus
\bigoplus_{s=0}^{\ell-1}(\bar{h}^s_{\mathcal O})^{m_s}\to V$ is surjective. 

Suppose that $\alpha$ is not injective, and $0\neq(\xi_1,\dots,\xi_m)\in(\ker\alpha)(I)$ for a finite $I$. 
Without loss of generality, we may assume that $\xi_m=\sum_{i=1}^ba_iI_i\neq 0$ for some $I_i\subseteq I$ 
of order $\ell$ and non-zero $a_i$. Set $J:=I\smallsetminus I_1$. Then $\alpha$ factors 
through a quotient $(\bar{h}^\ell_{\mathcal O})^m/\langle(\xi_1,\dots,\xi_m)\rangle$. On the other hand, 
the inclusion $(\bar{h}^\ell_{\mathcal O})^{m-1}\stackrel{(-,0)}{\hookrightarrow}(\bar{h}^\ell_{\mathcal O})^m$ 
induces a surjection of $\mathcal O_{+J}$-modules \[(\bar{h}^\ell_{\mathcal O})_{+J}^{m-1}
\oplus\bigoplus_{\varnothing\neq\Lambda\subseteq J}\bar{h}^{\ell-\#\Lambda}_{\mathcal O_{+J}}
\xrightarrow{(-,0)+\sum_{\Lambda}(\underbrace{0,\dots,0}_{m-1},-\cup\Lambda)}
((\bar{h}^\ell_{\mathcal O})^m/\langle(\xi_1,\dots,\xi_m)\rangle)_{+J}\] 
giving rise to a surjection of $\mathcal O_{+J}$-modules $(\bar{h}^\ell_{\mathcal O})_{+J}^{m-1}
\oplus\bigoplus_{s=0}^{\ell-1}\bar{h}^s_{\mathcal O_{+J}}{}^{\binom{\#J}{\ell-s}+m_s}\to V_{+J}$. 
This contradicts the minimality assumption on $m$, thus showing that $\alpha$ is injective. 

By Lemma \ref{growth} and the induction hypothesis, for any subsheaf $\mathcal F\subseteq\mathrm{Im}(\beta)$ 
there is some $C>0$ such that $\dim_{{\mathcal O}(J)}\mathcal F(J)\le C\cdot\#J^{\ell-1}$ for all 
sufficiently big $J$. For any non-zero subsheaf $\mathcal F\subseteq(\bar{h}^\ell_{\mathcal O})^m$, 
Lemma~\ref{growth} gives an estimate $\dim_{{\mathcal O}(J)}\mathcal F(J)\ge C'\cdot\#J^\ell$ for some 
$C'>0$ and all sufficiently big $J$. This implies that $\mathrm{Im}(\alpha)\cap\mathrm{Im}(\beta)=0$. 
Therefore, $V\cong(\bar{h}^\ell_{\mathcal O})^m\oplus\mathrm{Im}(\beta)$. By induction hypothesis, 
$\mathrm{Im}(\beta)_{+J}$ is of required type for some $J$, thus completing the induction step. 

By Krull--Remak--Schmidt--Azumaya Theorem, the integers $\ell,\kappa_0,\dots,\kappa_{\ell}\ge 0$ 
in the statement of the Proposition are uniquely determined. 

Fix a finite set $I$ such that $\mathcal O(I)\neq\mathcal O(\varnothing)$. By Remark~\ref{fixed_field_alg_closed}, 
$\mathcal O(\varnothing)$ is algebraically closed in $\mathcal O(J)$ for any $J$, so $\mathcal O(I)$ is 
transcendental over $\mathcal O(\varnothing)$, and therefore, $\mathcal O(I)^{\Sy_I}$ is transcendental 
over $\mathcal O(\varnothing)$ as well. By Lemma~\ref{strong-generation}, $\mathcal O(J)\neq\mathcal O(I)$ 
for any $I\subsetneqq J$. Then Lemma~\ref{embedding_into_prod} implies that, for each integer $s>0$, the 
sheaf $\bar{h}^s_{\mathcal O_{+I}}$ can be embedded into a product of copies of $\mathcal O_{+I}$. Now any 
$\mathcal O$-module $V$ embeds into $V_{+J}$ for any finite $J$, so if $V$ is an $\mathcal O$-sheaf of 
finite type we can use the above description of $V_{+J}$ for sufficiently big $J$ to conclude that, $V$ can 
be embedded into a product of copies of $\mathcal O_{+I}$ for any sufficiently large finite $I$. \end{proof} 

\begin{definition} \label{level-of-a-rep} For a finitely generated object in $\Sm_K(\Sy_{\Psi})$, 
we call the corresponding integer $\ell$ in Proposition~\ref{local-structure} 
the {\sl level} of $V$, and $\kappa_{\ell}$ the {\sl rank} of $V$. \end{definition} 

\begin{remark} \label{points_P} It is easy to show that any 
non-zero submodule of $K\langle\binom{\Psi}{\ell}\rangle$ is of level $\ell$, while the key point of the proof 
of the Proposition is that any quotient of $K\langle\binom{\Psi}{\ell}\rangle$ by a non-zero submodule is of 
level $<\ell$. (More generally, this is true for any uniform module of level $\ell$.) 

In particular, the objects $K\langle\binom{\Psi}{\ell}\rangle$ of $\Sm_K(\Sy_{\Psi})$ are 
monoform,\footnote{A non-zero object $M$ of abelian an category is called {\sl monoform} if for 
any non-zero subobject $N$ of $M$, there exists no common non-zero subobject of $M$ and $M/N$, see 
\cite[Definition 2.1]{Kanda}.} so their injective hulls are indecomposable and pairwise non-isomorphic. 
\end{remark}

\begin{lemma} \label{common_kernel} Let $G$ be a group, $K$ be a $G$-field, $A:=K\langle G\rangle$, $k:=K^G$, 
$V$ and $W$ be $A$-modules. Assume that {\rm (i)} $W$ and the quotient 
of $V$ by its arbitrary non-zero submodule are finite-dimensional over $K;$ {\rm (ii)} $\Hom_A(V,W)\neq 0;$ 
{\rm (iii)} $\End_A(V)$ is an infinite-dimensional division $k$-algebra. 

Then $V$ embeds into a product of copies of $W$. \end{lemma} 
\begin{proof} Let $M$ be the common kernel of the morphisms $V\to W$, so 
$\Hom_A(V,W)=\Hom_A(V/M,W)$. If $M\neq 0$ then $V/M$ is finite-dimensional over $K$, 
therefore, $\Hom_A(V/M,W)=((V/M)^{\vee}\otimes_KW)^G$. As 
$K\otimes_k((V/M)^{\vee}\otimes_KW)^G\xrightarrow{\times}(V/M)^{\vee}\otimes_KW$ is injective, 
the $k$-vector space $\Hom_A(V,W)$ 
is finite-dimensional. But $\Hom_A(V,W)$ is a non-zero right $\End_A(V)$-vector space, so it is 
infinite-dimensional as a $k$-vector space. This contradiction implies $M=0$. \end{proof}

\begin{lemma} \label{level-1_quotients} Let $\Psi$ be an infinite set, $x\in\Psi$, $K$ be 
an $\Sy_{\Psi}$-field, and $\mathcal L\in\mathrm{Pic}_K(\Sy_{\Psi|\{x\}})$. Then any non-zero 
submodule of $V:=K\langle\Sy_{\Psi}\rangle\otimes_{K\langle\Sy_{\Psi|\{x\}}\rangle}\mathcal L\ ($e.g. 
of $K\langle\Psi\rangle)$ is of finite codimension over 
$K;\ \End_{K\langle\Sy_{\Psi}\rangle}(V)\cong K^{\Sy_{\Psi|\{x\}}}=:F;$ for any $s\ge 1$, 
the cokernel of any non-zero morphism of $K\langle\Sy_{\Psi}\rangle$-modules 
$\varphi\colon K\langle\binom{\Psi}{s}\rangle\to V$ is at most $(s-1)$-dimensional. 

Suppose that $K$ is a non-trivial smooth $\Sy_{\Psi}$-field, $K$ is a cogenerator of $\Sm_K(\Sy_{\Psi})$ 
and $K\langle\Psi\rangle$ is injective. Then there are natural bijections
\begin{enumerate} \item between the $K\langle\Sy_{\Psi}\rangle$-submodules of
$K\langle\Psi\rangle$ of codimension $s$ and the $s$-dimensional $k$-vector subspaces in $F;$
\item \label{isomorphism_classes-level-1} between the isomorphism classes of
$K\langle\Sy_{\Psi}\rangle$-submodules of $K\langle\Psi\rangle$ of codimension $s$ and the
$F^{\times}$-orbits of the $s$-dimensional $k$-vector subspaces in the space $F$. \end{enumerate} \end{lemma}
\begin{proof} Any non-zero element of $V$ can be presented as 
$\alpha=\sum_{i=1}^sa_i[g_i]\otimes e$ for some $a_i\in K^{\times}$, a non-zero 
$e\in\mathcal L$ and some $g_i\in\Sy_{\Psi}$ pairwise distinct modulo $\Sy_{\Psi|\{x\}}$. 
For any $g\in\Sy_{\Psi}$ with $gx\notin\{g_2x,\dots,g_sx\}$, there exists 
$h\in\Sy_{\Psi|\{g_2x,\dots,g_sx\}}$ such that $hg_1x=gx$. Then, for any $a\in K$, one has 
$a[g]\otimes e\in(b\cdot h)\alpha+\sum_{i=2}^sK[g_i]\otimes e$, where 
$b=\frac{a}{a_1^h}\left(\frac{e}{e^{g^{-1}hg_1}}\right)^g\in K$. This means that $\alpha$ 
generates a $K\langle\Sy_{\Psi}\rangle$-submodule of codimension $\le s-1$. 

The image under $\varphi$ of any element of $\binom{\Psi}{s}$ is a non-zero linear combination 
of some elements $[g_1]\otimes e,\dots,[g_s]\otimes e$, 
so the cokernel of $\varphi$ is at most $(s-1)$-dimensional. 

One has $\End_{K\langle\Sy_{\Psi}\rangle}(V)=\Hom_{K\langle\Sy_{\Psi|\{x\}}\rangle}(\mathcal L,V)=
\left(\mathcal L^{\vee}\otimes_KK\langle\Sy_{\Psi}\rangle\otimes_{K\langle\Sy_{\Psi|\{x\}}\rangle}
\mathcal L\right)^{\Sy_{\Psi|\{x\}}}$, and therefore, $\End_{K\langle\Sy_{\Psi}\rangle}(V)=
\left(\mathcal L^{\vee}\otimes_KK[1]\otimes_K\mathcal L\right)^{\Sy_{\Psi|\{x\}}}=K^{\Sy_{\Psi|\{x\}}}$.

We already know that the $K$-vector space $K\langle\Psi\rangle/M$ is finite-dimensional for any 
non-zero $K\langle\Sy_{\Psi}\rangle$-submodule $M$ of $K\langle\Psi\rangle$, so assuming that $K$ is a 
period field, the $K\langle\Sy_{\Psi}\rangle$-module $K\langle\Psi\rangle/M$ is isomorphic to a sum 
of copies of $K$, and therefore, $M$ is the common kernel of the elements of a finite-dimensional 
$k$-vector subspace of $\Hom_{K\langle\Sy_{\Psi}\rangle}(K\langle\Psi\rangle,K)=F$. 
As $\End_{K\langle\Sy_{\Psi}\rangle}(K\langle\Psi\rangle)=F$ is a field, the 
$\End_{K\langle\Sy_{\Psi}\rangle}(K\langle\Psi\rangle)^{\times}$-action preserves 
the isomorphism classes of the common kernels. On the other hand, 
$\Hom_{K\langle\Sy_{\Psi}\rangle}(K\langle\Psi\rangle/M,K\langle\Psi\rangle)=0$ 
and, if $K\langle\Psi\rangle$ is injective, 
the restriction morphism $\End_{K\langle\Sy_{\Psi}\rangle}(K\langle\Psi\rangle)\to
\Hom_{K\langle\Sy_{\Psi}\rangle}(M,K\langle\Psi\rangle)$ is an isomorphism, i.e., 
any morphism of $K\langle\Sy_{\Psi}\rangle$-modules $M\to K\langle\Psi\rangle$ 
is induced by an endomorphism of $K\langle\Psi\rangle$. \end{proof} 

\begin{corollary} \label{admiss} Let $\mathcal O$ be a non-constant sheaf of fields on 
$\mathrm{FI}^{\mathrm{op}}$. Then, for any $\mathcal O$-sheaf of finite type $V$, there is 
a unique polynomial $P_V(X)$ over $\mathbb Q$ such that $P_V(\#J)=\dim_{\mathcal O(J)}V(J)$ 
for any sufficiently large finite set $J$. \end{corollary} 
\begin{proof} By Proposition~\ref{local-structure}, there is an isomorphism of $\mathcal O_{+J}$-modules 
$\bigoplus_{s=0}^{\ell}\bar{h}_{\mathcal O_{+J}}^s{}^{\kappa_s}\xrightarrow{\sim}V_{+J}$ 
for some finite set $J$ and integers $\ell,\kappa_0,\dots,\kappa_{\ell}\ge 0$. 
Then $\dim_{\mathcal O(T)}V(T)=\sum_{s=0}^\ell\kappa_s\binom{\#(T\smallsetminus J)}{s}$ for any 
finite set $T$ containing $J$, i.e. $P_V(X)=\sum_{s=0}^\ell\kappa_s\binom{X-\#J}{s}$ is the unique 
polynomial whose value at $\#T$ is $\dim_{\mathcal O(T)}V(T)$ for such $T$. \end{proof}

\begin{lemma} \label{stand-injection} Let $K$ be a field endowed with an $\Sy_{\Psi}$-action; 
$k:=K^{\Sy_{\Psi}}$; $Q_s$ and $\Lambda_s$ be $k$-vector spaces. 
Then any embedding of $K\langle\Sy_{\Psi}\rangle$-modules 
$\iota\colon\bigoplus_{s=0}^{\ell}K\langle\binom{\Psi}{s}\rangle\otimes_kQ_s\hookrightarrow 
M:=\bigoplus_{s\ge 0}K\langle\binom{\Psi}{s}\rangle\otimes_k\Lambda_s$ splits, and 
$\mathrm{Coker}(\iota)\cong\bigoplus_{s\ge 0}K\langle\binom{\Psi}{s}\rangle\otimes_k\Lambda_s'$ 
for subspaces $\Lambda_s'\subseteq\Lambda_s$ of codimension $\dim Q_s$. \end{lemma} 
\begin{proof} We proceed by induction on $\ell\ge 0$. As 
$\Hom_{K\langle\Sy_{\Psi}\rangle}(K\langle\binom{\Psi}{s}\rangle,K\langle\binom{\Psi}{t}\rangle)=0$ for $s<t$, 
$\iota$ factors through $\bigoplus_{s=0}^{\ell}K\langle\binom{\Psi}{s}\rangle\otimes_k\Lambda_s\subseteq M$. 
Fix a subset $S\subset\Psi$ of order $\ell$. Let $\iota_\ell$ be the restriction of $\iota$ 
to the summand $K\langle\binom{\Psi}{\ell}\rangle\otimes_kQ_\ell$. As $\End_{K\langle\Sy_{\Psi}\rangle}
(K\langle\binom{\Psi}{\ell}\rangle)\cong K^{\Sy_{\Psi,S}}$, it follows from Lemma~\ref{growth} that the 
composition of $\iota_\ell$ with the projection $M\to K\langle\binom{\Psi}{\ell}\rangle\otimes_k\Lambda_\ell$ 
is given on $[S]\otimes Q_\ell$ by an injective element $\alpha$ of 
$\Hom_k(Q_\ell,\Lambda_\ell)\otimes_kK^{\Sy_{\Psi,S}}$. 
Choose a $k$-vector subspace $\Lambda_\ell'\subseteq\Lambda_\ell$ such that 
$\Lambda_\ell\otimes_kK^{\Sy_{\Psi,S}}=\mathrm{Im}(\alpha)\dotplus\Lambda_\ell'\otimes_kK^{\Sy_{\Psi,S}}$. 
Then $M=\bigoplus_{s=0}^{\ell-1}K\langle\binom{\Psi}{s}\rangle\otimes_k\Lambda_s\oplus\mathrm{Im}(\iota_\ell)
\oplus K\langle\binom{\Psi}{\ell}\rangle\otimes_k\Lambda_\ell'\oplus
\bigoplus_{s>\ell}K\langle\binom{\Psi}{s}\rangle\otimes_k\Lambda_s$, which reduces the level. \end{proof} 

\begin{theorem}[Local semisimplicity] \label{Local-injectivity} Let $K$ be a field endowed with a faithful 
smooth $\Sy_{\Psi}$-action. Then for any finitely generated object $M$ of $\Sm_K(\Sy_{\Psi})$ and a subobject 
$M'\subseteq M$ there exist an open subgroup $U\subseteq\Sy_{\Psi}$ and a $K\langle U\rangle$-submodule 
$M_0\subseteq M$ such that $M=M'\oplus M_0$. \end{theorem} 
\begin{proof} By \cite[Theorem 3.18]{H90}, $M'$ is finitely generated. By Proposition~\ref{local-structure}, 
there exists an open subgroup $U\subseteq\Sy_{\Psi}$ such that the restriction to $U$ of the inclusion 
$\iota\colon M'\hookrightarrow M$ is as in Lemma~\ref{stand-injection}. Then by Lemma~\ref{stand-injection}, 
$\iota$ is split in $\Sm_K(U)$, i.e. $M=M'\oplus M_0$ for a $K\langle U\rangle$-submodule $M_0\subseteq M$. 
\end{proof} 

\begin{remark} Obviously, a surjection $\bigoplus_{s=0}^{\ell}K\langle\binom{\Psi}{s}\rangle^{\lambda_s}
\to\bigoplus_{s=0}^{\ell}K\langle\binom{\Psi}{s}\rangle^{\kappa_s}$ of finitely generated objects of 
$\Sm_K(\Sy_{\Psi})$ need not be split: $K\langle\Psi\rangle\to K$, $\sum_ia_i[x_i]\mapsto\sum_ia_i$. \end{remark} 

\begin{corollary} \label{additivity} In notation of Corollary~\ref{admiss}, for any exact sequence 
$0\to V_1\to V_2\to\cdots\to V_n\to 0$ of finitely generated objects of $\Sm_K(\Sy_{\Psi})$, 
one has $\sum_{i=1}^n(-1)^iP_{V_i}=0$. \end{corollary}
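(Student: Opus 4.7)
The plan is to reduce the long exact sequence to short exact sequences, use Theorem~\ref{Local-injectivity} to split them after restriction to the pointwise stabilizer of a sufficiently large finite set, and then read off the additivity on dimensions of fixed points.

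First, I would introduce the images $W_i := \mathrm{Im}(V_i \to V_{i+1})$ for $1 \le i \le n-1$ (with $W_0 = W_n := 0$), yielding the short exact sequences
\[
0 \to W_{i-1} \to V_i \to W_i \to 0, \qquad 1 \le i \le n.
\]
By Corollary~\ref{locally-noetherian}, each $V_i$ is noetherian as a $K\langle\Sy_{\Psi}\rangle$-module, hence each submodule $W_i\subseteq V_{i+1}$ is finitely generated. In particular the polynomials $P_{W_i}$ of Corollary~\ref{admiss} are defined.

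Next, I would apply Theorem~\ref{Local-injectivity} to each inclusion $W_{i-1}\hookrightarrow V_i$: this provides, for every $i$, an open subgroup $U_i\subseteq\Sy_{\Psi}$ and a decomposition $V_i = W_{i-1}\oplus M_i$ in $\Sm_K(U_i)$. By Lemma~\ref{open-subgrps-descr}, each $U_i$ contains $\Sy_{\Psi|T_i}$ for some finite $T_i\subset\Psi$. Set $T_0 := T_1\cup\cdots\cup T_n$. For any finite $T\subset\Psi$ containing $T_0$, restriction preserves the decompositions, so
\[
V_i\bigr|_{\Sy_{\Psi|T}} \cong W_{i-1}\bigr|_{\Sy_{\Psi|T}} \oplus W_i\bigr|_{\Sy_{\Psi|T}},
\]
as $K\langle\Sy_{\Psi|T}\rangle$-modules (using $M_i \cong V_i/W_{i-1} \cong W_i$). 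Taking $\Sy_{\Psi|T}$-fixed vectors, which is exact on split sequences, gives the identity of $K^{\Sy_{\Psi|T}}$-dimensions
\[
\dim_{K^{\Sy_{\Psi|T}}} V_i^{\Sy_{\Psi|T}} = \dim_{K^{\Sy_{\Psi|T}}} W_{i-1}^{\Sy_{\Psi|T}} + \dim_{K^{\Sy_{\Psi|T}}} W_i^{\Sy_{\Psi|T}}.
\]

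Finally, taking the alternating sum telescopes (since $W_0=W_n=0$) to
\[
\sum_{i=1}^{n}(-1)^i \dim_{K^{\Sy_{\Psi|T}}} V_i^{\Sy_{\Psi|T}} = 0
\]
for every finite $T\supseteq T_0$ large enough that Corollary~\ref{admiss} is in force for each $V_i$ and $W_i$. By that corollary, the left hand side equals $\sum_{i=1}^n (-1)^i P_{V_i}(\#T)$, so the polynomial $\sum_{i=1}^n(-1)^i P_{V_i}$ vanishes for all sufficiently large integer values and hence is identically zero. The only mild subtlety is ensuring the $W_i$ enter the argument as bona fide finitely generated objects with well-defined growth polynomials; this is handled by the local noetherianity result cited above, and no further obstacle arises.
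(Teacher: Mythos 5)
Your proof is correct and follows essentially the same approach as the paper's: the paper's one-line proof simply invokes Theorem~\ref{Local-injectivity} to split the sequence after restriction to an open subgroup. You have spelled out the natural implicit steps (decomposing the long exact sequence into short exact sequences, noting finite generation of the intermediate images via local noetherianity, taking a common open subgroup, passing to invariants, and telescoping).
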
 
\begin{proof} By Theorem~\ref{Local-injectivity}, this sequence splits 
after restriction to an open subgroup of $\Sy_{\Psi}$, thus giving the claim. \end{proof} 

\begin{corollary} In notation of Corollary~\ref{admiss}, for any object $V$ of $\Sm_K(\Sy_{\Psi})$, 
$N_sV$ consists of the elements $v\in V$ such that $\deg P_{K\langle\Sy_{\Psi}\rangle v}\le s$. 
In particular, the level filtration 
\begin{itemize} \item on $\Sm_K(\Sy_{\Psi})$ is strictly compatible with injections, i.e. 
$N_sV=V\cap N_sW$ for any inclusion $V\subset W$ in $\Sm_K(\Sy_{\Psi})\ ($so that $N_sE(V)=0$ whenever $N_sV=0);$ 
\item is strictly compatible 
with $\Sy_{\Psi}$-field extensions $\wK|K$: $N_s(\wK\otimes_KV)=\wK\otimes_KN_sV$. 
\end{itemize} 

$\Sm_K^{\le s}(\Sy_{\Psi})$ is a cocomplete Serre subcategory of $\Sm_K(\Sy_{\Psi})$, 
non-complete if $K$ is non-trivial. \end{corollary}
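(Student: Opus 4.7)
The plan is to first establish the characterization
\[N_sV=\{v\in V:\deg P_{K\langle\Sy_\Psi\rangle v}\le s\},\]
from which the remaining assertions follow. For $v\in V$, write $W_v:=K\langle\Sy_\Psi\rangle v$, which is finitely generated by smoothness, so Corollary~\ref{admiss} defines its Hilbert polynomial $P_{W_v}$. If $\deg P_{W_v}\le s$, Proposition~\ref{local-structure} yields a finite $J$ with $(W_v)_{+J}\cong\bigoplus_{t\le s}(\bar h^t_{\mathcal O_{+J}})^{\kappa_t}$; each summand is generated over $K\langle\Sy_{\Psi|J}\rangle$ by vectors fixed by $\Sy_{\Psi|J\cup J'}$ with $|J'|\le s$, so $\widetilde N_s^{\Sy_{\Psi|J}}W_v=W_v$ and $v\in W_v\subseteq N_sV$. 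For the converse, write $v=\sum_iv_i$ with $v_i\in V_i'$ and $V_i'=\widetilde N_s^{U_i}V_i'$, finitely many; since $W_v\subseteq\sum_iW_{v_i}$, it suffices to bound $\deg P_{W_{v_i}}$. By Lemma~\ref{restriction_to_open}, $W_{v_i}|_{U_i}$ is finitely generated as $K\langle U_i\rangle$-module, say $W_{v_i}|_{U_i}=\sum_\ell K\langle U_i\rangle w_\ell$. Each $w_\ell\in V_i'$ expands as $w_\ell=\sum_mg_{\ell m}z_{\ell m}$ with $g_{\ell m}\in U_i$ and $z_{\ell m}\in(V_i')^{(U_i)_{J_{\ell m}}}$, $|J_{\ell m}|\le s$, giving $W_{v_i}|_{U_i}\subseteq\sum_{\ell,m}K\langle U_i\rangle z_{\ell m}$; each $K\langle U_i\rangle z_{\ell m}$ is a $K\langle U_i\rangle$-quotient of $K\langle U_i/(U_i)_{J_{\ell m}}\rangle$, whose $\Sy_{\Psi|T}$-fixed $K^{\Sy_{\Psi|T}}$-dimension is $O(|T|^s)$, so $\deg P_{W_{v_i}}\le s$.

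With the characterization in hand, strict compatibility $N_sV=V\cap N_sW$ for $V\subset W$ is immediate since $K\langle\Sy_\Psi\rangle v$ does not depend on the ambient module; the consequence $N_sE(V)=0$ whenever $N_sV=0$ follows from $N_sE(V)\cap V=N_sV=0$ and essentiality of $V\hookrightarrow E(V)$. For a smooth $\Sy_\Psi$-field extension $\wK|K$, reduce to $V$ finitely generated (since $N_s$ and $\wK\otimes_K(-)$ both commute with directed unions of subobjects); then Proposition~\ref{local-structure} gives $V_{+J}\cong\bigoplus_t(\bar h^t_{\mathcal O_{+J}})^{\kappa_t}$, tensoring with $\wK$ yields the same decomposition over the sheaf $\widetilde{\mathcal O}$ of fields for $\wK$, and both $\wK\otimes_KN_sV$ and $N_s(\wK\otimes_KV)$ are identified with the sum of summands with $t\le s$.

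The Serre property for $\Sm_K^{\le s}(\Sy_\Psi)$ follows from the characterization together with Corollary~\ref{additivity}: subobjects are handled by strict compatibility, while for a short exact sequence $0\to A\to B\to C\to 0$ of finitely generated objects the relation $P_B=P_A+P_C$ gives $\deg P_B\le\max(\deg P_A,\deg P_C)$ and $\deg P_C\le\max(\deg P_A,\deg P_B)$, settling closure under extensions and quotients. Cocompleteness is then immediate, because any element in a direct sum or filtered colimit lies in a finite subsum, so $W_v\subseteq\sum_iW_{v_i}$ has $\deg P_{W_v}\le\max_i\deg P_{W_{v_i}}\le s$. For non-completeness when $K$ is non-trivial, the smooth product in $\Sm_K(\Sy_\Psi)$ of countably many copies of $K\langle\Psi\rangle$ contains elements $(v_n)_n$ with $v_n$ supported on a common finite subset $J\subset\Psi$ of arbitrarily large cardinality; such an element generates a $K\langle\Sy_\Psi\rangle$-submodule of polynomial degree up to $|J|$, so the product does not lie in $\Sm_K^{\le s}$ for any fixed $s$.

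The main obstacle lies in the converse direction of the characterization: the witnessing subobjects $V_i'$ need not be finitely generated, so one cannot apply Proposition~\ref{local-structure} to them directly. The fix is to replace $V_i'$ by the finitely generated $G$-subobject $W_{v_i}\subseteq V_i'$ and exploit finite generation as a $K\langle U_i\rangle$-module (Lemma~\ref{restriction_to_open}), together with the explicit bound $\dim_{K^{\Sy_{\Psi|T}}}K\langle U_i/(U_i)_J\rangle^{\Sy_{\Psi|T}}=O(|T|^s)$ for $|J|\le s$, which encapsulates the combinatorics of injections $J\hookrightarrow T$ modulo the pointwise stabilizer in $U_i$.
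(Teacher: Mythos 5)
Your proposal is correct and follows the paper's approach in spirit, relying on Proposition~\ref{local-structure}, Lemma~\ref{growth}, and Corollary~\ref{additivity}. The main mathematical value you add, beyond spelling out what the paper leaves implicit, is in the converse inclusion $N_sV\subseteq\{v:\deg P_{K\langle\Sy_\Psi\rangle v}\le s\}$: the witnessing subobjects $V_i'$ in the definition of $N_s$ need not be finitely generated, so one cannot apply Proposition~\ref{local-structure} to them directly, and a naive count of the sets $J$ fixing the generators $z_{\ell m}$ only gives degree $\le s+\#J_{U_i}$. Your fix --- restrict $W_{v_i}$ to $U_i$, invoke Lemma~\ref{restriction_to_open} to get finite generation over $K\langle U_i\rangle$, and then compare $\Sy_{\Psi|T}$-fixed dimensions with the permutation modules $K\langle U_i/(U_i)_{J_{\ell m}}\rangle$, $\lvert J_{\ell m}\rvert\le s$, whose fixed-point growth is genuinely $O(\lvert T\rvert^s)$ --- is exactly what is needed, and you correctly identified it as the main obstacle. (Implicitly you also need $\dim(\mathrm{quotient})^{\Sy_{\Psi|T}}\le\dim(\mathrm{source})^{\Sy_{\Psi|T}}$ for large $T$; this follows from Theorem~\ref{Local-injectivity} applied within $\Sm_K(\Sy_{\Psi|J_{U_i}})$, and is worth a word.) The paper's two-sentence proof only treats finitely generated $V$ and does not visibly handle this subtlety.

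One loose end: for non-completeness your witness is under-specified. Saying that $(v_n)_n$ with common support $J$ of large cardinality generates a submodule ``of polynomial degree up to $\lvert J\rvert$'' gives an upper bound $\le\lvert J\rvert$ (it is a quotient of $K\langle\Sy_\Psi/\Sy_{\Psi|J}\rangle$), not the needed lower bound $>s$. You must actually exhibit $\alpha$ whose cyclic module has degree $>s$; e.g.\ using that $K$ non-trivial forces $K^{\Sy_{\Psi|J}}$ to be infinite over $k=K^{\Sy_\Psi}$ and arranging the coordinates $v_n$ so that the translates $\sigma\alpha$ for $\sigma(J)\subseteq T$ are $K^{\Sy_{\Psi|T}}$-linearly independent (a Vandermonde-type choice works). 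As written this step asserts the conclusion rather than proving it, though the claim itself is correct.
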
 

\begin{proof} We may assume that $V$ is finitely generated, so, for a finite $J\subset\Psi$, its 
restriction to $\Sy_{\Psi|J}$ splits as $\bigoplus_jK\langle\binom{\Psi\smallsetminus J}{j}\rangle^{\kappa_j}$. 
By Lemma~\ref{growth}, any element outside of 
$\bigoplus_{j\le s}K\langle\binom{\Psi\smallsetminus J}{j}\rangle^{\kappa_j}$ generates a 
$K\langle\Sy_{\Psi\smallsetminus J}\rangle$-submodule that grows as a polynomial of degree $>s$. 

By Corollary~\ref{additivity}, the polynomial $P$ is additive on exact sequences, so (i) 
the level of a subquotient of $V$ does not exceed the level of $V$; (ii) if $V$ is an extension of $V_1$ 
by $V_2$ then the level of $V$ coincides with the maximum of the levels of $V_1$ and $V_2$. 
\end{proof}

\subsection{\texorpdfstring{$K_0$}{K0} of the subcategory 
of compact objects in \texorpdfstring{$\Sm_K(\Sy_{\Psi})$}{}} 
For a Roelcke precompact $G$, denote by $\Sm_K^{\mathrm{fp}}(G)$ the full subcategory of compact 
objects in $\Sm_K(G)$. 

For each extension $L|K$ of smooth $G$-fields, the faithful functor 
$L\otimes_K(-)\colon\Sm_K^{\mathrm{fp}}(G)\to\Sm_L^{\mathrm{fp}}(G)$ induces a $\lambda$-ring 
homomorphism $K_0(\Sm_K^{\mathrm{fp}}(G))\to K_0(\Sm_L^{\mathrm{fp}}(G))$. 
This gives rise to a functor $\beta_G\colon K\mapsto K_0(\Sm_K^{\mathrm{fp}}(G))$ from the category 
$\mathrm{SmFields}(G)$ of smooth $G$-fields to the category of $\lambda$-rings.

Denote by $\Lambda_{{\mathbb Z}}$ the category of $\lambda$-rings endowed with a surjective $\lambda$-ring 
homomorphism onto the binomial ring $\mathrm{Int}(\mathbb Z)\subset\mathbb Q[X]$ of integer-valued 
polynomials in one variable. Its final object is the identity automorphism of $\mathrm{Int}(\mathbb Z)$. 
Let $\mathrm{SmFields}^+(G)$ be the subcategory of non-trivial $G$-fields. 

The following result generalizes \cite[Theorem 4.18]{NagpalSnowden}. 
\begin{theorem} \label{lambda-homomorphism} The functor $\beta_{\Sy_{\Psi}}$ factors through a functor 
$\mathrm{SmFields}^+(\Sy_{\Psi})\xrightarrow{\xi}\Lambda_{{\mathbb Z}}$, sending to the final object the fields 
$F_{k,\Psi}$ for all non-trivial regular field extensions $F|k$. \end{theorem} 
\begin{proof} By Corollary~\ref{admiss}, for any smooth non-trivial $\Sy_{\Psi}$-field $K$ and each object 
$V$ of $\Sm_K^{\mathrm{fp}}(\Sy_{\Psi})$, there is a unique polynomial $P_V(X)\in\mathbb Q[X]$ such that 
$P_V(\#T)=\dim_{K^{\Sy_{\Psi|T}}}V^{\Sy_{\Psi|T}}$ for any sufficiently large finite subset $T\subset\Psi$. 

By Corollary~\ref{additivity}, $P_{V_2}=P_{V_1}+P_{V_3}$ for any short exact sequence 
$0\to V_3\to V_2\to V_1\to 0$ in $\Sm_K^{\mathrm{fp}}(\Sy_{\Psi})$, and thus, $V\mapsto P_V$ 
induces an additive homomorphism $\varphi_K$ from $K_0(\Sm_K^{\mathrm{fp}}(\Sy_{\Psi}))$. 

As the polynomials $P_{K\langle\binom{\Psi}{s}\rangle}(X)=\binom{X}{s}$ are linearly independent for all 
$s\ge 0$, the classes of $K\langle\binom{\Psi}{s}\rangle$ are linearly independent as well. As the polynomials 
$\binom{X}{s}$ additively generate the target, $\varphi_K$ is surjective. 

To check the multiplicativity and the compatibity with $\lambda$-structures, it suffices to notice that 
$(V_1\otimes_KV_2)^{\Sy_{\Psi|J}}=V_1^{\Sy_{\Psi|J}}\otimes_{K^{\Sy_{\Psi|J}}}V_2^{\Sy_{\Psi|J}}$ 
and $(\bigwedge^r_KV)^{\Sy_{\Psi|J}}=\bigwedge^r_{K^{\Sy_{\Psi|J}}}V^{\Sy_{\Psi|J}}$ 
for all sufficiently big $J$. Namely, by Proposition~\ref{local-structure}, this is 
reduced to the case of $V_i=K\langle\binom{\Psi}{s_i}\rangle$, which is clear. 

If $K=F_{k,\Psi}$ then, as in \cite{NagpalSnowden}, $V$ admits a finite 
resolution by finite direct sums of $K\langle\binom{\Psi}{s}\rangle$'s, so the classes of 
$K\langle\binom{\Psi}{s}\rangle$'s form an additive basis of $K_0(\Sm_K^{\mathrm{fp}}(\Sy_{\Psi}))$, 
and thus, $\varphi_K$ is bijective. \end{proof}

\begin{example} According to Example~\ref{fin-ind-ex}, the classes of $\Ka\langle\binom{\Psi}{s}\rangle$ for all 
$s\ge 0$ generate a subring of $K_0(\Sm_{\Ka}^{\mathrm{fp}}(\Sy_{\Psi}))$ isomorphic to $\mathrm{Int}(\mathbb Z)$; 
the classes of $x^{\lambda}\Ka$ for all $\lambda\in\Xi$ generate a subring isomorphic to the group ring 
$\mathbb Z[\mathrm{Pic}_{\Ka}]$. Let $\varepsilon\colon\mathbb Z[\mathrm{Pic}_{\Ka}]\to\mathbb Z$ be 
given by $\sum_im_i[\mathcal L_i]\mapsto\sum_im_i$. By Lemma~\ref{fixed_basis}, 
$([\Ka]-[x^{\lambda}\Ka])[\Ka\langle\binom{\Psi}{s}\rangle]=0$ for all $s>0$, 
so $K_0(\Sm_{\Ka}^{\mathrm{fp}}(\Sy_{\Psi}))$ is isomorphic to the ring 
$\mathrm{Int}(\mathbb Z)\oplus\ker\varepsilon$ with the multiplication given by 
$(P,\alpha)(Q,\beta)=(PQ,Q(0)\alpha+P(0)\beta+\alpha\beta)$. \end{example}

\subsection{Injective objects and resolutions} \label{Injective_objects} 
\begin{proposition} \label{standard_sheaf_resolution} Let $\mathcal O$ be a non-constant sheaf of fields on 
$\mathrm{FI}^{\mathrm{op}}$. Then, for any $\mathcal O$-sheaf of finite type $W$, 
there is a finite set $J$ and a resolution of the form 
\begin{equation} \label{alm_stand_sheaf_resol} 0\to\mathcal O_{+J}\otimes_{\mathcal O}W\to
\bigoplus_{s=0}^N\bar{h}^s_{\mathcal O_{+J}}{}^{\kappa_{0,s}}\to
\bigoplus_{s=0}^{N-1}\bar{h}^s_{\mathcal O_{+J}}{}^{\kappa_{1,s}}
\to\cdots\to\bigoplus_{s=0}^1\bar{h}^s_{\mathcal O_{+J}}{}^{\kappa_{N-1,s}}
\to\mathcal O_{+J}^{\kappa_N}\to 0\end{equation} 
for some integer $\kappa_{ij}\ge 0$, where $N$ is the level of $W$. \end{proposition} 
\begin{proof} We construct recursively an ascending sequence of finite sets 
$\varnothing=:J_{-1}\subseteq J_0\subseteq J_1\subseteq\cdots\subseteq J_N=:J$ together 
with a sequence of $\mathcal O_{+J_{i-1}}$-modules $W_i$ of level $\le N-i$, where $W_0:=W$. 

Choose $J_i$ so that $(W_i)_{+(J_i\smallsetminus J_{i-1})}$ is as in 
Proposition~\ref{local-structure}. Define $W_{i+1}$ as the cokernel of the morphism 
$\mathcal O_{+J_i}\otimes_{\mathcal O_{+J_{i-1}}}W_i\hookrightarrow(W_i)_{+(J_i\smallsetminus J_{i-1})}$. 
Then $W_{i+1}$ is of level $<N-i$ and the sequence 
\[0\to\mathcal O_{+J}\otimes_{\mathcal O_{+J_{i-1}}}W_i\hookrightarrow\mathcal O_{+J}\otimes_{\mathcal O_{+J_i}}
(W_i)_{+(J_i\smallsetminus J_{i-1})}\to\mathcal O_{+J}\otimes_{\mathcal O_{+J_i}}W_{i+1}\to 0\] 
is exact. Combining all these short exact sequences we get a resolution 
\begin{multline*} 0\to\mathcal O_{+J}\otimes_{\mathcal O_{+J_{-1}}}W_0\to\mathcal O_{+J}\otimes_{\mathcal O_{+J_0}}
(W_0)_{+(J_0\smallsetminus J_{-1})}\to\mathcal O_{+J}\otimes_{\mathcal O_{+J_2}}
(W_1)_{+(J_1\smallsetminus J_0)}\to\cdots\\ 
\cdots\to\mathcal O_{+J}\otimes_{\mathcal O_{+J_{N-1}}}
(W_{N-1})_{+(J_{N-1}\smallsetminus J_{N-2})}\to\mathcal O_{+J}\otimes_{\mathcal O_{+J_N}}
(W_N)_{+(J_N\smallsetminus J_{N-1})}\to 0\end{multline*} 
of the required type (\ref{alm_stand_sheaf_resol}). \end{proof} 

\begin{lemma} \label{simples_closed-pts_cogenerators} Let $C$ be a collection of objects in 
an abelian category $\mathcal A$ such that {\rm (i)} $\End(P)$ is a division ring for all $P\in C$, 
{\rm (ii)} $\Hom(P,P')=0$ for all $P\neq P'$ in $C$. \begin{enumerate} 
\item \label{simple_cogenerators} Suppose in addition that $C$ is cogenerating. 
Then all objects in $C$ are injective. 
\item \label{cogenerators_for_fin_gen} Suppose in addition that $\mathcal A$ is a locally finitely generated 
Grothendieck category, while any finitely generated object of $\mathcal A$ admits an embedding into 
a product of objects in $C$. Then each finitely generated object in $C$ is injective. If all objects in $C$ 
are finitely generated then $C$ cogenerates $\mathcal A$. \end{enumerate} \end{lemma} 
\begin{proof} Assuming that $C$ is cogenerating, if $P_0\in C$ and $i\colon P_0\to V$ is a monomorphism 
then there exists a morphism $f\colon V\to P$ for some $P\in C$ such that $f\circ i\colon P_0\to P$ is non-zero, 
so $P=P_0$ and $(f\circ i)^{-1}\circ f\colon V\to P_0$ is a splitting of $i$, i.e., $P_0$ is injective. 

Assuming that $\mathcal A$ is locally finitely generated, if $P_0\in C$ is finitely generated and admits a 
non-trivial essential extension\footnote{Recall, that an injection $M\hookrightarrow N$ in an abelian category 
is called an {\sl essential extension} if any non-zero subobject of $N$ has a non-zero intersection with the 
image of $M$, cf. \cite[Ch. 6, \S2]{BucurDeleanu}.} $E$ then $E$ can be chosen to be finitely generated as well. 
Any morphism $E\to P$ for $P\neq P_0$ in $C$ vanishes on $P_0$, so there exists a morphism $f\colon E\to P_0$ 
non-vanishing on $P_0$. But $\ker f\cap P_0=0$, so $\ker f=0$, i.e. $E=P_0$, and thus, $P_0$ is injective. Now, 
if all objects in $C$ are finitely generated then they are injective, so for any object $W$ and each 
finitely generated $W_0\subseteq W$ there is an embedding $W_0\hookrightarrow\prod_{i\in I(W_0)}Q_{W_0}(i)$ 
into a product of objects $Q_{W_0}(i)$ in $C$ indexed by a set $I(W_0)$, while this embedding extends to 
a morphism $f_{W_0}\colon W\to\prod_{i\in I(W_0)}Q_{W_0}(i)$, thus giving an embedding 
$(f_{W_0})\colon W\to\prod_{W_0}\prod_{i\in I(W_0)}Q_{W_0}(i)$. \end{proof} 

\begin{lemma} \label{injectivity-K-condition} Let $\mathcal O$ be a non-constant sheaf of fields, and $\mathcal V$ 
be a level 0 simple $\mathcal O$-module. Then $\mathcal V$ is injective if and only if, for any finite set $J$, 
any $\mathcal O$-module embedding $\iota\colon\mathcal V\hookrightarrow\mathcal O_{+J}$ is split. \end{lemma} 
\begin{proof} The `only if' direction is trivial. In the opposite direction, let an $\mathcal O$-module 
$\mathcal E$ be an essential extension of $\mathcal V$. As we are going to figure out whether 
$\mathcal E$ can be non-trivial, we may assume that $\mathcal E$ is finitely generated. By Proposition 
\ref{local-structure}, there is a finite set $J$ and an isomorphism of $\mathcal O_{+J}$-modules 
$\mathcal E_{+J}\xrightarrow{\sim}\bigoplus_{s=0}^N(\bar{h}^s_{\mathcal O_{+J}})^{\kappa_s}$ 
for some integer $N,\kappa_0,\dots,\kappa_N\ge 0$. Then $\mathcal E$ is a finitely generated 
$\mathcal O$-submodule of $\bigoplus_{s=0}^N(\bar{h}^s_{\mathcal O_{+J}})^{\kappa_s}$. Fix 
a non-zero $\mathcal O_{+J}$-morphism $\lambda\colon\mathcal E_{+J}\to\mathcal O_{+J}$ (e.g. $\lambda$ 
can be chosen to be the composition of the natural projection $\mathcal E_{+J}\xrightarrow{\sim}
\bigoplus_{s=0}^N(\bar{h}^s_{\mathcal O_{+J}})^{\kappa_s}\to\mathcal O_{+J}^{\kappa_0}$ 
with any $\mathcal O_{+J}$-morphism $\mathcal O_{+J}^{\kappa_0}\to\mathcal O_{+J}$ that is 
non-zero on the image of $\mathcal V\subset\mathcal E$; by Lemma~\ref{no-finite-dim-submod}, 
$\bar{h}^s_{\mathcal O_{+J}}$ does not contain level 0 $\mathcal O$-submodules if $s>0$). 
Then $\lambda$ embeds $\mathcal E$ into $\mathcal O_{+J}$. By our assumption, we can choose 
a morphism of $\mathcal O$-modules $\xi\colon\mathcal O_{+J}\to\mathcal V$ such that 
the composition $\xi\circ\lambda\circ\iota$ splits the embedding $\iota$. \end{proof} 

\begin{lemma} \label{some-injective} Let $G$ be a permutation group, $\Psi$ be a smooth $G$-set, 
$K$ be a smooth $G$-field. Let $V$ be an object of $\Sm_K(G)$ such that $V\otimes_KK(\Psi)$ 
is injective and at least one of $K$ and $V$ is injective as well. Then the object 
$V\langle\binom{\Psi}{s}\rangle:=V\otimes_KK\langle\binom{\Psi}{s}\rangle$ 
is injective for any integer $s\ge 0$. \end{lemma}  
\begin{proof} Let $\wK\subset K(\Psi)$ be the subfield generated over $K$ by squares of the elements of 
$\Psi$. There is an isomorphism $\bigoplus_{s\ge 0}\wK\langle\binom{\Psi}{s}\rangle\xrightarrow{\sim}K(\Psi)$, 
$[S]\mapsto\wK\cdot\prod_{t\in S}t$, so each $\wK\langle\binom{\Psi}{s}\rangle$ is isomorphic 
to a direct summand of the object $K(\Psi)$ of $\Sm_{\wK}(G)$. 

As $\wK$ and $K(\Psi)$ are isomorphic $G$-field extensions of $K$ (under $t^2\mapsto t$ for all $t\in\Psi$), 
they are isomorphic as objects of $\Sm_K(G)$, as well as $V\otimes_K\wK$ and $V\otimes_KK(\Psi)$ are. 

We assume that $V\otimes_KK(\Psi)$ is injective in $\Sm_K(G)$, so each 
$V\otimes_K\wK\langle\binom{\Psi}{s}\rangle$ is an injective object of $\Sm_K(G)$. As we also assume 
that either $K$ or $V$ is injective, the inclusion $V\stackrel{\otimes 1}{\hookrightarrow}V\otimes_K\wK$ 
admits a splitting $V\otimes_K\wK\xrightarrow{\pi}V$. Then the inclusion 
$V\otimes_KK\langle\binom{\Psi}{s}\rangle\hookrightarrow V\otimes_K\wK\langle\binom{\Psi}{s}\rangle$ 
splits as well: $\sum_iv_i\otimes a_i[S_i]\mapsto\sum_i\pi(v_i\otimes a_i)[S_i]$, 
and thus, $V\langle\binom{\Psi}{s}\rangle$ is an injective object of $\Sm_K(G)$. \end{proof}

\subsection{Existence of weak period extensions} 
It is not clear so far, whether there exist $G$-period extensions (Definition~\ref{def_period_field}). 
However, weak $\Sy_{\Psi}$-period extensions (Definition~\ref{def_period_field}) do exist. 

\begin{proposition} \label{weak_period} Let $G$ be a permutation group, $K$ be a smooth $G$-field. 
Suppose that, for any smooth $G$-field extension $L|K$, the category $\Sm_L(G)$ is locally noetherian. 
Then there exists a weak period field over $K$. 

If $G=\Sy_{\Psi}$ then there are functorial smooth $G$-field extensions of the non-trivial smooth 
$G$-fields. Namely, for a fixed infinite set, say $\mathbb N$, and each non-constant sheaf of fields $\mathcal O$ 
on $\mathrm{FI}^{\mathrm{op}}$, the colimit $\widetilde{\mathcal O}:=\indlim_{J\subset\mathbb N}\mathcal O_{+J}$ 
over the finite subsets $J$ is a weak period sheaf over $\mathcal O$. \end{proposition} 
\begin{proof} Let $L_0:=K\subset L_1\subset L_2\subset\dots$ be a tower of smooth $G$-field extensions, and 
$\widetilde{K}:=\bigcup_iL_i$. As $\Sm_{\widetilde{K}}(G)$ is locally noetherian, any finitely generated 
object $W$ of $\Sm_{\widetilde{K}}(G)$ is the cokernel of a morphism 
$\widetilde{K}\langle G/U\rangle^m\to\widetilde{K}\langle G/U\rangle^n$ for some $m,n$ and an open 
subgroup $U\subset G$, so $W=\widetilde{K}\otimes_{L_i}W_0$ for some $i$ and $W_0\in\Sm_{L_i}(G)$. By 
Lemma~\ref{simples_closed-pts_cogenerators}~(\ref{cogenerators_for_fin_gen}), to show that $\widetilde{K}$ 
is a cogenerator of $\Sm_{\widetilde{K}}(G)$ it suffices to check that any such $W$ can be embedded into 
a product of copies of $\widetilde{K}$. 

For each $i\ge 0$, choose recursively $L_{i+1}$ so that any finitely generated object of $\Sm_{L_i}(G)$ can 
be embedded into $L_{i+1}$. (The isomorphism classes of the finitely generated objects of $\Sm_{L_i}(G)$ 
form a set, so we can define $L_{i+1}$ as the fraction field of the symmetric $L_i$-algebra of the direct 
sum of representatives of all such isomorphism classes.) Then, for each $j\ge i$, there is an embedding 
$\alpha_j\colon L_j\otimes_{L_i}V_0\hookrightarrow L_{j+1}$. Any element $\xi$ in the kernel of 
$id_{\widetilde{K}}\cdot\alpha_j\colon W\to\widetilde{K}$ belongs, in fact, to $L_s\otimes_{L_i}W_0$ 
for some $s\ge j$, so $(id_{\widetilde{K}}\cdot\alpha_s)(\xi)\neq 0$. Thus, the map 
$\alpha\colon W\xrightarrow{(id_{\widetilde{K}}\cdot\alpha_j)_{j\ge i}}\prod_{j\ge i}\widetilde{K}$ is injective. 

In the case $G=\Sy_{\Psi}$, let the field $L_j$ correspond to the $\mathcal O$-module 
$\mathcal O_{+\{1,\dots,j\}}$ under the equivalence of Lemma~\ref{iso-restr}. By \cite[Theorem 3.18]{H90}, 
any finitely generated $\widetilde{\mathcal O}$-module $W$ is noetherian, so as shown above, 
$W=\widetilde{\mathcal O}\otimes_{\mathcal O_{+J}}W_0$ for some finite set $J$ and 
an $\mathcal O_{+J}$-module $W_0$. It suffices to check that $W$ can be embedded into a product 
of copies of $\widetilde{\mathcal O}$. 

By Proposition~\ref{local-structure}, there is a finite set $J'\supseteq J$ such that 
the $\mathcal O_{+J'}$-module $W_1:=(W_0)_{+(J'\smallsetminus J)}$ is a sum of sheaves 
$\bar{h}^s_{\mathcal O_{+J'}}$. By Lemmas~\ref{embedding_into_prod} and \ref{open-subgrps-descr}, 
$\bar{h}^s_{\mathcal O_{+J'}}$ embeds into a product of copies of $\mathcal O_{+J'}$ for sufficiently 
big $J'$. As $\mathcal O_{+J'}\otimes_{\mathcal O_{+J}}W_0$ is embedded into $W_1$, the module 
$W=\widetilde{\mathcal O}\otimes_{\mathcal O_{+J'}}(\mathcal O_{+J'}\otimes_{\mathcal O_{+J}}W_0)$ 
is embedded into $\widetilde{\mathcal O}\otimes_{\mathcal O_{+J'}}W_1$, 
which is a sum of sheaves $\bar{h}^s_{\widetilde{\mathcal O}}$. \end{proof}

\subsubsection{An example of non-injective tensor units of type \texorpdfstring{$F_S$}{} in positive characteristic} 
\begin{example} Let $F|k$ be a non-trivial regular field extension, 
$s\ge 1$ be an integer, $k':=F_{\mathbb N}$ (as in \S\ref{NotaF}), 
and $\widetilde{F}:=F_{\binom{\Psi}{s}\sqcup\bigsqcup_{j=1}^{s-1}\binom{\Psi}{j}\times\mathbb N}$. 
The functor of Proposition~\ref{weak_period} produces the fraction field $\widetilde{F}'$ of 
$\widetilde{F}\otimes_kk'$ out of $F_{\binom{\Psi}{s}}$, so $\widetilde{F}'$ is a cogenerator of 
the category $\Sm_{\widetilde{F}'}(\Sy_{\Psi})$. \end{example} 

\begin{proposition} \label{no-incr-morphisms} Let $F'|k$ and $F|k$ be regular field extensions of characteristic 
$p>0$, $F'\neq k$. Let $s\ge p>t\ge 1$ be integers. Then {\rm (i)} any morphism of $\Sy_{\Psi}$-modules 
$\varphi\colon F'_{\binom{\Psi}{t}}\to F_{\binom{\Psi}{s}}$ vanishes at $1;$ {\rm (ii)} the object 
$F_{\binom{\Psi}{s}}$ is not injective in $\Sm_{F_{\binom{\Psi}{s}}}(\Sy_{\Psi})$. \end{proposition} 
\begin{proof} Fix some $x\in F'\smallsetminus k$ and $J\in\binom{\Psi}{u}$ for some 
integer $t<u\le s$. For each $I\in\binom{\Psi}{t}$, denote by $x_I$ the decomposable tensor in 
$\bigotimes_{k,\binom{\Psi}{t}}F'\subset F'_{\binom{\Psi}{t}}$ whose only non-identical 
tensor multiple distinct from 1 is $x$ on the $I$-th place. For such $I$ with $I\subset J$, let 
$a_I:=\varphi\left(\frac{x_I}{\sum_{I'\subset J}x_{I'}}\right)\in F_{\binom{\Psi}{s}}$. Then 
$a_I\in F_{\binom{\Psi}{s}}^{\Sy_{\Psi|J}}=F_{\binom{J}{s}}$, so either $a_I\in k$ or $u=s$ and 
$a_I$ is a decomposable tensor in $\bigotimes_{k,\binom{\Psi}{s}}F\subset F_{\binom{\Psi}{s}}$ 
whose only non-identical tensor multiple is on the $J$-th place. 

As $F_{\binom{\Psi}{s}}^{\Sy_{\Psi|J}}=F_{\binom{\Psi}{s}}^{\Sy_{\Psi,J}}$, $a_I=a$ is independent of $I$, and 
thus $\sum_{I\subset J}\frac{x_I}{\sum_{I'\subset J}x_{I'}}\in F'_{\binom{\Psi}{t}}$ is sent to $\binom{u}{t}a$. 
Taking $u=p$, we get that 1 is sent to 0.

As any morphism $F'_{\Psi}\to F_{\binom{\Psi}{s}}$ in $\Sm_k(\Sy_{\Psi})$ vanishes at 1, the restriction 
of any morphism $\xi\in\Hom_{F_{\binom{\Psi}{s}}}(F_{\binom{\Psi}{s}}\otimes_kF'_{\Psi},F_{\binom{\Psi}{s}})$ 
to $k\otimes_kF'_{\Psi}$ vanishes at 1, so the restriction of $\xi$ to 
$F_{\binom{\Psi}{s}}\otimes_kk=F_{\binom{\Psi}{s}}$ cannot be identical, and thus, the natural embedding 
$F_{\binom{\Psi}{s}}\xrightarrow{(-)\otimes 1}F_{\binom{\Psi}{s}}\otimes_kF'_{\Psi}$ in 
$\Sm_{F_{\binom{\Psi}{s}}}(\Sy_{\Psi})$ admits no splitting, which means that the object 
$F_{\binom{\Psi}{s}}$ of $\Sm_{F_{\binom{\Psi}{s}}}(\Sy_{\Psi})$ is not injective. \end{proof} 

\begin{remarks} 1. Now \cite[Lemma~A.10]{H90} implies that the object $k$ of 
the category $\Sm_k(\Sy_{\Psi})$ is injective if and only if the characteristic of $k$ is $0$. 

2. For any tower of fields $k\subset F\subseteq\widetilde{F}$ with $k$ algebraically closed 
in $\widetilde{F}$ and any pair of integers $s>t\ge 1$, there are no $\Sy_{\Psi}$-field embeddings of 
$F_{\binom{\Psi}{s}}$ into $\widetilde{F}_{\binom{\Psi}{t}}$ over $k$. [If there is one, 
we may restrict it to $F'_{\binom{\Psi}{s}}$ for a subfield $k\neq F'=k(t)$. Then, we may choose 
a subfield $\widetilde{F}'\subseteq\widetilde{F}$ finitely generated over $k$ such that, 
for some $I\in\binom{\Psi}{s}$, ${F'_{\binom{\Psi}{s}}}^{\Sy_{\Psi|I}}=F'$ is embedded into 
$\widetilde{F}'_{\binom{\Psi}{t}}{}^{\Sy_{\Psi|I}}=\mathrm{Frac}(\widetilde{F}'{}^{\otimes_k^{\binom{I}{t}}})$. 
But $\mathrm{tr.deg}(F'_{\binom{\Psi}{s}}{}^{\Sy_{\Psi|J}}|k)=\binom{\#J}{s}$ for 
any finite $J\subset\Psi$, while $\mathrm{tr.deg}(\widetilde{F}'_{\binom{\Psi}{t}}{}^{\Sy_{\Psi|J}}|k)
=\binom{\#J}{t}\cdot\mathrm{tr.deg}(\widetilde{F}'|k)$ is less than 
$\binom{\#J}{s}$ for sufficiently large $J$.] \end{remarks}

\section{The spectrum of \texorpdfstring{$\Sm_K(\Sy_{\Psi})$}{} for 
\texorpdfstring{$K=\Ka,\Kb,\Kc$}{K=Ka,Kb,Kc}} \setcounter{subsection}{-1}
\subsection{Topology of the Gabriel spectra} 

Given a Grothendieck category $\mathcal A$, any proper closed subset of the Gabriel spectrum 
$\mathrm{Spec}(\mathcal A)$ of $\mathcal A$ is an intersection of sets $[X]^c:=\{P~|~\Hom(X,P)\neq 0\}$ 
for a collection of compact objects $X$. As $[X]^c\cup[Y]^c=[X\oplus Y]^c$, these sets are closed under 
finite union, so an arbitrary closed set will have the form $\bigcap_i[X_i]^c$ with some compact $X_i$'s. 
\begin{lemma} \label{rmks_on_Gabriel_topology} \begin{enumerate} \item Let $X,Y$ be compact objects of $\mathcal A$ 
such that there is either an epimorphism $X\to Y$ or a monomorphism $Y\to X$. Then $[Y]^c\subseteq[X]^c$. 
\item \label{closure_of_a_pt} If $\mathcal A$ is locally noetherian then the closure $\overline{\{P\}}$ 
of a point $P$ consists of the points $P'$ such that $P$ admits a monomorphism into a product of 
injectives presenting $P'$. 
\item \label{density_generators_no_Hom} Let $T$ be a collection of compact generators 
of $\mathcal A$, and $S$ be a subset in $\mathrm{Spec}(\mathcal A)$ such that for any 
$W\in T$ there exists $P\in S$ with $\Hom(W,P)=0$. Then $S$ is dense. 

\item For any simple object of $\mathcal A$, the isomorphism class of its injective hull is a closed point. 

\end{enumerate} \end{lemma}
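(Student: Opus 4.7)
For (1) I would use a direct diagram chase: given a non-zero morphism $g\colon Y\to P$ into an indecomposable injective representing a point, composing with an epimorphism $\varphi\colon X\twoheadrightarrow Y$ gives $g\circ\varphi\ne 0$ (else $g=0$ by the epi property), whereas along a monomorphism $\iota\colon Y\hookrightarrow X$ we extend $g$ to $\widetilde g\colon X\to P$ by injectivity of $P$, with $\widetilde g\circ\iota=g\ne 0$. Either way $\Hom(X,P)\ne 0$, giving $[Y]^c\subseteq[X]^c$.

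For (2) I would first unwind the topology: since the basic opens are $[X]$ for compact $X$, a point $P'=[E']$ lies in $\overline{\{P\}}$ iff for every compact $X$ one has $\Hom(X,P)\ne 0\Rightarrow\Hom(X,E')\ne 0$. One direction is transparent: if $P\hookrightarrow(E')^I$ and $0\ne f\colon X\to P$, some coordinate projection is non-zero on the image of $f$, yielding a non-zero $X\to E'$. For the converse I would form the canonical evaluation $\phi\colon P\to(E')^{\Hom(P,E')}$ and analyse $K:=\ker\phi$. The crucial observation is that any morphism $h\colon X\to E'$ from a subobject $X\subseteq K$ vanishes, because by injectivity of $E'$ it extends to $\widetilde h\colon P\to E'$, which is one coordinate of $\phi$ and so $\widetilde h|_K=0$. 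If $K\ne 0$, the locally noetherian assumption supplies a non-zero noetherian (hence compact) subobject $X\subseteq K$; then $\Hom(X,P)\ne 0$ via the inclusion, but $\Hom(X,E')=0$, contradicting the hypothesis. So $K=0$ and $\phi$ gives the desired embedding $P\hookrightarrow(E')^I$.

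For (3) I would reason by contradiction. If $S$ is not dense then some non-empty basic open $[X]$ has $[X]\cap S=\varnothing$; compactness of $X$ together with the generating property of $T$ give an epimorphism $W_1\oplus\cdots\oplus W_n\twoheadrightarrow X$ with $W_i\in T$, and~(1) yields $\bigcap_i[W_i]\subseteq[X]$. Applying the hypothesis to the finite subcollection $\{W_1,\dots,W_n\}$ --- that is, to the compact generator $W_1\oplus\cdots\oplus W_n$ obtained by closing $T$ under finite direct sums without loss of generality --- produces a single $P\in S$ with $\Hom(W_i,P)=0$ for all $i$, so $P\in[X]\cap S$, a contradiction.

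Finally, (4) follows by a direct computation. A simple object $V$ is cyclic, hence (in our locally noetherian setting) compact. Any non-zero morphism from $V$ to an indecomposable injective $E'$ is monic by simplicity, and extends along $V\hookrightarrow E(V)$ by injectivity of $E'$ to an embedding $E(V)\hookrightarrow E'$ (injectivity preserved since $V$ is essential in $E(V)$); this splits because $E(V)$ is injective, and must be an isomorphism since $E'$ is indecomposable. Consequently $[V]^c=\{[E(V)]\}$, which is closed as the complement of a basic open. The main obstacle in the whole argument is the step in~(2) of extracting a non-zero compact subobject of the kernel $K$, for it is precisely there that local noetherianity converts an abstract pointwise vanishing condition into an actual embedding; the other parts are comparatively formal.
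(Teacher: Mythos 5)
Items (1), (2) and (4) are correct and take essentially the same route as the paper. For (1) the argument is the obvious diagram chase; for (2) you write the paper's characterisation of $\overline{\{P\}}=\bigcap_{0\neq X\subset P\ \text{compact}}[X]^c$ in terms of the canonical evaluation $\phi:P\to (E')^{\Hom(P,E')}$, and use local noetherianity exactly where the paper does, to extract a nonzero compact subobject of $\ker\phi$; for (4) you identify $[V]^c=\{E(V)\}$ in the same way.

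Item (3), however, has a genuine gap at the step ``closing $T$ under finite direct sums without loss of generality.'' The hypothesis is that for each $W\in T$ there is \emph{some} $P\in S$, depending on $W$, with $\Hom(W,P)=0$; this does not transfer to the direct-sum closure of $T$, because nothing guarantees a single $P\in S$ that simultaneously kills all of $W_1,\dots,W_n$. Under your reading of ``collection of compact generators'' as a generating family of compact objects, the assertion is in fact false: in the semisimple, locally noetherian category of modules over $k\times k\times k$, with simple (injective) objects $S_1,S_2,S_3$, take $T=\{S_1\oplus S_3,\ S_2\oplus S_3\}$ (a generating family) and $S=\{[S_1],[S_2]\}$. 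Then $\Hom(S_1\oplus S_3,S_2)=0$ and $\Hom(S_2\oplus S_3,S_1)=0$, so the hypothesis holds, yet $[S_1\oplus S_2]=\{[S_3]\}$ is a nonempty open set disjoint from $S$, so $S$ is not dense. What the paper actually needs and uses (in the proof of Lemma~\ref{closed-pts_bd}) is the hypothesis for \emph{every} compact $W$ (equivalently, for a generating family already closed under finite direct sums); under that stronger hypothesis your argument becomes correct and the ``WLOG'' is unnecessary.
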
 
\begin{proof} Let $S$ be a non-dense subset. As the closure $\overline{S}$ of $S$ is the intersection 
of all proper closed subsets containing $S$, one has \[\overline{S}=\bigcap_{X:\text{ compact, 
$\Hom(X,P)\neq 0$ for all $P\in S$}}[X]^c
=\bigcap_{X\subset\prod_{P\in S}P:\text{ compact, $\mathrm{pr}_P(X)\neq 0$ for all $P\in S$}}[X]^c.\] 

If $S=\{P\}$ consists of a single point $P$ then we get 
$\overline{\{P\}}=\bigcap_{0\neq X\subset P:\text{ compact}}[X]^c$. 
If $P'\in\overline{\{P\}}$ then for any non-zero subobject $X\subset P$ there is a non-zero morphism 
$X\to P'$, so extending it to $P$ we see that $P$ admits a monomorphism into a product of copies of $P'$. 
Conversely, if $P$ admits an embedding into a product of copies of $P'$ then any non-zero subobject 
$X\subset P$ admits a non-zero morphism to $P'$. 

If $P$ is simple then $[P]^c$ is the class of $E(P)$, i.e. the class of $E(P)$ is a closed point. \end{proof}

\begin{lemma} \label{closed-pts_bd} Let $K$ be a smooth $\Sy_{\Psi}$-field, 
and $\mathrm{Spec}_K=\mathrm{Spec}(\Sm_K(\Sy_{\Psi}))$. 
\begin{enumerate} \item \label{density_of_P_n} 
Any proper closed subset in $\mathrm{Spec}_K$ consists of injective hulls of objects of bounded level.

In particular, any infinite subset of $\{P_1^{(K)},P_2^{(K)},\dots\}$ is dense in $\mathrm{Spec}_K$, 
where $P_i^{(K)}$ is the isomorphism class of $E(K\langle\binom{\Psi}{i}\rangle)$. 
\item Let $Q=E(Y)$ be a point of $\mathrm{Spec}_K$ for an object $Y$ of level $\le n$. Then the closure of $Q$ 
does not contain points $P=E(X)$ such that $N_nX=0$. In particular, $\overline{\{P_i\}}\not\ni P_j$ for $j>i$. 
\item In notation of Theorem~\ref{spectrum-triple-rough}, 
$\overline{\{P_n^{(K_?)}\}}\supseteq\{P_0^{(K_?)},P_1^{(K_?)},\dots,P_n^{(K_?)}\}$ for any $n\ge 0$ 
with the following exceptions, where $F=k:$ $?=\mathrm{a,b,c}$, $n=1$, and $\Gamma=0$ if $?=\mathrm{a};$ 
$?=\mathrm{d}$ and $n=2$. 
\item The points $P_1^{(\Ka)}$ and $P_{1,m}^{(\Kc)}$ for all $m\in\mathbb Z$ are closed. 
\item For $?\in\{\mathrm{a},\mathrm{c}\}$, the closure of $E(\mathrm{Pic}_{K_?})$ 
contains neither $P_s$ for $s>1$, nor $P'_2$ if $?=\mathrm{c}$ and $p\neq 2$, nor $P_{1,n}$ for 
$n\in\mathbb Z$ if $?=\mathrm{c}$. \end{enumerate} \end{lemma} 
\begin{proof} For any compact object $X$ of $\Sm_K(\Sy_{\Psi})$, there is an integer 
$n\ge 0$ such that $X=N_nX$. Then $\Hom(X,E(Y))=0$ for all $Y$ such that $N_nY=0$, in particular, 
$\Hom(X,P_j)=0$ for all $j>n$, i.e. $[X]^c$ contains only finitely many $P_s$'s. This implies that any closed 
set containing infinitely many $P_s$'s is not proper, i.e. coincides with $\mathrm{Spec}_K$. 

To describe closures of points, we apply Lemmas~\ref{rmks_on_Gabriel_topology} (\ref{closure_of_a_pt}) 
and \ref{embedding_into_prod} to $U=\Sy_{\Psi,I}$, $V=K\langle\binom{\Psi}{s}\rangle$ for $s<\#I$, and 
$V\xrightarrow{\lambda}K$, $\sum_{J\in\binom{\Psi}{s}}a_J[J]\mapsto\sum_{J\in\binom{\Psi}{s}}a_J$. 
As the subgroups $\Sy_{\Psi,I}$ are maximal, we have only to check that $K^{\Sy_{\Psi,I}}\neq K^{\Sy_{\Psi}}$ 
and that there is an element $A\in K_I\langle\binom{I}{s}\rangle^{\Sy_I}$ with $\lambda(A)=1$. The 
$\Sy_I$-action on $K_?^{\Sy_{\Psi|I}}$ is non-trivial, so $K^{\Sy_{\Psi,I}}\neq K^{\Sy_{\Psi}}$, 
while in the exceptional cases $K_?^{\Sy_{\Psi,I}}=k=K_?^{\Sy_{\Psi}}$. 
Then the composition of $K^{\Sy_{\Psi,I,J}}=K_I^{\Sy_{I,J}}\xrightarrow{\sim}V^U$, 
$a\mapsto A_a:=\sum_{g\in\Sy_I/\Sy_{I,J}}a^g[gJ]\in V^U$, with $\lambda$ is the trace
$\mathrm{tr}_{K_I^{\Sy_{I,J}}|K_I^{\Sy_I}}:K_I^{\Sy_{I,J}}\to K_I^{\Sy_I}$. As the field 
extension $K_I^{\Sy_{I,J}}|K_I^{\Sy_I}$ is separable, there is an element $a$ with 
$\mathrm{tr}_{K_I^{\Sy_{I,J}}|K_I^{\Sy_I}}(a)=1$, and then $\lambda(A_a)=1$.

As the $k$-vector space $\Hom((x-y)^mK_?\langle\Psi\rangle,(x-y)^nE(K_?))=
\Hom(K_?\langle\Psi\rangle,(x-y)^{n-m}E(K_?))$ is at most one-dimensional and $E(K_?)$ is injective, 
$\Hom(\langle(x-y)^{2m-n}(y^{n-m}[x]-x^{n-m}[y])\rangle,E(K_?))=0$. 

The $k$-vector space $\Hom((x-y)^m\Kc\langle\Psi\rangle,(x-y)^nE(\Kc))=
\Hom(\Kc\langle\Psi\rangle,(x-y)^{n-m}E(\Kc))$ is one-dimensional if $F=k$ and $n\ge m$; 
it vanishes if $F=k$ and $n<m$. 
As $E(\Kc)$ is injective, $\Hom(\langle(x-y)^{2m-n}(y^{n-m}[x]-x^{n-m}[y])\rangle,E(\Kc))=0$, 
i.e. the $\mathrm{Pic}_{\Kc}$-orbit of $P_0$ does not meet the closure of $P_{1,m}$. 
The remaining points are not in the closure of $P_{1,m}$ for trivial reasons.

For any $m\in\mathbb Z$, $\Hom(P_{1,m},P_{0,n})\neq 0$ for all $n\in\mathbb Z$, i.e. $P_{0,n}\in[P_{1,m}]^c$, 
while $\Hom(P_{1,m},P_s)=0$ for all $s>1$, i.e. $P_s\notin[P_{1,m}]^c$, meaning that the closure of 
$\{P_{0,n}~|~n\in\mathbb Z\}\subset\mathrm{Cl}_{K_?}$ contains neither $P_s$ for $s>1$. 

As $\Hom(P_1,P'_2)=0$, the closure of $\{P_{0,n}~|~n\in\mathbb Z\}\subset\mathrm{Cl}_{\Kc}$ does not 
contain $P'_2$ if $p\neq 2$. 

As $\Hom(P_{1,m},P_{1,n})=0$ for all $n\neq m$, the closure of 
$\{P_{0,n}~|~n\in\mathbb Z\}\subset\mathrm{Cl}_{\Kc}$ contains neither $P_{1,n}$ for $n\in\mathbb Z$. 
\end{proof} 

\subsection{A construction of \texorpdfstring{$G$}{G}-extensions with Picard group containing 
a given abelian group} 
Recall that, for a permutation group $G$ and a smooth $G$-field $K$, $\mathrm{Pic}_K(G)$ denotes 
the Picard group of $\Sm_K(G)$, i.e. $\mathrm{Pic}_K(G)=H^1_{\text{{\rm cont}}}(G,K^{\times})$. 

The group $\mathrm{Pic}_K(G)$ acts continuously on the Gabriel spectrum of $\Sm_K(G)$ by 
$\mathcal L\colon I\mapsto\mathcal L\otimes_KI$.

\begin{example}[The fields $\Ka=K_{\Psi,S,\Gamma}^L$] \label{torsion-Pic} 
Fix a quadruple $\mathrm{a}=(\Psi,S,\Gamma,L)$, where $\Psi$ and $S$ are sets, 
$L$ is a field, $\Gamma$ is a subgroup of the group $\Xi:=\mathbb Z\langle S\rangle$. 

Let $L(\Psi\times S)=L(S)_{L,\Psi}$ be the purely transcendental field extension of $L$ with a transcendence 
basis consisting of the variables labeled by the set $\Psi\times S$. Denote by $u_s$ the variable corresponding 
to $(u,s)\in\Psi\times S$. For each $s\in S$, sending $u\in\Psi$ to $u_s$ gives rise to a field embedding 
$L(\Psi)\hookrightarrow L(\Psi\times S)$, $f\mapsto f_s=f^{[s]}$. For all $f\in L(\Psi)^{\times}$ 
and $\gamma=\sum_sm_s[j_s]\in\Xi$, set $f^{\gamma}:=\prod_sf_{j_s}^{m_s}$. Then the embeddings 
$u\mapsto u_s$ give rise to the homomorphism $\Xi\otimes L(\Psi)^{\times}\to L(\Psi\times S)^{\times}$, 
$\gamma\otimes f\mapsto f^{\gamma}$. 

Define a subfield $\Ka=K_{\Psi,S,\Gamma}^L$ of $L(\Psi\times S)$ by 
\[\Ka:=L\left(u^{\gamma},\frac{u_s}{v_s}~|~\gamma\in\Gamma,~s\in S,~u,v\in\Psi\right)=
L\left(x^{\gamma},\frac{u_s}{x_s}~|~\gamma\in\Gamma,~s\in S,~u\in\Psi\right)\] for an arbitrary 
fixed $x\in\Psi$ (so $K_{\Psi,\varnothing,0}^L:=L$, $K_{\Psi,\{\ast\},0}^L=L(u/v~|~u,v\in\Psi)$). 

If $G$ is a group acting on the set $\Psi$ and on the field $L$ then 
\begin{itemize} \item $G$ acts on the fields $L(\Psi)$ and $L(\Psi\times S)$, while the above homomorphisms 
$L(\Psi)\stackrel{(-)_s}{\hookrightarrow}L(\Psi\times S)$ and 
$\Xi\otimes L(\Psi)^{\times}\to L(\Psi\times S)^{\times}$ are $G$-equivariant, \item $G$ preserves 
the one-dimensional $\Ka$-vector subspaces $x^{\lambda}\Ka\subseteq L(\Psi\times S)$ for all $\lambda\in\Xi$, 
\item $L(\Psi\times S)|\Ka$ is a $G$-field extension of degree equal to the index of $\Gamma$. \end{itemize} 
This shows that, for any non-precompact permutation group $G$, any smooth $G$-field $L$, 
any abelian group $\Lambda$, there exists a smooth $G$-field extension $\Ka|L$ and 
a group embedding $\Lambda\hookrightarrow\mathrm{Pic}_{\Ka}(G)$. \end{example}

\begin{lemma} \label{Simple-cogenerators-Ka} We keep notation of Example~\ref{torsion-Pic}. 
\begin{enumerate} \item The $\Ka\langle G\rangle$-module $L(\Psi\times S)$ embeds into 
$\prod_{\lambda\in\Xi}x^{\lambda}\Ka$ if $\Gamma$ is finitely generated. 
\item Assume that the $G$-action on $L(\Psi)$ is smooth. Then sending $\lambda$ to the 
class of $x^{\lambda}\Ka$ gives rise to a homomorphism $\Xi\to\mathrm{Pic}_{\Ka}(G)$ that 
factors through $\iota\colon\Xi/\Gamma\to\mathrm{Pic}_{\Ka}(G)$. If, moreover, all $G$-orbits 
on $\Psi$ are infinite then the homomorphism $\iota$ is injective. 
\item Assume that the $G$-action on $L(\Psi)$ is smooth, all $G$-orbits on $\Psi$ are infinite, 
$\Gamma$ is finitely generated, $\mathrm{Pic}_{L(\Psi\times S)}(G)=0$. Then \begin{itemize} \item the map 
$\iota\colon\Xi/\Gamma\to\mathrm{Pic}_{\Ka}(G)$, $\lambda\mapsto[x^{\lambda}\Ka]$, is an isomorphism, and 
\item the invertible subobjects $x^{\lambda}\Ka\subset L(\Psi\times S)$ form a system of cogenerators 
of $\Sm_{\Ka}(G)$ if $L(\Psi\times S)$ is a cogenerator of 
$\Sm_{L(\Psi\times S)}(G)$. \end{itemize} \end{enumerate} \end{lemma} 
\begin{proof} Suppose that $\Xi$ admits a basis $S'$ such that $\Gamma$ is generated by the elements $m(s)s$ 
for some $S'\xrightarrow{m}\mathbb Z$ and all $s\in S'$. (E.g. if $\Gamma$ is finitely generated then such an 
$S'$ is provided by the Smith normal form.) Let $S_0:=\{s\in S'~|~m(s)=0\}$ and $S_+:=S'\smallsetminus S_0$. 
Fix a total order on $S_0$, and order $\mathbb Z\langle S_0\rangle$ lexicographically. Then 
the Hahn $\mathbb Z\langle S_0\rangle$-power series over the field 
$\widetilde{\Ka}:=L(x_j,u_s/v_s~|~u,v\in\Psi,~s\in S,~j\in S_+)$ form a $G$-field 
$\widetilde{\Ka}((x^{\mathbb Z\langle S_0\rangle})):=\{\sum_{\lambda\in J}a_{\lambda}x^{\lambda}~|~
J\subset\mathbb Z\langle S_0\rangle\mbox{ is well-ordered, }a_{\lambda}\in\widetilde{\Ka}\}$ 
containing naturally $L(\Psi\times S)$. On the other hand, the $\widetilde{\Ka}\langle G\rangle$-module 
$\widetilde{\Ka}((x^{\mathbb Z\langle S_0\rangle}))$ is 
naturally embedded into $\prod_{\lambda\in\mathbb Z\langle S_0\rangle}x^{\lambda}\widetilde{\Ka}$, 
while $\widetilde{\Ka}=\bigoplus_{\xi\in\Gamma^{\perp}}x^{\xi}\Ka$, where 
$\Gamma^{\perp}=\{\sum_{s\in S_+}m_s[s]\in\mathbb Z\langle S_+\rangle~|~0\le m_s<|m(s)|\}$. 
This means that the $\Ka\langle G\rangle$-module $L(\Psi\times S)$ embeds into 
$\prod_{\lambda\in\Xi}x^{\lambda}\Ka$.

If $\mathrm{Pic}_{L(\Psi\times S)}(G)=0$ then any element of $\mathrm{Pic}_{\Ka}(G)$ can be considered as a 
subobject of $L(\Psi\times S)$, i.e. a one-dimensional $\Ka$-vector subspace $\varphi\Ka$ for a rational function 
$\varphi$. Let $S_0$ be a finite subset such that $\varphi\in L(\Psi\times S_0)\subseteq L(\Psi\times S)$. 
Set $\Ka':=L(\Psi\times S_0)\cap\Ka$. Then $\varphi\Ka=\varphi\Ka'\otimes_{\Ka'}\Ka$. We know that 
the $\Ka'\langle G\rangle$-module $L(\Psi\times S_0)$ embeds into 
$\prod_{\lambda\in\mathbb Z\langle S_0\rangle}x^{\lambda}\Ka'$, so $\varphi\Ka'\cong x^{\lambda}\Ka'$ 
for some $\lambda\in\mathbb Z\langle S_0\rangle$, and thus, $\varphi\Ka\cong x^{\lambda}\Ka$. 

As $L(\Psi\times S)$ is a cogenerator of $\Sm_{\Ka}(G)$, to show that the subobjects 
$x^{\lambda}\Ka\subset L(\Psi\times S)$ form a system of cogenerators, it suffices to verify that 
$L(\Psi\times S)$ embeds into $\prod_{\lambda\in\Xi}x^{\lambda}\Ka$. Each element of $L(\Psi\times S)$ 
belongs to $L(\Psi\times S_0)$ for a finite subset $S_0\subseteq S$, while $L(\Psi\times S_0)$ embeds 
into $\prod_{\lambda\in\mathbb Z\langle S_0\rangle}x^{\lambda}\Ka 
\stackrel{(-)\times\text{`0'}}{\hookrightarrow}\prod_{\lambda\in\Xi}x^{\lambda}\Ka$. 

If all $G$-orbits on $\Psi$ are infinite then $L(\Psi\times S)^G\subseteq L$, so $(x^{\gamma}\Ka)^G\neq 0$ 
only if $(x^{\gamma})^{-1}\in\Ka$, which means that $\gamma\in\Gamma$. \end{proof}

\begin{remark} \label{the_socle} Keeping notation of Lemma~\ref{Simple-cogenerators-Ka}, fix 
a subset $\Gamma^{\perp}$ of $\Xi$ projecting bijectively onto the quotient $\Xi/\Gamma$. 
As any simple objects of $\Sm_{\Ka}(\Sy_{\Psi})$ is isomorphic to $x^{\lambda}\Ka$ for 
a unique $\lambda\in\Gamma^{\perp}$, and the $k$-vector space 
$\Hom_{\Ka\langle\Sy_{\Psi}\rangle}(x^{\lambda}\Ka,\prod_{\lambda\in\Xi}x^{\lambda}\Ka)$ is 
isomorphic to $\prod_{\Gamma}k$, the maximal semisimple $\Ka\langle\Sy_{\Psi}\rangle$-submodule 
(the socle) of $\prod_{\lambda\in\Xi}x^{\lambda}\Ka$ is isomorphic to 
$\bigoplus_{\lambda\in\Gamma^{\perp}}x^{\lambda}\Ka\otimes_k\prod_{\Gamma}k$. Then the socle of 
$F_{\Psi}(\Psi\times S)$ is $\bigoplus_{\lambda\in\Gamma^{\perp}}(x^{\lambda}\Ka)^{\oplus\Gamma}$. \end{remark} 

Fix some pairwise distinct $x,y,z\in\Psi$. For each $w\in\Psi$, define the cross-ratio 
$\xi_w:=\frac{(x-w)(z-y)}{(x-z)(w-y)}\in\Kd\cup\{\infty\}$ of the quadruple 
$(w,x,y,z)$ so that $(w,x,y,z)$ is projectively equivalent to $(\xi_w,0,\infty,1)$. 
\begin{lemma} \label{relations_between_K?} Keeping Notation~\ref{the_fields_K?}, the elements $x,y,z\in\Psi$ 
are algebraically independent over $\Kd=F_{\Psi}\left(\xi_u~|~u\in\Psi\smallsetminus\{x,y,z\}\right)$, 
$\Kc=\Kd\left(\frac{x-y}{x-z}\right)=K_{\Psi,\{\ast\},0}^{(F|k)}\cap\Kb$, 
$x/y,x-y$ are algebraically independent over $\Kc$, $\Kb=\Kc(x-y)$, $K_{\Psi,\{\ast\},0}^{(F|k)}=\Kc(x/y)$, 
and $F_{\Psi}(\Psi)=\Kd(x,y,z)=\Kc(x,y)=K_{\Psi,\{\ast\},0}^{(F|k)}(x)=\Kb(x);$ for all 
$?\in\{\mathrm{b},\mathrm{c},\mathrm{d}\}$, the field extensions $\Ka|F_{\Psi}$, $K_?|F_{\Psi}$, 
$F_{\Psi}(\Psi\times S)|K_{\Psi,S,0}^{(F|k)}$, $F_{\Psi}(\Psi)|K_?$ are purely transcendental. \end{lemma} 
\begin{proof} One has $\Ka=F_{\Psi}\left(x^{\gamma},\frac{u_s}{x_s}~|~u\in\Psi\smallsetminus\{x\}\right)$, 
since $u^{\gamma}/x^{\gamma}=(u/x)^{\gamma}$ and $u_s/v_s=\frac{u_s/x_s}{v_s/x_s}$; 
$\Kb=F_{\Psi}\left(u-x~|~u\in\Psi\smallsetminus\{x\}\right)$, since $u-v=(u-x)-(v-x)$; 
$\Kc=F_{\Psi}\left(\frac{x-u}{x-y}~|~u\in\Psi\smallsetminus\{x,y\}\right)$, since 
$\frac{u-v}{u-w}=\frac{\frac{x-u}{x-y}-\frac{x-v}{x-y}}{\frac{x-u}{x-y}-\frac{x-w}{x-y}}$; 
$\Kd=F_{\Psi}\left(\xi_u~|~u\in\Psi\smallsetminus\{x,y,z\}\right)$, since 
$\frac{(\xi_t-\xi_u)(\xi_v-\xi_w)}{(\xi_v-\xi_u)(\xi_t-\xi_w)}=\frac{(t-u)(v-w)}{(v-u)(t-w)}$ 
for all pairwise distinct $t,u,v,w\in\Psi$ with the obvious meaning when one of $\xi$'s 
is $\infty$. Thus, all these fields are purely transcendental over $F_{\Psi}$. 

As $\frac{u}{v}=\frac{1-\frac{x-u}{x-y}(1-\frac{y}{x})}{1-\frac{x-v}{x-y}(1-\frac{y}{x})}$, 
$K_{\Psi,\{\ast\},0}^{(F|k)}=\Kc(x/y)$; as $u-v=(x-y)(\frac{x-v}{x-y}-\frac{x-u}{x-y})$, $\Kb=\Kc(x-y)$; 
as $\frac{x-u}{x-z}\left(\frac{x-y}{x-z}+\frac{(x-u)(y-z)}{(x-z)(y-u)}-1\right)=
\frac{(x-u)(y-z)}{(x-z)(y-u)}\times\frac{x-y}{x-z}$, $\Kc=\Kd\left(\frac{x-y}{x-z}\right)$. Thus, the extensions 
$F_{\Psi}(\Psi)|K_?$ for $?\in\{\mathrm{b},\mathrm{c},\mathrm{d}\}$ and $F_{\Psi}(\Psi\times S)|\Ka$ 
are purely transcendental if $\Xi/\Gamma$ is torsion free. \end{proof} 

\subsection{Structure of \texorpdfstring{$\Sm_{\Ka}(\Sy_{\Psi})$}{Representations over Ka}} 
\begin{proposition} \label{injectivity-Ka} Let $\Psi$ be a set, $F|k$ be a non-trivial regular field extension, 
and $K=F_{\Psi}$. Then $K$ is an injective object of $\Sm_K(\Sy_{\Psi})$. \end{proposition} 
\begin{proof} By Lemma~\ref{injectivity-K-condition}, it suffices to construct, for any finite $J\subset\Psi$, 
a morphism of $K^{(J)}\langle\Sy_{\Psi|J}\rangle$-modules $\xi\colon K\to K^{(J)}$ identical on $K^{(J)}$. 
We consider $K$ as the fraction field of the algebra $K^{(J)}\otimes_kF_J$ and embed both the algebra and 
the field into a field of series with coefficients in $K^{(J)}\otimes_k\overline{k}$ 
for an algebraic closure $\overline{k}$ of $k$. 

For each ordered group $\Gamma$, let $\overline{k}((\Gamma))$ be the field of Hahn power series over 
$\overline{k}$, i.e. the set of formal expressions of the form $\sum_{s\in\Gamma}a_s\cdot s$, where 
$a_s\in\overline{k}$ and the set $\{s\in\Gamma~|~a_s\neq 0\}$ is well-ordered. 
Fix a totally ordered divisible group $\Gamma$, such that transcendence degree of the field 
extension $k((\Gamma))|k$ is at least that of $F|k$. 
By \cite{MacLane}, there is a field embedding $F_J\hookrightarrow\overline{k}((\Gamma))$ over $k$, so 
the $\Sy_{\Psi|J}$-field $K$ becomes a subfield of $(K^{(J)}\otimes_k\overline{k})((\Gamma))$ with 
$\Sy_{\Psi|J}$ acting on the coefficients. Define $\widetilde{\xi}\colon K\to K^{(J)}\otimes_k\overline{k}$ 
as the `constant term'\ of the Hahn power series expression: $\sum_{s\in\Gamma}a_s\cdot s\mapsto a_0$. 
Fix a $k$-linear functional $\nu\colon\overline{k}\to k$ identical on $k$. Finally, we define 
$\xi\colon K\to K^{(J)}$ as $(id_{K^{(J)}}\cdot\nu)\circ\widetilde{\xi}$. \end{proof} 

\begin{lemma} \label{hulls_of_simples_are_cogenerators} If any non-zero object in a Grothendieck 
category $\mathcal A$ admits a simple subquotient $($e.g. $\mathcal A=\Sm_A(G)$ for a permutation 
group $G$ and a unital $G$-ring $A)$ then injective hulls of the simple objects form a `minimal' 
system of cogenerators. $($`Minimal' means$:$ for each injective hull $I$ of a simple object, 
any system of cogenerators contains an element having $I$ as a direct summand.$)$ \end{lemma} 
\begin{proof} Assume that, for any non-zero morphism $X\to Y$, its image admits a subobject 
$Z$ with a simple quotient $Q$. Then the composition $Z\to Q\hookrightarrow E(Q)$ extends to 
a morphism $\varphi\colon Y\to E(Q)$ non-zero on $Z$, and therefore, with a non-zero composition 
$X\to Y\xrightarrow{\varphi}Q\hookrightarrow E(Q)$. Thus, all such $E(Q)$ form a system of cogenerators. 
If $E(Q)$ injects into $\prod_jX_j$ then there is $j$ with non-zero projection $p_j\colon Q\to X_j$. 
But then $p_j$ injects $E(Q)$ into $X_j$, so $E(Q)$ is a direct summand of $X_j$. \end{proof} 

\begin{theorem} \label{smooth-simple} Let $\Psi$ be a set, and $F|k$ be a non-trivial regular field extension. 

Let $K\subseteq F_{\Psi}$ be an $\Sy_{\Psi}$-invariant subfield. Then the object $F_{\Psi}$ is 
an injective cogenerator of the category $\Sm_K(\Sy_{\Psi})$. In particular, {\rm (i)} 
any smooth $K$-semilinear representation of $\Sy_{\Psi}$ can be embedded into a direct product of 
copies of $F_{\Psi};$ {\rm (ii)} any smooth $F_{\Psi}$-semilinear representation of $\Sy_{\Psi}$ 
of finite length is isomorphic to a direct sum of copies of $F_{\Psi}$.\footnote{This is 
\cite[Theorem 3.10]{H90} with corrected conditions on $F|k$, and a restriction on its transcendence 
degree omitted.} \end{theorem} 
\begin{proof} By Proposition~\ref{local-structure}, 
for any smooth simple $F_{\Psi}\langle\Sy_{\Psi}\rangle$-module $M$ there is a finite 
subset $J\subset\Psi$ and an isomorphism of $F_{\Psi}\langle\Sy_{\Psi|J}\rangle$-modules 
$\bigoplus_{s=0}^NF_{\Psi}\langle\binom{\Psi\smallsetminus J}{s}\rangle^{\kappa_s}
\xrightarrow{\sim}M$ for some integer $N,\kappa_0,\dots,\kappa_N\ge 0$. By Lemma~\ref{iso-restr}, 
the $F_{\Psi}\langle\Sy_{\Psi}\rangle$-module $M$ admits a simple 
$F_{\Psi\smallsetminus J}\langle\Sy_{\Psi|J}\rangle$-submodule $M'$. By Lemma~\ref{no-simple-submod}, 
Remark~\ref{more_remarks} (\ref{condition_on_symm-group-field}) and Corollary~\ref{simple-in-K_S}, 
there are no simple $F_{\Psi\smallsetminus J}\langle\Sy_{\Psi|J}\rangle$-submodules in 
$F_{\Psi}\langle\binom{\Psi\smallsetminus J}{s}\rangle$ for $s>0$, so $M'$ is isomorphic to 
$F_{\Psi\smallsetminus J}$, again by Corollary~\ref{simple-in-K_S}, and thus, $M$ is isomorphic to $F_{\Psi}$. 
As $F_{\Psi}$ is injective (Proposition \ref{injectivity-Ka}), Lemma~\ref{hulls_of_simples_are_cogenerators} 
shows that $F_{\Psi}$ is an injective cogenerator of $\Sm_K(\Sy_{\Psi})$. \end{proof}

\begin{corollary}[\cite{H90}, Corollary 3.11] Let $F|k$, $\Psi$ and $K\subseteq F_{\Psi}$ 
be as in Theorem \ref{smooth-simple}. Then any smooth $K$-semilinear irreducible 
representation of $\Sy_{\Psi}$ can be embedded into $F_{\Psi}$. \qed \end{corollary} 

As a consequence, we get a description of level 1 objects of $\Sm_{F_{\Psi}}(\Sy_{\Psi})$: 
\begin{corollary} \label{objects-level-1} Let $\Psi$ be an infinite set, $F|k$ be a non-trivial 
regular field extension, $P_1:=F_{\Psi}\langle\Psi\rangle$, and $M\cong P_1^{\oplus r}$. 
For any set of homomorphisms with a common source $\Upsilon$, denote by ${}^{\perp}\Upsilon$ 
their common kernel. Then there are natural bijections \begin{enumerate} \item 
\begin{gather*}\left\{\begin{array}{c}F_{\Psi}\langle\Sy_{\Psi}\rangle\mbox{{\rm -submodules}}\\ 
V\ \mbox{{\rm of }}M \end{array}\right\}\leftrightarrow
\left\{\begin{array}{c}\mbox{{\rm pairs }}(\Lambda,S),\mbox{{\rm  where }}
\Lambda\ \mbox{{\rm is an }}F\mbox{{\rm -vector subspace in}}\\ 
\Hom_{\Sm_{F_{\Psi}}(\Sy_{\Psi})}(M,P_1)\cong F^{\oplus r},\ S\ \mbox{{\rm is a finite-dimensional }}\\ 
k\mbox{{\rm -vector subspace in}}\ \Hom_{\Sm_{F_{\Psi}}(\Sy_{\Psi})}({}^{\perp}\Lambda,F_{\Psi})
\cong F^{\oplus r}/\Lambda\end{array}\right\},\\ 
V\mapsto(\Lambda(V),\Hom_{\Sm_{F_{\Psi}}(\Sy_{\Psi})}({}^{\perp}\Lambda(V)/V,F_{\Psi})),\ 
\mbox{{\rm where }}\Lambda(V):=\Hom_{\Sm_{F_{\Psi}}(\Sy_{\Psi})}(M/V,P_1)\\ 
(\Lambda,S)\mapsto{}^{\perp}S\ (\mbox{{\rm so }}\dim_kS=\dim_K({}^{\perp}\Lambda/V));\end{gather*} 
\item between the isomorphism classes $[V]$ of $F_{\Psi}\langle\Sy_{\Psi}\rangle$-submodules $M$ 
and the pairs $(t,S)$, where $0\le t\le r$ and $S$ is a $\mathrm{GL}_tF$-orbit of 
finite-dimensional $k$-vector subspaces in $F^{\oplus t}$. \end{enumerate} \end{corollary} 
\begin{proof} By Theorem~\ref{smooth-simple}, this follows from Lemma~\ref{level-1_quotients}. \end{proof}

\begin{corollary} \label{list-inj-Ka} The indecomposable objects $\Ka\langle\binom{\Psi}{s}\rangle$ 
of $\Sm_{\Ka}(\Sy_{\Psi})$ are injective for all integer $s\ge 0$. \end{corollary}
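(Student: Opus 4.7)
The plan is to deduce the corollary from Lemma~\ref{some-injective} applied with $V=K=\Ka$ and the $\Sy_{\Psi}$-set $\Psi$, so that $V\langle\binom{\Psi}{s}\rangle=\Ka\langle\binom{\Psi}{s}\rangle$. Indecomposability of this object follows from Lemma~\ref{indecomp} applied to the invertible module $V=\Ka$ and the subgroup $U=\Sy_{\Psi,T}$ for some $T\in\binom{\Psi}{s}$, since $[U:U\cap gUg^{-1}]=\infty$ whenever $g(T)\neq T$ (the intersection is the setwise stabilizer of $g(T)\smallsetminus T$ inside the symmetric group of the infinite set $\Psi\smallsetminus T$, with infinite orbit). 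Invoking Lemma~\ref{some-injective} then reduces the claim to verifying: (a) $\Ka$ is injective in $\Sm_{\Ka}(\Sy_{\Psi})$, and (b) $\Ka(\Psi)$ is injective in $\Sm_{\Ka}(\Sy_{\Psi})$.

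For (a), I would first apply Theorem~\ref{smooth-simple} to the enlarged pair $(F(S),k(S))$ when $F\neq k$ (or to $(k(S),k)$ when $F=k$, using that $S$ is then non-empty by hypothesis) to conclude that $F(S)_{\Psi}\supseteq\Ka$ is an injective cogenerator of $\Sm_{\Ka}(\Sy_{\Psi})$. Since $S$ is finite, $\Gamma\subseteq\Xi$ is finitely generated, so Lemma~\ref{Simple-cogenerators-Ka} supplies a $\Ka\langle\Sy_{\Psi}\rangle$-equivariant embedding $\iota:F(S)_{\Psi}\hookrightarrow\prod_{\lambda\in\Xi}x^{\lambda}\Ka$. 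Tracking through its construction---the Hahn-series embedding into $\widetilde{\Ka}((x^{\mathbb Z\langle S_{0}\rangle}))$, the coefficient-tuple map into $\prod_{\lambda}x^{\lambda}\widetilde{\Ka}$, and the decomposition $\widetilde{\Ka}=\bigoplus_{\xi\in\Gamma^{\perp}}x^{\xi}\Ka$---one reads off that an element $a\in\Ka\subset F(S)_{\Psi}$ is sent to the tuple supported only at $\lambda=0$ with entry $a$. Composing $\iota$ with the coordinate projection $\pi_{0}:\prod_{\lambda}x^{\lambda}\Ka\twoheadrightarrow x^{0}\Ka=\Ka$ therefore yields a splitting of the inclusion $\Ka\hookrightarrow F(S)_{\Psi}$, and $\Ka$ emerges as a direct summand of an injective object.

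For (b), I would exploit the identification $\Ka(\Psi)=K_{\Psi,S,\Gamma}^{(F(X)|k)}$, which holds because $F(X)_{k,\Psi}=F_{\Psi}(\Psi)$; note that $k$ is algebraically closed in $F(X)$ and $F(X)\neq k$ automatically, so the hypotheses of (a) are met with no restriction on $S$. Repeating the argument of (a) verbatim with $(F,k)$ replaced by $(F(X),k)$ and the same $(S,\Gamma)$ realises $\Ka(\Psi)$ as a direct summand of the injective object $F(X\sqcup S)_{\Psi}$ in $\Sm_{\Ka(\Psi)}(\Sy_{\Psi})$, hence injective there; by Remarks~\ref{various_remarks} it is then injective in $\Sm_{\Ka}(\Sy_{\Psi})$ as well. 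Lemma~\ref{some-injective} now delivers the injectivity of $\Ka\langle\binom{\Psi}{s}\rangle$.

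The delicate point is the $\Sy_{\Psi}$-equivariance of the coordinate projection $\pi_{0}$: this rests on the fact, built into Example~\ref{torsion-Pic}, that each line $x^{\lambda}\Ka\subset F_{\Psi}(\Psi\times S)$ is $\Sy_{\Psi}$-stable---because $\sigma(x)^{\lambda}/x^{\lambda}=(\sigma(x)/x)^{\lambda}\in\Ka^{\times}$---so $\Sy_{\Psi}$ acts on $\prod_{\lambda\in\Xi}x^{\lambda}\Ka$ coordinate-wise, making every coordinate projection equivariant.
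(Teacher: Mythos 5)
Your proposal is correct and follows the same outline as the paper's proof: both reduce via Lemma~\ref{some-injective} (applied with $V=K=\Ka$) to showing that $\Ka$ and $\Ka(\Psi)$ are injective objects of $\Sm_{\Ka}(\Sy_\Psi)$, and both obtain an ambient injective cogenerator from Theorem~\ref{smooth-simple} together with an equivariant embedding of it into $\prod_{\lambda\in\Xi}x^{\lambda}\Ka$ from Lemma~\ref{Simple-cogenerators-Ka}. The one genuine divergence is in extracting injectivity of $\Ka$: the paper invokes Lemma~\ref{simples_closed-pts_cogenerators}~(\ref{simple_cogenerators}), which promotes a cogenerating, Hom-orthogonal family with division-ring endomorphism algebras to a family of injectives, hence showing all the $x^{\lambda}\Ka$ are injective at once (a fact also needed for Theorem~\ref{spectrum-triple}~(\ref{simple_objects_are_invertible_Pic})); you instead trace the Hahn-series embedding of Lemma~\ref{Simple-cogenerators-Ka} and exhibit $\pi_0\circ\iota:F(S)_\Psi\to\Ka$ as an explicit $\Sy_\Psi$-equivariant retraction, so that $\Ka$ splits off the injective $F(S)_\Psi$ directly. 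Both steps are valid; yours is more concrete but addresses only $\Ka$ (and analogously $\Ka(\Psi)$), while the paper's abstract lemma is shorter and handles the whole $\mathrm{Pic}_{\Ka}$-orbit in one stroke. Your supporting observations---that each line $x^{\lambda}\Ka$ is $\Sy_\Psi$-stable so $\pi_0$ is equivariant, that $\Ka(\Psi)\cong K_{\Psi,S,\Gamma}^{(F(X)|k)}$, and that injectivity descends along the smooth extension $\Ka(\Psi)|\Ka$ by Remarks~\ref{various_remarks}---are all correct and usefully spell out what the paper's terse proof leaves implicit.
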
 
\begin{proof} This is Lemma~\ref{some-injective}, since by Theorem~\ref{smooth-simple} 
and Lemmas~\ref{Simple-cogenerators-Ka} and \ref{simples_closed-pts_cogenerators} (\ref{simple_cogenerators}), 
$\Ka$ and $\Ka(\Psi)$ are injective objects of $\Sm_{\Ka}(\Sy_{\Psi})$. \end{proof} 

\subsubsection{Partial fraction decomposition} 
\begin{lemma} \label{Part_frac_decomp} Let $X$ be an irreducible curve over a field with the generic point 
$\eta$, $G$ be an automorphism group of the {\bf scheme} $X$, and $\mathcal L$ be a locally free sheaf on $X$. 
Then there exist a natural exact sequence $0\to\Gamma(X,\mathcal L)\to\mathcal L_{\eta}\to
\bigoplus_{x\in X^1}\mathcal L_{\eta}/\mathcal L_x\to H^1(X,\mathcal L)\to 0$, 
and $G$-module isomorphisms $\mathbb Z[G]\otimes_{\mathbb Z[\St_{x_O}]}(\mathcal L_{\eta}/\mathcal L_{x_O})
\xrightarrow{\sim}\bigoplus_{x\in O}\mathcal L_{\eta}/\mathcal L_x$ for all $O\in G\backslash X^1$, 
where $x_O$ is an arbitrary point in $O$. \end{lemma} 
\begin{proof} The first part comes immediately from the flabby 
resolution $\mathcal L_{\eta}\to\coprod_{x\in X^1}\mathcal L_{\eta}/\mathcal L_x$ 
of $\mathcal L$. The second part is evident. \end{proof}

\subsubsection{Lemma~\ref{Simple-cogenerators-Ka} and Corollary~\ref{list-inj-Ka} 
list all indecomposable injectives of $\Sm_{\Ka}(\Sy_{\Psi})$} 
\begin{proof} Let $K$ be of type $\Ka$. 
Any indecomposable injective object is an injective hull of a non-zero cyclic subobject, 
so we only have to show that any smooth finitely generated $K\langle\Sy_{\Psi}\rangle$-module $V$ can be embedded 
into a direct sum of $x^{\lambda}K$ for some $\lambda\in\mathbb Z\langle S\rangle$ 
and of $K\langle\binom{\Psi}{s}\rangle$ for several integer $s\ge 1$. 

By Proposition \ref{local-structure}, there is a subset $\Psi'\subset\Psi$ with 
finite complement $J$ and an isomorphism of $K\langle\Sy_{\Psi|J}\rangle$-modules 
$\bigoplus_{s=0}^NK\langle\binom{\Psi'}{s}\rangle^{\kappa_s}\xrightarrow{\sim}V$ 
for some integer $N,\kappa_0,\dots,\kappa_N\ge 0$. In particular, 
$V':=\indlim_{I\subset\Psi'}V^{\Sy_{\Psi|I}}$, where $I$ runs over the finite subsets of 
$\Psi'$, can be embedded into $\bigoplus_{s=0}^NK\langle\binom{\Psi'}{s}\rangle^{\kappa_s}$.

By Lemma \ref{iso-restr}, it suffices to show that the $K'\langle\Sy_{\Psi|J}\rangle$-module 
$K\langle\binom{\Psi'}{s}\rangle$ is isomorphic to a direct sum of copies of modules 
$x^{\lambda}K',K'\langle\Psi'\rangle,K'\langle\binom{\Psi'}{2}\rangle,\dots$, where $x\in\Psi\smallsetminus J$. 
(Obviously, $\Ka'=F_{\Psi'}(u^{\gamma},u_s/v_s~|~\gamma\in\Gamma,~s\in S,~u,v\in\Psi')$.) 

As $K\langle\binom{\Psi'}{s}\rangle=K\otimes_{K'}K'\langle\binom{\Psi'}{s}\rangle$, this can be reduced 
to the case $s=0$ as follows. Assuming that $K$ is a direct sum of the required type, we only have 
to (i) note that $K'\langle\binom{\Psi'}{s}\rangle\to x^{\lambda}K'\otimes_{K'}K'\langle\binom{\Psi'}{s}\rangle$, 
$[J]\mapsto(\sum_{t\in J}t)^{\lambda}\cdot[J]$, is an isomorphism for any $s\ge 1$, and (ii) to check that 
$K'\langle\binom{\Psi'}{n}\times\binom{\Psi'}{s}\rangle\cong\bigoplus_{j=0}^{n+s}
K'\langle\binom{\Psi'}{j}\rangle^{\oplus N_j}$. It is clear that 
$K'\langle\binom{\Psi'}{n}\times\binom{\Psi'}{s}\rangle
\xrightarrow{\sim}\bigoplus_{j=0}^{\min(n,s)}K'\langle\binom{\Psi'}{j,n-j,s-j}\rangle$, 
$[(I,J)]\mapsto[(I\cap J,I\smallsetminus I\cap J,J\smallsetminus I\cap J)]$, 
where $\binom{\Psi'}{j,n-j,s-j}$ denotes the set of triples of disjoint subsets of $\Psi'$ of orders $j,n-j,s-j$. 
According to Example~\ref{fin-ind-ex}, $K'\langle\binom{\Psi'}{j,n-j,s-j}\rangle$ 
is isomorphic to a direct sum of copies of $K'\langle\binom{\Psi'}{n+s-j}\rangle$. 

Now we treat the case of $s=0$. By Lemma~\ref{some-injective}, the objects 
$x^{\lambda}K',K'\langle\Psi'\rangle,K'\langle\binom{\Psi'}{2}\rangle,\dots$ are injective, while by 
\cite[Theorem 3.18]{H90}, the category $\Sm_{K'}(\Sy_{\Psi|J})$ is locally noetherian, so by Bass--Papp 
theorem, all direct sums of their copies are injective as well. This means that we can split off any 
direct sum of isomorphic copies of the above objects from $K$. It is thus sufficient to show that 
the $K'\langle\Sy_{\Psi|J}\rangle$-submodule in $K\langle\binom{\Psi'}{s}\rangle$ generated by 
any element $\alpha$ is contained in a submodule isomorphic to a finite direct sum 
$\bigoplus_{\lambda}(x^{\lambda}K')^{m_{\lambda}}\oplus\bigoplus_{i\ge 1}K'\langle\binom{\Psi'}{s_i}\rangle^{n_i}$. 

As $F_{\Psi}$ is the fraction field of $F_{\Psi'}\otimes_kF_J$, in the case $K=F_{\Psi}$ it 
suffices to show that, for any regular field extension $L|k$, any element $\alpha$ in the fraction 
field of $F_{\Psi'}\otimes_kL$ is contained in a $K'\langle\Sy_{\Psi|J}\rangle$-submodule 
isomorphic to a finite direct sum $\bigoplus_{i\ge 0}K'\langle\binom{\Psi'}{s_i}\rangle^{n_i}$. 

For a group $G$ and a $G$-field $M$, let $M(t)|M$ be a simple (purely) transcendental 
$G$-field extension such that the line $tM$ is $G$-invariant. In this setting, 
the partial fraction decomposition of Lemma~\ref{Part_frac_decomp} becomes 
\begin{equation}\label{part_fraction_decomp} M(t)=\bigoplus_{n\in\mathbb Z}t^nM\oplus\bigoplus_{m=1}^{\infty}
\bigoplus_O\bigoplus_{j=0}^{\deg O-1}\left(\bigoplus_{P\in O}t^jP(t)^{-m}M\right),\end{equation} where $O$ runs 
over the $G$-orbits of (non-constant) irreducible polynomials $P$ over $M$ such that $P(0)=1$. 
Here the $M\langle G\rangle$-module $\bigoplus_{P\in O}P(t)^{-m}M$ is isomorphic to 
$M\langle G/\St_P\rangle$. 

We proceed by induction on minimal order $d$ of a subset $T\subset L$ such that $\alpha$ is algebraic over 
the fraction field of $F_{\Psi'}\otimes_kk(T)$, the case $d=0$ being trivial. We may assume that $L|k(T)$ 
is finite. Then, in $\Sm_{K'}(\Sy_{\Psi|J})$, the fraction field of $F_{\Psi'}\otimes_kL$ is isomorphic to 
$F_{\Psi'}(T)^{[L:k(T)]}$. Fix some $t\in T$. Taking $G=\Sy_{\Psi'}$ and $M=F_{\Psi'}(T\smallsetminus\{t\})$ 
in decomposition (\ref{part_fraction_decomp}), we see that $F_{\Psi'}(T)=M(t)$ is a direct sum of objects 
isomorphic to $M\langle G/U\rangle$ for open subgroups $U$ of $G$. By induction assumption, 
$F_{\Psi'}(T\smallsetminus\{t\})$ is already of the required type, while 
$\bigoplus_{P\in O}F_{\Psi'}(T\smallsetminus\{t\})\cdot P(t)^{-m}$ is isomorphic to a direct sum of objects 
$K'\langle\binom{\Psi'}{s_i}\rangle\otimes\mathbb Z\langle\Sy_{\Psi'}/\St_P\rangle$ 
for some open subgroups $\St_P\subseteq\Sy_{\Psi'}$. Applying 
Lemmas \ref{open-subgrps-descr} and \ref{finite-index-split} and the above isomorphism 
$K'\langle\binom{\Psi'}{n}\times\binom{\Psi'}{s}\rangle\cong\bigoplus_{j=0}^{n+s}
K'\langle\binom{\Psi'}{j}\rangle^{\oplus N_j}$, we see that the latter object is 
of the required type. This completes the induction step. 

In the case $K=\Ka$, $K$ is contained in $F_{\Psi}(\Psi\times S)$, which is isomorphic in 
$\Sm_{F_{\Psi'}(\Psi'\times S)}(\Sy_{\Psi|J})$ to a direct sum of copies of 
$F_{\Psi'}(\Psi'\times S)\langle\binom{\Psi'}{s}\rangle=
F_{\Psi'}(\Psi'\times S)\otimes_{K'}K'\langle\binom{\Psi'}{s}\rangle$ for $s\ge 0$. To show that 
$F_{\Psi'}(\Psi'\times S)$ is isomorphic in $\Sm_{K'}(\Sy_{\Psi|J})$ to a direct sum of copies of 
injective objects $K'\langle\binom{\Psi'}{s}\rangle$ for $s\ge 1$ and $x^{\lambda}K'$, it suffices to 
check that any element of $F_{\Psi'}(\Psi'\times S)$ belongs to a direct sum of subobjects of such type. 
We may, thus, assume $S$ to be finite, and proceed by induction on $\#S$. As in the proof of 
Lemma~\ref{Simple-cogenerators-Ka}, after a base change of $\mathbb Z\langle S\rangle$, we may assume 
that $\Gamma$ is generated by the elements $m(i)[i]$ for all $i\in S$. 
As $F_{\Psi'}(\Psi'\times S)=K'(x_i~|~i\in S)$ for any $x\in\Psi'$, we again use decomposition 
(\ref{part_fraction_decomp}) with $M=K_{\mathrm{b},t}$ for some $t\in S$ and $G=\Sy_{\Psi'}$ to see that 
$F_{\Psi'}(\Psi'\times S)$ is a direct sum of objects isomorphic to $x_t^dK_{\mathrm{b},t}$ for all 
$d\in\mathbb Z$ or $K_{\mathrm{b},t}\langle G/U\rangle$ for open subgroups $U$ of $G$. Applying arguments 
similar to the above ones, we conclude that $F_{\Psi'}(\Psi'\times S)\langle\binom{\Psi'}{s}\rangle$ 
admits a decomposition of desired type. \end{proof}

\subsection{Structure of \texorpdfstring{$\Sm_{\Kb}(\Sy_{\Psi})$}{Representations over Kb}} 
Let $p$ be the characteristic of $k$. 
\subsubsection{The spectrum of \texorpdfstring{$\Sm_{\Kb}(\Sy_{\Psi})$}{}} 
\begin{theorem} In the notation of Theorem~\ref{spectrum-triple}, the indecomposable injectives 
of $\Sm_{\Kb}(\Sy_{\Psi})$ are injective hulls $P_s^{(\Kb)}$ of $\Kb\langle\binom{\Psi}{s}\rangle$ 
for all integer $s\ge 0$, where $P_0^{(\Kb)}=\Kb[x]$, $P_s^{(\Kb)}=\Kb\langle\binom{\Psi}{s}\rangle$ 
for all $s\ge 1$, except that $P_2^{(\Kb)}=\Kb\langle\Psi^2\smallsetminus\Delta_{\Psi}\rangle$ 
if $F=k$ and $p=2$ $($and $\End_{\Kb\langle\Sy_{\Psi}\rangle}(P_2^{(\Kb)})\cong k[X]/(X^2))$. \end{theorem} 
\begin{proof} By Theorem \ref{smooth-simple}, 
$F_{\Psi}(\Psi)=F(X)_{\Psi}$ is an injective cogenerator of $\Sm_{\Kb}(\Sy_{\Psi})$. For any $x\in\Psi$, 
one has $F(X)_{\Psi}=\Kb(x)=\Kb[x]\oplus\bigoplus_R\bigoplus_{m\ge 1}V^{(m)}_R$, where $R$ runs over 
the $\Sy_{\Psi}$-orbits of non-constant irreducible monic polynomials in $\Kb[x]$ and $V^{(m)}_R$ is 
the $\Kb$-linear envelope of $P/Q^m$ for all $Q\in R$ and $P\in\Kb[x]$ with $\deg P<\deg Q$. As $F(X)_{\Psi}$ 
is injective, its direct summand $\Kb[x]$ is also injective, as well as $V^{(m)}_R$ are for all $R$ and $m$. 

Let $V\subseteq\Kb[x]$ be a non-zero $\Kb\langle\Sy_{\Psi}\rangle$-submodule. Let $Q\in V$ be a monic 
polynomial in $x$. Then $V$ contains $Q-\sigma Q$ for any $\sigma\in\Sy_{\Psi}$ with $\sigma Q\neq Q$. 
Then $Q-\sigma Q$ is a non-zero polynomial of degree $<\deg Q$. In particular, if $Q$ is 
of minimal degree then $Q-\sigma Q=0$ for any $\sigma\in\Sy_{\Psi}$, which means that $Q\in k$, i.e. 
$V$ contains $\Kb$. This implies that $\Kb[x]$ is an injective hull of $\Kb$, in particular, all 
$\Kb\langle\Sy_{\Psi}\rangle$-submodules of $\Kb[x]$ are indecomposable. 
This also follows from Lemma~\ref{End-EKb}. 

Each $V^{(m)}_R$ is filtered by $V^{(j,m)}_R$, $0\le j<\deg R$, where $V^{(j,m)}_R$ is the 
$\Kb$-linear envelope of $P/Q^m$ for all $Q\in R$ and $P\in\Kb[x]$ with $\deg P\le j$. Clearly, 
these decomposition and filtration are independent of $x$. Moreover, for all $0\le j<\deg R$ and $m$ the map 
$V^{(j,1)}_R\to V^{(j,m)}_R$, $P/Q\mapsto P/Q^m$, is an isomorphism of $\Kb\langle\Sy_{\Psi}\rangle$-modules. 

We separate the case where $F=k$ and $p=2$. We denote this case by $\star$. 
Let us show that \begin{itemize} \item for any $R$ and $0\le j<\deg R$, the 
$\Kb\langle\Sy_{\Psi}\rangle$-module $V^{(j,1)}_R$ is non-canonically isomorphic to 
$(\Kb\langle\Sy_{\Psi}/\St_Q\rangle)^{j+1}$, where $Q\in R$ is arbitrary, but in the $\star$ case 
we exclude those $R$ with $\St_Q=\Sy_{\Psi,\{x,y\}}$ for some $Q\in R$ and $x\neq y$ in $\Psi$; 
\item in the $\star$ case, if $\St_Q=\Sy_{\Psi,\{x,y\}}$ for some $R$, $Q\in R$ and $x\neq y$ 
in $\Psi$, and $0\le j<\frac{1}{2}\deg R$, the $\Kb\langle\Sy_{\Psi}\rangle$-module $V^{(2j+1,1)}_R$ is 
non-canonically isomorphic to $(\Kb\langle\Psi^2\smallsetminus\Delta_{\Psi}\rangle)^{j+1}$. \end{itemize} 
(In particular, by Lemma \ref{no-simple-submod}, 
there are no simple $\Kb\langle\Sy_{\Psi}\rangle$-submodules in $V^{(1)}_R$). 

We proceed by induction on $0\le j<\deg R$, the case $j=0$ being obvious, since the morphism 
$\Kb\langle\Sy_{\Psi}/\St_Q\rangle\to V^{(0,1)}_R$, $[g]\mapsto(gQ)^{-1}$, is an isomorphism. 

Let $R$ contain $Q=x_1+\sum_{i=2}^s(x_i-x_1)^2$ for some $s\ge 1$ and pairwise 
distinct $x_1,\dots,x_s\in\Psi$. Then (i) $\St_Q$ is of index $s$ in 
$\Sy_{\Psi,\{x_1,\dots,x_s\}}$; 
(ii) $V_R^{(1)}\cong\Kb\langle\Psi\rangle$ if $s=1$; (iii) 
$V_R^{(1)}\cong\Kb\langle\Psi^2\smallsetminus\Delta_{\Psi}\rangle$ if $s=2$. 
By Lemma~\ref{finite-index-split}, this implies that $V^{(1)}_R$ is isomorphic to 
$\Kb\langle\binom{\Psi}{s}\rangle^s$, unless $F=k$, $p=2$ and $s=2$. 
According to Example~\ref{fin-ind-ex}, this implies that $\Kb\langle\Sy_{\Psi}/U\rangle$ is injective 
for any proper open subgroup $U\subset\Sy_{\Psi}$, with the exception of $U=\Sy_{\Psi,\{x,y\}}$ for 
$x\neq y$ in $\Psi$ in the $\star$ case. 

Now $R$ is arbitrary. By the induction hypothesis, $V^{(j-1,m)}_R$ is isomorphic to 
$(\Kb\langle\Sy_{\Psi}/\St_Q\rangle)^j$, and thus, injective. Then the inclusion 
$V^{(j-1,1)}_R\hookrightarrow V^{(j,1)}_R$ splits: 
$V^{(j,1)}_R\cong V^{(j-1,1)}_R\oplus(V^{(j,1)}_R/V^{(j-1,1)}_R)$. Then the canonical isomorphisms 
$x^j\cdot\colon V^{(0,m)}_R\xrightarrow{\sim}V^{(j,1)}_R/V^{(j-1,1)}_R$ complete the induction step. 

We know that any simple object is isomorphic to $\Kb$, so by Lemma~\ref{hulls_of_simples_are_cogenerators}, 
the injective hull $\Kb[x]$ of $\Kb$ is a cogenerator. 

The remaining indecomposable injectives are described in a way similar to the case of $\Ka$. Namely, we have to 
decompose $F_{\Psi}(\Psi)\langle\binom{\Psi}{s}\rangle=F_{\Psi}(\Psi)\otimes_{\Kb}\Kb\langle\binom{\Psi}{s}\rangle$ 
in $\Sm_{\Kb}(\Sy_{\Psi})$ for all $s\ge 1$. The summands of type $\Kb\langle\Sy_{\Psi}/U\rangle$ in 
$F_{\Psi}(\Psi)$ give the same type of summands in $F_{\Psi}(\Psi)\otimes_{\Kb}\Kb\langle\binom{\Psi}{s}\rangle$. 
Now, if $\Kb\langle\binom{\Psi}{s}\rangle$ is injective then, by the above inductive argument, 
$\Kb[x]^{<n}\otimes_{\Kb}\Kb\langle\binom{\Psi}{s}\rangle\cong\Kb\langle\binom{\Psi}{s}\rangle^{\oplus n}$, so 
$\Kb[x]\otimes_{\Kb}\Kb\langle\binom{\Psi}{s}\rangle\cong\bigoplus_{n\ge 0}\Kb\langle\binom{\Psi}{s}\rangle$. 
Finally, if $p=2$ then 
$\Kb\langle\Psi^2\smallsetminus\Delta_{\Psi}\rangle\to\Kb[x]^{<2}\otimes_{\Kb}\Kb\langle\binom{\Psi}{2}\rangle$, 
$(x,y)\mapsto(x\{x,y\},y\{x,y\})$, is an isomorphism, so reasoning by induction we see that 
$\Kb[x]^{<2n}\otimes_{\Kb}\Kb\langle\binom{\Psi}{2}\rangle$ is injective for any $n\ge 1$ 
and $\Kb[x]^{<2n}\otimes_{\Kb}\Kb\langle\binom{\Psi}{2}\rangle\cong
\Kb\langle\Psi^2\smallsetminus\Delta_{\Psi}\rangle^{\oplus n}$, and thus, 
$\Kb[x]\otimes_{\Kb}\Kb\langle\binom{\Psi}{2}\rangle\cong
\bigoplus_{n\ge 0}\Kb\langle\Psi^2\smallsetminus\Delta_{\Psi}\rangle$. \end{proof} 

\subsubsection{Endomorphisms of \texorpdfstring{$P_0^{(\Kb)}$}{}} For a field $k$ of characteristic 
$p$, let $\mathbb D$ be the completed free divided power $k$-algebra on one PD-generator of the 
maximal ideal, i.e. the elements of $\mathbb D$ are formal series $a_0+\sum_{i\ge 1}a_iD^{(i)}$, 
where $a_i\in k$, with multiplication $D^{(i)}D^{(j)}=\binom{i+j}{i}D^{(i+j)}$ and 
PD-structure on the maximal ideal $(D^{(i)})^{(j)}=\eta_{i,j}D^{(ij)}$ for all integer 
$i,j\ge 1$ and $\eta_{i,j}=\frac{(ij)!}{(i!)^jj!}\in\mathbb Z$. 

\begin{lemma} \label{End-EKb} There is an isomorphism onto the commutative local PD-$k$-algebra $\mathbb D$ 
from the algebra $k[\![X_0,X_1,X_2,\dots]\!]/(X_0^p,X_1^p,X_2^p,\dots)$, given by $X_i\mapsto D^{(p^i)}$, 
if $p>0;$ from the algebra $k[\![X]\!]$, given by $X\mapsto D^{(1)}$, if $p=0$. There is a continuous 
$k$-algebra isomorphism $\mathbb D\xrightarrow{\sim}\End_{\Kb\langle\Sy_{\Psi}\rangle}(\Kb[x])$, 
given by $D^{(i)}\mapsto[x^n\mapsto\binom{n}{i}x^{n-i}]$. 
In particular, $\Hom_{\Kb\langle\Sy_{\Psi}\rangle}(\Kb[x],\Kb)=0$. \end{lemma} 
\begin{proof} By Lucas's theorem, \cite{Lucas}, if $p>0$, $n=\sum_{i\ge 0}n_ip^i$ and $m=\sum_{i\ge 0}m_ip^i$ 
for some $n_i,m_i\in\{0,1,\dots,p-1\}$ then $\binom{n}{m}\equiv\prod_i\binom{n_i}{m_i}\pmod p$. 
Then $D^{(n)}=\prod\limits_{j\ge 0}D^{(n_jp^j)}=
\frac{1}{\prod\limits_{j\ge 0}n_j!}\prod\limits_{j\ge 0}(D^{(p^j)})^{n_j}$. 

An $\Sy_{\Psi}$-endomorphism $\varphi$ of $\Kb[x]$ sends the element $x^n\in\Kb[x]^{\Sy_{\Psi|x}}$ to 
some $P_n^x(x)\in\Kb[x]^{\Sy_{\Psi|x}}=F_x[x]$, where $F_x:=F_{\Psi}^{\Sy_{\Psi|x}}$ is the subfield of 
decomposable tensors with all factors 1, except for the $x$-th. If, moreover, $\varphi$ is $\Kb$-linear 
then $y^n=\sum_{i=0}^n\binom{n}{i}x^i(y-x)^{n-i}$ is sent to $\sum_{i=0}^n\binom{n}{i}(y-x)^{n-i}P_i^x(x)$. 
Therefore, $P_n^y(y)=\sum_{i=0}^n\binom{n}{i}(y-x)^{n-i}P_i^x(x)$. As the left hand side is independent of 
$x$, we get $P_n^y(y)=\sum_{i=0}^n\binom{n}{i}P_i^x(0)y^{n-i}$. Comparing the coefficients of the latter 
polynomials, we see that $P_n^y(y)\in k[y]$, and thus, the map $\varphi$ is given by $\sum_{i\ge 0}a_iD^{(i)}$. 

If $p=0$ then $D^{(i)}=D^i/i!$, so $\End_{\Kb\langle\Sy_{\Psi}\rangle}(\Kb[x])=k[\![D]\!]$. 
If $p>0$ then $D^{(i)}\colon x^n\mapsto(-1)^ix^{n-i}$ for any $n\equiv -1\pmod{p^m}$ with 
$p^m>i$, so $D^{(i)}$ are (topologically) linearly independent. \end{proof} 

For each integer $n\ge 0$, denote by $\delta_p(n)$ the sum of the digits of $n$ in the base $p$, if $p>0$. 

\begin{proposition} \label{structure_of_the_closed_pt} Let $\Phi_s\subset\Kc\left[\frac{x}{x-y}\right]$ be 
the $\Kc$-linear span of $\frac{x^n}{(x-y)^n}$ for all $n\ge 0$ with $\delta_p(n)\le s$, where 
$\delta_0(n):=n$. In particular, $\Phi_{\bullet}$ is exhausting filtration of $\Kc\left[\frac{x}{x-y}\right]$ 
multiplicatively generated by $\Phi_1$, $\Phi_0=\Kc$. 
Then $\Phi_{\bullet}$ is the socle series of $\Kc\left[\frac{x}{x-y}\right]$. The socle series of the object 
$\Kb[x]$ of $\Sm_{\Kb}(\Sy_{\Psi})$ is exhausting as well, and coincides with $\Kb\otimes_{\Kc}\Phi_{\bullet}$. 

If the characteristic of $k$ is $0$ then the module $\Kc[\frac{x}{x-y}]$ is uniserial $($its submodules are 
totally ordered by inclusion$);$ the module $\Kb[x]^{<s}$ admits the injective resolution 
$0\to\Kb[x]^{<s}\to\Kb[x]\xrightarrow{\mathrm{d}^s/\mathrm{d}^sx}\Kb[x]\to 0$, 
so $\mathrm{Ext}^{\ge 2}_{\Kb\langle\Sy_{\Psi}\rangle}(-,V)=0$ for any $V$ of finite length. 

For any $s>0$, there exist indecomposable modules of length $s+1$ with the socle $\cong\Kb^s$. 

Each smooth indecomposable $\Kb\langle\Sy_{\Psi}\rangle$-module $V$ of length $2$ is isomorphic to 
a unique submodule of $\Kb[x]$ that corresponds to a point of $\mathbb P_k((\Kb[x]/\Kb)^{\Sy_{\Psi}})$. 
\end{proposition} 
\begin{proof} Let $p$ be the characteristic of $k$. The $\Kc$-linear map 
$\Phi_s/\Phi_{s-1}\to\bigoplus\limits_{n:~\delta_p(n)=s}(x-y)^{-n}\Kc\subset\Kb$, 
$\left(\frac{x}{x-y}\right)^n\mapsto(x-y)^{-n}$, 
is an isomorphism of $\Kc\langle\Sy_{\Psi}\rangle$-modules, since \begin{multline*} 
\sigma\left(\frac{x}{x-y}\right)^n=\sigma\left(\left(\frac{x}{x-y}\right)^{p^{i_1}+\cdots+p^{i_s}}\right)=
\prod_{j=1}^s\left(\left(\frac{x-y}{x^{\sigma}-y^{\sigma}}\right)^{p^{i_j}}\left(\frac{x}{x-y}\right)^{p^{i_j}}
+\left(\frac{x^{\sigma}-x}{x^{\sigma}-y^{\sigma}}\right)^{p^{i_j}}\right)\\ \equiv
\left(\frac{x-y}{x^{\sigma}-y^{\sigma}}\right)^n\left(\frac{x}{x-y}\right)^n
\bmod{\Phi_{s-1}\cap\Kc\left[\frac{x}{x-y}\right]^{\le n-p^{i_1}}}\end{multline*} 
for any $n=p^{i_1}+\cdots+p^{i_s}$, $i_1\le i_2\le\cdots\le i_s$, and $\sigma\in\Sy_{\Psi}$. 
In particular, $\Phi_s/\Phi_{s-1}$ and $\Kb\otimes_{\Kc}(\Phi_s/\Phi_{s-1})$ are semisimple. It 
remains to check that $\Phi_s/\Phi_{s-1}$ (resp. $\Kb\otimes_{\Kc}(\Phi_s/\Phi_{s-1})$) is the 
socle of $\Kc\left[\frac{x}{x-y}\right]/\Phi_{s-1}$ (resp. of $\Kb[x]/\Kb\otimes_{\Kc}\Phi_{s-1}$). 
We proceed by induction on $s\ge 0$ (with $\Phi_{-1}:=0$), the case $s=0$ being evident. 

As the operators $D^{(i)}\colon\Kb[x]\to\Kb[x]$ commute with the $\Sy_{\Psi}$-action, 
$D^{(i)}$ induce $k$-linear maps $(\Kb[x]/\Kb)^{\Sy_{\Psi}}\to\Kb[x]^{\Sy_{\Psi}}=k$ for all $i\ge 1$. 
If $p>0$ and $n=mp^i$ with $m>1$ prime to $p$ then $D^{(p^i)}x^n=\binom{n}{p^i}x^{n-p^i}=mx^{n-p^i}\neq 0$, 
and therefore, $(\Kb[x]/\Kb)^{\Sy_{\Psi}}\subseteq\{\sum_{i=0}^na_ix^{p^i}~|~a_i\in\Kb\}$. 
If $Q=\sum_{i=0}^na_ix^{p^i}\in(\Kb[x]/\Kb)^{\Sy_{\Psi}}$ then 
$Q^{\sigma}\equiv\sum_{i=0}^na_i^{\sigma}x^{p^i}\pmod\Kb$ for any $\sigma\in\Sy_{\Psi}$, 
so $a_i\in\Kb^{\Sy_{\Psi}}=k$. If $p=0$ then $(\Kb[x]/\Kb)^{\Sy_{\Psi}}=\Kb\cdot x$. 
This identifies $\Phi_1$. 

Let us check that the operators $D^{(i)}$ map $\Phi_s$ to $\Phi_{s-1}$ for all $i\ge 1$. 
This is trivial if $p=0$, so assume that $p>0$. By Lucas's theorem, the set $\mathbb N$ is 
partially ordered by $n\succeq_pm$ if $p\nmid\binom{n}{m}$, 
so if $x^n\in\Phi_s$ then either $D^{(i)}x^n=0$ or $n\succeq_pi$ and 
$x^{n-i}\in\Phi_{s-1}$. Therefore, all $D^{(i)}$ induce $\Kb[x]/\Phi_s\to\Kb[x]/\Phi_{s-1}$ and 
$(\Kb[x]/\Phi_s)^{\Sy_{\Psi}}\to(\Kb[x]/\Phi_{s-1})^{\Sy_{\Psi}}=(\Phi_s/\Phi_{s-1})^{\Sy_{\Psi}}$. 
Moreover, if $x^n\notin\Phi_{s-1}$ and $n\succeq_pp^j$ then $x^{n-p^j}\notin\Phi_{s-2}$. This shows 
that if an element does not belong to $\Phi_{s+1}$ then some $D^{(i)}$ maps it outside of $\Phi_s$, 
so it is not fixed by $\Sy_{\Psi}$ modulo $\Phi_s$, thus completing the induction step. 

Given a smooth indecomposable $\Kb\langle\Sy_{\Psi}\rangle$-module $V$ of length 2, an isomorphism of the 
socle of $V$ onto $\Kb\subset\Kb[x]$ can be extended to a morphism to $\Kb[x]$, which is injective, since the 
socle is essential. To check uniqueness, it suffices to show that the submodules corresponding to distinct 
points of $\mathbb P_k((\Kb[x]/\Kb)^{\Sy_{\Psi}})$ are non-isomorphic. Indeed, as $\Kb[x]$ is injective, 
any isomorphism between submodules of $\Kb[x]$ extends to an endomorphism of $\Kb[x]$, which is scalar on 
$(\Kb[x]/\Kb)^{\Sy_{\Psi}}$. 

For any $s\ge 0$, the dual of $\{a+b_0x+\cdots+b_sx^{p^s}~|~a,b_0\dots,b_s\in\Kb\}$ is 
$\{(a_0+bx,\dots,a_s+bx^{p^s})~|~a_0,\dots,a_s,b\in\Kb\}$ (where the pairing is 
$\langle(a_0+bx,\dots,a_s+bx^{p^s}),a+b_0x+\cdots+b_sx^{p^s}\rangle=ab-a_0b_0-\cdots-a_sb_s$; 
this pairing is equivariant). It is indecomposable, since so is the former. \end{proof} 

\subsection{Structure of \texorpdfstring{$\Sm_{\Kc}(\Sy_{\Psi})$}{Representations over Kc}} 

\subsubsection{The projective line \texorpdfstring{$\mathbb Y$}{Y} over \texorpdfstring{$\Kd;$}{Kd;} invertible 
objects \texorpdfstring{$\omega_{\mathbb Y,\eta}^n$}{} of \texorpdfstring{$\Sm_{\Kc}(\Sy_{\Psi})$}{}} 
\label{projective_line_Y}
For any field extension $L|K$, denote by $\mathrm{Val}_{L|K}$ the set of all discrete 
valuations $L^{\times}\longrightarrow\hspace{-3mm}\to\mathbb Z$ trivial on $K$.

\begin{lemma} \label{stab_val} \begin{itemize} \item $\mathrm{Val}_{\Kc|\Kd}$ can be identified 
naturally with the set $\mathbb Y^1$ of closed points of a projective line $\mathbb Y$ over $\Kd;$ 
the natural $\Sy_{\Psi}$-action on $\mathrm{Val}_{\Kc|\Kd}$ is smooth and degree-preserving. 
\item For each triple of pairwise distinct $\alpha,\beta,\gamma\in\Psi$ there is a unique 
$v_{\gamma}\in\mathrm{Val}_{\Kc|\Kd}$ with $v_{\gamma}\left(\frac{\alpha-\beta}{\beta-\gamma}\right)>0$. 
This $v_{\gamma}$ does not depend on $\alpha,\beta$. The map $\Psi\to\mathrm{Val}_{\Kc|\Kd}$, 
$\gamma\mapsto v_{\gamma}$, is an $\Sy_{\Psi}$-equivariant injection$;$ 
it identifies $\Psi$ with an $\Sy_{\Psi}$-orbit $\Theta:=\{v_{\gamma}~|~\gamma\in\Psi\}$. 
\item For any $\gamma\in\Psi$, the valuation $v_{\gamma}$ is trivial on the subfield 
$\varkappa_{\gamma}:=F_{\Psi}\left(\frac{u-v}{v-w}~|~u,v,w\in\Psi\smallsetminus\{\gamma\}\right)$ 
of $\Kc$. The residue field of $v_{\gamma}$ is identified naturally with $\varkappa_{\gamma}$. 
\item For any distinct $\alpha,\gamma\in\Psi$, the fixed points of $\Sy_{\Psi|\{\alpha,\gamma\}}$ 
acting on $\mathrm{Val}_{\Kc|\Kd}$ are $v_{\alpha}$ and $v_{\gamma}$. \end{itemize} \end{lemma} 
\begin{proof} For any $\alpha'\neq\alpha,\gamma$, the element $\frac{\alpha-\alpha'}{\alpha'-\gamma}$ 
is a $\Kd^{\times}$-multiple of $\frac{\alpha-\beta}{\beta-\gamma}$, so their valuations coincide. 
If $v\in\mathrm{Val}_{\Kc|\Kd}$ is positive on $\frac{\alpha-\beta}{\beta-\gamma}$ then 
$0=v\left(1+\frac{\alpha-\alpha'}{\alpha'-\gamma}\right)=v\left(\frac{\alpha-\gamma}{\alpha'-\gamma}\right)$, 
so $v\left(\frac{\alpha-\alpha'}{\alpha'-\gamma}\right)=v\left(\frac{\alpha-\alpha'}{\alpha-\gamma}\right)=
v\left(\frac{\beta'-\alpha'}{\beta'-\gamma}\right)$ for arbitrary $\beta'\neq\alpha'$ distinct from 
$\gamma$. Thus, the stabilizer of $v$ is $\Sy_{\Psi|\{\gamma\}}$, and sending 
$v$ to $\gamma$ gives rise to an isomorphism of $\Sy_{\Psi}$-sets $\Theta\xrightarrow{\sim}\Psi$. 

Any element $f\in\varkappa_{\gamma}^{\times}$ is fixed by $\Sy_{\Psi|S}$ for a finite subset 
$S\subset\Psi\smallsetminus\{\gamma\}$. Then $v_{\beta}(f)=v_{\gamma}(f)$ for any $\beta$ in 
the $\Sy_{\Psi|S}$-orbit $\Psi\smallsetminus S$ of $\gamma$. As only finitely many 
valuations are non-zero on $f$, $v_{\gamma}(f)=0$. 

By Lemma~\ref{relations_between_K?}, the elements of $\Kc$ can be considered as rational functions over 
$\Kd$ (and over $\varkappa_{\gamma}$) in the variable $T:=\frac{\alpha-\beta}{\beta-\gamma}\in\Kc$, 
so each $v\in\mathrm{Val}_{\Kc|\Kd}\smallsetminus\{v_{\alpha}\}$ corresponds to a closed point of 
$\mathbb A^1_{\Kd}$, i.e. to a maximal ideal in $\Kd[T]$, or to a monic irreducible polynomial 
$P=\sum_{i=0}^na_iT^i\in\Kd[T]$. For any $g\in\Sy_{\Psi|\{\alpha,\gamma\}}$, 
$P^g=\sum_{i=0}^na_i^g\left(\frac{(\alpha-\beta^g)(\beta-\gamma)}{(\alpha-\beta)(\beta^g-\gamma)}\right)^iT^i$, 
so if $v$ is fixed by $\Sy_{\Psi|\{\alpha,\gamma\}}$ then 
\[a_i^g\left(\frac{(\alpha-\beta^g)(\beta-\gamma)}{(\alpha-\beta)(\beta^g-\gamma)}\right)^i
=a_i\left(\frac{(\alpha-\beta^g)(\beta-\gamma)}{(\alpha-\beta)(\beta^g-\gamma)}\right)^n
\mbox{ for all }0\le i\le n,\] which means that $a_i\left(\frac{\alpha-\beta}{\beta-\gamma}\right)^{n-i}
\in\Kc^{\Sy_{\Psi|\{\alpha,\gamma\}}}=F_{\{\alpha,\gamma\}}$, 
i.e. $a_i=0$ for $i\neq n$, so $n=1$, and thus, $v=v_{\gamma}$. \end{proof}

\begin{notation} \begin{itemize} \item 
$\mathcal O_{\mathbb Y}\colon U\mapsto\{f\in\Kc~|~v(f)\ge 0\ \mbox{for all }v\in U\}$ is the structure 
sheaf of $\mathbb Y$; \item for each integer $n$, $\omega_{\mathbb Y}^n$ is the $n$-th tensor power of 
the dualizing sheaf $\Omega^1_{\mathbb Y|\Kd}$ of $\mathbb Y$; 
\item for each point $v$ of $\mathbb Y$, $\omega_{\mathbb Y,v}^n$ is the stalk of $\omega_{\mathbb Y}^n$ at $v$; 
\item $\eta$ is the general point of $\mathbb Y$, so 
\item for each integer $n$, $\omega_{\mathbb Y,\eta}^n$ is the one-dimensional 
$\Kc$-vector space of rational sections of $\omega_{\mathbb Y}^n$. \end{itemize} \end{notation} 

\begin{lemma} \label{fixed_1-form} Fix some pairwise distinct $x,y,z\in\Psi$, 
and set $T:=\frac{y-z}{z-x}\in\Kc$. Then {\rm (i)} the $1$-form 
$\varpi:=\frac{\mathrm{d}T}{(x-y)T}\in\Omega^1_{\Kb|\Kd}$ is fixed by $\Sy_{\Psi}$, 
and therefore, it is independent of $x,y,z;$ {\rm (ii)} for each $n\in\mathbb Z$, 
$\Hom_{\Kc\langle\Sy_{\Psi}\rangle}(\omega_{\mathbb Y,\eta}^n,F_{\Psi}(\Psi))
=k\cdot\iota_{\mathrm{c}}^n$, where 
$\iota_{\mathrm{c}}^n\colon\omega_{\mathbb Y,\eta}^n\xrightarrow[\sim]{\times\varpi^{-n}}
(x-y)^n\Kc\subset\Kb$ sends $\left(\frac{\mathrm{d}T}{T}\right)^n$ to $(x-y)^n$. \end{lemma} 

\begin{proof} For any $\sigma\in\Sy_{\Psi|\{x,y\}}$, one has $T^{\sigma}/T\in\Kd^{\times}$, so 
(i) $\varpi$ is fixed by $\Sy_{\Psi|\{x,y\}}$ and (ii) $\varpi=\frac{(z-x)\mathrm{d}(y-z)-(y-z)
\mathrm{d}(z-x)}{(x-y)(y-z)(z-x)}=\frac{(y-z)\mathrm{d}x+(z-x)\mathrm{d}y+(x-y)\mathrm{d}z}{(x-y)(y-z)(z-x)}
\in\Omega^1_{\Kb|\Kd}\subset\Omega^1_{F_{\Psi}|\Kd}$ is  symmetric in $x,y,z$. Then $\varpi$ is also 
fixed by $\Sy_{\Psi|\{y,z\}}$ and $\Sy_{\Psi|\{z,x\}}$, and thus, it is fixed by $\Sy_{\Psi}$, 
so the multiplication by $\varpi^{-n}$ is a $\Kd\langle\Sy_{\Psi}\rangle$-morphism. 

As (i) $\Hom_{\Kc\langle\Sy_{\Psi}\rangle}(\omega_{\mathbb Y,\eta}^n,F_{\Psi}(\Psi))=
(F_{\Psi}(\Psi)\otimes_{\Kc}\omega_{\mathbb Y,\eta}^{-n})^{\Sy_{\Psi}}$, (ii) $F_{\Psi}(\Psi)^{\Sy_{\Psi}}=k$ 
and (iii) the $F_{\Psi}(\Psi)$-vector space $F_{\Psi}(\Psi)\otimes_{\Kc}\omega_{\mathbb Y,\eta}^{-n}$ is 
one-dimensional, one has 
$\dim_k\Hom_{\Kc\langle\Sy_{\Psi}\rangle}(\omega_{\mathbb Y,\eta}^n,F_{\Psi}(\Psi))\le 1$, i.e. any 
morphism from $\omega_{\mathbb Y,\eta}^n$ to $F_{\Psi}(\Psi)$ is a $k$-multiple of $\iota_n$. \end{proof}

\subsubsection{The spectrum of \texorpdfstring{$\Sm_{\Kc}(\Sy_{\Psi})$}{}} 
\begin{proposition} \label{simple-X-Y} Any simple object of 
$\Sm_{\Kc}(\Sy_{\Psi})$ is isomorphic to $\omega_{\mathbb Y,\eta}^n$ for some $n\in\mathbb Z$. 

The object $M^{(n)}:=\omega_{\mathbb Y,\eta}^n\otimes_{\Kc}\Kc\left[\frac{x}{x-y}\right]\cong
(x-y)^n\Kc\left[\frac{x}{x-y}\right]\subset F_{\Psi}(\Psi)$ is an injective hull of 
$\omega_{\mathbb Y,\eta}^n$. 

The remaining isomorphism classes of indecomposable injective objects 
of $\Sm_{\Kc}(\Sy_{\Psi})$ are presented by $(x-y)^n\Kc\langle\Psi\rangle$ for $n\in\mathbb Z$, 
direct summands of $\Kc\langle\Psi^2\smallsetminus\Delta_{\Psi}\rangle$, and 
$\Kc\langle\binom{\Psi}{s}\rangle$ for $s\ge 3$. \end{proposition} 
\begin{remark} By Proposition~\ref{structure_of_the_closed_pt}, the socle series $\Phi^{(n)}_{\bullet}$ of 
$M^{(n)}$ is exhausting and coincides with $\omega_{\mathbb Y,\eta}^n\otimes_{\Kc}\Phi_{\bullet}$. 
\end{remark} 
\begin{proof} Set $X:=x-y$, so $\Sy_{\Psi}$ acts on $\Kc[X]=\Kc[x-y]$ by $\sigma:\sum_ia_iX^i
\mapsto\sum_ia_i^{\sigma}\left(\frac{x^{\sigma}-y^{\sigma}}{x-y}\right)^iX^i$. 
Then $\Kb=\Kc(x-y)=\bigoplus_{n\in\mathbb Z}(x-y)^n\Kc\oplus\bigoplus_R\bigoplus_{m\ge 1}V_R^{(m)}$, 
where $R$ runs over the set of $\Sy_{\Psi}$-orbits of non-constant irreducible polynomials  
$Q(X)\in\Kc[X]$ with $Q(0)=1$ and $V_R^{(m)}$ is the $\Kc$-vector subspace of $\Kb$ spanned by 
$(x-y)^j/Q(x-y)^m$ for all $0\le j<\deg R$ and all $Q\in R$, so 
\[\Kb[x]=\Kb\otimes_{\Kc}\Kc\left[\frac{x}{x-y}\right]=\bigoplus_{n\in\mathbb Z}M^{(n)}
\oplus\bigoplus_R\bigoplus_{m\ge 1}V_R^{(m)}\otimes_{\Kc}\Kc\left[\frac{x}{x-y}\right].\] In particular, 
$M^{(n)}$ and $V_R^{(1)}$ are injective for all $n\in\mathbb Z$ and $R$. Note that 
for all $m$ the map $V^{(1)}_R\to V^{(m)}_R$, $P(x-y)/Q(x-y)\mapsto P(x)/Q(x-y)^m$, is an isomorphism of 
$\Kc\langle\Sy_{\Psi}\rangle$-modules.

Any simple object $V$ of $\Sm_{\Kc}(\Sy_{\Psi})$ can be embedded into a simple object of 
$\Sm_{\Kb}(\Sy_{\Psi})$, i.e. into $\Kb$. 
As $\Kb$ admits an $\Sy_{\Psi}$-invariant discrete valuation trivial on $\Kc$ and positive on $x-y$, $\Kb$ 
embeds $\Sy_{\Psi}$-equivariantly into $\Kc((x-y))=\indlim_m\prod\limits_{n\ge -m}(x-y)^n\Kc\subset
\prod\limits_{n\in\mathbb Z}(x-y)^n\Kc$, so $V$ is isomorphic to $(x-y)^n\Kc$ for some $n\in\mathbb Z$. 

The natural map $\Hom_{\Kc\langle\Sy_{\Psi}\rangle}(M,M')\to
\Hom_{\Kc\langle\Sy_{\Psi}\rangle}(M\otimes_{\Kc}\mathcal L,M'\otimes_{\Kc}\mathcal L)$ is bijective 
for any invertible $\mathcal L$ in $\Sm_{\Kc}(\Sy_{\Psi})$, so 
$\Hom_{\Kc\langle\Sy_{\Psi}\rangle}((x-y)^m\Kc,M^{(n)})\cong
(M^{(n-m)})^{\Sy_{\Psi}}=\begin{cases}k,&\mbox{if $n=m$,}\\0,&\mbox{if $n\neq m$.}\end{cases}$ 
This means that the socle of $M^{(n)}$ is $(x-y)^n\Kc$, so $M^{(n)}$ is an injective hull of $(x-y)^n\Kc$. 
As $M^{(n)}$ is artinian, the socle series $\Phi^{(n)}_{\bullet}$ of $M^{(n)}$ is exhausting. 

Let $R$ contain $Q(x_1-x_2)=(x_1-x_2)\prod_{i=3}^s\frac{x_i-x_1}{x_i-x_2}+1$ for some $s\ge 2$ and pairwise 
distinct $x_1,\dots,x_s\in\Psi$. Then (i) $V_R^{(1)}\cong\Kc\langle\Sy_{\Psi}/\St_Q\rangle$; 
(ii) $\St_Q$ is of index $s(s-1)$ in $\Sy_{\Psi,\{x_1,\dots,x_s\}}$ for all $s\ge 2$, with the 
exception of $s=2,3$ when $p=2$, where $\St_Q$ is of index $2s-3$ in $\Sy_{\Psi,\{x_1,\dots,x_s\}}$. 
By Lemma~\ref{finite-index-split}, if $U$ is an open subgroup in $\Sy_{\Psi,\{x_1,\dots,x_s\}}$ 
of a finite index $\kappa$ for some $s\ge 3$ then $\Kc\langle\Sy_{\Psi}/U\rangle$ is isomorphic to 
$\Kc\langle\binom{\Psi}{s}\rangle^{\oplus\kappa}$ if $s\ge 3$; 
$\Kc\langle\Psi^2\smallsetminus\Delta_{\Psi}\rangle\to\Kc\langle\binom{\Psi}{2}\rangle\oplus
(x-y)\Kc\langle\binom{\Psi}{2}\rangle$, $(x,y)\mapsto(\{x,y\},(x-y)\{x,y\})$, is an isomorphism for $p\neq 2$.   
In particular, $V^{(1)}_R\cong\Kc\langle\binom{\Psi}{s}\rangle^{\oplus\kappa_s}$ if $s\ge 3$. 

As $\Kb\langle\Psi^2\smallsetminus\Delta_{\Psi}\rangle$ and $\Kb\langle\binom{\Psi}{s}\rangle$ 
are injective for $s\neq 0,2$ (it suffices to use this fact for $s=1$), the objects 
$\Kc\langle\Psi^2\smallsetminus\Delta_{\Psi}\rangle\otimes_{\Kc}\Kc\langle\binom{\Psi}{s}\rangle$ and 
$\Kc\langle\binom{\Psi}{t}\rangle\otimes_{\Kc}\Kc\langle\binom{\Psi}{s}\rangle$ are injective for $t\ge 3$ 
and $s\neq 0,2$. The $\Sy_{\Psi}$-set $\binom{\Psi}{t}\times\Psi$ contains a $\Sy_{\Psi}$-subset 
isomorphic to $\Sy_{\Psi}/U$ for a subgroup $U$ of finite index in $\Sy_{\Psi|\{x_1,\dots,x_t\}}$, 
so $P_t^{(\Kc)}:=\Kc\langle\binom{\Psi}{t}\rangle$ are injective for all $t\ge 3$. Similarly, the 
$\Sy_{\Psi}$-set $(\Psi^2\smallsetminus\Delta_{\Psi})\times\Psi$ contains a $\Sy_{\Psi}$-subset isomorphic 
to $\Psi^2\smallsetminus\Delta_{\Psi}$, so $\Kc\langle\Psi^2\smallsetminus\Delta_{\Psi}\rangle$ 
(which is $P_2^{(\Kc)}$ if $p=2$, $P_2^{(\Kc)}\oplus{P_2'}^{(\Kc)}$ if $p\neq 2$) is 
injective. The remaining summands $(x-y)^n\Kc$ of $\Kb$ give the injective objects 
$P_{1,n}^{(\Kc)}:=(x-y)^n\Kc\langle\Psi\rangle$ in $\Sm_{\Kc}(\Sy_{\Psi})$. Clearly, the $P_s^{(\Kb)}$ 
for $s\ge 1$ are direct sums of such objects in $\Sm_{\Kc}(\Sy_{\Psi})$. 

There remains to decompose $V:=V_R^{(1)}\otimes_{\Kc}\Kc\left[\frac{x}{x-y}\right]$. The object 
$\Kc\left[\frac{x}{x-y}\right]$ is filtered with the successive quotients $(x-y)^n\Kc$ with $n\ge 0$. 
If $V_R^{(1)}\cong\Kc\langle\binom{\Psi}{s}\rangle^{\oplus\kappa}$ with $s\ge 3$ then $V$ is isomorphic 
to a direct sum of copies of $\Kc\langle\binom{\Psi}{s}\rangle$. Similarly, if $V_R^{(1)}$ is isomorphic 
to (a direct summand of) $\Kc\langle\Psi^2\smallsetminus\Delta_{\Psi}\rangle$ then $V$ is isomorphic 
to a direct sum of copies of (direct summands of) $\Kc\langle\Psi^2\smallsetminus\Delta_{\Psi}\rangle$. 
\end{proof} 

\section{Several results on representations over \texorpdfstring{$\Kd$}{}} 
\subsection{Residues of 1-forms on curves} 
\label{Residues} Let $C$ be a smooth curve over a field $K$, and $P$ be a closed point of $C$. 
Denote by $\varkappa(P)$ the residue field of $P$, and assume that $\varkappa(P)$ is separable over $K$. 

\begin{definition} The {\sl residue} at $P$ is the $K$-linear map 
$\mathrm{res}_P:\Omega_{K(C)|K}\xrightarrow{\sum_ia_it_P^idt_P\mapsto a_{-1}}\varkappa(P)$, 
where $t_P$ is a local parameter at $P$, the completion of the local ring $\mathcal O_P$ of $P$ 
is identified with $\varkappa(P)[\![t_P]\!]$ (so the function field $K(C)$ becomes embedded into 
$\varkappa(P)(\!(t_P)\!)$), and $a_i\in\varkappa(P)$. \end{definition} 

It is well-known that $\mathrm{res}_P$ is independent of $t_P$ (\cite[Ch. II, \S11]{Serre}), and 
the Cauchy formula holds: $\sum_{P\in C^1}\mathrm{tr}_{\varkappa(P)|K}(\mathrm{res}_P(\omega))=0$ 
for any $\omega\in\Omega_{K(C)|K}$ if $C$ is projective (\cite[Ch. II, \S12]{Serre}). 

As in \S\ref{projective_line_Y}, for each field extension $L|K$, we denote by $\mathrm{Val}_{L|K}$ 
the set of all discrete valuations of $L$ trivial on $K$. 

For each $G$-orbit $O$ on $\mathrm{Val}_{L|K}$ with the residue fields $\varkappa(v)$ for $v\in O$
separable over $K$, define \begin{equation} \label{Res_O} \mathrm{Res}_O:\Omega^1_{L|K}
\xrightarrow{\omega\mapsto\sum_{v\in O}\mathrm{tr}_{\varkappa(v)|K}(\mathrm{res}_v(\omega))\cdot[v]}
K\langle O\rangle.\end{equation} 

\begin{remark} \label{direct-summands_of_1-forms} Let $G$ be a group, $L|K$ be a purely transcendental 
$G$-field extension of transcendence degree 1, and $O$ be a $G$-orbit of some $v\in\mathrm{Val}_{L|K}$. 
Suppose that $\frac{[\varkappa(v):K]}{N[\varkappa(v_1):K]}\in K^{\times}$ for a $\St_v$-orbit 
$\{v_1,\dots,v_N\}$ in $\mathrm{Val}_{L|K}\smallsetminus O$. Then $\mathrm{Res}_O$ splits. [Namely, 
if $c$ is the greatest common divisor of $[\varkappa(v):K]$ and $N[\varkappa(v_1):K]$ then 
$\frac{N}{c}[\varkappa(v_1):K][v]-\frac{1}{c}[\varkappa(v):K]\sum_{i=1}^N[v_i]$ is the divisor of an element 
$f\in(L^{\times}/K^{\times})^{\St_v}$, so a splitting of $\mathrm{Res}_O$ is determined uniquely 
by the condition $[v]\mapsto\frac{[\varkappa(v):K]}{N[\varkappa(v_1):K]}\cdot\frac{\mathrm{d}f}{f}$.] 
\end{remark}

\subsection{Categories \texorpdfstring{$\Sm_{\Kd}(U)$}{} for open subgroups 
\texorpdfstring{$U\subseteq\Sy_{\Psi}$}{}} 

\begin{proposition} \label{some-Kd-inj} The objects 
$\Kd\langle\{\{1,2,3\}\hookrightarrow\Psi\}\rangle$ 
and $\Kd\langle\binom{\Psi}{s}\rangle$ for $s\ge 4$ are injective. \end{proposition} 
\begin{proof} Set $\binom{\Psi}{2}':=\Psi^2\smallsetminus\Delta_{\Psi}$ if $p=2$ and $F=k$, 
and $\binom{\Psi}{s}':=\binom{\Psi}{s}$ in all other cases. 

By Proposition \ref{simple-X-Y}, for all $s\ge 1$, the objects 
$\Omega^1_{\Kc|\Kd}\otimes_{\Kd}\Kd\langle\binom{\Psi}{s}'\rangle$ of $\Sm_{\Kc}(\Sy_{\Psi})$ 
are injective, and therefore, they are injective as objects of $\Sm_{\Kd}(\Sy_{\Psi})$. 

Suppose that, for an $\Sy_{\Psi}$-orbit $O$ on the set 
$\mathbb Y^1:=\mathrm{Val}_{\Kc|\Kd}$ with the residue fields separable over $\Kd$, the residue map 
$\mathrm{Res}_O:\Omega^1_{\Kc|\Kd}\to\Kd\langle O\rangle$ of (\ref{Res_O}) from \S\ref{Residues} 
is split as a morphism in $\Sm_{\Kd}(\Sy_{\Psi})$.
Then, for each $s\ge 1$, the object $\Kd\langle O\rangle\otimes_{\Kd}\Kd\langle\binom{\Psi}{s}'\rangle
=\Kd\langle O\times\binom{\Psi}{s}'\rangle$ of $\Sm_{\Kd}(\Sy_{\Psi})$ is a direct summand 
of the injective object $\Omega^1_{\Kc|\Kd}\otimes_{\Kd}\Kd\langle\binom{\Psi}{s}'\rangle$, 
so $\Kd\langle O\times\binom{\Psi}{s}'\rangle$ is injective. 

For any $\gamma\in O$, if the stabilizer of $\gamma$ fixes some $J\in\binom{\Psi}{s}'$ then 
the $\Sy_{\Psi}$-orbit of $(\gamma,J)\in O\times\binom{\Psi}{s}'$ is isomorphic to $O$, 
so $\Kd\langle O\rangle$ is injective as well. 

Let us show that for each $s\ge 3$ there exists a pair $(q,f)$, where (i) $q$ is a closed point 
of $\mathbb Y$ with the stabilizer $\Sy_{\Psi|S}$ for some $S=\{x_1,\dots,x_s\}\subset\Psi$ 
of order $s\ge 3$, (ii) $f\in(\Kd(\mathbb Y)^{\times}/\Kd^{\times})^{\Sy_{\Psi|S}}$ is 
such that $\mathrm{Res}_O\left(\frac{\mathrm{d}f}{f}\right)=[q]$, where $O$ is 
the $\Sy_{\Psi}$-orbit of $q$. Obviously, such $\mathrm{Res}_O$ are split. 

Set $T:=\frac{x_2-\beta}{\beta-x_1}$ for some $\beta\in\Psi\smallsetminus S$ and 
$\xi_i:=\frac{(x_i-x_1)(x_2-\beta)}{(x_i-x_2)(x_1-\beta)}\in\Kd$ for $3\le i\le s$. For 
each $\lambda\in\Kd$, let $q_{\lambda}$ be the point of $\mathbb Y^1\smallsetminus\{x_2\}$ 
corresponding (as in the proof of Lemma~\ref{stab_val}) to the polynomial 
$T+\lambda\in\Kd[T]$ (e.g., $q_0=x_1$ and $q_1=\beta$), and $O_{\lambda}$ be the 
$\Sy_{\Psi}$-orbit of $q_{\lambda}$. Then $x_2\notin O_{\lambda}$ if $\lambda\notin\{0,1\}$, 
so $\mathrm{Res}_{O_{\lambda}}\left(\frac{\mathrm{d}T}{T+\lambda}\right)=[q_{\lambda}]$. 

If $s\ge 3$ and $\lambda=\sum_{i=3}^s(\xi_3/\xi_s)^i\xi_i$ then 
$T+\lambda\in(\Kd(\mathbb Y)^{\times}/\Kd^{\times})^{\Sy_{\Psi|S}}$,\footnote{As 
$T+\lambda=\frac{x_2-\beta}{\beta-x_1}\left(1-\sum_{i=3}^s(\xi_3/\xi_s)^i\frac{x_i-x_1}{x_i-x_2}\right)$, 
one has $T^g+\lambda^g=\frac{(x_2-\beta^g)(\beta-x_1)}{(\beta^g-x_1)(x_2-\beta)}(T+\lambda)$ for any 
$g\in\Sy_{\Psi|S}$.} and the stabilizer of $q_{\lambda}$ is $\Sy_{\Psi|S}$, so 
$\Kd\langle\Sy_{\Psi}/\Sy_{\Psi|S}\rangle$ is injective. 
As the symmetric group $\Sy_S$ acts faithfully on $\Kd^{\Sy_{\Psi|S}}$ when $s\ge 4$, by 
Lemma~\ref{finite-index-split}, the object $\Kd\langle\Sy_{\Psi}/\Sy_{\Psi|S}\rangle$ is 
a direct sum of copies of the indecomposable object $\Kd\langle\binom{\Psi}{s}\rangle$ 
for $s\ge 4$, and therefore, $\Kd\langle\binom{\Psi}{s}\rangle$ is also injective. \end{proof}

Let $J\subset\Psi$ be a finite subset of order $m$. By Lemma~\ref{stab_val}, $\Sy_{\Psi|J}$ fixes precisely 
$m$ closed point of $\mathbb Y$ if $m\le 2$, though these points are in the same $\Sy_{\Psi}$-orbit. In 
particular, the (setwise) stabilizer $\Sy_{\Psi,J}$ of $J$ fixes no closed point of $\mathbb Y$ if $m=2$. 
However, if $m\ge 3$, there exist infinitely many $\Sy_{\Psi}$-orbits of closed points of $\mathbb Y$ 
containing points whose stabilizers in $\Sy_{\Psi}$ coincide with $\Sy_{\Psi|J}$.

\begin{lemma} \label{multiplicities} Let $G$ be a group, $\wK$ be a $G$-field, and $K\subseteq\wK$ 
be a $G$-invariant subfield. Set $k:=\wK^G$. Let $V\neq 0$ be a $K\langle G\rangle$-module of 
a finite dimension $d$. Then {\rm (i)} $\dim_k\Hom_{K\langle G\rangle}(V,\wK)\le d;$ {\rm (ii)} 
$V$ is simple and $\End_{K\langle G\rangle}(V)=k$ if $k\subseteq K$ and there is 
a $K\langle G\rangle$-module embedding $V^{\oplus d}\hookrightarrow\wK$. 
In particular, if $\mathcal L$ and $\mathcal L'$ are isomorphic invertible 
$K\langle G\rangle$-submodules of $\wK$ then $\mathcal L=\mathcal L'$ if $k\subseteq K$. \end{lemma}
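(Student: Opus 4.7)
For part (i), I would apply Lemma~\ref{inject} to the division ring $A=\wK$ and the $\wK\langle G\rangle$-module $M:=\Hom_K(V,\wK)$, equipped with $G$-action $(\sigma f)(v):=\sigma(f(\sigma^{-1}v))$ and scalar $\wK$-action on the target; the invariants $M^G$ are precisely $\Hom_{K\langle G\rangle}(V,\wK)$. With $\chi$ trivial the lemma yields an injection $\wK\otimes_k\Hom_{K\langle G\rangle}(V,\wK)\hookrightarrow\Hom_K(V,\wK)$, and since the target has $\wK$-dimension $\dim_KV=d$ we conclude $\dim_k\Hom_{K\langle G\rangle}(V,\wK)\le d$.

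For part (ii), let $f_1,\dots,f_d\in\Hom_{K\langle G\rangle}(V,\wK)$ denote the components of the given embedding $V^{\oplus d}\hookrightarrow\wK$, so that $(v_1,\dots,v_d)\mapsto\sum_if_i(v_i)$. I would first check that the $f_i$ are $k$-linearly independent: supposing $\sum c_if_i=0$ with $c_i\in k\subseteq K$ and $c_d\neq 0$, the tuple $(c_d^{-1}c_1v,\dots,c_d^{-1}c_{d-1}v,v)$ (which lies in $V^{\oplus d}$ thanks to $k\subseteq K$) is in the kernel of the embedding for every $v\in V$, forcing $v=0$, a contradiction. Together with (i) this upgrades the bound to an equality $\dim_k\Hom_{K\langle G\rangle}(V,\wK)=d$. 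Simplicity of $V$ then follows by the same trick applied to any proper nonzero $W\subsetneq V$: the restrictions $f_i|_W$ coming from $W^{\oplus d}\hookrightarrow V^{\oplus d}\hookrightarrow\wK$ are $k$-linearly independent, giving $\dim_k\Hom_{K\langle G\rangle}(W,\wK)\ge d>\dim_KW$ and contradicting (i) applied to $W$.

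To identify $\End_{K\langle G\rangle}(V)$ with $k$, I set $D:=\End_{K\langle G\rangle}(V)$ (a division ring by Schur) and $\delta:=\dim_kD$. Precomposition endows $\Hom_{K\langle G\rangle}(V,\wK)$ with a right $D$-module structure; it is free over the division ring $D$, of $k$-dimension $d$ and hence $D$-dimension $d/\delta$. Applying the left exact functor $\Hom_{K\langle G\rangle}(V,-)$ to $V^{\oplus d}\hookrightarrow\wK$ yields an inclusion of right $D$-modules $\Hom_{K\langle G\rangle}(V,V^{\oplus d})=D^d\hookrightarrow\Hom_{K\langle G\rangle}(V,\wK)$, so $d\le d/\delta$ and $\delta=1$. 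Finally the ``in particular'' statement is the case $V=\mathcal L$, $d=1$ of (i): the inclusion $\mathcal L\hookrightarrow\wK$ and the composition $\mathcal L\xrightarrow{\sim}\mathcal L'\hookrightarrow\wK$ are $k$-proportional, so $\mathcal L'=c\mathcal L$ for some $c\in k^{\times}\subseteq K^{\times}$, whence $\mathcal L=\mathcal L'$ as $K$-subspaces of $\wK$.

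I expect the main obstacle to be the $\End=k$ computation, since the other parts reduce quickly to Lemma~\ref{inject} and a routine linear-algebra argument; the delicate point is arranging the injection $\Hom_{K\langle G\rangle}(V,V^{\oplus d})\hookrightarrow\Hom_{K\langle G\rangle}(V,\wK)$ as right $D$-modules so that the dimension comparison makes sense. The hypothesis $k\subseteq K$ is essential in (ii) both for the linear-independence step (one needs $c_i\in k$ to act on $V$) and for the ``in particular'' deduction (one needs $c\in K^{\times}$ to conclude $c\mathcal L=\mathcal L$).
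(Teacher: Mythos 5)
Your proof is correct and follows essentially the same route as the paper's: part (i) is Lemma~\ref{inject} applied to $\wK\otimes_K V^{\vee}\cong\Hom_K(V,\wK)$, and part (ii) combines the $k$-linear independence of the components $f_i$ (which is where $k\subseteq K$ enters) with the bound from (i), via left-exactness of $\Hom_{K\langle G\rangle}(V',-)$. The paper compresses the independence check, the simplicity of $V$, and $\End_{K\langle G\rangle}(V)=k$ into a single chain $k^{\oplus d}\subseteq\End_{K\langle G\rangle}(V')^{\oplus d}=\Hom_{K\langle G\rangle}(V',{V'}^{\oplus d})\hookrightarrow\Hom_{K\langle G\rangle}(V',\wK)$, whereas you separate these into an explicit independence argument and a right-$D$-module dimension count, but the underlying idea is identical.
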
 
\begin{proof} (i) As it is well-known, the multiplication map 
$\wK\otimes_k(\wK\otimes_KV^{\vee})^G\to\wK\otimes_KV^{\vee}$ is injective, so 
$\dim_k\Hom_{K\langle G\rangle}(V,\wK)=\dim_k(\wK\otimes_KV^{\vee})^G
=\dim_{\wK}(\wK\otimes_k(\wK\otimes_KV^{\vee})^G)\le
\dim_{\wK}(\wK\otimes_KV^{\vee})=\dim_KV=:d$. 

(ii) For any submodule $V'\subseteq V$, any embedding $V^{\oplus d}\hookrightarrow\wK$ restricts 
to an embedding ${V'}^{\oplus d}\hookrightarrow\wK$. If $V'$ is simple then the existence 
of such an embedding implies that $\dim_D\Hom_{K\langle G\rangle}(V',\wK)\ge d$, where 
$D:=\End_{K\langle G\rangle}(V')$, so $e:=\dim_k\Hom_{K\langle G\rangle}(V',\wK)\ge d\cdot\dim_kD$. 

By (i), $e\le\dim_KV'$, so $d\ge\dim_KV'\ge e\ge d\cdot\dim_kD$, and therefore, 
$d=\dim_KV'=d\cdot\dim_kD$, i.e. $V'=V$ and $\End_{K\langle G\rangle}(V)=D=k$. \end{proof}

For any permutation group $\mathcal G$, a smooth $\mathcal G$-field $\mathcal K$ and an integer 
$n\ge 1$, let\label{Pi} $\Pi_{\mathcal K}^{(n)}(\mathcal G):=H^1_{\text{{\rm cont}}}(\mathcal G,\GL_n\mathcal K)$ 
denote the set of isomorphism classes of $n$-dimensional objects of $\Sm_{\mathcal K}(\mathcal G)$. 
\begin{proposition} \label{simpleH_simpleG} Let $G$ be a permutation group, $\wK$ be a smooth $G$-field, 
and $H$ be a profinite group of $G$-field automorphisms of $\wK$. Set $K:=\wK^H$ and $k:=\wK^G$. 
\begin{enumerate} \item \label{simple-objects_open-subgrps} Assume that $H$ acts faithfully on $\wK^G$, and any 
simple object of $\Sm_{\wK}(G)$ is isomorphic to $\wK$. Then any simple object of $\Sm_K(G)$ is isomorphic to $K$. 
\item \label{simple_induct} Assume that $k$ coincides with $K^G$. 
Then, for any absolutely irreducible $\rho\in\Sm_k(H)$, the object 
$V_{\rho}:=\Hom_{k[H]}(\rho,\wK)$ of $\Sm_K(G)$ is simple$;\ \dim_KV_{\rho}=\dim_k\rho$. 

If $\#(H/U)$ is invertible in $K$ for any open subgroup $U\subset H$ then {\rm (i)} $\wK$ is 
a direct sum of subobjects of type $V_{\rho}$ for irreducible $\rho\in\Sm_k(H);$ 
{\rm (ii)} $V_{\rho}$ and $V_{\rho'}$ are dual in $\Sm_K(G)$ if and only if $\rho'\cong\rho^{\vee}$. 
\item \label{Pic_of_a_fixed_field} The extension of coefficients\footnote{so that $G$ acts 
trivially on the representatives of 
$\Pi_k^{(n)}(H)$.} $\Pi_k^{(n)}(H)\xrightarrow{\wK\otimes_k(-)}\Pi_{\wK}^{(n)}(G\times H)$ 
is injective and the natural sequence of pointed sets 
$\Pi_k^{(n)}(H)\xrightarrow{\wK\otimes_k(-)}\Pi_{\wK}^{(n)}(G\times H)\xrightarrow{\mathrm{Res}}
\Pi_{\wK}^{(n)}(G)^H$ is exact. 

In particular, there is an exact sequence of groups 
$0\to\mathrm{Pic}_k(H)\to\mathrm{Pic}_K(G)\to\mathrm{Pic}_{\wK}(G)^H$. \end{enumerate} \end{proposition} 
\begin{proof} \begin{enumerate} 
\item Any simple object of $\Sm_K(G)$ can be embedded into a simple object $V$ of $\Sm_{\wK}(G)$, i.e., 
into $\wK$. By the Galois theory, the following conditions on the $H$-action on $\wK^G$ are equivalent: 
\begin{enumerate} \item the $H$-action on $\wK^G$ is faithful; 
\item $[(\wK^G)^U:(\wK^G)^H]=[\wK^U:K]=\#(H/U)$ for any open subgroup $U\subseteq H;$ 
\item \label{sum-copies} $\wK^G\otimes_{(\wK^G)^H}K\xrightarrow{\times}\wK$ is an isomorphism. 
\end{enumerate} 
The condition (\ref{sum-copies}) means that $\wK^U$ is isomorphic to a direct sum 
of $[(\wK^G)^U:(\wK^G)^H]=\#H$ copies of the simple object $K$ of $\Sm_K(G)$ 
for any open subgroup $U\subseteq H$, but $\wK=\bigcup_U\wK^U$. 
\item For any $H$-invariant subfield $k$ of $\wK$ and any $\rho\in\Sm_k(H)$ with 
$\dim_k\rho<\infty$, one has $\rho^{\vee}\otimes_k\wK\cong\wK^{\oplus\dim_k\rho^{\vee}}$ in $\Sm_{\wK}(H)$ 
(by \cite[Satz 1]{Speiser}). As $V_{\rho}=(\rho^{\vee}\otimes_k\wK)^H$, one gets 
$\dim_KV_{\rho}=\dim_K(\wK^{\oplus\dim_k\rho^{\vee}})^H=\dim_k\rho$. 

Set $D_{\rho}:=\End_{k[H]}(\rho)$. Then $V_{\rho}$ is a $D_{\rho}\otimes_kK$-module. 
Clearly, the evaluation map $\rho\otimes_kV_{\rho}\to\wK$ factors through 
$\rho\otimes_{D_{\rho}}V_{\rho}\xrightarrow{\mathrm{ev}_{\rho}}\wK$. 
Let us show that the $K\langle G\times H\rangle$-morphism $\mathrm{ev}_{\rho}$ 
is injective if $K\otimes_k\rho$ is a simple object of $\Sm_K(H)$. 

By the normal basis theorem, there is an isomorphism $K[H/U]\xrightarrow{\sim}\wK^U$ in $\Sm_K(H/U)$, 
where $U:=\ker\rho$. Replacing $\wK$ by $\wK^U$, and further by $K[H/U]$ in the definition of $V_{\rho}$, 
we get an isomorphism $K\otimes_k\rho^{\vee}\xrightarrow{\sim}V_{\rho}\subset\rho^{\vee}\otimes_kK[H/U]$, 
$\lambda\mapsto\widehat{\lambda}:=\sum_{h\in H/U}\lambda^h[h]$, so the evaluation map becomes 
$\xi=\sum_{i=1}^rv_i\otimes\widehat{\lambda}_i\mapsto\sum_{h\in H/U}\sum_{i=1}^r\lambda_i^h(v_i)[h]$, 
which means that $\xi$ is its kernel only if $\sum_{i=1}^r\lambda_i^h(v_i)=\sum_{i=1}^r\lambda_i(hv_i)=0$ 
for all $h\in H$. We may assume that $v_i\in K\otimes_k\rho$ are linearly independent over 
$K\otimes_kD_{\rho}$ and $\lambda_1\neq 0$. By the Jacobson density theorem, the natural ring 
homomorphism $k[H/U]\to\mathrm{End}_{D_{\rho}}(\rho)$ is surjective, so there exist $a_h\in K$ 
such that $\sum_{h\in H/U}a_h[h]$ annihilates $v_2,\dots,v_r$ and sends $v_1$ to $v$ such that 
$\lambda_1(v)\neq 0$. Then 
$\sum_{h\in H/U}a_h\sum_{i=1}^r\lambda_i(hv_i)=\sum_{i=1}^r\lambda_i((\sum_{h\in H/U}a_h[h])v_i)=\lambda_1(v)$, 
giving contradiction. 

If $k\subseteq K^G$ then $G$ acts on $V_{\rho}$, so $V_{\rho}$ becomes a smooth 
$D_{\rho}^{\mathrm{op}}\otimes_kK\langle G\rangle$-module. By Lemma~\ref{multiplicities}, 
this implies that if $\rho$ is absolutely irreducible then $V_{\rho}$ is a simple object of $\Sm_K(G)$. 

Assume that the {\sl characteristic of $K$ does not divide} the indices of open subgroups $U'$ of $H$, 
so any smooth representation of $H$ over $K$ is semisimple. Let $\Pi_{U'}$ be the set 
of isomorphism classes of $k$-linear irreducible representations of $H/U'$. Then the $H$-bimodule 
decomposition $k[H/U']=\bigoplus_{\bar{\rho}\in\Pi_{U'}}\rho\otimes_{D_{\rho}}\rho^{\vee}$ corresponds 
to a decomposition $\wK=\bigoplus_{\bar{\rho}\in\Pi_{U'}}W_{\rho}$ is into a direct sum 
of objects of $\Sm_K(G\times H)$, where $W_{\rho}:=\rho\otimes_{D_{\rho}}V_{\rho}$. 
As $\Hom_{K[H]}(W_{\rho}\otimes_KW_{\rho'},K)\neq 0$ if and only if $\rho'\cong\rho^{\vee}$ 
(i.e. $\rho$ and $\rho'$ are dual), $W_{\rho}$ and $W_{\rho'}$ are dual if and only if 
$\rho'\cong\rho^{\vee}$. In particular, $V_{\rho}$ and $V_{\rho'}$ are dual in $\Sm_K(G)$ 
if and only if $\rho'\cong\rho^{\vee}$. 
\item 
The extension of coefficients is the inflation $H^1_{\text{{\rm cont}}}(H,\GL_n\wK^G)\xrightarrow{\mathrm{Inf}}
H^1_{\text{{\rm cont}}}(G\times H,\GL_n\wK)$ which is injective, while the above natural sequence 
of pointed sets becomes the inflation-restriction sequence $H^1_{\text{{\rm cont}}}(H,\GL_n\wK^G)
\xrightarrow{\mathrm{Inf}}H^1_{\text{{\rm cont}}}(G\times H,\GL_n\wK)\xrightarrow{\mathrm{Res}}
H^1_{\text{{\rm cont}}}(G,\GL_n\wK)^H$ which is exact, see \cite[\S5.8]{CohGal}. \end{enumerate} \end{proof}

For any permutation group $\mathcal G$ and a smooth $\mathcal G$-field $\mathcal K$, 
let $\Sm_{\mathcal K}^{\mathrm{fd}}(\mathcal G)$ denote the category 
of smooth finite-dimensional $\mathcal K\langle\mathcal G\rangle$-modules. 
\begin{corollary} \label{cor-for-U} In setting of Proposition~\ref{simpleH_simpleG}, the functor 
$\Sm_k^{\mathrm{fd}}(H)\xrightarrow{\wK\otimes_k(-)}\Sm_{\wK}^{\mathrm{fd}}(G\times H)$ 
is fully faithful. If 
$\Pi_{\wK}^{(n)}(G)^H=\{\ast\}$ then $\Pi_k^{(n)}(H)\xrightarrow{\wK\otimes_k(-)}\Pi_{\wK}^{(n)}(G\times H)$ 
is bijective. If $\Pi_{\wK}^{(n)}(G)^H=\{\ast\}$ for all $n\ge 1$ then 
$\Sm_k^{\mathrm{fd}}(H)\xrightarrow{\wK\otimes_k(-)}\Sm_{\wK}^{\mathrm{fd}}(G\times H)$ is an equivalence 
of categories. \end{corollary} 
\begin{proof} The full faithfulness is evident: for any $V,V'\in\Sm_k^{\mathrm{fd}}(H)$ 
one has \begin{multline*} \Hom_{\wK\langle G\times H\rangle}(\wK\otimes_kV,\wK\otimes_kV')
=(\wK\otimes_kV^{\vee}\otimes_kV')^{G\times H}=(\wK^G\otimes_kV^{\vee}\otimes_kV')^H
=\Hom_{k\langle H\rangle}(V,V'). \end{multline*} The bijectivity follows from 
Proposition~\ref{simpleH_simpleG} (\ref{Pic_of_a_fixed_field}), which implies equivalence 
of the categories. \end{proof}

\begin{remark} Given a finite non-empty subset $S\subset\Psi$, there are the following 
options for the action on the field $\Kd^{\Sy_{\Psi|S}}=F_S(\mbox{cross-ratios of quadruples in }S)$ 
of a subgroup $H$ of $\Sy_S$: \begin{enumerate} \item it is faithful (then the suboptions are 
(a) $F\neq k$, (b) $\#S\ge 5$, (c) $H=1$, (d) $\#S=4$ and $H$ contains no even non-identical involution), 
\item it is trivial, but $H\neq 1$ (then $F=k$ and the suboptions are 
(a) $\#S\in\{2,3\}$, (b) $\#S=4$ and $H$ consists of even involutions), 
\item it is non-trivial, but not faithful (then $F=k$, $\#S=4$ and 
the natural map $H\to\Sy_3$, given by the $H$-action on the partitions of $S$ into pairs, is not injective).
\end{enumerate} \end{remark}

\begin{lemma} \label{simple-Kd-dim} Let $S\subset\Psi$ be a finite subset and $H$ be a subgroup 
of $\Sy_S$. Set $L:=\Kd^{\Sy_{\Psi|S}}$. \begin{enumerate} \item If $\#S\ge 3$ then 
$\phi:\Sm_L^{\mathrm{fd}}(H)\xrightarrow{\Kd\otimes_L(-)}\Sm_{\Kd}^{\mathrm{fd}}(H\times\Sy_{\Psi|S})$ 
is an equivalence of categories, identifying the simple objects of $\Sm_L(H)$ and those of 
$\Sm_{\Kd}(H\times\Sy_{\Psi|S})$.
\item Let $x,y,z\in\Psi$ be some pairwise distinct elements, $p$ be the characteristic of $k$, 
and $T:=\frac{y-z}{z-x}$. 

Then the isomorphism classes of simple objects of $\Sm_{\Kd}(\Sy_{\Psi,\{x,y\}})$ are presented by $\Kd$, 
$T^n\Kd\oplus T^{-n}\Kd\subset\Kc$ for all integer $n\ge 1$, and by $(x-y)\Kd$ if 
$F=k$ and $p\neq 2$. \end{enumerate} \end{lemma} 
\begin{proof} By Lemma~\ref{relations_between_K?}, $\Kd=F_{\Psi}(\xi_u~|~u\in\Psi\smallsetminus\{x,y,z\})$, 
where $\xi_w:=\frac{(w-x)(y-z)}{(w-y)(x-z)}=\frac{(x-y)^{-1}-(x-z)^{-1}}{(x-y)^{-1}-(x-w)^{-1}}$. 
Then $\Kd=F_{\Psi}\left(\frac{y'-u'}{y'-z'}~|~u\in\Psi\smallsetminus\{x,y,z\}\right)=
F_{\Psi}\left(\frac{u''}{z''}~|~u\in\Psi\smallsetminus\{x,y,z\}\right)$, where $v'=\frac{1}{v-x}$ 
for all $v\in\Psi\smallsetminus\{x\}$, and $v'':=\frac{x-v}{v-y}$ for all $v\in\Psi\smallsetminus\{x,y\}$, 
so the restriction of $\Kd$ \begin{itemize} \item to $\Sy_{\Psi|\{x\}}$ is of type $\Kc$, 
i.e. it is isomorphic to $F_{\Psi}(\frac{y-u}{y-z}~|~u\in\Psi\smallsetminus\{x,y,z\})$; 
\item to $\Sy_{\Psi|\{x,y\}}$ is isomorphic to $F_{\Psi}(u/z~|~u\in\Psi\smallsetminus\{x,y,z\})$ 
(and $(xy)\in\Sy_{\Psi}$ acts by $v''\mapsto(v'')^{-1}$); 
\item to any subgroup of $\Sy_{\Psi|\{x,y,z\}}$ is isomorphic to 
$F_{\Psi}(\Psi\smallsetminus\{x,y,z\})$. \end{itemize} 

In each of these cases, the simple objects of $\Sm_{\Kd}(\Sy_{\Psi|S})$ are invertible, namely isomorphic to 
$e^n\Kd\subset\Kb$ for $n\in\mathbb Z$, where $e^{-1}:=\frac{1}{x-y}-\frac{1}{x-z}=\frac{y-z}{(x-y)(x-z)}$ 
(clearly, $e^n\Kd$ is independent of $y$ and $z$): 
$\mathrm{Pic}_{\Kd}(\Sy_{\Psi|S})\cong\mathbb Z$ if $\#S\in\{1,2\}$; 
$\mathrm{Pic}_{\Kd}(\Sy_{\Psi|S})=0$ (i.e. $e^n\Kd\cong\Kd$) if $\#S\ge 3$. By Proposition~\ref{Satz1}, 
$H^0(H,-)\colon\Sm_{\Kd}(U)\xrightarrow{\sim}\Sm_{\Kd^H}(\Sy_{\Psi|S})$ is an equivalence of categories. 
Any object $W$ of $\Sm_{\Kd^H}(\Sy_{\Psi|S})$ can be embedded into the object $W':=\Kd\otimes_{\Kd^H}W$ 
of $\Sm_{\Kd}(\Sy_{\Psi|S})$, while $\dim_{\Kd^H}W=\dim_{\Kd}W'$. If $W$ is simple it can be embedded 
into a simple quotient $W''$ of $W'$. In particular, any simple object of $\Sm_{\Kd}(U)$ is 
finite-dimensional, if so are the objects of $\Sm_{\Kd}(\Sy_{\Psi|S})$. 

If $\#S\ge 3$ then, by Corollary \ref{cor-for-U}, $\phi$ is an equivalence of categories. In particular, 
any simple object of $\Sm_{\Kd}(H\times\Sy_{\Psi|S})$ comes from a simple object of $\Sm_L(H)$. 

Let ${}^{(xy)}\Kc$ denote the $\Kd\langle\Sy_{\Psi}\rangle$-module $\Kc$ with the usual 
$\Sy_{\Psi|\{x,y\}}$-action, but with the $(xy)$-linear $\Kd$-vector space structure. Then 
$A_n:=\Hom_{\Kd\langle\Sy_{\Psi|\{x,y\}}\rangle}(T^n\Kd,{}^{(xy)}\Kc)\cong F_{\{x,y\}}$, 
$F_{\{x,y\}}\ni\alpha\mapsto[f\cdot T^n\mapsto{}^{(xy)}f\alpha\cdot T^{-n}]$. 
As the restriction of any simple object of $\Sm_{\Kd}(\Sy_{\Psi,\{x,y\}})$ to 
$\Sm_{\Kd}(\Sy_{\Psi|\{x,y\}})$ is a direct sum of objects $T^{n_i}\Kd$, while any non-zero 
element of $A_{n_i}$ identifies $T^{n_i}\Kd$ with $T^{-n_i}\Kd$, we get the second claim, unless 
$n_i=0$. As the involutions in $A_0$ are $\pm 1$, if a one-dimensional object is not isomorphic to $\Kd$ 
then $(xy)$ acts on its generator by $-1$, so it is isomorphic to $(x-y)\Kd$. As $(x-y)(f(x)-f(y))$ 
is fixed by the whole group for any $f\in F$, $(x-y)\Kd\cong\Kd$ if $F\neq k$. \end{proof} 

\begin{lemma} \label{simple-Kd-4} Let $S=\{w,x,y,z\}$ be a set of order $4$, $H$ be a subgroup of 
$\Sy_S$, and $H'\subseteq H$ be the kernel of the $H$-action $H\to\Sy_3$ on the set of partitions of $S$ 
into pairs. Then $H=H'\rtimes H''$ for a subgroup $H''$ of the stabilizer of an element of $S$, unless 
$H$ is a cyclic group of order $4$. 

Set $L:=k(\xi)$ for a field $k$ of characteristic $p\ge 0$, where 
$\xi:=\frac{(w-x)(y-z)}{(w-y)(x-z)}$. 

Then the simple objects of $\Sm_L(H)$ are at most three-dimensional$;$ besides $L$, they are 

\begin{tabular}{c|c|c|c} \hline $H$ & {\rm constraints} & 
{\rm remaining simple objects of} $\Sm_L(H)$ & {\rm dimensions}\\ 
\hline {\rm arbitrary} & $p=2$ & $-$ & $-$\\ 
\hline $H'=1$ & $-$ & $-$ & $-$\\ 
\hline $H=H'$ & $p\neq 2$ & {\rm twist(s) of $L$ by the non-trivial character(s) of $H$}& 
$\underbrace{1,\dots,1}_{\mbox{{\tiny $\#H-1$ {\rm time(s)}}}}$\\ 
\hline & $1\neq -1\in k^{\times 2}$ & 
{\rm twists of $L$ by characters of $H$ of order 4}& $1$, $1$\\ 
\cline{2-4} $\cong\mathbb Z/4\mathbb Z$ & $-1\notin k^{\times 2}$ & {\rm a generator acts on a basis by} 
$\begin{pmatrix} 0&-1\\ 1&0 \end{pmatrix}$ & $2$\\ 
\hline $\langle(wx),(yz)\rangle$ & $p\neq 2$ & 
{\rm the kernel of} $L\langle H/H''\rangle\xrightarrow{\sum_ha_h[h]\mapsto\sum_ha_h}L$ & $1$\\ 
\hline {\rm 2-Sylow} & $p\neq 2$ &  $L(\chi)\ (${\rm any} $H\xrightarrow{\chi}k^{\times},\ \chi|_{H'}\neq 1);\ 
L\otimes_kW$ & $1;$ $2$\\ & & $(W\in\Sm_k(H)$ {\rm is standard 2-dimensional)} & \\ 
\hline $\mathfrak{A}_S,\ \mathfrak{S}_S$&$p\neq 2$&
{\rm the kernel of} $L\langle H/H''\rangle\xrightarrow{\sum_ha_h[h]\mapsto\sum_ha_h}L$& $3$\\ 
\hline \end{tabular} \end{lemma} 
\begin{proof} \begin{enumerate} \item To check that $H=H'\rtimes H''$, consider the options for $H'$. 
If $\#H'=1$ then either $\#H=2$ or $H$ has a normal Sylow 3-subgroup, in any case it is not transitive. 
If $\#H'=2$ then $H$ is a 2-subgroup: either of period 2 or (transitive) cyclic of order 4. 
If $\#H'=4$ then, for any element $s$ of $S$, $H''$ can be chosen to be an appropriate subgroup of 
the stabilizer of $s$. 
\item  Let $V\not\cong L$ be a simple object of $\Sm_L(H)$. 

If $H$ is cyclic of order 4 then it is generated by $g=\sigma g'\sigma^{-1}$ for some 
$\sigma\in\Sy_S$, where $g':=(wxyz)$. Set $\xi'={}^{\sigma}\xi$, so ${}^g\xi'=1-\xi'$. 
Then $V$ is a quotient of the kernel $W=L(1-g^2)\dotplus L(g-g^3)$ of the surjection 
$L\langle H\rangle\xrightarrow{1\mapsto(1,\xi')}L\oplus L$. 
As the lines $L(1-g^2)$ and $L(g-g^3)$ are interchanged by $g$, any invariant line in $W$ contains 
$(1-g^2)+c(g-g^3)$ for some $c=\beta\frac{{\xi'}^n+\dots}{{\xi'}^m+\dots}\in L$ (with $\beta\in k$ and 
$m,n\in\mathbb Z_{\ge 0}$) such that ${}^gcc=-1$. Therefore, $m=n$ and $\beta^2=-1$. 
Thus, $W$ splits if and only if $k$ contains a fourth primitive root of unity $i$: 
$W=L(1-ig-g^2+ig^3)\dotplus L(1+ig-g^2-ig^3)$. In characteristic 2, there is an exact sequence 
$0\to L(\sum_{h\in H}h)\to W\to L(1+g^2)\to 0$, so $W$ is an extension of $L$ by $L$. 

If $H=H'\rtimes H''$ then $H''$ acts faithfully on $L$, so the restriction of 
$V$ to $H''$ is trivial, and therefore, $V$ is a quotient of the induced module 
$L\langle H\rangle\otimes_{L\langle H''\rangle}L=L\langle H/H''\rangle$. 
As the latter admits a natural surjection $L\langle H/H''\rangle\xrightarrow{\varepsilon}L$, 
$\dim_LV\le\#(H/H'')-1\le 3$, while the equality may be attained only if $\#H'=4$. If 
$p\neq 2$, $\#H'=4$ and $H''$ contains a 3-cycle then $\ker\varepsilon$ is simple, since 
it restriction to $H'$ is a sum of three distinct irreducible representations, permuted by 
any 3-cycle. If $H=\langle(wx),(yz)\rangle$ then $\ker\varepsilon$ is one-dimensional, 
and thus, simple; obviously, $\ker\varepsilon\not\cong L$ if $p\neq 2$.

The object $V$ contains an $H'$-invariant one-dimensional subspace $V_0$. As $V$ is simple, 
$V=\sum_{g\in H/H'}g(V_0)$. 

If $V_0$ is a trivial representation of $H'$ (which is always the case if $p=2$) then $V$ 
is so as well, and therefore, $V$ is a semilinear representation of $H/H'$, which is trivial, 
since the $H/H'$-action on $L$ is faithful. 

In the remaining case of a Sylow 2-subgroup $H$, $L\langle H\rangle=L\otimes_kk[H]$, while 
$k[H]=W\oplus\bigoplus_{\chi}k(\chi)$ if $p\neq 2$, where $W$ is 
the standard two-dimensional representation of $H$, and $\chi$ runs over the characters of $H$. 
As $L(\chi)^{H'}=0$ if $\chi|_{H'}\neq 1$, $L(\chi)\not\cong L$ in that case. If $\chi|_{H'}=1$ 
then the $H$-action on $L(\chi)$ factors through a faithful action of $H/H'$, so 
$L(\chi)\cong L$. Finally, the module $L\otimes_kW$ is simple, since is restriction to $H'$ is 
a sum of two non-isomorphic modules, while $H$ interchanges them. \end{enumerate} \end{proof} 

\begin{remark} \label{PFD} For $n\le 0$ and $\mathcal L=\omega_{\mathbb Y}^n$, Lemma~\ref{Part_frac_decomp} 
gives a short exact sequence \[0\to\omega^n_{\mathbb Y,\eta}/\Gamma(\mathbb Y,\omega_{\mathbb Y}^n)\to
\bigoplus_O\Kd\langle\Sy_{\Psi}\rangle\otimes_{\Kd\langle\St_{x_O}\rangle}(\omega^n_{\mathbb Y,\eta}/
\omega^n_{\mathbb Y,x_O})\to\Gamma(\mathbb Y,\omega_{\mathbb Y}^{1-n})^{\vee}\to 0,\] 
where $O$ runs over the $\Sy_{\Psi}$-orbits on $\mathbb Y^1$ and $x_O$ is an element of $O$. 

Let $S$ be a finite non-empty subset of $\Psi$. It follows from the description of the restriction of 
$\Kd$ to $\Sy_{\Psi|S}$, given in the proof of Lemma~\ref{simple-Kd-dim} (and in notation there), that 
the indecomposable injectives of $\Sm_{\Kd}(\Sy_{\Psi|S})$ are $\mathrm{Pic}_{\Kd}(\Sy_{\Psi|S})$-twists 
of $\Kd\langle\binom{\Psi\smallsetminus S}{s}'\rangle$ for $s\ge 1$ and, respectively, of $\Kd$ if 
$\#S>1$, and of $\Kd\left[\frac{z-x}{z-y}\right]$ if $S=\{x\}$. \end{remark}

\begin{lemma} \label{Hom_from_Jm} Fix some pairwise distinct $x,y,z\in\Psi$ and set 
$e:=\frac{(x-y)(x-z)}{y-z}\in\Kb$. Then 
\begin{itemize} \item for each $m\in\mathbb Z$, $e^m\Kd\subset\Kb$ is independent 
of $y$ and $z$, so it is an object of $\Sm_{\Kd}(\Sy_{\Psi|\{x\}});$ 
\item for the infinite-dimensional object 
$\mathcal J_m:=\Kd\langle\Sy_{\Psi}\rangle\otimes_{\Kd\langle\Sy_{\Psi|\{x\}}\rangle}e^m\Kd$ of 
$\Sm_{\Kd}(\Sy_{\Psi})$ one has\footnote{By Lemma \ref{fixed_1-form}, 
$\omega_{\mathbb Y,\eta}^n\cong(x-y)^n\Kc$; by \ref{simple-X-Y}, 
$\bigoplus\limits_{s=0}^{\infty}\frac{x^s}{(x-y)^{s-n}}\Kc$ is an injective hull of $(x-y)^n\Kc$ 
in $\Sm_{\Kc}(\Sy_{\Psi})$.} 
\begin{equation} \label{Hom_from_ind} 
\Hom_{\Kd\langle\Sy_{\Psi}\rangle}\left(\mathcal J_m,\bigoplus_{s=0}^a\frac{x^s}{(x-y)^{s-n}}\Kc\right)=
\begin{cases}x^{n-m}F,&\mbox{{\rm if }}m\le n\le a+m,\\ 0,&\mbox{{\rm otherwise}};\end{cases}\end{equation} 
\item the objects $\Kc\otimes_{\Kd}\mathcal J_n$ and 
$\mathcal E_n:=(x-y)^n\Kc\otimes_{\Kc}\Kc\langle\Psi\rangle$ of $\Sm_{\Kc}(\Sy_{\Psi})$ 
are isomorphic, and, for any non-zero submodule $V$ in $\mathcal J_m$, one has 
\begin{equation}\label{Hom_mn} \Hom_{\Kd\langle\Sy_{\Psi}\rangle}\left(\mathcal J_m,(x-y)^n\Kc\right)
=\Hom_{\Kd\langle\Sy_{\Psi}\rangle}\left(V,\mathcal E_n\right)=\begin{cases} F,&\mbox{{\rm if }}n=m,\\ 
0,&\mbox{{\rm otherwise}}.\end{cases} \end{equation} \end{itemize} \end{lemma} 

\begin{proof} Using the adjunction \begin{equation}\label{J-adj}\Hom_{\Kd\langle\Sy_{\Psi}\rangle}
\left(\mathcal J_m,-\right)=\Hom_{\Kd\langle\Sy_{\Psi|\{x\}}\rangle}\left(e^m\Kd,-\right)
=\left(e^{-m}\Kd\otimes_{\Kd}(-)\right)^{\Sy_{\Psi|\{x\}}},\end{equation} 
we get (\ref{Hom_from_ind}) as a consequence of the following identifications: \begin{multline*} 
\Hom_{\Kd\langle\Sy_{\Psi}\rangle}\left(\mathcal J_m,\bigoplus_{s=0}^a\frac{x^s}{(x-y)^{s-n}}\Kc\right)
=\bigoplus_{s=0}^a\left(e^{-m}\Kd\otimes_{\Kd}\frac{x^s}{(x-y)^{s-n}}\Kc\right)^{\Sy_{\Psi|\{x\}}}\\
=\bigoplus_{s=0}^a\left(\frac{(y-z)^m}{(x-y)^m(x-z)^m}\frac{x^s}{(x-y)^{s-n}}\Kc
\right)^{\Sy_{\Psi|\{x\}}}\\ 
=\bigoplus_{s=0}^ax^s\left((x-y)^{n-m-s}\Kc\right)^{\Sy_{\Psi|\{x\}}}=\begin{cases}
x^{n-m}F_x,&\mbox{{\rm if }}0\le n-m\le a,\\ 0,&\mbox{{\rm otherwise}}.\end{cases}\end{multline*} 

The map $\Kc\otimes_{\Kd}\mathcal J_n\xrightarrow{h\otimes(f[\sigma]\otimes e^n)\mapsto
(x^{\sigma}-y^{\sigma})^nfh\otimes\frac{(x^{\sigma}-z^{\sigma})^n}{(y^{\sigma}-z^{\sigma})^n}[x^{\sigma}]}
\mathcal E_n$ is an isomorphism. 

The adjunction (in particular, (\ref{J-adj})) gives 
\begin{equation}\label{Hom_mnm}\Hom_{\Kc\langle\Sy_{\Psi}\rangle}
\left(\mathcal E_m,\mathcal E_n\right)\cong
\Hom_{\Kd\langle\Sy_{\Psi}\rangle}\left(\mathcal J_m,\mathcal E_n\right)=
\left(e^{-m}\Kd\otimes_{\Kd}\Kc\langle\Sy_{\Psi}\rangle\otimes_{\Kd\langle\Sy_{\Psi|\{x\}}
\rangle}e^n\Kd\right)^{\Sy_{\Psi|\{x\}}}.\end{equation}
Let $\alpha=e^{-m}\otimes\sum_{i=1}^sh_i[\sigma_i]\otimes e^n$, 
where $\sigma_i\in\Sy_{\Psi}$, $h_i\in\Kc^{\times}$, $\#\{\sigma_1x,\dots,\sigma_sx\}=s$, be a non-zero 
element of the rightmost group of (\ref{Hom_mnm}). As $\alpha$ is fixed by $\Sy_{\Psi|\{x\}}$), one has 
$s=1$ and $\sigma_1x=x$, so $\frac{h_1(y-z)^{m-n}}{(x-y)^{m-n}(x-z)^{m-n}}\in\Kb^{\Sy_{\Psi|\{x\}}}=F_x$. 
This means that $n=m$ and $h_1\in F_x$. Thus, (\ref{Hom_mnm}) is 0 for $m\neq n$, 
and it is a one-dimensional $F$-vector space for $m=n$. 

As there are no non-zero finite-dimensional submodules in $\mathcal J_n$, any non-zero morphism 
from a submodules of $\mathcal J_m$ to $\mathcal J_n$ is injective, thus inducing an injection 
to $\mathcal E_n$. By Proposition \ref{simple-X-Y}, $\mathcal E_n$ is injective, so this injection 
extends to a morphism (in fact, an injection) $\mathcal J_m\to\mathcal E_n$. 

As the quotient of $\mathcal J_m$ by any non-zero submodules is finite-dimensional, any 
infinite-dimensional subquotient of $\mathcal J_m$ is just a submodule. 

According to Theorem~\ref{spectrum-triple} (\ref{Kc_P01}), for any $n\in\mathbb Z$, 
the object $\mathcal E_n$ of $\Sm_{\Kc}(\Sy_{\Psi})$ is injective (presenting the point $P_{1,n}$), 
while $\mathcal E_m$ and $\mathcal E_n$ are not isomorphic for $m\neq n$. Therefore, 
any morphism $V\to\mathcal E_n$ extends to a morphism $\mathcal J_m\to\mathcal E_n$. \end{proof} 

\begin{lemma} \label{Jm-toE_c} Let $b,m,n\in\mathbb Z$, and $T:=\frac{y-z}{z-x}$. Then, in notation 
of Lemma~\ref{Hom_from_Jm}, the element $x^n(1-\frac{y}{x})^mT^b$ spans a finite-dimensional 
$\Kd\langle\Sy_{\Psi}\rangle$-submodule of $\bigoplus\limits_{s=0}^{\infty}x^s(x-y)^{n-s}\Kc$ 
if and only if $m\le n\le b\le -m$. There exists a non-zero morphism 
$\mathcal J_m\xrightarrow{\upsilon}\bigoplus\limits_{s=0}^{\infty}\frac{x^s}{(x-y)^{s-n}}\Kc$ with 
the image finite-dimensional over $\Kd$ if and only if $|n|\le -m$. 
\end{lemma} 
\begin{proof} For any $\sigma\in\Sy_{\Psi}$, one has\footnote{As above, $\xi_w:=
\frac{(w-x)(y-z)}{(w-y)(x-z)}=\frac{w-x}{y-w}T\in\Kd\cup\{\infty\}$, so $\xi_x:=0$ and $\xi_y:=\infty$.} 
$x^{\sigma}=\frac{x(y-x^{\sigma})+(x^{\sigma}-x)y}{y-x}=
\frac{x+\frac{x^{\sigma}-x}{y-x^{\sigma}}y}{1+\frac{x^{\sigma}-x}{y-x^{\sigma}}}=
\frac{Tx+\xi_{x^{\sigma}}y}{T+\xi_{x^{\sigma}}}=x-\frac{\xi_{x^{\sigma}}}{T+\xi_{x^{\sigma}}}(x-y)$, 
\begin{multline*}(x-y)^{\sigma}=
\frac{(\xi_{y^{\sigma}}-\xi_{x^{\sigma}})T}{(T+\xi_{x^{\sigma}})(T+\xi_{y^{\sigma}})}(x-y), 
T^{\sigma}=\frac{\xi_{y^{\sigma}}-\xi_{z^{\sigma}}}{\xi_{z^{\sigma}}-\xi_{x^{\sigma}}}
\frac{T+\xi_{x^{\sigma}}}{T+\xi_{y^{\sigma}}},\mbox{ hence }\\ 
(x^{n-m}(x-y)^mT^b)^{\sigma}=\frac{(\xi_{y^{\sigma}}-\xi_{x^{\sigma}})^m(\xi_{y^{\sigma}}-\xi_{z^{\sigma}})^b}
{(\xi_{z^{\sigma}}-\xi_{x^{\sigma}})^b}(x-y)^mT^m
\frac{(Tx+\xi_{x^{\sigma}}y)^{n-m}}{(T+\xi_{x^{\sigma}})^{n-b}(T+\xi_{y^{\sigma}})^{b+m}}.
\end{multline*} Thus, the $\Kd\langle\Sy_{\Psi}\rangle$-span of $x^n(1-\frac{y}{x})^mT^b$ is isomorphic 
to the $\Kd$-span of the rational functions $(X+\xi)^{n-m}(T+\xi)^{b-n}(T+\xi')^{-b-m}\in\Kd(T,X)$ 
for some $\xi,\xi'\in\Kd$, i.e., it is finite-dimensional over $\Kd$ if and only if 
these functions are polynomial, or equivalently, $n\ge m$ and $n\le b\le -m$. 

If such a morphism $\upsilon$ exists then it corresponds to a multiple of the generator $x^{n-m}$ of 
(\ref{Hom_from_ind}) of Lemma~\ref{Hom_from_Jm}, so its image is the $\Kd\langle\Sy_{\Psi}\rangle$-span 
of $x^{n-m}e^m=\frac{x^{n-m}(y-x)^m}{T^m}$, corresponding to the case of $b=-m$, and therefore, 
it is finite-dimensional over $\Kd$ if and only if $|n|\le -m$ (and then its dimension is equal to 
$d_p(m,n):=\#\{s\ |\ \exists i\in\mathbb Z:\ p\nmid\binom{-n-m}{i}\binom{n-m}{s-i}\}=d_p(m,-n)\le 1-2m$). 
\end{proof} 

\begin{theorem}[``Borel--Weil theorem'' for $\Kd$] \label{non-triv-Simple-Kd} Let $p$ 
be the characteristic of $k$. For each integer $n\le 0$, let $W_n$ be the socle 
of the object $\Gamma(\mathbb Y,\omega_{\mathbb Y}^n)$ of $\Sm_{\Kd}(\Sy_{\Psi})$. 
\begin{enumerate} \item \label{finite-Simple-Kd} \begin{itemize} \item The objects $W_n$ 
{\rm (i)} are simple, {\rm (ii)} are pairwise non-isomorphic, {\rm (iii)} are self-dual, 
{\rm (iv)} present all isomorphism classes of simple finite-dimensional objects of $\Sm_{\Kd}(\Sy_{\Psi})$. 
\item If $p=0$ or $p>-2n$ then $W_n=\Gamma(\mathbb Y,\omega_{\mathbb Y}^n)$, so $\dim_{\Kd}W_n=1-2n$. 
\item If $p>0$ then $\dim_{\Kd}W_n=\prod\limits_{t\ge 0}(c_t+1)$, where $-2n=\sum\limits_{t\ge 0}c_tp^t$ and 
$c_t\in\{0,1,\dots,p-1\}$. In particular, $\Gamma(\mathbb Y,\omega_{\mathbb Y}^n)$ is simple if and only if 
$n=1/2-(a-1/2)p^m$ for some integers $0<a\le p/2$ and $m\ge 0.\ ($So it is never simple if $m>0$ and $p=2.)$ 

Let $-2n'=\sum\limits_{t\ge 0}c_t'p^t\in\mathbb Z_{\ge 0}$ for some $0\le c_t'<p$. 
Then there are natural isomorphisms \begin{itemize} 
\item $W_n\otimes_{\Kd}W_{n'}\xrightarrow[\sim]{\eta\otimes\eta'\mapsto\eta\eta'}W_{n+n'}$ 
if $c_tc_t'=0$ for all $t\ge 0;$ 
\item $\Kd\otimes_{\Kd^p}W_n^p\xleftarrow[\sim]{f\otimes\eta^p\mapsfrom f\otimes\eta}
\Kd\otimes_{\Kd,\phi}W_n\xrightarrow[\sim]{f\otimes\eta\mapsto f\eta^p}W_{pn}$, where 
$\phi\colon\Kd\xrightarrow{f\mapsto f^p}\Kd$ is the absolute Frobenius endomorphism. 
\end{itemize} \end{itemize} 
\item \label{infinite-Simple-Kd} For each $m\in\mathbb Z$, let 
$\mathcal J_m\in\Sm_{\Kd}(\Sy_{\Psi})$ be as in Lemma~\ref{Hom_from_Jm}. 
\begin{itemize} \item The objects $\mathcal J_m^{\circ}:=\begin{cases}\mathcal J_m&\mbox{{\rm if} }m>0,\\
\mathrm{socle}(\mathcal J_m)&\mbox{{\rm if} }m\le 0\mbox{ {\rm and }}F=k\end{cases}$ are simple 
and pairwise distinct. 
\item No non-zero subquotient of $\mathcal J_m$ can be embedded into $\mathcal J_n$ if $m\neq n$. 
\item For any $m\le 0$, {\rm (i)} any simple quotient of $\mathcal J_m$ is isomorphic to $W_m;$ 
{\rm (ii)} if $F\neq k$ then $\mathcal J_m$ embeds into a product of copies of $W_m;$ 
{\rm (iii)} if $F=k$ then $\mathcal J_m^{\circ}$ is the common kernel of the morphisms 
$\mathcal J_m\to E(W_n)$ for $m\le n\le 0$, 
$\dim_{\Kd}(\mathcal J_m/\mathcal J_m^{\circ})=\dim_{\Kd}\Gamma(\mathbb Y,\omega_{\mathbb Y}^n)$. \end{itemize} 
\item If there exists another simple object of $\Sm_{\Kd}(\Sy_{\Psi})$ then $F=k$ 
and it can be embedded into $\Kd\langle\Sy_{\Psi}\rangle\otimes_{\Kd\langle U\rangle}V$ for an open subgroup 
$U=H\times\Sy_{\Psi|S}\subseteq\Sy_{\Psi,S}$ of finite index, where $S\subset\Psi$ is a subset of order $3$ 
or $4$, and $V$ is a simple object of $\Sm_{\Kd}(U)\ ($with $\dim_{\Kd}V\le 3)$. \end{enumerate} \end{theorem} 
\begin{proof} By Remark~\ref{various_remarks} (\ref{irred-contained-irred-semilin}), any simple object $W$ of 
$\Sm_{\Kd}(\Sy_{\Psi})$ can be embedded into a simple object of $\Sm_{\Kc}(\Sy_{\Psi})$, i.e. (by 
Proposition~\ref{simple-X-Y}) into $\omega_{\mathbb Y,\eta}^{\nu}$ for some $\nu\in\mathbb Z$. Fix such $\nu$. 

For each integer $N\ge 0$, denote by $\omega_{\mathbb Y}^{\nu}(\ast N)$ the subsheaf of the constant 
sheaf $\omega_{\mathbb Y,\eta}^{\nu}$ on $\mathbb Y$, whose sections do not have poles 
of order $>N$. Fix the minimal possible $N\ge 0$ such that $W$ is actually embedded into 
$\Gamma(\mathbb Y,\omega_{\mathbb Y}^{\nu}(\ast N))$. Clearly (cf. Remark~\ref{PFD}), 
$W$ is infinite-dimensional if $N>0$. 

As before (e.g., in Lemma~\ref{fixed_1-form}), fix some pairwise distinct $x,y,z\in\Psi$, 
and set $T:=\frac{y-z}{z-x}\in\Kc$. 

As $T^{\sigma}=\frac{\xi_{y^{\sigma}}-\xi_{z^{\sigma}}}{\xi_{z^{\sigma}}-\xi_{x^{\sigma}}}
\frac{T+\xi_{x^{\sigma}}}{T+\xi_{y^{\sigma}}}$ for any $\sigma\in\Sy_{\Psi}$, 
for any integer $i$ and $\nu$, one has 
\begin{equation}\label{action}\left(T^i\left(\frac{\mathrm{d}T}{T}\right)^{\nu}\right)^{\sigma}
=(\xi_{y^{\sigma}}-\xi_{x^{\sigma}})^{\nu}
\left(\frac{\xi_{y^{\sigma}}-\xi_{z^{\sigma}}}{\xi_{z^{\sigma}}-\xi_{x^{\sigma}}}\right)^i
(T+\xi_{x^{\sigma}})^{i-\nu}(T+\xi_{y^{\sigma}})^{-i-\nu}(\mathrm{d}T)^{\nu}. \end{equation} 

In particular, $\left(T^i\left(\frac{\mathrm{d}T}{T}\right)^{\nu}\right)^{\sigma}=
\xi_{z^{\sigma}}^{-i}\cdot T^i\left(\frac{\mathrm{d}T}{T}\right)^{\nu}$ for any $\sigma\in\Sy_{\Psi|\{x,y\}}$, 
so $V_i^{(\nu)}:=\Kd\cdot T^i\left(\frac{\mathrm{d}T}{T}\right)^{\nu}$ is a 
$\Kd\langle\Sy_{\Psi|\{x,y\}}\rangle$-submodule of 
$\Gamma(\mathbb Y,\omega_{\mathbb Y}^{\nu}(\ast N_i))\subset\omega_{\mathbb Y,\eta}^{\nu}$, 
where $N_i:=\max(0,|i|+\nu)\le N$; the 
isomorphism class of $V_i=V_i^{(\nu)}$ is independent of $\nu$; the modules $V_i$ are pairwise non-isomorphic. 

\begin{enumerate} \item For all subsets $I\subseteq\mathbb Z$, this implies that $\bigoplus_{i\in I}V_i$ 
are the only $\Kd\langle\Sy_{\Psi|\{x,y\}}\rangle$-submodules of $\bigoplus_{i\in\mathbb Z}V_i$. 

If $N=0$ then $W$ is embedded into $\Gamma(\mathbb Y,\omega_{\mathbb Y}^{\nu})$, so $\nu\le 0$. 
Then $\Gamma(\mathbb Y,\omega_{\mathbb Y}^{\nu})=\bigoplus_{i=\nu}^{-\nu}V_i$, and thus, 
$W=\bigoplus_{i\in I}V_i$ for a subset $I\subseteq\{\nu,\nu+1,\dots,-\nu\}$. 
As one can see from the formula (\ref{action}), $I$ contains $-\nu$. 

The same formula (\ref{action}) with $i=-\nu$ (and $\xi_{x^{\sigma}}\notin\{0,\infty\}$) 
shows that $I$ contains $s$ if and only if $\binom{-2\nu}{s-\nu}\neq 0$ in $k$. In particular, 
$\Gamma(\mathbb Y,\omega_{\mathbb Y}^{\nu})$ is simple if $p=0$. 

Let now $p>0$. Then $W=\bigoplus_{0\le s-\nu\preceq_p-2\nu}V_s$ is the socle of 
$\Gamma(\mathbb Y,\omega_{\mathbb Y}^{\nu})$.\footnote{Here, as in the proof of 
Proposition~\ref{structure_of_the_closed_pt}, $\succeq_p$ is the partial order on $\mathbb N$, 
defined by $n\succeq_pm$ if $p\nmid\binom{n}{m}$.} Then 
$\dim_{\Kd}W=\#\{s\ |\ \binom{-2\nu}{s-\nu}\neq 0\}=\prod\limits_{t\ge 0}(c_t+1)$, where 
$-2\nu=\sum\limits_{t\ge 0}c_tp^t$ and $c_t\in\{0,1,\dots,p-1\}$. 

For each $n\le 0$, the element $T^i\left(\frac{\mathrm{d}T}{T}\right)^n\otimes 
T^{-i}\left(\frac{\mathrm{d}T}{T}\right)^n\in W_n\otimes_{\Kd}W_n$ is fixed by the transposition 
of $z$ and an element of $\Psi\smallsetminus\{x,y,z\}$; the transposition of $x$ and $y$ fixes 
$T^i\left(\frac{\mathrm{d}T}{T}\right)^n\otimes T^{-i}\left(\frac{\mathrm{d}T}{T}\right)^n+
T^{-i}\left(\frac{\mathrm{d}T}{T}\right)^n\otimes T^i\left(\frac{\mathrm{d}T}{T}\right)^n$; 
the transposition of $x$ and $z$ sends $T$ to $-T-1$, so it fixes 
$(T\otimes 1-1\otimes T)^{-2n}=\sum_{s=0}^{-2n}\binom{-2n}{s}(-T)^s\otimes T^{-2n-s}$ 
and $\mathrm{d}T\otimes \mathrm{d}T$, and therefore, it fixes 
$\sum_{s=0}^{-2n}\binom{-2n}{s}(-T)^{s+n}\left(\frac{\mathrm{d}T}{T}\right)^n
\otimes T^{-n-s}\left(\frac{\mathrm{d}T}{T}\right)^n$. This means that the element 
$\sum_{s=n}^{-n}\binom{-2n}{s-n}(-T)^s\left(\frac{\mathrm{d}T}{T}\right)^n\otimes 
T^{-s}\left(\frac{\mathrm{d}T}{T}\right)^n\in W_n\otimes_{\Kd}W_n$ is fixed by the whole group $\Sy_{\Psi}$, 
and therefore, it gives rise to an isomorphism between $W_n$ and its dual. 

Let $\phi\colon\Kd\xrightarrow{f\mapsto f^p}\Kd$ denote the absolute Frobenius endomorphism. 
As the dimensions of $W_n$ and of $W_{pn}$ coincide, the natural injection 
$\Kd\otimes_{\Kd,\phi}W_n\xrightarrow{f\otimes\eta\mapsto f\eta^p}
\Gamma(\mathbb Y,\omega_{\mathbb Y}^{pn})$ factors through an isomorphism onto $W_{pn}$. 

The module $\Gamma(\mathbb Y,\omega_{\mathbb Y}^n)$ is simple (i.e. it coincides with $W$) 
if and only if $-2\nu\succeq_ps$ for all $0<s<-2\nu$, which means that $-2\nu=bp^m-1$ 
for some integers $0<b<p$ and $m\ge 0$. In particular, $\nu=0$ if $p=2$; $b$ is odd if 
$p\neq 2$, i.e. $\nu=1/2-(a-1/2)p^m$ for some integers $0<a\le p/2$ and $m\ge 0$. 

\item[(2--3)] If $N>0$ then $W$ contains an element with a pole of order $N$ at a closed point 
$v_0\in\mathbb Y^1$. 

Let $\St$ be the stabilizer of $v_0$ in $\Sy_{\Psi}$, and $O$ be the $\Sy_{\Psi}$-orbit of $v_0$. 
For each $v\in O$, set $V_v:=\omega_{\mathbb Y}^n(N\cdot v)_v/\omega_{\mathbb Y}^n((N-1)\cdot v)_v$. 
This is a smooth one-dimensional semilinear representation of $\St$ over the residue field of $v$. 
The map $W\xrightarrow{\eta\mapsto(\eta\pmod{\omega_{\mathbb Y}^n((N-1)v)_v})_{v\in O}}V_O
:=\bigoplus_{v\in O}V_v$ is an embedding. The natural $\Kd\langle\St\rangle$-action on $V_O$ 
identifies $V_O$ with $\Kd\langle\Sy_{\Psi}\rangle\otimes_{\Kd\langle\St\rangle}V_{v_0}$. 

As $V_{v_0}$ is finite-dimensional over $\Kd$, it admits a maximal $\Kd\langle\St\rangle$-submodule 
$V'$ such that $\Kd\langle\Sy_{\Psi}\rangle\otimes_{\Kd\langle\St\rangle}V'$ does not contain 
$W$. Then $W$ embeds into $\Kd\langle\Sy_{\Psi}\rangle\otimes_{\Kd\langle\St\rangle}V$ 
for a simple $\Kd\langle\St\rangle$-submodule $V$ of $V_{v_0}/V'$. 

\item[(2)] In notation of Lemma~\ref{Hom_from_Jm}, any simple object of $\Sm_{\Kd}(\Sy_{\Psi|\{x\}})$ 
is isomorphic to $e^m\Kd$ for some $m\in\mathbb Z$. By Lemma~\ref{level-1_quotients}, any non-zero 
submodule of $\mathcal J_m$ is of finite codimension. Thus, if $\mathcal J_m$ is not simple 
then it admits a non-zero morphism to $W_n$ for some $n\le 0$. The object $W_n$ is contained in 
$(x-y)^n\Kc$, so by the equality (\ref{Hom_from_ind}) of Lemma~\ref{Hom_from_Jm} with $a=0$, 
this means that $m=n\le 0$ and any simple quotient of $\mathcal J_m$ is isomorphic to $W_m$. 

Let now $m\le 0$. By Lemma~\ref{fixed_1-form}, the identical inclusion 
$e^m\Kd\hookrightarrow(x-y)^m\Kc$ corresponds to the $\Kd\langle\Sy_{\Psi|\{x\}}\rangle$-morphism 
$e^m\Kd\xrightarrow{(-)\times\varpi^m}\omega_{\mathbb Y,\eta}^m$, sending the generator 
$e^m=\frac{(x-y)^m(x-z)^m}{(y-z)^m}$ to $\left(\mathrm{d}T^{-1}\right)^m\in W_m$, 
thus inducing a surjective $\Kd\langle\Sy_{\Psi}\rangle$-morphism 
$\mathcal J_m\to W_m\subseteq\Gamma(\mathbb Y,\omega_{\mathbb Y}^m)$. 

Still assuming that $m\le 0$, if a cyclic submodule $M$ of $\mathcal J_m$ 
admits a proper non-zero submodule then there is a non-zero morphism 
$M\to W_n\subset\omega_{\mathbb Y,\eta}^n$ for some $n\le 0$. This morphism extends 
to a morphism $\mathcal J_m\xrightarrow{\upsilon}E_{\mathrm{c}}(\omega_{\mathbb Y,\eta}^n)\cong
\bigoplus\limits_{s=0}^{\infty}x^s(x-y)^{n-s}\Kc$, where $E_{\mathrm{c}}$ denotes an injective 
hull in the category $\Sm_{\Kc}(\Sy_{\Psi})$. By Lemma~\ref{Jm-toE_c}, the image of $\upsilon$ is 
finite-dimensional if and only if $m\le n\le -m$. This means that (i) any simple subquotient 
of $\mathcal J_m$ outside the socle is isomorphic to $W_n$ for some $m\le n\le 0$, (ii) the common 
kernel of the morphisms $\mathcal J_m\to E_{\mathrm{c}}(\omega_{\mathbb Y,\eta}^n)$ for 
all $m\le n\le 0$ is of finite codimension if $F=k$, and therefore, it is (non-zero and) simple. 
Its codimension is the dimension of the image of the map 
$\mathcal J_m\to\prod_{n=m}^0E_{\mathrm{c}}(\omega_{\mathbb Y,\eta}^n)$, i.e. of the linear span 
$\mathcal M$ of the elements $((e^m)^{\sigma},(xe^m)^{\sigma},(x^2e^m)^{\sigma},\dots,(x^{-m}e^m)^{\sigma})$ 
for all $\sigma\in\Sy_{\Psi}$, where \[(x^{n-m}e^m)^{\sigma}=\left(\frac{(\xi_{y^{\sigma}}-\xi_{x^{\sigma}})
(\xi_{z^{\sigma}}-\xi_{x^{\sigma}})}{\xi_{y^{\sigma}}-\xi_{z^\xi{\sigma}}}(y-x)T\right)^m
\frac{(Tx+\xi_{x^{\sigma}}y)^{n-m}}{(T+\xi_{x^{\sigma}})^{n+m}}.\] Then $\mathcal M$ is isomorphic to 
the $\Kd$-linear span of $(1-m)$-tuples of polynomials \[((T+\xi)^{-2m},(X+\xi y)(T+\xi)^{-1-2m},
(X+\xi y)^2(T+\xi)^{-2-2m},\dots,(X+\xi y)^{-m}(T+\xi)^{-m})\] 
for some $\xi\in\Kd$. As $(X+\xi y)^t(T+\xi)^{-t-2m}=
\sum_{s=0}^{-2m}\left(\sum_{i=0}^t\binom{t}{i}\binom{-t-2m}{s-i}X^{t-i}y^iT^{i-2m-s-t}\right)\xi^s$, 
the dimension of this linear span is $1-2m$, since for each $0\le s\le -2m$ there exist $0\le t\le -m$ 
and $0\le i\le t$ such that $\binom{t}{i}\binom{-t-2m}{s-i}=1$, namely $t=i=s$ if $s\le -m$, and 
$t=-2m-s$ and $i=0$ if $s\ge -m$. 

As $\Hom_{\Kd\langle\Sy_{\Psi}\rangle}(\mathcal J_m,\omega_{\mathbb Y,\eta}^m)\neq 0$ is an 
$F$-vector space, by Lemmas~\ref{level-1_quotients} and \ref{common_kernel}, if 
$F\neq k$ and $m\le 0$ then the common kernel of the morphisms $\mathcal J_m\to W_m$ is zero, so 
there are no simple submodules in $\mathcal J_m$.\footnote{As Proposition \ref{simplicity_of_Sm} 
shows, a simple subquotient of a product of copies of $W_m$ need not be isomorphic to $W_m$.}

By Lemma~\ref{Hom_from_Jm}, 
$\Kc\otimes_{\Kd}\mathcal J_n\cong\mathcal E_n:=(x-y)^m\Kc\otimes_{\Kc}\Kc\langle\Psi\rangle$, 
and therefore, by the equality (\ref{Hom_mn}) there, no non-zero subquotient of $\mathcal J_m$ 
can be embedded into $\mathcal J_n$ if $m\neq n$. 
In particular, the non-zero socles of distinct $\mathcal J_m$'s are pairwise non-isomorphic.

\item[(3)] By Lemma~\ref{stab_val}, if $O\neq\Theta$ then $\St=H\times\Sy_{\Psi|S}$ 
for a finite subset $S\subset\Psi$ of order $\ge 3$ and a subgroup $H\subseteq\Sy_S$. 

As the case of $O=\Theta$ is already treated above, we further assume that $\#S\ge 3$. 

By Lemma~\ref{simple-Kd-dim}, the simple objects of the category $\Sm_{\Kd}(\St)$ are trivial 
if and only if the simple $F_S\langle H\rangle$-modules are trivial. 
The latter condition holds in the following cases: \begin{enumerate} \item $\Kd^{\Sy_{\Psi|S}}\neq k$ 
and the $H$-action on $\Kd^{\Sy_{\Psi|S}}$ is faithful (which amounts to the following options: 
(i) $\#S\ge 5$, (ii) $\#S\ge 3$ and $F\neq k$, (iii) $\#S=4$ and $\#(H\cap\mathfrak{A}_S)$ is odd); 
\item $\#S=4$ and $p=2$ (by Lemma~\ref{simple-Kd-4}); 
\item $\#S=3$ and at least one of the following options holds: (i) $H=1$, (ii) $\#H=p$. \end{enumerate} 

In these cases we only have to find the simple subobjects of $\Kd\langle\Sy_{\Psi}/\St\rangle$. 

If $\Kd^{\Sy_{\Psi|S}}\neq k$ and the $\Sy_S$-action on $\Kd^{\Sy_{\Psi|S}}$ is faithful 
(which is the case for (a)(i) and (a)(ii)) then, by Lemma~\ref{embedding_into_prod}, 
$\Kd\langle\Sy_{\Psi}/\St\rangle$ embeds into a product of copies of $\Kd$, so there are 
no simple subobjects in $\Kd\langle\Sy_{\Psi}/\St\rangle$. 

Thus, in remaining cases a simple subobject may appear only in 
$\Kc\langle\Sy_{\Psi}/\Sy_{\Psi|S}\rangle\otimes_{L\langle H\rangle}V$, where $F=k$, $V$ is a simple 
object of $\Sm_L(H)$, $\#S\in\{3,4\}$, $L=k$ if $\#S=3$, 
$L$ and $V$ are as in Lemma~\ref{simple-Kd-4} if $\#S=4$. 
\end{enumerate} \end{proof}

\section{Some algebraically non-closed subfields of \texorpdfstring{$F_{\Psi}$}{}} 
\subsection{Torsion of the Picard group, and its trivialization} 
For an integer $n>0$, denote by ${}_n\mathrm{Pic}_K(G)$ the $n$-torsion subgroup in $\mathrm{Pic}_K(G)$. 

\begin{proposition} \label{Pic-torsion} Let $G$ be a permutation group, $K$ be a smooth $G$-field and $n>0$ 
be an integer. Set $k:=K^G$ and $\mu_n:=\{z\in K^{\times}~|~z^n=1\}$. Then there is a natural exact sequence 
\[H^1_{\text{{\rm cont}}}(G,\mu_n)\to{}_n\mathrm{Pic}_K(G)\xrightarrow{\beta}
(K^{\times}/K^{\times n})^G/k^{\times}\xrightarrow{\xi}H^2_{\text{{\rm cont}}}(G,\mu_n).\] 

Assume that $G$ admits no open subgroups of finite index. Then any invertible object $\mathcal L$ 
of $\Sm_K(G)$ of order $n$ is contained in the $G$-field $K(a^{1/n})$ 
for some $a\in(K^{\times}/K^{\times n})^G$. \end{proposition}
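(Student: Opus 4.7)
The plan is to derive the exact sequence from a Kummer-style diagram chase between two short exact sequences of smooth $G$-modules, and to obtain the second assertion by realising $\mathcal L$ inside the twisted cyclic $K$-algebra $\bigoplus_{i=0}^{n-1}\mathcal L^{\otimes i}$, which will be a $G$-field thanks to the no-finite-index hypothesis.

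For the exact sequence, I would split the tautological 4-term sequence $1\to\mu_n\to K^{\times}\xrightarrow{(-)^n}K^{\times}\to K^{\times}/K^{\times n}\to 1$ into the two short exact sequences
\[(A)\ 1\to\mu_n\to K^{\times}\xrightarrow{(-)^n}K^{\times n}\to 1,\quad (B)\ 1\to K^{\times n}\to K^{\times}\to K^{\times}/K^{\times n}\to 1,\]
and take continuous $G$-cohomology, using $H^1_{\mathrm{cont}}(G,K^{\times})=\mathrm{Pic}_K(G)$. The key observation is that the composition $K^{\times}\xrightarrow{(-)^n}K^{\times n}\hookrightarrow K^{\times}$ is multiplication by $n$, so the connecting map $\mathrm{Pic}_K(G)\to H^1(G,K^{\times n})$ from $(A)$ followed by $i_{*}:H^1(G,K^{\times n})\to\mathrm{Pic}_K(G)$ from $(B)$ is multiplication by $n$ on $\mathrm{Pic}_K(G)$. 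For $\mathcal L\in{}_n\mathrm{Pic}_K(G)$ the image therefore lies in $\ker(i_{*})$, which by $(B)$ is exactly the image of the injection $\delta_B:(K^{\times}/K^{\times n})^G/k^{\times}\hookrightarrow H^1(G,K^{\times n})$; inverting $\delta_B$ defines $\beta$, while $\xi:=\delta_A\circ\delta_B$ with $\delta_A:H^1(G,K^{\times n})\to H^2(G,\mu_n)$ coming from $(A)$. Exactness at the two middle terms is then a routine chase: at ${}_n\mathrm{Pic}_K(G)$, $\beta(\mathcal L)=0$ iff the image of $\mathcal L$ in $H^1(G,K^{\times n})$ vanishes iff $\mathcal L$ lies in the image of $H^1(G,\mu_n)$ by $(A)$; at $(K^{\times}/K^{\times n})^G/k^{\times}$, $\xi(\bar a)=0$ iff $\delta_B(\bar a)$ is in the image of $(-)^n$ iff $\bar a=\beta(\mathcal L)$ for some $\mathcal L$ satisfying $n\mathcal L=i_{*}(\delta_B(\bar a))=0$.

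For the second statement, I would attach to $\mathcal L$ the commutative $K$-algebra $\mathcal A:=\bigoplus_{i=0}^{n-1}\mathcal L^{\otimes i}$, whose multiplication uses a fixed trivialisation $\mathcal L^{\otimes n}\xrightarrow{\sim}K$; as $\mathcal L$ is a smooth semilinear representation, so is $\mathcal A$. A nonzero $\ell\in\mathcal L$ generates $\mathcal A$ as a $K$-algebra and satisfies $\ell^n=a\in K^{\times}$ with $\bar a=\beta(\mathcal L)\in(K^{\times}/K^{\times n})^G$. The decisive step is that every primitive idempotent $e\in\mathcal A$ is $G$-invariant: its stabiliser in $G$ is open by smoothness of $\mathcal A$ and of finite index because its $G$-orbit (contained in the finite set of primitive idempotents) is finite, hence by hypothesis the stabiliser equals $G$. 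Consequently all idempotents of $\mathcal A$ lie in $\mathcal A^G=\bigoplus_{i=0}^{n-1}(\mathcal L^{\otimes i})^G$; since $\mathcal L$ has exact order $n$, each $\mathcal L^{\otimes i}$ for $1\le i\le n-1$ is a nontrivial invertible object with $(\mathcal L^{\otimes i})^G=0$, so $\mathcal A^G=k$ contains only $0$ and $1$ as idempotents and $\mathcal A$ is local.

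Finally, $\ell\cdot\ell^{n-1}=a\in K^{\times}$ makes $\ell$ a unit of $\mathcal A$, so its image in the residue field $F:=\mathcal A/\mathrm{nil}(\mathcal A)$ is a nonzero $n$-th root of $a$ that generates $F$ over $K$; since the nilradical of the local ring $\mathcal A$ is its unique maximal ideal, it is automatically $G$-stable, so $F$ is a $G$-field of the desired shape $K(a^{1/n})$ into which $\mathcal L$ embeds as the $K$-line spanned by the image of $\ell$. The main subtlety I anticipate is the inseparable case $\mathrm{char}(K)\mid n$, where $X^n-a$ need not be separable and $\mathcal A$ need not be reduced; the passage to the nilradical-quotient is forced to land in a field by the local structure just established, and the no-finite-index hypothesis is used precisely to prevent $\mathcal A$ from splitting as a $G$-algebra into several factors of smaller rank.
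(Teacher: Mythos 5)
Your proposal is correct. On the first assertion you take a genuinely different route from the paper. The paper writes down $\beta$ and $\xi$ explicitly on cocycles ($\beta((f_\sigma))=[g]$ with $g^\sigma/g=f_\sigma^n$, $\xi(a)=[(b_\sigma b_\tau^\sigma b_{\sigma\tau}^{-1})]$ with $b_\sigma^n=a^\sigma/a$) and verifies exactness by hand, whereas you split the Kummer four-term sequence into $(A)$ and $(B)$, take long exact sequences of smooth cohomology, observe $i_*\circ(-)^n_*=n\cdot\mathrm{id}$ on $\mathrm{Pic}_K(G)$, and define $\beta=\delta_B^{-1}\circ(-)^n_*$ and $\xi=\delta_A\circ\delta_B$. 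The chase is sound: the fact that $\beta$ really lands in $\mathrm{Im}\,\delta_B$ and that $\ker\xi=\mathrm{Im}\,\beta$ reduce to the identity $i_*\circ(-)^n_*=n$ plus exactness of $(A)$ and $(B)$ (and one must note, as you implicitly do, that anything in $\mathrm{Im}((-)^n_*)\cap\mathrm{Im}(\delta_B)$ already comes from the $n$-torsion of $\mathrm{Pic}$ since $i_*$ annihilates $\mathrm{Im}\,\delta_B$). Your version is more systematic and makes the maps transparent; the paper's version is elementary but more opaque. Unwinding $\delta_B^{-1}\circ(-)^n_*$ and $\delta_A\circ\delta_B$ on cochains reproduces the paper's formulas exactly.

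On the second assertion you use essentially the paper's idea — realise $\mathcal L$ in $\mathcal A=\bigoplus_{i=0}^{n-1}\mathcal L^{\otimes i}$ and use the no-finite-index hypothesis to make the finite $G$-orbit on the relevant discrete finite set a singleton — but you apply it to primitive idempotents where the paper applies it to maximal ideals, and you stop at ``$\mathcal A$ is local'' rather than ``$\mathcal A$ is a field.'' Your passage to the residue field still yields the claimed conclusion, but the detour through $\mathrm{nil}(\mathcal A)$ is unnecessary and your closing worry about inseparability is a red herring. The paper's cleaner point is this: the unique maximal ideal $\mathfrak m$ is $G$-stable, hence (since $\mathcal L^{\otimes 0},\dots,\mathcal L^{\otimes n-1}$ are pairwise non-isomorphic simple objects of $\Sm_K(G)$, using that $\mathcal L$ has exact order $n$) $\mathfrak m$ is a direct sum of some of the $\mathcal L^{\otimes i}$; but every non-zero element of $\mathcal L^{\otimes i}$ is a $K^\times$-multiple of $\ell^i$, which is a unit of $\mathcal A$, so $\mathfrak m=0$. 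Thus $\mathcal A$ itself is the field $K(a^{1/n})$ and $X^n-a$ is irreducible, which is slightly stronger information than your residue-field statement gives without further argument.
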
 
\begin{proof} Let $\mathcal L$ be an object of $\Sm_K(G)$ with $\mathcal L^{\otimes_K^n}\cong K$. 
Choose $\pi\in\mathcal L$ and $\lambda\in(\mathcal L^{\otimes_K^n})^G$, both non-zero, and set 
$\beta([\mathcal L]):=[\pi^{\otimes n}/\lambda]$. In terms of 1-cocycles, $\beta((f_{\sigma}))=[g]$, 
where $g^{\sigma}/g=f_{\sigma}^n$ for all $\sigma\in G$. 

Define $\xi$ by $a\mapsto[(b_{\sigma}b_{\tau}^{\sigma}b_{\sigma\tau}^{-1})]$, where $\sigma a/a=b_{\sigma}^n$ 
for some 1-cochain $(b_{\sigma})$ with values in $K^{\times}$. Obviously, $\xi\beta=0$. 
If $\xi a=0$ then $(b_{\sigma}\zeta_{\sigma})$ is a 1-cocycle for some 1-cochain $(\zeta_{\sigma})$ 
with values in $\mu_n$, so $(b_{\sigma}\zeta_{\sigma})$ defines an element of $\mathrm{Pic}_K(G)$. 
Obviously, it is of order $n$. 

If $\beta((f_{\sigma}))=0$ then there exists $g\in K^{\times}$ such that $(g^n)^{\sigma}/g^n=f_{\sigma}^n$ 
for all $\sigma\in G$, and therefore, $(gf_{\sigma}/g^{\sigma})$ is a 1-cocycle with values in $\mu_n$.

Let $R=R_{\mathcal L,\lambda}$ be the quotient of the symmetric $K$-algebra of $\mathcal L$ by the ideal 
generated by $\lambda-1$. The natural map $\bigoplus_{i=0}^{n-1}\mathcal L^{\otimes_K^i}\to R$ 
is a $K\langle G\rangle$-module isomorphism, so $R$ is an $n$-dimensional $K$-vector space, 
and thus, it has at most $n$ maximal ideals. 

If $G$ admits no open subgroups of finite index then the $G$-action on the set of maximal ideals in $R$ is 
trivial, i.e. any maximal ideal is $G$-invariant. As $\mathcal L^{\otimes_K^i}$ are pairwise non-isomorphic for 
$0\le i<n$, any $G$-invariant $K$-vector subspace in $R$ is a direct sum of several $\mathcal L^{\otimes_K^i}$. 
As any non-empty direct sum of $\mathcal L^{\otimes_K^i}$'s contains invertible elements, we conclude that 0 is 
the only maximal ideal in $R$, i.e. $R$ is a field. Fix a non-zero $\pi\in\mathcal L$ and set 
$a:=\pi^n/\lambda\in K^{\times}$, so $R\cong K[\pi]/(\pi^n-a)$. As $\pi^{\sigma}/\pi\in K^{\times}$ for any 
$\sigma\in G$, one has $(\pi^{\sigma}/\pi)^n=a^{\sigma}/a\in K^{\times n}$, so $a$ is fixed by $G$ modulo 
$K^{\times n}$. \end{proof} 

Proposition~\ref{Pic-torsion} means that torsion in the Picard group is a particular case of finite smooth 
$G$-field extensions $L|K$ with $L^G=K^G$. 
\begin{example} \label{finite_smooth_G-ext} The following $G$-fields $K$ admit no non-trivial finite smooth 
$G$-field extension $L|K$ with $L^G=K^G$, so the Picard group is torsion free: 
(i) any $K$ such that it is a cogenerator of $\Sm_K(G)$, e.g. $\Ka$ with $\Xi=0$, 
(ii) any $K$ of characteristic 0 such that all simple objects of $\Sm_K(G)$ are isomorphic, 
(iii) $\Ka$ with $\Gamma=0$, (iv) $F_{\Psi}(x-y~|~x,y\in\Psi)$.

\begin{proof} (i) If $L$ is a finite smooth $G$-extension of $K$ then $L$ 
is a trivial $K$-semilinear representation of $G$, so $1=\dim_{K^G}L^G=[L:K]$. 

(ii) If $L$ is a non-trivial finite smooth $G$-extension of $K$ and $[L:K]$ is invertible in $K$ then 
$[L:K]^{-1}\mathrm{tr}_{L|K}$ splits the inclusion $K\hookrightarrow L$, so the socle of $L$ contains 
a subobject isomorphic to $K\oplus K$, contradicting $\dim_{K^G}L^G=1$. 

(iii) If $L$ is a finite smooth $\Sy_{\Psi}$-extension of $\Ka$ then $L\cong\bigoplus x^{\lambda}\Ka$ is an integral 
ring with a $\Ka$-semilinear action of $\Sy_{\Psi}$. As $L\otimes_{\Ka}L\xrightarrow{\times}L$ is injective 
on each $\Ka\cdot x^{\lambda}\otimes_{\Ka}\Ka\cdot x^{\lambda}\cong\Ka\cdot x^{2\lambda}$, taking $\lambda$ with 
maximal absolute value of $i$-th coordinate, we see that $L\cong\Ka^{[L:\Ka]}$. By the argument of (i), $L=\Ka$. 

(iv) Let $K=F_{\Psi}(x-y~|~x,y\in\Psi)$. By Theorem~\ref{spectrum-triple} (\ref{unip}), $K[x]$ is injective in 
$\Sm_K(G)$, so if $L$ is a finite smooth $\Sy_{\Psi}$-extension of $K$ then the inclusion $K\hookrightarrow K[x]$ 
extends to a morphism $\varphi\colon L\to K[x]$ in $\Sm_K(G)$. As $L^{\Sy_{\Psi}}=k$, $K$ is the only simple object 
and $\ker\varphi$ is of finite length, the map $\varphi$ is injective. We may thus assume that 
$L\subseteq K[x]^{<d}$ for a minimal $d\ge 1$. As $K[x]$ is injective, any commutative $K$-algebra structure 
$L\otimes_KL\to L\subset K[x]$ extends to a morphism $\xi\colon K[x]\otimes_KK[x]^{<d}\to K[x]$ 
in $\Sm_K(G)$. One has $K[x]\otimes_KK[x]^{<d}=
\bigoplus_{i=0}^{d-1}(K[x]\otimes_KK)(1\otimes x-x\otimes 1)^i\xrightarrow{\sim}K[x]^{\oplus d}$, 
$f\otimes g=\sum_{i=0}^{d-1}(fD^{(i)}g\otimes 1)(1\otimes x-x\otimes 1)^i\mapsto
(fg,fD^{(1)}g,fD^{(2)}g,\dots,fD^{(d-1)}g)$, 
in $\Sm_K(G)$. By Proposition~\ref{structure_of_the_closed_pt}, 
$\xi(f\otimes g)=\sum_{i=0}^{\infty}\sum_{j=0}^{d-1}a_{ij}D^{(i)}f\cdot D^{(j)}g$ 
for some $a_{ij}\in k$. As $\xi(1\otimes 1)\neq 0$, the element $a_{00}$ is non-zero, and therefore, 
$\deg\xi(f\otimes g)=\deg f+\deg g$. If $\deg f$ is maximal on $L$ then $\deg\xi(f\otimes f)=2\deg f\le\deg f$, 
i.e. $\deg f=0$, and thus, $L=K$. \end{proof} \end{example}

\subsubsection{Representations over fixed subfields of $k(\Psi)$} \label{reps-over-fixed-subfields} 
Proposition~\ref{simpleH_simpleG} (\ref{simple_induct}) provides the most straightforward source of 
finite-dimensional irreducible semilinear representations. 

In the following examples, $\wK=k(\Psi)$, $G=\Sy_{\Psi}$ and $H$ is a subgroup of $\mathrm{PGL}_{2,k}$ 
acting `diagonally' on $k(\Psi)$. If $\#H$ is invertible in $k$ then 
such $H$ can only be cyclic, dihedral, or isomorphic to one of $\mathfrak{A}_4$, $\mathfrak{S}_4$ 
and $\mathfrak{A}_5$. Except for the dihedral groups, there is only one conjugacy class for each of these 
groups (with no exceptions if $k^{\times}=k^{\times 2}$), see \cite[Theorem 4.2]{Beauville}. If $k$ 
contains a finite subfield $\mathbb F_q$ then $\mathrm{PGL}_2(\mathbb F_q)$ is one more option for $H$, 
while a complete list can be found in \cite{Faber}.

\begin{example}[Fixed fields of dihedral groups $\mu_{n,k}\rtimes\{\pm 1\}$ and of 
$\mathbb G_{\mathrm{m},k}\rtimes\{\pm 1\}$] \label{irred_2-dim_ex} Let $k$ be a field of characteristic 
$p$, $a\in k^{\times}$, and $\iota\colon k(X)\to k(X)$ be the $k$-field involution $X\mapsto a/X$. 

For each integer $n\ge 2$, let $\mu_{n,k}:=\mathrm{Spec}(k[A,A^{-1}]/(A^n-1))\subset
\mathbb G_{\mathrm{m},k}:=\mathrm{Spec}(k[A,A^{-1}])\subset\mathrm{PGL}_{2,k}$ 
be the standard $k$-groups acting on $k(\Psi)$ diagonally. Fix some distinct $x,y\in\Psi$. Let 
\begin{gather*} L:=k(\Psi)^{\mathbb G_{\mathrm{m},k}}=k(u/v~|~u,v\in\Psi)=k(u/x~|~u\in\Psi\smallsetminus\{x\}),\\ 
L_n:=k(\Psi)^{\mu_{n,k}}=L(x^n),\ 
K:=k(\Psi)^{\mathbb G_{\mathrm{m},k}\rtimes\langle\iota\rangle}=L^{\langle\iota\rangle},\mbox{ and }
K_n:=k(\Psi)^{\mu_{n,k}\rtimes\langle\iota\rangle}=L_n^{\langle\iota\rangle}\end{gather*} 
be the subfields of $k(\Psi)$ fixed by the corresponding $k$-groups 
under the action of \S\ref{actions}. 

As before, $\mathrm{Spec}_?$ is the Gabriel spectrum of $\Sm_?(\Sy_{\Psi})$. Then 
\begin{itemize} \item the set $\Pi_K^{(2)}(\Sy_{\Psi})$ (resp., 
$\Pi_{K_n}^{(2)}(\Sy_{\Psi})$)\footnote{As on p.\pageref{Pi}, $\Pi_{\mathcal K}^{(2)}(\mathcal G)$ denotes 
the set of isomorphism classes of smooth two-dimensional simple $\mathcal K\langle\mathcal G\rangle$-modules.} 
consists of the pairwise distinct classes of the injective objects $x^jL$ for all $j\ge 1$ 
(resp., $xL_n,\dots,x^{[(n-1)/2]}L_n$, so $\Pi_{K_2}^{(2)}(\Sy_{\Psi})=\varnothing$); 
\item the remaining simple objects of $\Sm_K(\Sy_{\Psi})$ (resp., of $\Sm_{K_n}(\Sy_{\Psi})$) 
are invertible; they are all injective if $p\neq 2$; 
\item $\mathrm{Pic}_K(\Sy_{\Psi})=0$ if $p=2$; $E(K)\cong L$, 
$E(K\langle\Psi\rangle)\cong L\langle\Psi\rangle$ and $L/K\xrightarrow[\mathrm{tr}_{L|K}]{\sim}K$ if $p=2$; 
\item $\mathrm{Pic}_K(\Sy_{\Psi})\cong\{\pm 1\}$ is generated by $(x/y-y/x)K$ if $p\neq 2$; the only 
non-trivial $\mathrm{Pic}_K(\Sy_{\Psi})$-orbit on $\mathrm{Spec}_K\smallsetminus\mathrm{Pic}_K(\Sy_{\Psi})$ 
is $\{[K\langle\Psi\rangle],[(x/y-y/x)K\otimes_KK\langle\Psi\rangle]\}$ if $p\neq 2$; 
\item $\mathrm{Pic}_{K_n}(\Sy_{\Psi})\cong\mathrm{Pic}_K(\Sy_{\Psi})\oplus\mathbb Z/(2,n)$; for any 
$\mathcal L\in\mathrm{Pic}_{K_n}(\Sy_{\Psi})$, one has $E(\mathcal L)/\mathcal L\cong\mathcal L$ if $p=2$; 
\item non-trivial $\mathrm{Pic}_{K_n}(\Sy_{\Psi})$-orbits on 
$\mathrm{Spec}_{K_n}\smallsetminus\mathrm{Pic}_{K_n}(\Sy_{\Psi})$ occur only if $n$ is even, 
in which case they are $\{[x^iL_n],[x^{n/2-i}L_n]\}$ for $1\le i<n/4$, while the subgroup 
$\mathrm{Pic}_K(\Sy_{\Psi})$ acts trivially; 
\item the closed points of $\mathrm{Spec}_K$ and of $\mathrm{Spec}_{K_n}$ are presented by 
injective hulls of the simple objects and, in the case of $\mathrm{Spec}_K$, by $E(K\langle\Psi\rangle)$ 
and, if $p\neq 2$, by $(x/y-y/x)K\otimes_KK\langle\Psi\rangle$; 
\item the non-closed points of $\mathrm{Spec}_K$ (resp., of $\mathrm{Spec}_{K_n}$) 
are presented by $K\langle\binom{\Psi}{s}\rangle$ for all integer $s>1$ (resp., 
by $K_n\langle\binom{\Psi}{s}\rangle$ for all integer $s>0$). \end{itemize} 
\begin{proof} As $\iota$ commutes with the $\Sy_{\Psi}$-action, $K_n$ is $\Sy_{\Psi}$-invariant. The 
involution $\iota$ induces isomorphisms $\iota_i\colon x^iL\xrightarrow{\sim}x^{-i}L$. The decomposition 
$k(\Psi)=\bigoplus_{i=0}^{n-1}x^iL_n$ into a direct sum of subobjects shows that $x^iL_n$ are injective, 
while Lemma~\ref{multiplicities} (with $\wK=k(\Psi)$) implies that $x^iL_n$ is irreducible if 
$i\not\equiv -i\pmod n$, and $x^iL_n$'s are pairwise non-isomorphic for $1\le i<n/2$. If $i=n-i$ then the 
$K_n$-eigenspaces $\mathcal L_{\pm}:=(x^{n/2}\pm a^{n/2}x^{-n/2})K_n$ of $\iota_i$ are $\Sy_{\Psi}$-invariant. 
If $p\neq 2$ then $L$ and $L_n=L\otimes_KK_n$ split into the sums of eigenspaces of $\iota$ as 
$L=K\oplus(x/y-y/x)K$ and $L_n=K_n\oplus(x^n-a^nx^{-n})K_n$. If $p=2$ then, by Lemma~\ref{multiplicities} 
(with $\wK=L_n$), the sequence $0\to K\to L\xrightarrow{\mathrm{tr}_{L|K}}K\to 0$ does not split, even 
under the functor $\Sm_K(\Sy_{\Psi})\xrightarrow{(-)\otimes_KK_n}\Sm_{K_n}(\Sy_{\Psi})$; clearly, 
$E(\mathcal L_+)\cong\mathcal L_+\otimes_{K_n}E(K_n)\cong x^{n/2}L_n$. 

The multiplication induces an isomorphism $(x^n-a^nx^{-n})K_n\otimes_{K_n}x^iL_n\xrightarrow{\sim}x^iL_n$ 
and an isomorphism $\mathcal L_+\otimes_{K_n}x^{n/4}L_n\xrightarrow[\sim]{\iota_{3n/4}}x^{n/4}L_n$ if $4|n$. 

The indecomposable injectives in $\Sm_K(\Sy_{\Psi})$ (resp., in $\Sm_{K_n}(\Sy_{\Psi})$) are direct summands 
of the indecomposable injectives in $\Sm_L(\Sy_{\Psi})$ (resp., in $\Sm_{k(\Psi)}(\Sy_{\Psi})$), i.e. direct 
summands of $x^iL$ for all $i\in\mathbb Z$, $L\langle\binom{\Psi}{s}\rangle$ for all $s>0$ (resp., of 
$k(\Psi)\langle\binom{\Psi}{s}\rangle$ for all $s\ge 0$), so their list comes from the isomorphism of 
Lemma~\ref{fixed_basis}: $K\langle\binom{\Psi}{s}\rangle\oplus K\langle\binom{\Psi}{s}\rangle 
\xrightarrow{\sim}L\otimes_KK\langle\binom{\Psi}{s}\rangle$ for $s\ge 2$ (resp., 
$K_n\langle\binom{\Psi}{s}\rangle^{\oplus 2n}\xrightarrow{\sim}k(\Psi)\langle\binom{\Psi}{s}\rangle$ 
for $s\ge 1$). \end{proof} \end{example} 

\begin{example}[Fixed field of $\alpha_{p^n,k}$] \label{alpha_p-invar} Let $F|k$ 
be a regular field extension of characteristic $p>0$, 
$X\in F\smallsetminus k$ be an element such that $F|k(X)$ is {\sl algebraic} separable, and $n\ge 1$ be an 
integer. Then, for each $\lambda\in kF^{p^n}$, there is a unique $k[B]$-algebra endomorphism $\xi_{\lambda}$ 
of $F_{\Psi}[B]/(B^{p^n})$ such that (i) $\xi_{\lambda}$ is identical modulo $(B)$, (ii) 
$u:=X(u)\mapsto u+\lambda(u)B$ for all $u\in\Psi$. It can be considered as an $\Sy_{\Psi}$-equivariant 
and $(kF^{p^n})_{\Psi}$-linear action $F_{\Psi}\to F_{\Psi}[B]/(B^{p^n})$ on the {\sl field} $F_{\Psi}$ of 
the infinitesimal subgroup $\alpha_{p^n,k}:=\mathrm{Spec}(k[B]/(B^{p^n}))$ of $\mathbb G_{\mathrm{a},k}$. 

For each subset $\Lambda\subset kF^{p^n}$, denote by $K_{\Lambda}$ the subfield of $F_{\Psi}$ 
fixed by $\xi_{\lambda}$ for all $\lambda\in\Lambda$. Then 
\begin{itemize} \item $kF^{p^n}\xrightarrow{\lambda\mapsto\xi_{\lambda}}
\mathrm{Aut}_{k[B]\mbox{-alg}}(F_{\Psi}[B]/(B^{p^n}))$, is a group homomorphism, so any choice 
of $\Lambda$ determines an action $F_{\Psi}\to F_{\Psi}\otimes_k(k[B]/(B^{p^n}))^{\Lambda}$ on 
$F_{\Psi}$ of the cartesian power $\alpha_{p^n,k}^{\Lambda}$ of $\alpha_{p^n,k}$; 
\item $[F_{\Psi}:K_{\Lambda}]=p^{nd}$ if $\Lambda$ is a $d$-dimensional $k$-vector subspace 
for an integer $d\ge 1$ and, if moreover $\{\lambda_1,\dots,\lambda_d\}$ is a basis of $\Lambda$, 
\[K_{\Lambda}=kF_{\Psi}^{p^n}\left(\sum_{\sigma\in\Sy_{\{0,\dots,d\}}}\mathrm{sgn}(\sigma)
\lambda_1(u_{\sigma(1)})\cdots\lambda_d(u_{\sigma(d)})u_{\sigma(0)}~|~u_0,\dots,u_d\in\Psi\right);\] 
\item the only closed point of $\mathrm{Spec}_{K_{\lambda}}$ is presented by $E(K_{\lambda})\cong F_{\Psi}$, 
where $K_{\lambda}:=K_{k\cdot\lambda}$ and $\lambda\neq 0$; 
\item the non-closed points of $\mathrm{Spec}_{K_{\lambda}}$ are presented by 
$K_{\lambda}\langle\binom{\Psi}{s}\rangle$ for all integer $s>0$. \end{itemize} 
\begin{proof} The (co)associativity of $\xi_{\lambda}$ is clear: 
$u\mapsto u+\lambda(u)B\mapsto u+\lambda(u)(B+B')=u+\lambda(u)B'+\lambda(u+\lambda(u)B')B$. 
The commutativity:  
$u\mapsto u+\lambda(u)B\mapsto u+\lambda'(u)B+\lambda(u+\lambda'(u)B)B=u+(\lambda(u)+\lambda'(u))B$. 

As $\xi_{\lambda}$ is $kF^{p^n}$-linear, replacing $X$ by $X/\lambda$ we may further assume that 
$\lambda=1$. It suffices to describe the structure of $F_{\Psi}\langle\binom{\Psi}{s}\rangle$ 
as objects of $\Sm_{K_{\lambda}}(\Sy_{\Psi})$ for all integer $s\ge 0$. 
Fix some $x\in\Psi$. If a subobject $V$ of $F_{\Psi}$ contains a polynomial $P$ in $x$ over 
$K_{\lambda}$, say a monic one, then $V$ contains $P-gP$ for any $g\in\Sy_{\Psi}$. 
As $u-v\in K_{\lambda}$, if $P\neq 1$ then 
$\deg(P-gP)<\deg P$ and $gP\neq P$ for some $g\in\Sy_{\Psi}$, and therefore, $V$ contains a non-zero 
polynomial in $x$ of degree $<\deg P$ over $K_{\lambda}$. Thus, $K_{\lambda}$ is an essential 
subobject of $F_{\Psi}$, i.e. $F_{\Psi}$ is indecomposable, and finally, $E(K_{\lambda})\cong F_{\Psi}$. 

For any $s\ge 1$, $S\in\binom{\Psi}{s}$ and $j\ge 0$, the polynomial $\sum_{u\in S}u^{p^n+j}=
\sum_{u\in S}u^{p^n}(x+(u-x))^j=(\sum_{u\in S}u^{p^n})x^j+\sum_{u\in S}u^{p^n}(j(u-x)x^{j-1}+\cdots+(u-x)^j)$ 
is of degree $j$, and therefore, $F_{\Psi}=\bigoplus_{j=0}^{p^n-1}(\sum_{u\in S}u^{p^n+j})K_{\lambda}$, 
so $F_{\Psi}\langle\binom{\Psi}{s}\rangle=\bigoplus_{j=0}^{p^n-1}\left(\bigoplus_{S\in\binom{\Psi}{s}}
(\sum_{u\in S}u^j)K_{\lambda}[S]\right)$ is a direct sum of $\Sy_{\Psi}$-invariant $K_{\lambda}$-vector 
subspaces, while the map $K_{\lambda}\langle\binom{\Psi}{s}\rangle\to\bigoplus_{S\in\binom{\Psi}{s}}
(\sum_{u\in S}u^{p^n+j})K_{\lambda}\cdot[S]$, given by $[S]\mapsto(\sum_{u\in S}u^{p^n+j})[S]$, is 
an isomorphism in $\Sm_{K_{\lambda}}(\Sy_{\Psi})$. \end{proof} \end{example}

\begin{example} \label{irred_A_4_ex} Let $k$ be a field of characteristic $p\neq 2,3$, and $L$ 
be the fixed subfield in $k(\Psi)$ of the Klein four-group generated by the commuting $k$-field 
involutions $u\mapsto -u$ and $\iota\colon u\mapsto\frac{1}{u}$ for all $u\in\Psi$, i.e. $L:=
k\left(x^2+\frac{1}{x^2},\frac{u}{x}+\frac{x}{u},\frac{x^2-x^{-2}}{u/x-x/u}~|~u\in\Psi\smallsetminus\{x\}\right)$ 
for some $x\in\Psi$. Then {\rm (i)} the (third order) automorphism 
$\theta\colon u\mapsto\frac{u+\sqrt{-1}}{u-\sqrt{-1}}$ of the field $k(\sqrt{-1})(\Psi)$ over $k(\sqrt{-1})$ 
preserves $L$, 
{\rm (ii)} the subfield $K=K_{A4}$ of $L$ fixed by $\theta$ is $\Sy_{\Psi}$-invariant, 
{\rm (iii)} \begin{itemize} \item $(x-\frac{1}{x})L$ presents the unique isomorphism class 
of simple 3-dimensional objects of $\Sm_K(\Sy_{\Psi})$, 
\item if $\sqrt{-3}\in k$ then the remaining simple objects are invertible and $\#\mathrm{Pic}_K(\Sy_{\Psi})=3$, 
\item if $\sqrt{-3}\notin k$ then $\frac{x^{12}+15x^8+15x^4+1}{x^2(x^4-1)^2}K\oplus
\frac{x^8+14x^4+1}{(x^4-1)^2}K\subset L$ presents the unique remaining isomorphism class of non-invertible 
simple objects of $\Sm_K(\Sy_{\Psi})$ and $\mathrm{Pic}_K(\Sy_{\Psi})=0$. \end{itemize} 
\end{example} 
\begin{proof} One has $L(x^2)=k\left(x^2,\frac{u}{x}~|~u\in\Psi\right)$ and $[L(x^2):L]=2$ (since $L$ is fixed 
by $\iota$, while $x^2$ is not), $[L(x):L(x^2)]=2$ (since $L(x^2)$ is fixed by $u\mapsto -u$, while $x$ is not), 
$[L(x):L]=4$. This means that $L=k(\Psi)^{\langle -z,\iota\rangle}$, and therefore, $L$ contains 
$u^2+\frac{1}{u^2}$, $\frac{u}{v}+\frac{v}{u}$, $\frac{u^2-u^{-2}}{u/v-v/u}$ and $\frac{u\pm u^{-1}}{x\pm x^{-1}}$ 
for all $u,v\in\Psi$. In particular, (i) the subfield $L\subset k(\Psi)$ is $\Sy_{\Psi}$-invariant, 
(ii) $k(\Psi)=L\oplus\left(x-\frac{1}{x}\right)L\oplus\left(x+\frac{1}{x}\right)L\oplus
\left(x^2-\frac{1}{x^2}\right)L$ is a decomposition into a sum of $\Sy_{\Psi}$-invariant $L$-vector subspaces, 
(iii) $\theta$ preserves $L$ even if $\sqrt{-1}\notin k$ (since the $k(\sqrt{-1})$-linear field automorphisms 
$\sigma\theta\sigma$ and $\theta\iota$ of $k(\sqrt{-1})(\Psi)$ coincide, where $\sigma$ is the generator of 
$\mathrm{Gal}(L(\sqrt{-1})|L)$, while $\theta\iota$ and $\theta$ coincide on $L$). 

Assuming that $p\neq 3$, if $k$ does not contain a third root of unity $\zeta\neq\zeta^{-1}$ 
then we adjoin it, so $K(\zeta)$, 
$\left(\frac{(x^2-1)^2}{4x^2}-\frac{4\zeta x^2}{(x^2+1)^2}-\frac{(x^2+1)^2}{\zeta(x^2-1)^2}\right)K(\zeta)$, 
$\left(\frac{(x^2-1)^2}{4x^2}-\frac{4x^2}{\zeta(x^2+1)^2}-\frac{\zeta(x^2+1)^2}{(x^2-1)^2}\right)K(\zeta)$ 
are the ($\Sy_{\Psi}$-invariant) eigenspaces of $\theta$ in the 3-dimensional $K(\zeta)$-vector space $L(\zeta)$. 
By Lemma~\ref{multiplicities}, the isomorphisms $(x-x^{-1})L\xrightarrow[\widetilde{\phantom{--}}]{\sqrt{-1}\theta}
(x+x^{-1})L\xrightarrow[\widetilde{\phantom{--}}]{\theta}(x^2-x^{-2})L$, 
$x-x^{-1}\mapsto -\frac{4}{x+x^{-1}}\mapsto -2\frac{x+x^{-1}}{x-x^{-1}}$, imply that $(x-x^{-1})L$, 
$(x+x^{-1})L$, $(x^2-x^{-2})L$ are simple. 

[Alternatively. If $(x-x^{-1})L(\zeta)$ (resp. $(x+x^{-1})L(\zeta)$, resp. $(x^2-x^{-2})L(\zeta)$) 
is reducible then either itself or its dual contains an invertible subobject. But this is 
impossible, since by Proposition~\ref{simpleH_simpleG} (\ref{Pic_of_a_fixed_field}), 
$\#\mathrm{Pic}_{K(\zeta)}(\Sy_{\Psi})=3$, 
and the invertible subobjects in $k(\zeta)(\Psi)$ are multiplicity-free.] \end{proof}

\begin{example} \label{irred_A_5_ex} Let $k$ be a field of characteristic $p$, $H$ be a finite subgroup of the 
group (isomorphic to $\mathrm{PGL}_2k$) of $\Sy_{\Psi}$-field automorphisms of $k(\Psi)|k$, and $K:=k(\Psi)^H$. 

\begin{enumerate} \item Assume that $p$ does not divide $\#H$, and the irreducible representations 
of $H$ over $k$ are absolutely irreducible. It follows from Proposition~\ref{simpleH_simpleG} and 
the classification of irreducible representations of $\mathfrak{S}_4$ and $\mathfrak{A}_5$ that 
\begin{itemize} \item if $p\neq 2,3$, $k$ contains $\sqrt{-1}$ and $H=H_{S4}\supset H_{A4}$ is 
isomorphic to $\mathfrak{S}_4$ then there are 5 isomorphism classes of simple objects of $\Sm_K(\Sy_{\Psi})$: 
one of dimension 2 (fixed by $\mathrm{Pic}_K(\Sy_{\Psi})\cong\{\pm 1\}$), and two free 
$\mathrm{Pic}_K(\Sy_{\Psi})$-orbits of classes of dimensions 1 and 3; 
\item if $p\neq 2,3,5$, $k$ contains $\sqrt{-1}$ and $\sqrt{5}$, and 
$H=H_{A5}\supset H_{A4}$ is isomorphic to $\mathfrak{A}_5$ then there are 5 isomorphism classes of 
simple objects of $\Sm_K(\Sy_{\Psi})$: $K$, unique ones of dimensions 4 and 5, and two classes of dimension 3. 
\end{itemize} 
\item \label{PGL_2_p>0} Let $k$ contain a finite subfield $\mathbb F_q$, and $H$ be isomorphic 
$k$-representation $\rho:=k[\mathbb P^1(\mathbb F_q)]^{\circ}$ of $H$ is absolutely irreducible. 
By Proposition~\ref{simpleH_simpleG}, $\dim_KV_{\rho}=q$ and $V_{\rho}$ is irreducible. 
\end{enumerate} \end{example}

\vspace{5mm}

\noindent 
{\sl Acknowledgements.} {\small The study has been funded within the framework 
of the HSE University Basic Research Program. 
I thank Alexander Efimov, Dmitry Kaledin and Alexander Kuznetsov for very useful discussions. 
} 

\vspace{5mm}

\end{document}